%% file: main.tex
\documentclass[12pt]{article}
\input{settings.tex}

\title{4-Rank Distribution of Picard Groups of Hyperelliptic Curves via $C$-Symmetric Matrices}
\author{Elia Gorokhovsky, August Liu}
\date{\today}

\begin{document}
\maketitle
\begin{abstract}
We determine the large-genus limiting distribution of the 4-rank of the Picard group of hyperelliptic curves over a fixed finite field $\F_q$ of odd characteristic. This is a function field analogue of a result of Fouvry and Kl\"uners. Our computation agrees with (the Picard group analogue of) the Cohen--Lenstra--Gerth heuristics in the case $q \equiv 3\pmod{4}$, i.e., in the absence of fourth roots of unity in the base field. When fourth roots of unity are present, the result is of the same form as the conjectured distribution for class groups of quadratic extensions of number fields containing certain roots of unity. The limiting distribution does not change when imposing finitely many conditions on the ramification behavior of the curves. In the process, we determine the rank distribution of a certain class of random matrix ensembles over finite fields determined by symmetry conditions.
\end{abstract}

\newcommand{\details}[1]{}
\newcommand{\intuition}[1]{}

\section{Introduction}

In 1984, Cohen and Lenstra \cite{cohenlenstra1984} gave conjectural distributions of the $p$-parts of class groups of imaginary and real quadratic fields for odd primes $p$, ordered by absolute discriminant. In the case of imaginary quadratic fields, they conjectured that a finite abelian $p$-group $A$ appears as the $p$-part of a class group of a ``random'' such field with probability inversely proportional to the order of its automorphism group, $|\Aut(A)|$. The conjectured distribution for imaginary (respectively, real) quadratic fields also arises as the limiting distribution of cokernels of Haar-random $n\times n$ (respectively, $n \times (n + 1)$) matrices over $\Z_p$ as $n \to \infty$ (see \cite{wood_probability_2023} for an overview).

Cohen and Lenstra did not formulate their heuristics for the case $p = 2$ because the 2-parts of class groups have ``deterministic'' behavior. Namely, Gauss genus theory \cite[Articles 229--287]{gauss} shows that the size of the 2-torsion subgroup $\Cl(K)[2]$ of a quadratic number field $K$ is determined by the number of primes ramified in the extension $K/\Q$ and whether $K$ has any fundamental unit of norm 1. In particular, when $K$ ranges over imaginary or real quadratic number fields, the average size of $\Cl(K)[2]$ is infinite.

Gerth \cite{gerthconj} extended the Cohen--Lenstra heuristics to the 2-part of the class group by conjecturing that all such deterministic behavior in the class group comes from genus theory. That is, as $K$ ranges over quadratic number fields, the subgroup $2\Cl(K)[2^\infty] \subseteq \Cl(K)[2^\infty]$ of the 2-part of $\Cl(K)$ should follow a Cohen--Lenstra-type distribution.

In this paper, we give evidence toward a function field analogue of Gerth's conjecture by determining the limiting distribution of the subgroup $2\Pic^0(X)(\F_q)[4] \subset \Pic^0(X)(\F_q)[2]$ of the Picard group of $X$ as $X$ ranges over hyperelliptic curves of large genus over the finite field $\F_q$ satisfying finitely many local conditions. The following two theorems are immediate consequences of the more general Theorem~\ref{thm:mainthm}, which interpolates between the two. 

We define $\mu_{CL}(r)$ (respectively, $\mu_S(r)$) to be the limiting probability as $n \to \infty$ that a uniformly random (respectively, uniformly random symmetric) $n\times n$ matrix over $\F_2$ has corank $r$. Both of these probabilities are not hard to compute explicitly (see, e.g., \cite{fulmangoldstein2015ranks} for a comprehensive overview with effective error bounds).

\begin{theorem}\label{thm:intro-thm-local}
Fix an odd prime power $q$. Let $S, S', S'' \subset \PP^1_{\F_q}$ be disjoint finite sets of closed points. For integers $g \geq 1$, let $X_g \to \PP^1_{\F_q}$ be chosen uniformly at random among hyperelliptic curves of genus $g$ which are ramified at a set of points containing $S$, split at $S'$, and inert at $S''$. 

Let $\mu \coloneqq \mu_{CL}$ if $q \equiv 3\pmod{4}$ and $\mu \coloneqq \mu_S$ if $q \equiv 1\pmod{4}$. Then for integers $r \geq 0$ we have \[
\lim_{g\to\infty} \PP[\dim_{\F_2} 2\Pic^0(X_g)(\F_q)[4] = r] = \mu(r).
\]
\end{theorem}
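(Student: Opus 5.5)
The plan follows the Fouvry--Kl\"uners strategy transported to function fields, with a R\'edei-matrix description of the 4-rank. Write the curve as $y^2 = f(x)$, or more invariantly as a double cover of $\PP^1_{\F_q}$ branched at a divisor $B$ built from monic irreducibles $\pi_1,\dots,\pi_m$, possibly together with $\infty$.

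\textbf{Step 1: R\'edei matrix.} Genus theory gives $\Pic^0(X)(\F_q)[2]$ explicitly, generated by classes of differences of ramification points. Its dimension is $m-1$ or $m-2$, according to the parity and leading-coefficient data of $f$. Then $\dim 2\Pic^0(X)(\F_q)[4]$ equals the corank of a R\'edei-type matrix $R_X$ over $\F_2$, minus a bounded correction. The entries of $R_X$ are the quadratic residue symbols $\left(\frac{\pi_i}{\pi_j}\right)$, with the diagonal determined by the row-sum condition and the local data at $\infty$ and at $S$.

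Quadratic reciprocity over $\F_q[x]$ reads
\[
\left(\tfrac{\pi_i}{\pi_j}\right)\left(\tfrac{\pi_j}{\pi_i}\right)=(-1)^{\frac{q-1}{2}\deg\pi_i\deg\pi_j}.
\]
So when $q\equiv 1\pmod 4$ the off-diagonal part is symmetric. When $q\equiv 3\pmod 4$ it is symmetric only up to a correction on pairs of odd-degree primes. This is exactly the ``$C$-symmetric'' structure, where the constraint is $R + R^T = C$ for a fixed matrix $C$ determined by degrees.

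\textbf{Step 2: random matrix theory.} Presumably using the paper's later rank computation for $C$-symmetric ensembles, show the following. If the off-diagonal symbols were independent and uniform subject to this constraint, the corank of $R_X$ would converge in distribution to $\mu_S$ when $C=0$. When $C$ has growing rank, as happens when many primes have odd degree, it would converge to $\mu_{CL}$. The intuition is that a large-rank $C$ destroys the symmetry constraint, so the matrix behaves like a uniform one.

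\textbf{Step 3: equidistribution via moments.} Compute the moments $\mathbb{E}[|2\Pic^0(X)[4]|^k]$, or equivalently count $k$-tuples of vectors in the kernel. Expand the indicator of each kernel vector as a product of quadratic characters, giving sums of
\[
\prod_{i,j}\left(\tfrac{\pi_i}{\pi_j}\right)^{a_{ij}}
\]
over factorizations $f=\prod \pi_i$. Following Fouvry--Kl\"uners, split the variables into ``linked'' and ``unlinked'' index patterns. The main term comes from maximal unlinked configurations, which reproduce the matrix moments. The error terms require cancellation in character sums $\sum_{\pi,\pi'} \chi(\pi)\chi'(\pi')\left(\frac{\pi}{\pi'}\right)$. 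Use the Riemann hypothesis for curves, i.e.\ Weil bounds for character sums over $\F_q[x]$, when one variable is long. Use a large-sieve / double-oscillation estimate when both variables are long.

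In the function field setting these bounds are in fact stronger, uniform in $q$ fixed and $\deg\to\infty$. Local conditions at $S,S',S''$ only fix finitely many symbols and residues mod finitely many primes. They are handled by detecting them with characters of bounded conductor, which change neither the main term nor the matrix ensemble in the limit.

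\textbf{Step 4: conclude.} Moments of $\mu_{CL}$ and $\mu_S$ grow like $2^{O(k^2)}$, so they determine the distribution. Moment convergence then gives convergence of the probabilities for each $r$, after accounting for the bounded correction term and for the proportion of $f$ with a typical number of odd-degree factors. In the $q\equiv 3\pmod 4$ case, almost all $f$ have $\gg \log g$ odd-degree prime factors, which forces $\operatorname{rank} C\to\infty$.

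\textbf{Main obstacle.} I expect the hardest part to be Step 3's combinatorics. The task is to show that the contribution of each linked configuration is negligible, uniformly in the number of prime factors, which is roughly $\log g$. This requires the analogue of Fouvry--Kl\"uners's careful partition of variable ranges, the ``maximal unlinked sets'' argument, and control of the factor $2^{\omega(f)k}$.
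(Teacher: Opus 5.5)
Your proposal takes a genuinely different route from the paper and is viable in outline. You use the Fouvry--Kl\"uners moment method, which Bae--Jung carried out for class groups when $q\equiv 3\pmod 4$. The paper never computes moments.

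The paper builds the R\'edei matrix from the class-field-theory pairing between $\Pic^0(X)(\F_q)[2]$ and $\Hom(\pi_1^{\twist}(X),\F_2)$, written in the basis $e_{p_0,p}$ attached to a branch point $p_0$ of minimal odd degree. It then argues directly with distributions:
\begin{itemize}
\item effective Chebotarev on the multiquadratic covers $H_{p_1}\times_{\PP^1}\cdots\times_{\PP^1}H_{p_{i-1}}$ makes each row nearly uniform given the earlier ones, once $d_i$ is large compared with $i$ (Proposition~\ref{prop:row-equidistribution});
\item the uncontrolled low-degree corner is absorbed by Smith's averaging trick, a random permutation in $S_{n'}\times S_{n''}$ that preserves $C$ and the rank (Lemma~\ref{lem:averaging});
\item Arratia--Barbour--Tavar\'e shows the degree profile is good with probability $1-O(1/\log\log\log g)$ (Corollary~\ref{cor:assumptions});
\item the corank law of a uniform $C$-symmetric matrix is computed by M\"obius inversion over totally isotropic subspaces (Theorem~\ref{thm:c-symmetric-ranks}).
\end{itemize}
Split/inert conditions enter by adjoining the covers $\widetilde H_\beta$ in the last-row Chebotarev step (Proposition~\ref{prop:last-row-equidistribution}), not through characters in a moment expansion.

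What each route buys: the paper needs the R\'edei identity only for typical curves and simply discards atypical degree profiles. It treats $q\equiv 1$ and $q\equiv 3\pmod 4$ uniformly through $C$, with no unlinked-set combinatorics and no determinacy theorem, and it gives an explicit if weak rate. Your route, if completed, would give much sharper error terms for each fixed moment and the moments themselves. However, the passage from moments to the distribution is ineffective, and the bookkeeping is far heavier.

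Three points in your sketch need repair.

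\textbf{Step 1 must be exact, and it fails on one subfamily.} You need an exact identity, as in the paper's $\dim\ker M_B=\dim 2\Pic^0(X_B)(\F_q)[4]+1$, not one ``up to a bounded correction''. Also, your claim that $\Pic^0(X)(\F_q)[2]$ is generated by differences of ramification points fails when every branch point has even degree. For $g$ odd, subsets $T$ of the geometric Weierstrass points $W$ with $\Frob(T)=W\setminus T$ give extra rational $2$-torsion; this is why Lemma~\ref{lem:phi-surjection} needs an odd-degree branch point. That subfamily has density $\asymp g^{-1/2}$, but you cannot drop it from a moment computation by the trivial bound. Summed over it, $2^{k\omega(f)}$ is $\asymp q^{2g+2}g^{2^{k-1}-1}$, which is not $o(q^{2g+2})$ for any $k\ge 1$. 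Instead, restrict the moment sums to curves with an odd-degree branch point.

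\textbf{The $q\equiv 1\pmod 4$ main term must be computed, not quoted.} You must show $\E[\#\mathrm{Sur}(2\Pic^0(X)(\F_q)[4],\F_2^k)]\to 2^{\binom{k}{2}}$. In Fouvry--Kl\"uners the maximal unlinked sets are weighted by averages of reciprocity signs. Those signs are absent here, so the count has to be redone.

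\textbf{Your determinacy claim is false as stated.} ``Moments of size $2^{O(k^2)}$ determine the distribution'' does not hold in general. Euler's identity $\prod_{j\ge 0}(1-z2^{-j})=\sum_r(-1)^r2^{-\binom r2}z^r/\prod_{i\le r}(1-2^{-i})$ gives a nonzero signed measure $\nu$ with $|\nu(r)|\asymp 2^{-\binom r2}$ and $\sum_r\nu(r)2^{kr}=0$ for all $k\ge 0$. What you need is the sharper criterion that limiting moments $\E[\#\mathrm{Sur}(\cdot,\F_2^k)]=O(2^{\binom k2})$ determine the law (Heath-Brown, Wood). Note that $\mu_S$ sits exactly at that threshold.
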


\begin{theorem}\label{thm:intro-thm-monic}
Fix an odd prime power $q$ and an affine chart $\Spec \F_q[x] = \A^1_{\F_q} \subset \PP^1_{\F_q}$. For integers $d \geq 1$ let $X_d \to \PP^1_{\F_q}$ be the hyperelliptic curve defined by $y^2 = f(x)$, where $f(x) \in \F_q[x]$ is chosen uniformly at random among monic squarefree polynomials over $\F_q$ of degree $d$.

Let $\mu \coloneqq \mu_{CL}$ if $q \equiv 3\pmod{4}$ and $\mu \coloneqq \mu_S$ if $q \equiv 1\pmod{4}$. Then for integers $r \geq 0$ we have \[
\lim_{d\to\infty} \PP[\dim_{\F_2} 2\Pic^0(X_d)(\F_q)[4] = r] = \mu(r).
\]
\end{theorem}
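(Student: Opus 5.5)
The plan is to deduce Theorem~\ref{thm:intro-thm-monic} from the more general Theorem~\ref{thm:mainthm} by specializing the ramification data, but I also sketch the direct route, following Fouvry--Kl\"uners. Write $y^2=f(x)$ with $f$ monic squarefree of degree $d$, and factor $f=\pi_1\cdots\pi_k$ into monic irreducibles. When $d$ is odd, $\infty$ ramifies and the map $\Pic^0(X_d)(\F_q)[2]$ is generated by the classes of the Weierstrass divisors $[P_i]-\frac{\deg\pi_i}{\cdot}\infty$ (genus theory), giving an $\F_2$-space of dimension $k-1$. When $d$ is even, the leading coefficient is a square, so $\infty$ splits. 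Then one analyses the analogous space, together with a possible extra contribution from the two points at infinity. In either case, $2\Pic^0[4]$ is the kernel of a R\'edei-type pairing. Concretely, a class $D=\sum e_i[P_i]$ lies in $2\Pic^0[4]$ iff it is a norm condition, which translates into quadratic-residue symbols $\left(\frac{\pi_i}{\pi_j}\right)$ and $\left(\frac{\pi_i}{\cdot}\right)$ at $\infty$ (degree parities). So $\dim 2\Pic^0[4]=k-1-\operatorname{rank}(M_f)$ up to a bounded correction, where $M_f$ is the R\'edei matrix over $\F_2$.

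The key arithmetic input is quadratic reciprocity in $\F_q[x]$: $\left(\frac{\pi_i}{\pi_j}\right)\left(\frac{\pi_j}{\pi_i}\right)=(-1)^{\frac{q-1}{2}\deg\pi_i\deg\pi_j}$. If $q\equiv1\pmod4$ the R\'edei matrix is symmetric. If $q\equiv3\pmod4$, the symmetry is twisted by the degree-parity vector, which is exactly the $C$-symmetric condition of the paper. I would cite the paper's random matrix result that such ensembles, when entries are asymptotically independent subject to the symmetry, have corank distributions $\mu_S$, respectively $\mu_{CL}$ (the twist destroys symmetry enough to recover the uniform law).

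Next comes the equidistribution step, which I expect to be the main obstacle. I would compute the moments $\mathbb{E}[2^{r\cdot\dim}]$ (equivalently the number of surjections or the average of $|2\Pic^0[4]|^m$) by expanding into character sums $\sum_f\prod\left(\frac{\pi_i}{\pi_j}\right)^{a_{ij}}$ over factorizations. I would show that all terms except the ``diagonal'' ones, where characters cancel by reciprocity, are negligible via the function field large sieve / Weil bound for sums of Jacobi symbols. This follows Fouvry--Kl\"uners: most $f$ have $k\sim\log d$ factors, which are handled by splitting into large and small factors. The function-field Riemann hypothesis gives square-root cancellation uniformly, which makes the bounds cleaner than over $\Q$. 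The diagonal terms count the expected number of solutions of linear equations in a random $C$-symmetric matrix. These moments determine the distribution because they grow like $2^{O(m^2)}$. Finally, I would check that $\dim(2\Pic^0[4])$ and the corank agree exactly, including the boundary point at $\infty$, for both parities of $d$; for odd and even $d$ this yields the same limit $\mu$.
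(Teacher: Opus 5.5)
Your opening sentence names exactly the paper's proof: Theorem~\ref{thm:intro-thm-monic} is a specialization of Theorem~\ref{thm:mainthm}. But you never carry the specialization out, and that is the entire argument. What you need to check is that ``uniform monic squarefree $f$'' is the same as ``uniform hyperelliptic curve with a prescribed local condition at $\infty$.''

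\textbf{Case $d$ even.} $X_d$ is unramified at $\infty$ and split there, because the leading coefficient $1$ is a square. Conversely, every curve unramified and split at $\infty$ has a unique monic model. So apply the first part of Theorem~\ref{thm:mainthm} with $S=S''=\varnothing$, $S'=\{\infty\}$ and $g=d/2-1$.

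\textbf{Case $d$ odd.} $\infty$ is ramified. With $t=1/x$ one has $f=t^{-d}(1+O(t))$, and in general $c_{t}(X_d\to\PP^1,\infty)$ is the square class of the leading coefficient. So monicity is exactly the normalization $c_t=1$. Apply the second part of Theorem~\ref{thm:mainthm} with $S=\{\infty\}$, $p_0=\infty$, $t=1/x$, $S'=S''=\varnothing$ and $g=(d-1)/2$. The hypothesis about odd-degree points of degree $<\deg(p_0)=1$ is vacuous.

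In both cases $g\to\infty$, so the error $O(1/\log\log\log g)$ tends to $0$.

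The ``direct route'' you actually sketch is a genuinely different method. It computes moments via character sums, as in Fouvry--Kl\"uners (and Bae--Jung for class groups when $q\equiv 3\pmod 4$). The paper instead proves Chebotarev-plus-averaging equidistribution of the R\'edei matrix. As written, your route has three gaps.

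\emph{First}, its analytic core is only asserted. You need that every non-diagonal sum $\sum_f\prod(\pi_i/\pi_j)^{a_{ij}}$ is negligible. This includes ranges where some factor has small degree: there the Weil bound gives nothing, and Fouvry--Kl\"uners need their linked/unlinked combinatorics. You also need that the surviving configurations reproduce the moments of the correct $C$-symmetric ensemble for each residue of $q$ mod $4$.

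\emph{Second}, the identity between $\dim 2\Pic^0(X_d)(\F_q)[4]$ and the corank must be exact, not ``up to a bounded correction.'' Your ``extra contribution from the two points at infinity'' is a class-group phenomenon. By Lemma~\ref{lem:phi-surjection}, the Picard $2$-torsion is generated by ramification-point divisors as soon as some branch point has odd degree. What genuinely needs separate treatment is the event that every factor of $f$ has even degree, which must be shown rare.

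\emph{Third}, ``moments of size $2^{O(m^2)}$ determine the distribution'' is false. Let $\nu(r)$ be the coefficients of $\prod_{j\ge0}(1-z2^{-j})$. Then $\sum_r\nu(r)2^{mr}=0$ for all $m\ge0$ and $|\nu(r)|\ll 2^{-\binom{r}{2}}$. Hence uniqueness already fails among laws with $\E[2^{mX}]\asymp 2^{m(m+1)/2}$. For $q\equiv1\pmod 4$ the target surjection moments are exactly $|\wedge^2\F_2^m|=2^{\binom m2}$, the boundary case of Wood's uniqueness/robustness criterion, only a factor $2^m$ inside failure. So you need that sharp theorem, not a crude growth bound.
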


The key detail to notice about Theorem~\ref{thm:intro-thm-monic} is that when $d$ is odd, we are constraining the behavior of $X_d \to \PP^1_{\F_q}$ at a ramification point (the point at $\infty$). Theorem~\ref{thm:mainthm} allows us to fix a finite number of ramification points and splitting behavior at a finite number of unramified points, as well as behavior at one ramification point of minimal degree.

The story of the 2-part of the class group is much more complete on the number field side. In general, the distribution of the odd part of the class group is still completely mysterious (aside from the 3-torsion, whose average size can be computed with methods that do not generalize easily; see \cite{davenport_density_1997, bhargava_davenport-heilbronn_2013}). However, the deterministic ``base layer'' given by Gauss genus theory enables working directly with $2^\infty$-torsion elements in the class group. 

In 1934, R\'edei \cite{Redei1934} showed that the 4-rank of the class group (that is, the dimension of $2\Cl(K)[4]$ as an $\F_2$-vector space) of a quadratic number field can be expressed as the nullity of a matrix over $\F_2$ whose entries are Legendre symbols evaluated at rational primes where $K$ is ramified. Based on this construction, in 1984 Gerth \cite{Gerth1984-hc} determined the distribution of the 4-rank of $\Cl(K)$ as $K$ ranges over quadratic number fields with a fixed number $r$ of ramified primes, in the large-$r$ limit. 

The \textit{R\'edei matrix} $M_K$ of a quadratic number field $K$ satisfies certain symmetry conditions due to quadratic reciprocity. In particular, we have $M_K - M_K^\intercal = C$, where $C$ is a matrix that encodes the residues modulo 4 of rational primes that ramify in $K$. We call matrices with such a symmetry property \textit{$C$-symmetric}. In Theorem~\ref{thm:c-symmetric-ranks} and Corollary~\ref{cor:c-symmetric-ranks} we determine the $n\to\infty$ limiting corank distribution of $C$-symmetric $n\times n$ random matrices when $C$ has large enough rank (at least on the order of $\log n$). In particular, this specializes to give an alternate effective proof for the result of Gerth \cite{Gerth1984-hc} and Koymans and Pagano \cite{Koymans2020EffectiveCO} that random matrices satisfying the same symmetry conditions as the R\'edei matrix have the same limiting corank distribution as random matrices with no symmetry constraints.

After Gerth, Fouvry and Kl\"uners \cite{fouvrykluners2010} determined the full distribution of the 4-rank of $\Cl(K)$ as $K$ ranges over quadratic number fields ordered by absolute discriminant using moment methods and hence techniques similar to the work of Heath-Brown \cite{heath-brown_size_1993, heath-brown_size_1994} on distribution of 2-Selmer ranks of congruent number elliptic curves. Their strategy was to express powers of the size of the kernel of the R\'edei matrix directly in terms of Legendre symbols of ramified primes, then use the averages of these powers to deduce the distribution of the 4-rank. Finally, Smith proved Gerth's conjecture for the 8-rank of the class group in 2016 \cite{smith_governing_2016} and for the full 2-part of the class group in 2022 \cite{smith_distribution_2023, smith_distribution_2023b}. Smith's results apply to arbitrary number fields not containing fourth roots of unity.

There has been some work toward similar results with ramification conditions imposed at finitely many primes, as in our Theorem~\ref{thm:intro-thm-local}. For instance, Altug, Shankar, Varma, and Wilson \cite[Theorem 4]{altug_number_2021} determined the average number of order 4 elements in class groups of quadratic number fields with some local conditions, ordered by discriminant. This result is similar to (but not quite the same as) determining the average size of $2\Cl(K)[4]$ rather than the full distribution. The method of proof is also very distinct from the approaches of Gerth, Fouvry--Kl\"uners, and this paper; it proceeds by determining asymptotic counts of quartic number fields with prescribed ramification and Galois group $D_4$.

Compared to the work of Fouvry and Kl\"uners, in the function field world we have the advantage of a strong effective Chebotarev density theorem and a good probabilistic theory of random subsets of $\PP^1$ that allow us to prove a type of effective equidistribution result about the R\'edei matrix associated to a hyperelliptic curve rather than computing the 4-rank of the Picard group directly from the data of the curve's ramification points. This enables us to study the 4-rank of the Picard group as the corank of a random matrix in Section~\ref{sect:random-matrices}. However, since we are working with Picard groups rather than class groups, there is some extra work that goes into defining such a R\'edei matrix in the first place. We do this in Section~\ref{sect:redei}. 

On the function field side, one may study the distribution of class groups $\Cl(K)$ of function fields $K$ or Picard groups $\Pic^0(X)(\F_q)$ of the associated projective curves $X$. These two group invariants are related by excision: if $X \to \PP^1_{\F_q}$ is a hyperelliptic curve over $\F_q$ with corresponding function field extension $K/\F_q(t)$, then the sequence \[
0 \longrightarrow R \longrightarrow \Pic^0(X)(\F_q) \longrightarrow \Cl(K) \longrightarrow \Z/\delta\Z \longrightarrow 0
\]
is exact, where $R$ is the group of degree 0 divisor classes with a representative supported at the points above $\infty \in \PP^1_{\F_q}$ and $\delta$ is the greatest common divisor of the degrees of the points of $X$ above $\infty$. In other words, if $\infty$ ramifies in $X \to \PP^1_{\F_q}$, then the class group agrees with the Picard group; if $\infty$ is split, then the class group is the quotient of the Picard group by the cyclic subgroup generated by the difference of the points above $\infty$; and if $\infty$ is inert, then the class group contains the Picard group as an index-2 subgroup. As a result, in the case where $\infty$ is split, the question of the distribution of the Picard group \textit{together with} the difference of the points above $\infty$ can be considered a refinement of the class group distribution question over quadratic function fields (see, e.g. \cite{wood_cohen-lenstra_2018}).

There is work by Bae and Jung \cite{bae_4-rank_2012} which follows the strategy of Fouvry and Kl\"uners to get 4-rank distributions for the narrow class groups and ideal class groups of quadratic function fields over $\F_q(t)$ when $q \equiv 3\pmod{4}$. Since we determine the 4-rank of Picard groups rather than class groups, neither work implies the other. However, our method allows us to access the case of $q \equiv 1\pmod{4}$ as well as imposing finitely many local conditions. It is likely that a similar strategy would yield distributions for ideal class groups as well.

Over function fields, much more is known about the distribution of the odd part of the class (or Picard) group. In 1989, Friedman and Washington \cite{friedmanDistributionDivisorClass1989} observed that for odd $p$, the $p$-part of the Picard group of a random curve over a finite field $\F_q$ can be related to a random (symplectic) matrix, and in 2006 and 2008 Achter \cite{achter2006classgroups, achter2008cohenlenstra} used this interpretation to give asymptotic formulas for the distribution of the Picard group in families of curves of fixed genus $g$ as $q \to \infty$. In the process, Achter observed that one gets different distributions for the $p$-torsion of the Picard group depending on the residue class of $q$ modulo $p$. In particular, if $q \equiv 1 \pmod{p}$ (i.e., if there are $p$th roots of unity in the base field $\F_q(t)$), the correct distribution for the analogue of the imaginary quadratic case matches the limiting cokernel distribution of $n\times n$ \textit{symmetric} matrices as $n \to \infty$. This observation suggests that the Cohen--Lenstra heuristics need to be modified when generalizing to the case of an arbitrary number field potentially containing roots of unity (see, e.g. \cite{sawinConjecturesDistributionsClass2024}). We observe the same phenomenon in our results, which suggests a heuristic for the distributions of 2-parts of class groups of quadratic number fields containing roots of unity. 

Later work by Ellenberg, Venkatesh, and Westerland \cite{evw16} and Landesman-Levy \cite{landesman2024cohenlenstramomentsfunctionfields} used powerful techniques from algebraic topology to prove a version of the Cohen--Lenstra heuristics over function fields. These methods can show that each limiting \textit{moment} of the class group distribution of random high-genus hyperelliptic curves matches the corresponding moment of the Cohen--Lenstra distribution when $q$ is large enough (depending on the chosen moment). However, they cannot prove that the $g \to \infty$ limit of the class group distribution for genus-$g$ hyperelliptic curves over $\F_q$ is the Cohen--Lenstra distribution for any fixed $q$.

\subsection{Outline of the argument}

There are three major steps in the proof:
\begin{enumerate}
    \item First, in Section~\ref{sect:redei} we express the 4-rank of the Picard group of a given hyperelliptic curve $X$ over $\F_q$ as the nullity of a matrix analogous to the R\'edei matrices used to understand 4-ranks of class groups of number fields. In Subsection~\ref{sect:pairing} we define a pairing (valued in $\F_2$) between rational 2-torsion divisor classes of $X$ and $\overline{\F_q}$-isomorphism classes of \'etale 2-covers of $X$. The left kernel of this pairing has the same dimension as $2\Pic^0(X)(\F_q)[4]$. In Subsection~\ref{sect:basis} we give a two-to-one parametrization of $\Pic^0(X)(\F_q)[2]$ by rational degree-0 divisors supported at the ramification points of $X$, modulo 2. We lift our pairing along this parametrization to obtain the matrix we wish to study.

    In Subsection~\ref{sect:param-curves}, we parametrize hyperelliptic curves by sets of distinct points in $\PP^1$ with some extra data, and we describe how to compute the pairing from these data. The goal is to study the associated matrix as a random matrix after picking points in $\PP^1$ at random. In Subsection~\ref{sect:symmetry}, we show that the pairing matrix is $C$-symmetric for a particular $C$, in addition to having each row and column sum to zero.

    \item Next, in Section~\ref{sect:random-matrices} we study the nullity distribution of large random $C$-symmetric matrices over a general finite field $\F_\ell$. In the case of symmetric matrices, we can rely on known results to determine exactly the nullity distribution (Subsection~\ref{sect:symmetric-random-matrices}). 
    
    In Subsection~\ref{sect:c-symmetric-random-matrices}, we determine the limiting nullity distribution of random $C$-symmetric matrices when $C$ has large enough rank. The argument proceeds by comparing $C$-symmetric matrices to uniformly random matrices with no symmetry conditions. 

    If $M$ is a random $n\times n$ $C$-symmetric matrix and $B \in \GL_n(\F_\ell)$ is a uniformly random invertible matrix, then $B^\intercal M B$ has the same nullity distribution, but if $C$ has high enough rank, the symmetry constraints imposed by $C$ are randomly shuffled around and end up averaging out. To formalize this, we compute the probability that $B^\intercal M B$ vanishes on an arbitrary subspace $V \subseteq \F_\ell^n$ and see that it agrees in the limit with the probability that a uniformly random matrix with no symmetry constraints vanishes on $V$.

    \item Finally, in Section~\ref{sect:equidistribution}, we show that if $X$ is a uniformly random hyperelliptic curve of fixed genus, then the associated R\'edei matrix behaves like a random matrix with appropriate symmetry constraints. This part of the proof is of similar flavor to a result of Park \cite{park2024primeselmerrankscyclic} on ranks of prime Selmer groups in cyclic prime twist families of elliptic curves.
    
    In Subsections~\ref{sect:chebotarev}--\ref{sect:subfamilies} we show that the R\'edei matrix nearly equidistributes when $X$ is chosen uniformly within subfamilies of hyperelliptic curves where the degree of each of the branch points is prescribed, but each branch point itself is random among all points of that degree. To do this, we first use an effective function field version of the Chebotarev density theorem to show that, after conditioning on a small corner of the matrix, the rest of the matrix equidistributes (Subsection~\ref{sect:chebotarev}). We need to condition on a small corner because the Chebotarev density theorem does not provide good enough error bounds when ranging over low-degree points.

    In Subsection~\ref{sect:averaging}, we remove the conditioning at the cost of not being able to show that the entire matrix equidistributes. Instead, we introduce a rank-preserving group action on the set of $C$-symmetric matrices and show that, after acting on the R\'edei matrix randomly, the resulting matrix equidistributes. We can conclude that the nullity of the R\'edei matrix distributes as the nullity of a random $C$-symmetric matrix, even if we cannot show the R\'edei matrix itself behaves randomly. To get good bounds, we need to assume some properties of the prescribed degrees.

    In Subsection~\ref{sect:branch-degrees}, we use the theory of random logarithmic combinatorial structures developed by Arratia, Barbour, and Tavar\'e \cite{arratiabarbourtavare} to show that these assumed properties hold in almost all subfamilies, thereby obtaining an equidistribution result when $X$ ranges over all hyperelliptic curves of fixed genus. We use this in Subsection~\ref{sect:main-results} to prove our main results. 
    
\end{enumerate}

\subsection{Notation}
\begin{itemize}
    \item For $r$ an integer and $\ell$ a prime power we write \[
    \mu_{S, \ell}(r) \coloneqq \frac{|\wedge^2 \F_\ell^r|}{|\GL_r(\F_\ell)|}\prod_{k=0}^\infty (1 - \ell^{-2k - 1})
    \]
    for the limiting corank distribution of large uniformly random symmetric matrices over $\F_\ell$ and \[
    \mu_{CL, \ell}(r) \coloneqq \frac{1}{|\GL_r(\F_\ell)|}\prod_{i=r+1}^\infty (1 - \ell^{-i})
    \]
    for the limiting corank distribution of large uniformly random matrices over $\F_\ell$.
    \item For $p \in \Spec \Z$ a prime or $p \in X$ a closed point on a curve, we denote by $v_p(\cdot)$ the associated valuation on the corresponding function field. For example, if $p$ is a prime, $v_p(\cdot)$ denotes the $p$-adic valuation. If $p \in X$ is a closed point on a curve and $D$ is a divisor on $X$, then $v_p(D)$ denotes the coefficient of $p$ in $D$.
    \item For $q$ a prime power, we denote by $\binom{n}{k}_q$ the $q$-binomial symbol which counts the number of subspaces of $\F_q^n$ of dimension $k$: \[
    \binom{n}{k}_q \coloneqq \prod_{i=0}^{k-1} \frac{q^n - q^i}{q^k - q^i} = q^{kn}\frac{\prod_{i=0}^{k-1} (1 - q^{i-n})}{\#\GL_k(\F_q)}.
    \]
    \item If $\ip{\cdot}{\cdot}\colon \F_2^n \times \F_2^n$ is a pairing, we sometimes identify it with the associated $n\times n$ matrix $M$ with $\ip{v}{w} = v^\intercal M w$.
    \item We denote by $(\cdot)_+$ the isomorphism between the multiplicative group $\{\pm 1\}$ and the finite field $\F_2$. If $k$ is a finite field of odd order, we also denote by $(\cdot)_+$ the isomorphism $k^\times/(k^\times)^2 \cong \F_2$.
    \item When we use big-$O$ notation, we always express dependence in the implicit constant by subscripts. For example, if $f = O_\alpha(g)$, then $|f| \leq C|g|$ for a constant depending only on $\alpha$. Note that if $f$ and $g$ are functions on the natural numbers, we can equivalently ask for $|f(n)| \leq C|g(n)|$ for all but finitely many $n$.
    \item We use $\PP$ for probability, $\E$ for expectation, and $\Var$ for variance.
    \item For a positive integer $n$ we denote by $[n]$ the set $\{1, \dots, n\}$.
    \item We sometimes write $\PP^1$ for $\PP^1_k$ when the base field $k$ is implicitly clear.
    \item Curves are regular and proper.
    \item For $X$ a scheme over a field $k$, we write $X \times_k \overline{k}$ for $X \times_{\Spec(k)} \Spec(\overline{k})$.
    \item If $X$ is a scheme over $k$ and $p \in X$ a closed point, we write $\deg(p)$ for the dimension over $k$ of the residue field at $p$.
    \item We write $\pi_1^\etale$ for the \'etale fundamental group. We will omit the choice of base point with the understanding that this group is only defined up to inner automorphisms.
    \item If $X$ is a scheme over a finite field $k$ and $p$ is a closed point, we write $\Frob_p$ for the associated (geometric) Frobenius. More precisely, if $k(p)$ is the residue field of $X$ at $p$ with Galois group $\Gal(\overline{k(p)}/k(p))$ generated by $\varphi\colon x \mapsto x^{\#k(p)}$, then $\Frob_p$ is the (conjugacy class of) the image of $\varphi^{-1}$ under the map $\Gal(\overline{k}(p)/k(p)) \cong \pi_1^\etale(\Spec(k(p))) \to \pi_1^\etale(X)$.
    \item If $X$ is a smooth, proper curve over a perfect field $k$, we write $\Pic^0(X)(\overline{k})$ for the class group of degree-0 Cartier divisors of $X \times_k \overline{k}$, and we write $\Pic^0(X)(k)$ for the group of degree-0 Galois-invariant Cartier divisor classes of $X \times_k \overline{k}$. When $\Br(k) = 0$ (e.g., if $k$ is a finite field or algebraically closed), a standard argument using the Hochschild-Serre spectral sequence shows this is naturally isomorphic to the group of degree-0 Cartier divisors of $X$ up to linear equivalence. 
    \details{
    We have the Hochschild-Serre spectral sequence \[
    E_2^{r, s} = H^r(\Gal(\overline{k}/k), H_\etale^s(X \times_k \overline{k}, \G_m)) \implies H_\etale^{r + s}(X, \G_m)
    \]
    We want to compute $\Pic(X)(k) \cong H^1_\etale(X, \G_m)$. This has a filtration \[
    0 \longrightarrow E_\infty^{1, 0} \longrightarrow H^1_\etale(X, \G_m) \longrightarrow E_\infty^{0, 1} \longrightarrow 0
    \]
    In fact, $E^{1, 0}$ stabilizes to \[
    E_\infty^{1, 0} = E_2^{1, 0} = H^1(\Gal(\overline{k}/k), H_\etale^0(X \times_k \overline{k}, \G_m)).
    \]
    Since $X \times_k \overline{k}$ is a projective $\overline{k}$-scheme, $H_\etale^0(X \times_k \overline{k}, \G_m) \cong H_\etale^0(X \times_k \overline{k}, \sheafO_{X \times_k \overline{k}})^\times \cong \overline{k}^\times$. By Hilbert's Theorem 90, $H^1(\Gal(\overline{k}/k), \overline{k}^\times) = 0$, so $H_\etale^1(X, \G_m) \cong E_\infty^{0, 1}$.

    Note that $E^{0, 1}$ stabilizes after 3 pages to \begin{align*}
    E_\infty^{0, 1} = E_3^{0, 1} &= \ker(E_2^{0, 1} \to E_2^{2, 0}) \\
    &= \ker(H^0(\Gal(\overline{k}/k), H_\etale^1(X \times_k \overline{k}, \G_m)) \to H^2(\Gal(\overline{k}/k), H^0(C_{\overline{k}}, \G_m))) \\
    &\cong \ker(\Pic(X)(\overline{k})^{\Gal(\overline{k}/k)} \to H^2(\Gal(\overline{k}/k), \overline{k}^\times)) \\
    &= \Pic(X)(\overline{k})^{\Gal(\overline{k}/k)}
    \end{align*}
    as we wanted. 
    }
    We also write $\Div(X)(k)$ (respectively, $\Div^0(X)(k)$, $\Prin(X)(k)$) for the group of Cartier divisors (respectively, degree 0 and principal Cartier divisors) of $X$.
\end{itemize}

\section{Rédei Matrix for Function Fields}\label{sect:redei}
In \cite{Redei1934}, R\'edei showed that the $4$-class rank of a quadratic number field is determined by a matrix, known as the Rédei matrix, whose entries are Legendre symbols considered additively in $\F_2$. In this section, we do the same for function fields, namely constructing a matrix in $\F_2$ whose nullity relates to the $4$-rank of $\Pic^0(X)(\F_q)$ for a hyperelliptic curve $X$ over $\F_q$ where $q$ is an odd prime power.

\subsection{The Pairing $\ip{\cdot}{\cdot}_X$}\label{sect:pairing}

In this subsection, we define a pairing between the 2-torsion in the Picard group of a curve and the group of quadratic twist families of \'etale 2-covers of the same curve. In the following sections, we will explicitly understand the 2-torsion elements in the Picard group and give a basis for the 2-torsion as an $\F_2$-vector space. Using this basis, the matrix associated to this pairing will be our analogue of the R\'edei matrix.

Let $X$ be a regular, proper geometrically connected curve defined over a finite field $k$ of odd characteristic. We say a \textit{cover} of $X$ is a finite flat surjective map of regular, proper one-dimensional schemes. The cover need not be connected.

Let $\pi_1^\twist(X)$ be the image of $\pi_1^\etale(X \times_k \overline{k})^\ab$ in $\pi_1^\etale(X)^\ab$. For any finite abelian group $A$, the group $\Hom(\pi_1^\twist(X), A)$ classifies Galois covers of $X$ with Galois group a subgroup of $A$ up to $\overline{k}$-isomorphism.

Indeed, by abelianizing the homotopy exact sequence we get a split exact sequence \[
0 \longrightarrow \pi_1^\twist(X) \longrightarrow \pi_1^\etale(X)^\ab \longrightarrow \Gal(\overline{k}/k) \longrightarrow 0
\]
and after taking $\Hom(\cdot, A)$, one sees that $\Hom(\pi_1^\twist(X), A)$ is a quotient of $\Hom(\pi_1^\etale(X)^\ab, A)$ which is isomorphic to the image of the map $\Hom(\pi_1^\etale(X)^\ab, A) \to \Hom(\pi_1^\etale(X \times_k \overline{k})^\ab, A)$ taking an \'etale cover to its $\overline{k}$-isomorphism class.

\details{
We have an exact sequence \[
1 \longrightarrow \pi_1^\etale(X \times_k \overline{k}) \longrightarrow \pi_1^\etale(X) \longrightarrow \Gal(\overline{k}/k) \longrightarrow 1
\]
induced by the maps of schemes $X \times_k \overline{k} \to X \to \Spec(k)$. The inclusion of any rational point $\Spec(k) \to X \to \Spec(k)$ induces a splitting of this sequence. After abelianizing, we get the exact sequence of abelian groups \[
\pi_1^\etale(X \times_k \overline{k})^\ab \longrightarrow \pi_1^\etale(X)^\ab \longrightarrow \Gal(\overline{k}/k) \longrightarrow 0
\]
which can be read \[
0 \longrightarrow \pi_1^\twist(X) \longrightarrow \pi_1^\etale(X)^\ab \longrightarrow \Gal(\overline{k}/k) \longrightarrow 0.
\]
The splitting $\Gal(\overline{k}/k) \to \pi_1^\etale(X)$ of the original sequence induces a splitting of the abelianized sequence. Since this short exact sequence is split, it remains exact after taking $\Hom(\cdot, A)$: \[
0 \longrightarrow \Hom(\Gal(\overline{k}/k), A) \longrightarrow \Hom(\pi_1^\etale(X)^\ab, A) \longrightarrow \Hom(\pi_1^\twist(X), A) \longrightarrow 0
\]
The middle group parametrizes Galois covers of $X$ with Galois group $A$. 

The second map also appears in the sequence \[
\Hom(\pi_1^\etale(X)^\ab, A) \longrightarrow \Hom(\pi_1^\twist(X), A) \longrightarrow \Hom(\pi_1^\etale(X \times_k \overline{k})^\ab, A)
\]
coming from $\pi_1^\etale(X \times_k \overline{k})^\ab \to \pi_1^\twist(X) \to \pi_1^\etale(X)^\ab$. The first map is surjective by the discussion above, and the second map is injective by left exactness of $\Hom(\cdot, A)$. Thus, this sequence expresses $\Hom(\pi_1^\twist(X), A)$ as precisely the image of the map $\Hom(\pi_1^\etale(X)^\ab, A) \to \Hom(\pi_1^\etale(X \times_k \overline{k})^\ab, A)$ which sends a Galois $A$-cover of $X$ to the corresponding Galois $A$-cover of $X \times_k \overline{k}$, i.e., its $\overline{k}$-isomorphism class. In other words, as a subgroup of $\Hom(\pi_1^\etale(X \times_k \overline{k})^\ab, A)$, the group $\Hom(\pi_1^\twist(X), A)$ consists of those $\overline{k}$-isomorphism classes of Galois $A$-covers of $X$ which come from actual Galois $A$-covers of $X$ defined over $k$.
}

Class field theory gives an isomorphism from the profinite completion $\widehat{\Pic}(X)(k)$ of $\Pic(X)(k)$ to the abelianization of the full \'etale fundamental group $\pi_1^\etale(X)^\ab$ sending a divisor $[p]$ represented by $p \in X$ to $\Frob_p$. There is a map $\pi_1^\etale(X)^\ab \to \pi_1^\etale(\Spec(k)) \cong \Gal(\overline{k}/k) \cong \widehat{\Z}$ coming from the structure map $X \to \Spec(k)$ sending the Frobenius of a degree-$d$ point to $d \in \widehat{\Z}$. Thus, under the identification $\widehat{\Pic}(X)(k) \cong \pi_1^\etale(X)^\ab$, the map $\pi_1^\etale(X)^\ab \to \widehat{\Z}$ corresponds to the degree map $\widehat{\Pic}(X)(k) \to \widehat{\Z}$. The kernel of $\pi_1^\etale(X)^\ab \to \widehat{\Z}$ is precisely $\pi_1^\twist(X)$ \details{from the exact sequence\[
1 \longrightarrow \pi_1^\etale(X \times_k \overline{k}) \longrightarrow \pi_1^\etale(X) \longrightarrow \Gal(\overline{k}/k) \longrightarrow 1
\]
and right-exactness of abelianization
}, whereas the kernel of the degree map $\widehat{\Pic}(X)(k) \to \widehat{\Z}$ is $\Pic^0(X)(k)$. This gives us an identification $\Pic^0(X)(k) \cong \pi_1^\twist(X)$. Thus we have a perfect pairing \[
\Pic^0(X)(k) \times \Hom(\pi_1^\twist(X), \Q/\Z) \to \Q/\Z
\]
given by the evaluation map. Restricting this pairing to $2$-torsion gives us a pairing \[
\ip{\cdot}{\cdot}_X\colon \Pic^0(X)(k)[2] \times \Hom(\pi_1^\twist(X), \F_2) \to \F_2
\]
whose left kernel is $\Pic^0(X)(k)[2] \cap 2\Pic^0(X)(k)$, an elementary abelian $2$-group whose rank coincides with the $4$-rank of $\Pic^0(X)(k)$. If $X$ is a hyperelliptic curve with a map $\pi\colon X \to \PP^1_k$, we will sometimes write $\ip{\cdot}{\cdot}_\pi$ instead of $\ip{\cdot}{\cdot}_X$.

Explicitly, suppose we are given a $2$-torsion divisor class represented by $D = \sum_{p \in X} a_p[p]$ and a character $\chi\colon \pi_1^\twist(X) \to \F_2$. Let $\widetilde{\chi}$ be a lift of $\chi$ to $\Hom(\pi_1^\etale(X), \F_2)$. Then the pairing evaluates as \begin{equation}\label{eq:pairing-formula}
    \ip{D}{\chi}_X=\sum_{p\in X}a_p\widetilde{\chi}(\Frob_p).
\end{equation}

\subsection{Divisors, branched covers, and second-order classes}\label{sect:second-order}

In this subsection, let $X$ be a geometrically integral, regular, proper curve over a finite field $k$ of odd characteristic. Let $K \coloneqq k(X)$ be the function field of $X$. 

In this subsection we spell out some well-known facts about 2-covers that will help us make the pairing $\ip{\cdot}{\cdot}_X$ concrete in the following three subsections.

The first is a lemma which is a geometric analogue of a special case of Kummer theory. We will apply this in two ways: first, to classify hyperelliptic curves in terms of their branch points and one additional datum (with $X = \PP^1$); and second, to relate \'etale 2-covers of a hyperelliptic curve to 2-torsion divisor classes (with $X$ a hyperelliptic curve).

Note that we have a natural group structure on the set of isomorphism classes of branched 2-covers of $X$ because they are parametrized by the group $\Hom(\Gal(K^{\text{sep}}/K), \F_2)$ of $\F_2$-valued characters on the absolute Galois group of $K$ (using the correspondence between finite branched covers of $X$ and finite extensions of $K$) \details{Note that isomorphisms of branched 2-covers of $X$ must commute with the covering maps to $X$}. We say two branched covers of $X$ are isomorphic over $\overline{k}$ (or $\overline{k}$-isomorphic) if they are isomorphic as branched covers of $X \times_k \overline{k}$ after base change to $\overline{k}$. Base change to $\overline{k}$ respects the group structure on the set of branched 2-covers of $X$, so we also get a natural group structure on the set of $\overline{k}$-isomorphism classes of branched 2-covers of $X$, which we can view as a quotient group of the set of $k$-isomorphism classes.\details{
To be precise, we can think of isomorphism classes of branched 2-covers of $X \times_k \overline{k}$ as parametrized by $\Hom(\Gal(K^\text{sep}/\overline{k}K), \F_2)$. Consequently, we identify the set of $\overline{k}$-isomorphism classes of branched 2-covers of $X$ as the image of the restriction map \[
\Hom(\Gal(K^{\text{sep}}/K), \F_2) \to \Hom(\Gal(K^\text{sep}/\overline{k}K), \F_2)
\]
which has a natural group structure.
}

\begin{lemma}\label{lem:kummer} 
Let $X$ be a regular, proper, geometrically integral curve over a finite field $k$ of odd characteristic. Let $K \coloneqq k(X)$ be the function field of $X$. 
\begin{enumerate}[label=(\alph*)]
    \item We have an isomorphism of groups \[
    K^\times/(K^\times)^2 \xleftrightarrow{\quad\sim\quad} \left\{\text{branched 2-covers of } X\right\}/\text{isomorphism}
    \]
    given by sending a rational function $f \in K^\times$ to the regular, proper, integral curve $X_f \to X$ with function field $K(\sqrt{f})$ (or to the disjoint union of two copies of $X$ if $f \in (K^\times)^2$).

    The branch points of $X_f \to X$ are exactly those points of $X$ where $f$ has a zero or pole of odd order.

    \item The isomorphism from part (a) descends to an isomorphism \[
    K^\times/k^\times(K^\times)^2 \xleftrightarrow{\quad\sim\quad} \left\{\text{branched 2-covers of } X\right\}/\overline{k}\text{-isomorphism}
    \]
    sending $k^\times(K^\times)^2$ to the disjoint union of two copies of $X$.

    \item There is an inclusion $\Pic^0(X)(k)[2] \hookrightarrow K^\times/k^\times(K^\times)^2$ which, together with the isomorphism from part (b), identifies $\Pic^0(X)(k)[2]$ with the group of $\overline{k}$-isomorphism classes of \'etale 2-covers of $X$, i.e., the map from part (b) induces an isomorphism \[
    \Pic^0(X)(k)[2] \xleftrightarrow{\quad\sim\quad} \Hom(\pi_1^\twist(X), \F_2).
    \]
\end{enumerate}
\end{lemma}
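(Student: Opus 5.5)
For part (a), I would invoke Kummer theory for the field $K$. Since $\operatorname{char} k$ is odd, $\mu_2 = \{\pm 1\} \subset K$, and the Kummer sequence gives an isomorphism
\[
K^\times/(K^\times)^2 \cong H^1(\Gal(K^{\text{sep}}/K), \mu_2) = \Hom(\Gal(K^{\text{sep}}/K), \F_2).
\]
This isomorphism sends $f$ to the character cutting out $K(\sqrt f)$. Branched 2-covers of $X$, up to isomorphism over $X$, correspond to étale $K$-algebras of degree 2, namely $K(\sqrt f)$ or $K\times K$. The correspondence goes by taking the normalization of $X$ in the algebra, using the equivalence between regular proper curves and function fields. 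The group structures agree by construction.

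For the branch locus, I would work locally at a closed point $p$ with uniformizer $t$. Write $f = u t^{v_p(f)}$ with $u$ a unit. If $v_p(f)$ is even, then $K(\sqrt f) = K(\sqrt u)$. Since $2$ is invertible, $y^2 = u$ defines an étale extension of the local ring, so $p$ is unramified. If $v_p(f)$ is odd, $\sqrt f$ has valuation $v_p(f)/2$ in the extension, so the extension is ramified at $p$.

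For part (b), I would identify the kernel of restricting to $\overline{k}K$. A class $f$ becomes trivial over $\overline{k}$ exactly when $f \in (\overline{k}K)^{\times 2} \cap K^\times$. The claim is that this intersection equals $k^\times (K^\times)^2$. Suppose $f = g^2$ with $g \in \overline{k}K$. Then $g$ lies in $k'K$ for some finite extension $k'/k$. The Galois action on $g$ is $\sigma(g) = \pm g$, which defines a character of $\Gal(k'/k)$ with values in $\pm 1$. This character factors through the unique quadratic extension $k(\sqrt a)$, for $a$ a nonsquare in $k$. Hence $g \in K$ or $g = \sqrt a\, h$ with $h \in K$, and so $f \in (K^\times)^2$ or $f \in a(K^\times)^2$. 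Here I use geometric integrality, which gives $k' \otimes K = k'K$, a field. The reverse inclusion is clear. This shows the map from (a) descends to an injective map, and it is surjective onto the image by definition.

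For part (c), I would first define the inclusion. A class $[D] \in \Pic^0(X)(k)[2]$ gives $2D = \operatorname{div}(f)$, and I send $[D]$ to $f$. Changing $D$ to $D + \operatorname{div}(h)$ changes $f$ by $h^2$. The choice of $f$ is determined up to $k^\times$, since $H^0(X,\mathcal O_X^\times) = k^\times$ for a proper geometrically integral curve. So the map is well defined into $K^\times/k^\times(K^\times)^2$.

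Injectivity: suppose $f = c g^2$. Then $2D = 2\operatorname{div}(g)$, and since $\Div(X)$ is torsion-free, $D = \operatorname{div}(g)$ is principal.

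Image: by (a), the image consists exactly of the classes of $f$ whose divisor $\operatorname{div}(f)$ has all coefficients even. Conversely, any such $f$ has $\operatorname{div}(f) = 2D$, and $D$ has degree $0$. So the image is precisely the set of classes of $f$ for which $X_f \to X$ is unramified everywhere, i.e. étale.

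The remaining step is to identify $\overline{k}$-isomorphism classes of étale 2-covers of $X$ with $\Hom(\pi_1^\twist(X), \F_2)$. This follows from the discussion in Subsection~\ref{sect:pairing}. There, $\Hom(\pi_1^\twist(X), \F_2)$ is shown to be the image of étale $\F_2$-covers of $X$ inside their $\overline{k}$-isomorphism classes. Combined with (b), this gives the stated isomorphism.

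The main obstacle is the descent argument in (b): showing that $f$ being a square over $\overline{k}K$ forces $f \in k^\times(K^\times)^2$. The key points there are the cyclicity of $\Gal(\overline{k}/k)$ and the fact that $k$ is algebraically closed in $K$.
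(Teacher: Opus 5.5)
Your proposal is correct and follows essentially the same route as the paper. For (a) you use Kummer theory plus a local valuation computation; for (b) you prove $((\overline{k}K)^\times)^2 \cap K^\times = k^\times (K^\times)^2$; and for (c) you use $[D] \mapsto f$ with $2D = \div f$ and the even-divisor description of the image. The paper gets the (b) identity from the long exact sequence of $1 \to \mu_2 \to (\overline{k}K)^\times \to ((\overline{k}K)^\times)^2 \to 1$ and the surjection of $k^\times$ onto $H^1(\Gal(\overline{k}K/K), \mu_2) \cong k^\times/(k^\times)^2$; your character $\sigma \mapsto \sigma(g)/g$ on $\Gal(k'/k)$ is just that connecting map written out explicitly.
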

\begin{remark}
Note that, since $|k^\times/(k^\times)^2| = 2$, part (b) shows that there are exactly two nonisomorphic branched covers of $X$ (up to $k$-isomorphism) in each $\overline{k}$-isomorphism class. These are given by $X_f$ and $X_{af}$, where $f \in K^\times$ and $a \in k^\times - (k^\times)^2$. 
\end{remark}
\begin{proof}
\begin{enumerate}[label=(\alph*)]
    \item The first isomorphism is exactly given by the usual Kummer theory together with the correspondence between finite branched covers of $X$ and finite extensions of $K$. Indeed, by construction this correspondence is an isomorphism with respect to the group structure we defined on the set of branched 2-covers above.

    To see that the branch points of $X_f \to X$ are exactly the points where $f$ has a zero or pole of odd order, we look at one point of $X$ at a time. Fix $p \in X$ and consider the discrete valuation ring $\sheafO_{X, p}$ of $X$ at $p$. Let $t$ be a choice of uniformizer at $p$. The image of $f$ in the fraction field $K = \Frac(\sheafO_{X, p})$ is of the form $t^{a_p}u$, where $a_p \in \Z$ is the order of vanishing of $f$ at $p$ and $u \in \sheafO_{X, p}^\times$ is a unit. 

    If $a_p$ is odd, then $k(X_f) = K(\sqrt{f})$ contains a square root of the uniformizer $ut$ at $p$, so $p$ is ramified in $X_f \to X$. Otherwise, $p$ is unramified. 

    \item To obtain the second isomorphism, we claim that two covers $X_{f_1} \to X$ and $X_{f_2} \to X$ are isomorphic over $\overline{k}$ if and only if $f_1/f_2 \in k^\times(K^\times)^2$.

    From Kummer theory for the function field $\overline{k}K$ of $X \times_k \overline{k}$, we have that $X_{f_1} \to X$ and $X_{f_2} \to X$ are isomorphic over $\overline{k}$ if and only if $f_1/f_2 \in (\overline{k}K^\times)^2$.\details{We are implicitly using that the function field of $X_f \times_k \overline{k}$ is $(\overline{k}K)(\sqrt{f})$, i.e., that the correspondence in part (a) plays well with base change to $\overline{k}$.} Thus, it suffices to show that $(\overline{k}K^\times)^2 \cap K^\times = k^\times(K^\times)^2$.

    Consider the short exact sequence $1\rightarrow \mu_2\rightarrow (\overline{k}K)^\times \xrightarrow{(\cdot)^2} ((\overline{k}K)^\times)^2\rightarrow 1$ of $\Gal(\overline{k}K/K)$ modules. Part of the associated long exact sequence reads \[
    K^\times \xrightarrow{\ \ (\cdot)^2\ \ } (\overline{k}K^\times)^2 \cap K^\times \xrightarrow{\ \ \delta\ \ } H^1(\Gal(\overline{k}K/K), \mu_2).
    \]
    We have $H^1(\Gal(\overline{k}K/K), \mu_2) \cong H^1(\Gal(\overline{k}/k), \mu_2) \cong k^\times/(k^\times)^2$ via the connecting map from the Kummer exact sequence for $\overline{k}^\times$.\details{
    We are using the fact that the map $\Gal(\overline{k}/k) \to \Gal(\overline{k}K/K)$ given by the $\Gal(\overline{k}/k)$-action on coefficients is an isomorphism (whose inverse is given by restriction to $\overline{k} \subset \overline{k}K$).
    } In particular, we notice that $k^\times$ maps surjectively onto $H^1(\Gal(\overline{k}K/K), \mu_2)$ via the connecting map $\delta$. Since $(K^\times)^2 = \ker \delta$, we have that $k^\times(K^\times)^2 = (\overline{k}K^\times)^2 \cap K^\times$ as we wanted.

    It follows that the isomorphism defined in (a) descends to an isomorphism of quotient groups as we wanted.

    \item The map $\Pic^0(X)(k)[2] \to K^\times/k^\times(K^\times)^2$ is defined as follows. 
    
    Given a 2-torsion divisor class represented by a divisor $D$, we have that $2D = \div f$ is principal. The function $f$ is defined up to scaling by constants $k^\times$, so we obtain a class in $K^\times/k^\times$. Note that if $D, D'$ are two divisors with $2D = \div f$ and $2D' = \div f'$, then $2(D + D') = \div ff'$.
    
    If $D \sim D'$ are linearly equivalent, then $D = D' + \div g$ for some $g \in K^\times$. Then $2D = 2D' + \div g^2$, so a rational function cutting out $2D$ differs from a rational function cutting out $2D'$ by a square in $K^\times$. Thus, we have a well-defined way to associate 2-torsion divisor classes to classes in $K^\times/k^\times(K^\times)^2$. Moreover, the resulting map $\Pic^0(X)(k)[2] \to K^\times/k^\times(K^\times)^2$ is a group homomorphism.

    To show the map $\Pic^0(X)(k)[2] \to K^\times/k^\times(K^\times)^2$ is an injection, suppose that $D$ is a divisor on $X$ such that $2D$ is cut out by $f \in k^\times(K^\times)^2$, i.e., $\div f = 2D$. We may assume $f \in (K^\times)^2$ because scaling does not change its zeroes or poles. Then $D$ is cut out by any square root of $f$ and is principal.

    Finally, $f \in K^\times/k^\times(K^\times)^2$ is in the image of the inclusion $\Pic^0(X)(k)[2] \to K^\times/k^\times(K^\times)^2$ if and only if $\div f = 2D$ has only even coefficients, i.e., $f$ has even order of vanishing everywhere. So, the image of this inclusion corresponds exactly to classes of \'etale covers of $X$ by part (a).
\end{enumerate}
\end{proof}

Part (a) of Lemma~\ref{lem:kummer} shows how the ramification of $X_f \to X$ can be read off of the divisor \[
\div f = \sum_{p \in X} a_p[p].
\]
Since the divisor $\sum_{p \in X} a_p[p]$ only determines $X_f$ up to $\overline{k}$-isomorphism, we will need a finer way to tell apart the two covers in a given $\overline{k}$-isomorphism class. One way to do this is to look at the splitting behavior of $X_f \to X$ over an unbranched point of odd degree: the two covers of $X$ will have different splitting behavior at every point. For technical reasons (Lemma~\ref{lem:pairing-by-hyperelliptics}) we will find it helpful to be able to distinguish covers of $X$ also by looking at ``second-order'' behavior at branch points.

Fix a point $p \in X$ and let $t$ be a uniformizer at $p$. Let $\hat{f} = t^{a_p}u$ be the image of $f$ in the completion $K_p$, where $u \in \widehat{\sheafO}_{X, p}^\times$ is a unit in the completed local ring. 

Let $c_t(f, p) \in k(p)^\times$ be the image of $u$ in the residue field $k(p)$ of $\widehat{\sheafO}_{X, p}$. Changing $X_f$ by $k$-isomorphism will change $f$ by a factor in $(K^\times)^2$, which will change each $a_p$ by a multiple of $2$ and $c_t(f, p)$ by a factor in $(k(p)^\times)^2$. This leads us to define an invariant $c_t(X_f \to X, p) \in k(p)^\times/(k(p)^\times)^2 \cong k^\times/(k^\times)^2$, which we call the \textit{second-order class} of $X_f \to X$ at $p$. It is invariant under $k$-isomorphism over $X$, but not under $\overline{k}$-isomorphism.

Twisting $f$ by a constant $a \in k^\times/(k^\times)^2$ results in \begin{equation}\label{eq:twisting-second-order}
c_t(X_{af} \to X, p) = a^{\deg(p)}c_t(X_f \to X, p),
\end{equation}
since the map $k^\times/(k^\times)^2 \to k(p)^\times/k(p)^\times)^2 \cong k^\times/(k^\times)^2$ is identity if $\deg(p)$ is odd and trivial if $\deg(p)$ is even.

If $s, t$ are two uniformizers at $p$, then $t = vs$ for some $v \in \widehat{\sheafO}_{X, p}^\times$, whence $\hat{f} = s^{a_p}v^{a_p}u$ and $c_s(X_f \to X, p)$ differs from $c_t(X_f \to X)$ by the image of $v^{a_p}$ in $k^\times/(k^\times)^2$. If $v$ maps to a square in $k(p)^\times$, then we always have $c_s(X_f \to X, p) = c_t(X_f \to X, p)$. By Hensel's lemma, the reduction map $\widehat{\sheafO}_{X, p}^\times/(\widehat{\sheafO}_{X, p}^\times)^2 \to k(p)^\times/(k(p)^\times)^2$ is an isomorphism. Thus, $v$ maps to a square in $k(p)^\times$ if and only if $v$ is a square in $\widehat{\sheafO}_{X, p}^\times$.\details{By definition, the map $\widehat{\sheafO}_{X, p}^\times \to k(p)^\times/(k(p)^\times)^2$ is surjective, and the kernel contains $(\widehat{\sheafO}_{X, p}^\times)^2$. To see that the kernel is precisely $(\widehat{\sheafO}_{X, p}^\times)^2$, let $g \in \widehat{\sheafO}_{X, p}^\times$ map to a square in $k(p)$. Then the polynomial $x^2 - g \in \widehat{\sheafO}_{X, p}[x]$ has a simple root after passing to $k(p)$ (we are using that $k$ had odd characteristic here). So, by Hensel's lemma, it has a root in $\widehat{\sheafO}_{X, p}$ as well.} So, if $2 \mid a_p$ then $c_t(X_f \to X, p)$ is independent of the choice of uniformizer $t$, whereas if $2 \nmid a_p$ then $c_t(X_f \to X, p)$ depends only on the class of $t$ up to scaling by $(\widehat{\sheafO}_{X, p}^\times)^2$.

We will refer to a choice of uniformizer for $\widehat{\sheafO}_{X, p}$ at $p$ up to scaling by $(\widehat{\sheafO}_{X, p}^\times)^2$ as a \textit{uniformizer class}. Since $\widehat{\sheafO}_{X, p}^\times/(\widehat{\sheafO}_{X, p}^\times)^2 \cong k(p)^\times/(k(p)^\times)^2 \cong \Z/2\Z$, there are two different uniformizer classes at each point $p$.

If $2 \mid a_p$, we may omit the subscript and write $c(X_f \to X, p) \coloneqq c_t(X_f \to X, p)$. However, if $2 \nmid a_p$, then $c_t(X_f \to X, p)$ ranges over all of $k^\times/(k^\times)^2$ depending on the choice of $t$. Later, we will use second-order classes to divide up families of curves into smaller ones over which we can prove equidistribution of the R\'edei matrix.

We have a trichotomy depending on $a_p$ and $c_t(X_f \to X, p)$: \begin{enumerate}[label=(\arabic*)]
    \item If $2 \nmid a_p$, then $X_f \to X$ is ramified at $p$.
    \item If $2 \mid a_p$ and $c(X_f \to X, p) \coloneqq c_t(X_f \to X, p) = 1$, then $p$ splits in $X_f \to X$.
    \item If $2 \mid a_p$ and $c(X_f \to X, p) \coloneqq c_t(X_f \to X, p) \neq 1$, then $p$ is inert in $X_f \to X$.
\end{enumerate} \details{
We can compute the scheme-theoretic preimage of $p$ in $X_f \to X$ after passing to a formal neighborhood $\Spec \widehat{\sheafO}_{X, p}$ of $p$. This will be the spectrum of the ring $\widehat{\sheafO}_{X, p}[z]/(z^2 - f)$. If $2 \nmid a_p$, then $\widehat{\sheafO}_{X, p}[z]/(z^2 - f)$ contains a square root of $t$, i.e., $p$ is ramified. Otherwise, $\widehat{\sheafO}_{X, p} \to \widehat{\sheafO}_{X, p}[z]/(z^2 - f)$ is unramified. If $f$ is a square in $\widehat{\sheafO}_{X, p}[z]/(z^2 - f)$, then the ring splits as a product, and the scheme-theoretic preimage is two identical formal neighborhoods (the split case). Otherwise, the map $\widehat{\sheafO}_{X, p} \to \widehat{\sheafO}_{X, p}[z]/(z^2 - f)$ is obtained by adjoining the square root of a unit, which results in a residue field extension (the inert case).
}

\begin{remark}\label{rmk:determining-covers}
    Since twisting $f$ by $a \in k^\times/(k^\times)^2$ changes the second-order class of $X_f \to X$ at $p$ by a factor of $a^{\deg(p)}$, a $k$-isomorphism class of branched covers of $X$ is determined by a class in $K^\times/k^\times(K^\times)^2$ together with a second-order class at any one odd-degree point of $X$. In particular:
    \begin{itemize}
        \item Since a rational function on $\PP^1$ is determined up to scalars by its divisor, a hyperelliptic curve $X \to \PP^1$ is determined by its set of branch points together with a second-order class at any one odd-degree point of $\PP^1$.
        \item By Lemma~\ref{lem:kummer}(c), an \'etale 2-cover of $X$ is determined by a 2-torsion class in $\Pic^0(X)(k)$ together with a second-order class at any one odd-degree point of $X$.
    \end{itemize}
\end{remark}

\subsection{Parametrization of $\Pic^0(X_f)(k)[2]$ and $\Hom(\pi_1^\twist(X_f), \Z/2\Z)$}\label{sect:basis}

In this section, let $P$ be a smooth, proper, irreducible curve over a perfect field $k$. We will always apply the results of this section with $P = \PP^1_k$, but some of them hold more generally whenever $\Pic^0(P)(k) = 0$. In this subsection only, we do not assume $k$ is finite, but we only need the following results in that case. Fix a rational function $f \in K^\times = k(P)^\times$ and let $X$ be the 2-cover of $P$ associated to $f$ via the correspondence of Lemma~\ref{lem:kummer}(a).

We give an explicit basis for $\Pic^0(X)(k)[2]$ (and therefore, by Lemma~\ref{lem:kummer}(c), also for $\Hom(\pi_1^\twist(X), \Z/2\Z)$ when $k$ is finite) in terms of the branch points of $X \to P$. The matrix associated to the pairing of Subsection~\ref{sect:pairing}, expressed in this basis, is what we will call the R\'edei matrix.

The following sequence of three lemmas gives a function field analogue of the ``imaginary'' case of Gauss genus theory. Genus theory for finding the size of the 2-torsion in the ideal class group of an imaginary quadratic function field was developed by Artin \cite{artin_quadratische_1924}. See \cite{stokvis2025governingfieldshyperellipticfunction} for a good review of this and its generalizations and construction of a R\'edei map for the ideal class group of a quadratic function field. 

A similar result for the 2-torsion subgroup in the Jacobian of a hyperelliptic curve is due to Cornelissen \cite{cornelissen_two-torsion_2001}, which views this subgroup as a subspace in the 2-torsion in the $\overline{k}$-points of the Jacobian. Since we want to parametrize 2-torsion explicitly by sets of branch points, we give a slightly different model of the 2-torsion as explicitly generated by divisors supported in the set of ramification points of $X \to P$. A similar result in the case where $X \to P$ is ramified at a degree-1 point can be found in the thesis of Kosters \cite[Theorem 4.9]{kostersthesis}.

\begin{lemma}\label{lem:phi-surjection}
Let $k$ be a perfect field characteristic not 2 with $\Br(k) = 0$ and let $\pi\colon X \to P$ be a degree 2 branched cover of geometrically connected curves defined by $f \in k(P)^\times$ via the correspondence of Lemma~\ref{lem:kummer}(a) and ramified at a finite set of closed points $S \subset X$ containing at least one point of odd degree. Suppose $\Pic^0(P)(k) = 0$.

Denote by $\Z\langle S\rangle^0$ the group of degree zero $k$-rational divisors on $X$ supported in $S$. 

The natural map \[
\phi\colon \Z\langle S\rangle^0 \longrightarrow \Pic^0(X)(k)[2]
\]
sending a divisor supported in $S$ to its associated divisor class is surjective.
\end{lemma}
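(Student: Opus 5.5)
We first check that $\phi$ is well defined, and then show surjectivity by a norm/trace argument that reduces a $2$-torsion class on $X$ to a principal divisor on $P$.

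\emph{Well-definedness.} Let $\sigma$ be the hyperelliptic involution of $X$ over $P$. A ramification point $s\in S$ satisfies $\pi^*\pi(s)=2s$. Hence for $D=\sum_{s\in S}a_s[s]$ of degree zero we get $2D=\pi^*\pi_*D$. Now $\pi_*D$ is a degree-zero $k$-rational divisor on $P$, and $\Pic^0(P)(k)=0$ makes it principal. Therefore $2D$ is principal, and $\phi$ really lands in $\Pic^0(X)(k)[2]$.

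\emph{Surjectivity.} Take a class $[D]\in\Pic^0(X)(k)[2]$. Since $\Br(k)=0$, we may represent it by a $k$-rational divisor $D$ of degree zero.

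1. Consider $D+\sigma^*D=\pi^*\pi_*D$. Here $\pi_*D$ has degree zero on $P$, so it is principal, and hence $D+\sigma^*D\sim 0$.
2. Since $2D\sim0$, we get $\sigma^*D\sim -D\sim D$.
3. The key step is to realize the class as $\sigma$-anti-invariant at the level of functions. Write $D+\sigma^*D=\div(\pi^*h)$ with $h\in k(P)^\times$. Choose $g$ with $2D=\div g$. Then
\[
\div\bigl(g/\pi^*h\bigr)=2D-D-\sigma^*D=D-\sigma^*D.
\]
4. Set $E=D-\sigma^*D=\div u$, where $u=g/\pi^*h$. Then $\sigma^*u\cdot u$ has trivial divisor, so $\sigma^*u=cu^{-1}$ for some constant $c\in k^\times$.
5. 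Decompose $k(X)=k(P)\oplus k(P)\sqrt f$. A function $u$ with $u\,\sigma^*u=c$ has norm $N(u)=c$.

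Hilbert 90 applied to the quadratic extension $k(X)/k(P)$ requires norm $1$. So we first need $c$ to be a norm from $k(X)$. Granting that, we can write $u=c'\,v/\sigma^*v$, which gives
\[
D-\sigma^*D=\div v-\sigma^*\div v.
\]
It follows that $D-\div v$ is $\sigma$-invariant.

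6. A $\sigma$-invariant $k$-rational divisor has the form $\pi^*A+\sum_{s\in S}b_s[s]$ with $A$ a divisor on $P$. The $\pi^*A$ part contributes $\pi^*A\sim\deg(A)\,\pi^*Q$, where $Q$ is any fixed point. Choosing $Q=\pi(s_0)$ for an odd-degree ramification point $s_0$ gives $\pi^*Q=2s_0$, which is supported on $S$. Pullbacks of divisors on $P$ are principal once their degree is zero, and $\Pic(P)$ is generated by degree modulo principal divisors, so only the degree of $A$ matters, and the odd-degree point $s_0$ lets us absorb it. Adjusting degrees using $s_0$ again, we end with a degree-zero divisor supported on $S$ that is linearly equivalent to $D$.

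\emph{Main obstacle.} I expect the hard part to be step 5, namely that the constant $c$ is a norm. To handle it, I would first renormalize $g$ by the constant freedom: replacing $g$ by $ag$ multiplies $c$ by $a^2$. I would also use the odd-degree ramification point $s_0$. The residue at $s_0$ of $N(u)$ lies in $k(s_0)$, and by local analysis at $s_0$ a norm from the totally ramified local extension at $s_0$ is a norm of a residue. This should show that $c$ is a local norm everywhere.

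Over a finite field, or more generally given $\Br(k)=0$, a Hasse-type argument would then upgrade this to $c$ being a global norm. An alternative is to use directly that $c=N(\sqrt f)\cdot(\text{square})$, which I would try to arrange by multiplying $u$ by $\sqrt f$, a function whose divisor is supported on $S$. That multiplication only changes $D$ by a divisor supported on $S$, so it does not affect the conclusion.
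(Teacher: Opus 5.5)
There is a genuine gap at the crux of the argument, step 5, and a smaller one in step 6.

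\textbf{Step 5.} Your well-definedness argument and steps 1--4 are correct. Your overall plan is also the paper's plan, unwound: produce a $\sigma$-invariant representative via Hilbert 90, then move it onto $S$. Indeed, the paper's connecting map $\Pic^0(X)(k)^G \to H^1(G,\Prin(X)(k)) \hookrightarrow k^\times/(k^\times)^2$ sends $[D]$ exactly to the class of your constant $c$. Step 5 is where the odd-degree hypothesis has to enter, and you have misidentified what is needed there.

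The constant $c = u\,\sigma^*u = N(u)$ is \emph{automatically} a norm from $k(X)$, and that fact is useless. If $c = N(w)$ with $w$ nonconstant, Hilbert 90 only gives $u = w\,v/\sigma^*v$. Then $D - \sigma^*D = \div w + \div v - \sigma^*\div v$, and $D - \div v$ need not be invariant; indeed $w = u$, $v = 1$ is always a solution. What you need is $c \in (k^\times)^2$, so that $N(u/a) = 1$ for a constant $a$ with $a^2 = c$.

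None of your proposed fixes reaches this:
\begin{itemize}
\item Rescaling $g$ changes $c$ only by squares.
\item A Hasse-type argument would prove the global-norm statement you already have. It is also not available for an arbitrary perfect $k$ with $\Br(k)=0$.
\item Multiplying $u$ by $\sqrt f$ replaces $c$ by $-cf$, which is no longer constant. Moreover, $\div\sqrt f$ need not be supported on $S$.
\end{itemize}

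The missing argument is short. Since $\sigma$ fixes $s_0$, we have $v_{s_0}(\sigma^*u) = v_{s_0}(u)$, and these sum to $v_{s_0}(c) = 0$, so $u$ is a unit at $s_0$. Since $s_0$ is ramified, $k(s_0) = k(\pi(s_0))$ and $\sigma$ acts trivially on it, so $c = u(s_0)^2$ in $k(s_0)$. As $[k(s_0):k]$ is odd, $c$ is a square in $k$. Your remark about residues of norms in the totally ramified extension at $s_0$ contains this idea, and it already finishes the job; no local-global step is involved.

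\textbf{Step 6.} The relation $\pi^*A \sim \deg(A)\,\pi^*Q$ only makes sense for $\deg Q = 1$. With $Q = \pi(s_0)$, you can absorb $\pi^*A$ only when $\deg s_0 \mid \deg A$, which can fail when $S$ contains points of several degrees.

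Use all of $S$ instead, as in the paper's Lemma~\ref{lem:moving-to-S}. Suppose the invariant representative is $\pi^*A + \sum_{s\in S} b_s[s]$ of degree $0$. Then $2\deg A = -\sum_s b_s\deg s$. The integer $\gcd_{s\in S}\deg s$ is odd because $s_0\in S$, so it divides $\deg A$. Hence $\deg A = \sum_s c_s\deg s$ for some integers $c_s$. By $\Pic^0(P)(k)=0$ this gives $A \sim \sum_s c_s[\pi(s)]$, and so $\pi^*A \sim \sum_s 2c_s[s]$ is supported on $S$.
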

\begin{proof}[Proof of Lemma~\ref{lem:phi-surjection}.]
We only need $\Br(k) = 0$ so that we can view elements of $\Pic^0(X)(k)$ (a priori divisor classes on $X \times_k \overline{k}$ fixed by the action of $\Gal(\overline{k}/k)$) as formal $\Z$-linear combinations of closed points of $X$ up to rational equivalence. 

We observe that if $D$ is a degree zero divisor on $X$ supported in $S$, then $2D = \pi^*D'$ for some degree zero divisor $D'$ on $P$ supported at the branch points of $\pi$, whence $2D$ is principal by the assumption that $\Pic^0(P)(k) = 0$. So, the natural map \[
\Z\langle S\rangle^0 \lhook\joinrel\longrightarrow \Div^0(X)(k) \relbar\joinrel\twoheadrightarrow \Pic^0(X)(k)
\]
has image in $\Pic^0(X)(k)[2]$. We wish to show this map is a surjection onto $\Pic^0(X)(k)[2]$; in other words, we want to check that every $k$-rational 2-torsion divisor class contains a $k$-rational divisor supported in $S$.

Let $G = \langle\sigma\rangle$ be the Galois group of the cover $X \to P$. We observe (using $\Pic^0(P)(k) = 0$) that $\sigma$ acts on $\Pic^0(X)(k)$ by inversion. \details{Indeed, if $[D] \in \Pic(X)(k)$ is a divisor class, then $[D] + \sigma[D] = [D + \sigma D]$ is represented by a divisor $D + \sigma D$ which is of the form $\pi^*D'$ for some $D' \in \Div(P)$. This can be checked on the level of points. If $\deg D = 0$, then $\deg D' = 0$, so since $\Pic^0(X)(k) = 0$, we have that $D'$ is principal. Thus $D + \sigma D$ is principal, and $[D] + \sigma[D] = 0$.} So, $\Pic^0(X)(k)[2]$ is the group of $G$-invariants $\Pic^0(X)(k)^G$. 

We will first show that any $G$-invariant (i.e., 2-torsion) degree 0 divisor class on $X$ contains a $G$-invariant divisor. We have an exact sequence \[
0 \longrightarrow \Prin(X)(k) \longrightarrow \Div^0(X)(k) \longrightarrow \Pic^0(X)(k) \longrightarrow 0
\]
where $\Prin(X)(k)$ denotes $k$-rational principal divisors and $\Div^0(X)(k)$ denotes $k$-rational divisors of degree $0$.

The associated long exact sequence on cohomology begins \[
0 \longrightarrow \Prin(X)(k)^G \longrightarrow \Div^0(X)(k)^G \longrightarrow \Pic^0(X)(k)^G \longrightarrow H^1(G, \Prin(X)(k))
\]
so it suffices to show that $H^1(G, \Prin(X)(k)) = 0$. We have the short exact sequence \[
0 \longrightarrow k^\times \longrightarrow k(X)^\times \longrightarrow \Prin(X)(k) \longrightarrow 0
\]
which yields a long exact sequence on cohomology, part of which is: \[
H^1(G, k(X)^\times) \longrightarrow H^1(G, \Prin(X)(k)) \longrightarrow H^2(G, k^\times) \longrightarrow H^2(G, k(X)^\times).
\]
By Hilbert's Theorem 90, the first term is $H^1(G, k(X)^\times) = 0$, so we identify $H^1(G, \Prin(X)(k))$ with the kernel of $H^2(G, k^\times) \to H^2(G, k(X)^\times)$. We have $H^2(G, k^\times) \cong k^\times/(k^\times)^2$ and $H^2(G, k(X)^\times) \cong k(P)^\times/N(k(X)^\times)$. (Here $N(k(X)^\times)$ denotes the subgroup of $k(X)^\times$ consisting of norms of elements of $k(X)^\times$, i.e., rational functions of the form $h\sigma(h)$ for $h \in k(X)^\times$.)

Using these isomorphisms, the map $H^2(G, k^\times) \to H^2(G, k(X)^\times)$ is identified with the map \[
k^\times/(k^\times)^2 \longrightarrow k(P)^\times/N(k(X)^\times)
\]
induced by the inclusion $k^\times \hookrightarrow k(P)^\times \hookrightarrow k(X)^\times$. \details{
We compute the cohomology of $G$ with coefficients in an arbitrary $G$-module $A$ using $H^i(G, A) \cong \operatorname{Ext}^i_{\Z[G]}(\Z, A)$ via the usual resolution \[
\cdots \xrightarrow{\sigma - 1} \Z[G] \xrightarrow{\sigma + 1} \Z[G] \xrightarrow{\sigma - 1} \Z[G] \longrightarrow \Z \to 0
\]
After applying $\Hom_{\Z[G]}(\cdot, A)$ to the resolution we get the complex \[
\cdots \xrightarrow{\sigma - 1} A \xrightarrow{\sigma + 1} A \xrightarrow{\sigma - 1} A
\]
If we have another $G$-module $B$ and a map $A \to B$, we get an induced map on complexes. Taking $H^2$ of this map gives the map \[
H^2(G, A) \cong A^G/(\sigma + 1)A \longrightarrow B^G/(\sigma + 1)B \cong H^2(G, B)
\]
If we set $A = k^\times$ with the trivial $G$-action and $B = k(X)^\times$ we get the claimed result.
}We want to see that this map is injective; in other words, we want to show that if $a \in k^\times$ is a norm from $k(X)^\times$ (i.e., is of the form $h\sigma(h)$ for some $h \in k(X)^\times$), then $a \in (k^\times)^2$.

Suppose that $a = h\sigma(h)$ for some $h \in k(X)^\times$. Then we begin by observing that $\div(h)$ is supported away from $S$. Indeed, if $p \in S$, then $p$ is fixed by $\sigma$, so $v_p(h) = v_{\sigma(p)}(\sigma(h)) = v_p(\sigma(h)) = v_p(ah^{-1}) = v_p(h^{-1}) = -v_p(h)$, and so $v_p(h) = 0$. Let $p \in S$ be of odd degree and consider the value of $h$ at $p$, an element of the residue field $k(p)$ at $p$. We have $h(p) = \sigma(h)(\sigma(p)) = \sigma(h)(p) = ah^{-1}(p) = a(h(p))^{-1}$, so $a = h(p)^2$. Now $a$ is a square in the odd-degree extension $k(p)$ of $k$, so $a$ is a square in $k$. \details{or else $\sqrt{a}$ would generate an even-degree subextension of $k(p)$.}

This shows $H^1(G, \Prin(X)(k)) = 0$, so the map $\Div^0(X)(k)^G \to \Pic^0(X)(k)^G$ is surjective. The conclusion follows from the fact (which we will prove in Lemma~\ref{lem:moving-to-S}) that any divisor $D \in \Div^0(X)(k)^G$ is linearly equivalent to a divisor $D_S \in \Z\langle S\rangle^0$. Indeed, suppose $[D] \in \Pic^0(X)(k)[2]$. We have just shown that we can choose a representative $D$ for this divisor class such that $D$ is fixed by the action of $G$. Then Lemma~\ref{lem:moving-to-S} shows that $D$ is equivalent to a divisor supported on $S$, i.e., we can find a divisor $D_S \in [D]$ with $D_S \in \Z\langle S\rangle^0$. This is exactly the statement of surjectivity of $\phi$. 
\end{proof}

\begin{lemma}\label{lem:moving-to-S}
With setup as in Lemma~\ref{lem:phi-surjection}, let $D \in \Div^0(X)(k)^G$ be a divisor fixed by the automorphism group of the cover $X \to P$. Then $D$ is linearly equivalent to a divisor $D_S$ in $\Z\langle S\rangle^0$ such that for $p \in S$, we have $v_p(D_S) \equiv v_p(D) \pmod{2}$. 
\end{lemma}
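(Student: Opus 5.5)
The plan is to decompose a $G$-invariant divisor into orbits, rewrite the unramified part as a pullback from $P$, and use $\Pic^0(P)(k)=0$ to move that pullback onto $S$.

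\textbf{Decomposition.} Since $D$ is $G$-invariant, we write
\[
D = \sum_{p\in S} a_p[p] + \sum_{Q} b_Q\,(Q+\sigma Q) + \sum_{R} c_R\, R,
\]
where $Q$ runs over closed points of $X$ not in $S$ with $\sigma Q\neq Q$, and $R$ runs over non-ramified closed points with $\sigma R = R$. Such an $R$ lies over a point of $P$ that is inert in $\pi$.

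\textbf{Unramified part is a pullback.} Every non-ramified $G$-orbit is exactly the support of $\pi^*$ of a point of $P$. If $y$ is split, then $\pi^*[y] = Q+\sigma Q$. If $y$ is inert, then $\pi^*[y] = R$ with multiplicity one, since $\pi$ is unramified there. Hence
\[
D = \sum_{p\in S} a_p[p] + \pi^*E
\]
for a $k$-rational divisor $E$ on $P$ supported away from the branch locus. For the ramified points we have $\pi^*[\pi(p)] = 2[p]$.

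\textbf{Moving $\pi^*E$ onto $S$.} Fix a ramified point $p_0\in S$ of odd degree $d_0$, which exists by hypothesis, and let $y_0=\pi(p_0)$. The degree bookkeeping is as follows:
\begin{itemize}
\item $\deg D = 0$ gives $\sum a_p\deg p + 2\deg E = 0$.
\item $\deg(\pi^*E) = 2\deg E$.
\item $\deg [y_0] = d_0$.
\end{itemize}
We would like $\deg E$ to be divisible by $d_0$ so that we can subtract a multiple of $[y_0]$, but this need not hold. The fix is to use that $P$ has a divisor of degree $1$: over a finite field every curve has index $1$, and in our application $P=\PP^1$. Even so, the divisor has to be moved onto branch points.

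The cleaner route is to replace $E$ by $E' = E - m[y_0]$ with $m$ an integer chosen mod nothing. The key observation is that $2\deg E = -\sum a_p \deg p$. We then use an integer combination of ramified points to match the degree exactly. Set
\[
F = -\sum_{p\in S} a_p[\pi(p)],
\]
a divisor on $P$ supported on the branch points. Then $2\deg E = \deg F$, so $\deg(2E - F)=0$. Since $\Pic^0(P)(k)=0$, $2E\sim F$.

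This does not directly give $E$, because we have shown that $2E$, not $E$, is equivalent to something supported on the branch locus. So we instead work with $E$ itself. Let $e=\deg E$ and choose $m\equiv e\,d_0^{-1}$; this requires $d_0\mid e$, which fails in general. Therefore we alter the $S$-part by even amounts. Replacing $E$ by $E - [y]$ for a branch point $y=\pi(p)$ changes $D$'s expression by $\pi^*[y] = 2[p]$, so this changes only $v_p$ by an even number.

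It therefore suffices to find integers $n_p$ with $\sum_{p} n_p \deg p = \deg E$. This is possible exactly when $\gcd_{p\in S}\deg p$ divides $\deg E$. Write $g=\gcd_{p\in S}\deg p$, which is odd since $p_0$ has odd degree. From $\sum a_p\deg p = -2\deg E$, we get $g \mid 2\deg E$, and since $g$ is odd, $g\mid \deg E$. So we choose such $n_p$ and set
\[
E' = E - \sum_p n_p[\pi(p)],
\]
which has degree $0$. Because $\Pic^0(P)(k)=0$, $E'$ is principal, so $\pi^*E'$ is principal on $X$.

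\textbf{Conclusion.} Then
\[
D \sim D_S := \sum_{p\in S} (a_p + 2n_p)[p],
\]
which is supported in $S$ and has degree $0$ (it is linearly equivalent to $D$, which has degree $0$). Its coefficients satisfy $v_p(D_S)\equiv a_p = v_p(D) \pmod 2$, as required.

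\textbf{Main subtlety.} The delicate point is the parity argument that the odd gcd $g$ divides $\deg E$, which is where we need a ramified point of odd degree. A secondary check is that the pullback multiplicities at inert and split points equal one, which holds because $\pi$ is unramified away from $S$.
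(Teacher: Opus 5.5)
Your proof is correct and is essentially the paper's argument: split off the part of $D$ supported on $S$, write the $\sigma$-invariant remainder as $\pi^*E$, use that the odd $\gcd$ of the degrees of the points of $S$ divides $\deg E$ to find a divisor on the branch locus of the same degree, and conclude via $\Pic^0(P)(k)=0$ and $\pi^*[\pi(p)]=2[p]$. The exploratory detours in the middle (the $m[y_0]$ attempt, the unused relation $2E\sim F$, and the index-one remark) play no role in the final argument and should simply be deleted.
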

\begin{proof}
Write $D = D' + E$, where $D' \in \Div(X)(k)$ is supported in $S$ and $E \in \Div(X)(k)$ is supported outside of $S$. Since $D'$ is supported in $S$, it is fixed by $\sigma$, whence $E = D - D'$ is fixed by $\sigma$ as well. Then $E = \pi^*\widetilde{E}$ for some divisor $\widetilde{E} \in \Div(P)(k)$. \details{This is because $E$ is a sum of fibers of the map $\pi$, and every fiber outside of $S$ is pulled back from a point of $X$. The same argument doesn't work if $E$ has points in $S$, since if $p \in S$, we only have $2[p]$ pulled back from $P$.}

Next we will show that $\widetilde{E}$ is equivalent to a divisor supported on the branch points of $\pi$.

Note that $D = D' + E$ has degree zero, so $\deg E = -\deg D'$. Moreover, we have $\deg E = 2\deg \widetilde{E}$. Since $D'$ is supported in $S$, the degree of $D'$ is a $\Z$-linear combination of degrees of points in $S$. In other words, if $d_0, \dots, d_n$ are the degrees of the points of $S$, we have $\gcd(d_0, \dots, d_n) \mid \deg D'$, so that $\gcd(d_0, \dots, d_n) \mid 2\deg \widetilde{E}$. However, since at least one of the degrees $d_0, \dots, d_n$ is odd, we have that $\gcd(d_0, \dots, d_n)$ is odd as well, whence $\gcd(d_0, \dots, d_n) \mid \deg \widetilde{E}$. This means there is a divisor $\widetilde{D}$ supported at the branch points of $\pi$ with the same degree as $\widetilde{E}$; since $\Pic^0(P)(k) = 0$, we must have $\widetilde{D} \sim \widetilde{E}$. Then $D' + \pi^*\widetilde{D}$ is supported in $S$ and is equivalent to $D$, as we wanted. 

The congruence condition comes from the fact that, for $p \in S$, we have $v_p(\pi^*\widetilde{D}) = 2v_{\pi(p)}(\widetilde{D})$.
\end{proof}

The map $\phi$ vanishes on $2\Z\langle S\rangle^0$, so it factors through $\Z\langle S\rangle^0/2\Z\langle S\rangle^0$. We need to understand the kernel of the induced map $\Z\langle S\rangle^0/2\Z\langle S\rangle^0 \to \Pic^0(X)(k)[2]$ for it to be a good parametrization of the 2-torsion elements. It turns out that this kernel is one-dimensional (as an $\F_2$-vector space) and essentially spanned by the divisor of the rational function $\sqrt{f}$ on $X$ (we may need to modify this divisor somewhat to get something supported on $S$).

\begin{lemma}\label{lem:phi-kernel}
Let setup be as in Lemma~\ref{lem:phi-surjection}, and assume further that $P$ has genus 0 (so the condition $\Pic^0(P)(\overline{k}) = 0$ is automatic). Then there is a rational function $h \in k(X)^\times$ such that $\div(h\sqrt{f}) \in \Z\langle S\rangle^0$. The kernel of $\phi$ is $\langle \div(h\sqrt{f}), 2\Z\langle S\rangle^0\rangle \subset \Z\langle S\rangle^0$.

In particular, the induced surjection of $\F_2$-vector spaces \[
\phi_2\colon \F_2^{|S| - 1} \cong \Z\langle S\rangle^0/2\Z\langle S \rangle^0 \relbar\joinrel\twoheadrightarrow \Pic^0(X)(k)[2]
\]
has a one-dimensional kernel spanned by the image of $\div(h\sqrt{f})$.
\end{lemma}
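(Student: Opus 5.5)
We first construct $h$. On $X$ the function $\sqrt{f}$ satisfies $(\sqrt f)^2 = \pi^*f$, so $2\div(\sqrt f) = \pi^*\div(f)$. Write $\div(f) = \sum_{b} a_b[b]$ on $P$, where $a_b$ is odd exactly at the branch points $b = \pi(p)$, $p \in S$. Choose a divisor $E$ on $P$ with $2E \equiv \div(f) - \sum_{p\in S} a_{\pi(p)}[\pi(p)]$ away from the branch points. We can take $E = \tfrac12\sum_{b \notin \pi(S)} a_b [b]$, since those $a_b$ are even. Then $\div(\sqrt f) = \sum_{p\in S} a_{\pi(p)}[p] + \pi^*E$. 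As in the proof of Lemma~\ref{lem:moving-to-S}, $\deg E$ is divisible by the (odd) gcd of the degrees of points of $S$, so there is a divisor $\widetilde D$ supported on $\pi(S)$ with $\deg \widetilde D = \deg E$. Since $P$ has genus $0$ and $\Pic^0(P)(k)=0$, we get $\widetilde D - E = \div(h_0)$ for some $h_0 \in k(P)^\times$. Setting $h = \pi^*h_0$, the divisor $\div(h\sqrt f) = \sum_{p\in S} a_{\pi(p)}[p] + \pi^*\widetilde D$ is supported on $S$ and has degree $0$, so it lies in $\Z\langle S\rangle^0$. Its image mod $2$ is $\sum_{p\in S}[p]$, because each $a_{\pi(p)}$ is odd. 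This is nonzero in $\Z\langle S\rangle^0/2\Z\langle S\rangle^0$, since $|S|\ge 1$. Note $\sum_{p\in S}[p]$ need not have degree $0$; only the integral divisor does.

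Next, the inclusion of the claimed subgroup in the kernel of $\phi$ is immediate. $\div(h\sqrt f)$ is principal. For $D \in \Z\langle S\rangle^0$, $2D = \pi^*D'$ with $D'$ of degree $0$ on $P$, hence principal, so $2D$ is principal.

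For the reverse inclusion, suppose $D \in \Z\langle S\rangle^0$ with $D = \div(g)$, $g \in k(X)^\times$. Since $D$ is $\sigma$-invariant, $\div(\sigma g / g) = 0$, so $\sigma(g) = c g$ with $c \in k^\times$. Applying $\sigma$ twice gives $c^2 = 1$, so $c = \pm 1$.
\begin{itemize}
\item If $c = 1$, then $g \in k(P)$, so $D = \pi^*\div_P(g)$. Every coefficient of $D$ at points of $S$ is then even, so $D \in 2\Z\langle S\rangle^0$. Here we must check $2\Z\langle S\rangle \cap \Z\langle S\rangle^0 = 2\Z\langle S\rangle^0$; this holds because degree is additive and divisible by $2$ only compatibly.
\item If $c = -1$, then $g/\sqrt f$ is $\sigma$-invariant, so $g = g_0\sqrt f$ with $g_0 \in k(P)$. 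Hence $D - \div(h\sqrt f) = \pi^*\div(g_0/h_0)$, which has even coefficients at $S$, and we conclude as in the first case.
\end{itemize}

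The delicate point is the parity bookkeeping, that is, checking $\pi^*$ of a divisor on $P$ has even coefficients at ramification points and that the difference lands in $2\Z\langle S\rangle^0$ rather than merely in $2\Z\langle S\rangle$. A degree-$0$ element of $2\Z\langle S\rangle$ is $2D_0$ with $\deg D_0 = 0$, so this is fine. Finally, combining with Lemma~\ref{lem:phi-surjection} gives the surjection $\phi_2$ with one-dimensional kernel. The identification $\Z\langle S\rangle^0/2\Z\langle S\rangle^0 \cong \F_2^{|S|-1}$ follows because $\Z\langle S\rangle^0$ is free of rank $|S|-1$.
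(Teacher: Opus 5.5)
Your proof is correct. The construction of $h$ matches the paper's: you simply unpack Lemma~\ref{lem:moving-to-S} by hand. You show the kernel is no larger by a genuinely different route, however.

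The paper passes to $\overline{k}$. There $\Pic^0(X)(\overline{k})[2]$ has dimension $2g$, and Riemann--Hurwitz gives $2g+2$ geometric ramification points, so the surjection $\overline{\phi}_2$ has a one-dimensional kernel. That kernel is necessarily spanned by $\div(h\sqrt{f})$, since this divisor has odd coefficients. The paper then descends to $k$ by taking $G_k$-invariants.

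You instead stay over $k$ and use the involution directly. A principal divisor $\div(g)$ supported on $S$ is $\sigma$-invariant. A function with trivial divisor on the proper, geometrically connected curve $X$ lies in $k^\times$, so $\sigma(g)=\pm g$. This places $g$ in $k(P)^\times$ or in $\sqrt{f}\,k(P)^\times$, and the evenness of coefficients of $\pi^*$ at ramified points finishes the argument.

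Your argument is more elementary. It avoids the structure of the $2$-torsion of the Jacobian, Riemann--Hurwitz, surjectivity over $\overline{k}$, and the descent step. In fact it uses only $\Pic^0(P)(k)=0$ (to produce $h$), not the genus-$0$ hypothesis. In exchange, the paper's route also proves the geometric statement that $\Z\langle\overline{S}\rangle^0/2\Z\langle\overline{S}\rangle^0$ surjects onto all of $\Pic^0(X)(\overline{k})[2]$ with one-dimensional kernel, which is the classical description of the $2$-torsion of a hyperelliptic Jacobian.
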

\begin{proof}
We observe that $\sigma(\div\sqrt{f}) = \div(-\sqrt{f}) = \div\sqrt{f}$, so by Lemma~\ref{lem:moving-to-S} the divisor $\div \sqrt{f}$ is equivalent to a divisor supported on $S$. Let $h \in k(X)^\times$ be the rational function witnessing this equivalence, so $\div(h\sqrt{f})$ is supported on $S$. 

We observe that $\sqrt{f}$ has odd valuation at each point of $S$ because the branch locus of $X \to P$ is exactly the set of points of $P$ where $f$ has odd valuation. By the congruence condition of Lemma~\ref{lem:moving-to-S}, the function $h\sqrt{f}$ also has odd valuation at the points of $S$.

Since $\div(h\sqrt{f})$ is principal, it lies in the kernel of $\phi$. So, it remains to check that there are no other inequivalent principal divisors supported on $S$.

We will first verify the statement over $\overline{k}$, then descend to $k$. Let $\overline{S}$ be the set of geometric ramification points and let $G_k$ be the absolute Galois group of $k$, so that $\Z\langle S\rangle \cong \Z\langle \overline{S}\rangle^{G_k}$. Let $\overline{\phi}\colon \Z\langle\overline{S}\rangle^0 \to \Pic^0(X)(\overline{k})[2]$ be the natural map sending a divisor on $X \times_k \overline{k}$ supported on $\overline{S}$ to its associated divisor class, and let $\overline{\phi}_2$ be the induced map $\Z\langle \overline{S}\rangle^0/2\Z\langle \overline{S}\rangle^0 \twoheadrightarrow \Pic^0(X)(\overline{k})[2]$.

By standard facts about Abelian varieties (see, e.g. \cite[Proposition 39.9.11]{stacks-project}), we have that $\Pic^0(X)(\overline{k})[2]$ is an $\F_2$-vector space of dimension $2g$, where $g$ is the genus of $X$. On the other hand, Riemann-Hurwitz gives us $\#\overline{S} = 2g + 2$, so $\Z\langle \overline{S}\rangle^0$ is free of rank $2g + 1$ over $\Z$. Thus, the map $\overline{\phi}_2\colon \Z\langle \overline{S}\rangle^0/2\Z\langle \overline{S}\rangle^0 \twoheadrightarrow \Pic^0(X)(\overline{k})[2]$ has one-dimensional kernel. The divisor $\div(h\sqrt{f})$ is not in $2\Z\langle \overline{S}\rangle^0$ because $h\sqrt{f}$ has odd valuation at points of $\overline{S}$, so the image of $\div(h\sqrt{f})$ in $\Z\langle \overline{S}\rangle^0/2\Z\langle \overline{S}\rangle^0$ spans the kernel of $\overline{\phi}_2$. Thus, the kernel of the map $\overline{\phi}\colon \Z\langle \overline{S}\rangle^0 \to \Pic^0(X)(\overline{k})[2]$ is generated by $\div(h\sqrt{f}) + 2\Z\langle \overline{S}\rangle^0$.

Now by left exactness of taking $G_k$-invariants, we have \[
\ker(\phi) = \ker(\overline{\phi})^{G_k}
\]
and the subgroup on the right hand side is generated by $\div(h\sqrt{f}) + 2\Z\langle S\rangle^0$.
\end{proof}

Combined with Lemma~\ref{lem:kummer}(c), we also get a two-to-one parametrization of $\overline{k}$-isomorphism classes of \'etale 2-covers of $X$ in $\Hom(\pi_1^\twist(X), \Z/2\Z)$ by the same vector space $\Z\langle S\rangle^0/2\Z\langle S\rangle^0$. However, we will not need to use this directly.

The end goal is to get a matrix representation for the pairing $\ip{\cdot}{\cdot}_{X}$. To avoid having to work with the unwieldy relation coming from the extra principal divisor on $X$, we will lift $\ip{\cdot}{\cdot}_{X}$ to a pairing on $\Z\langle S\rangle^0/2\Z\langle S\rangle^0$, for which we can give an easy-to-handle explicit basis.

\begin{lemma}\label{lem:basis}
With setup as in Lemma~\ref{lem:phi-kernel}, let $S = \{p_0, \dots, p_m\}$ such that $p_0$ has odd degree. Then $\Z\langle S\rangle^0/2\Z\langle S\rangle^0$ is an $\F_2$-vector space of dimension $m$ spanned by the images $\overline{e}_i$ of the elements \[
e_i \coloneqq \deg(p_0)[p_i] - \deg(p_i)[p_0] \in \Z\langle S\rangle^0, \qquad\text{ for } 1 \leq i \leq m.
\]
In this basis, the kernel of the map $\phi_2$ in Lemma~\ref{lem:phi-kernel} is spanned by the all-ones vector $(1, \dots, 1)$.
\end{lemma}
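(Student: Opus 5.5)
We prove the two claims separately. The first is linear algebra over $\Z$ reduced mod 2. The second needs the parity of $v_p(\div(h\sqrt f))$ at the points of $S$, which Lemma~\ref{lem:phi-kernel} already records.

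\textbf{Spanning and dimension.} Let $d_i = \deg(p_i)$, with $d_0$ odd. An element of $\Z\langle S\rangle^0$ is $D=\sum_{i=0}^m a_i[p_i]$ with $\sum a_i d_i = 0$. We have
\[
d_0 D = \sum_{i=1}^m a_i e_i + \Big(\sum_{i=0}^m a_i d_i\Big)[p_0] = \sum_{i=1}^m a_i e_i .
\]
Since $d_0$ is odd, $D \equiv d_0 D \pmod{2\Z\langle S\rangle^0}$, so $\overline{D} = \sum_i a_i \overline{e}_i$. Hence the $\overline{e}_i$ span.

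For independence, we count dimensions. The lattice $\Z\langle S\rangle^0$ is the kernel of the surjection-onto-its-image $\Z^{m+1}\to \Z$ given by $(a_i)\mapsto\sum a_id_i$. It is therefore a free abelian group of rank $m$, being a subgroup of $\Z^{m+1}$ with image of rank 1. Consequently $\Z\langle S\rangle^0/2\Z\langle S\rangle^0 \cong \F_2^m$, and $m$ spanning vectors in an $m$-dimensional space form a basis.

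\textbf{The kernel.} By Lemma~\ref{lem:phi-kernel}, $\ker\phi_2$ is spanned by the image of $D_0 := \div(h\sqrt f)$. Write $D_0 = \sum a_i[p_i]$. The proof of Lemma~\ref{lem:phi-kernel} shows that $h\sqrt f$ has odd valuation at every point of $S$, so every $a_i$ is odd. By the computation above, the coordinates of $\overline{D_0}$ in the basis $\overline{e}_1,\dots,\overline{e}_m$ are $(a_1 \bmod 2,\dots,a_m \bmod 2) = (1,\dots,1)$.

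The only point that needs care is that the coordinates $a_i$ in the expansion are well defined modulo 2. This holds because $\{\overline{e}_i\}$ is a basis, so the coordinates of any class are unique, and the identity $\overline{D}=\sum_{i\ge1} a_i\overline{e}_i$ computes them. There is no real obstacle here; the crux is the parity fact from Lemma~\ref{lem:phi-kernel} together with the oddness of $d_0$.
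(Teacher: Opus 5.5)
Your proof is correct and follows essentially the same argument as the paper. Both use the rank count ($\Z\langle S\rangle^0$ is free of rank $m$) together with one half of the basis property, and both read off the all-ones vector from the odd valuations of $h\sqrt{f}$ at the points of $S$. The only difference is cosmetic. The paper proves linear independence by viewing the quotient inside $\F_2\langle S\rangle$, where $\overline{e}_i = [p_i] + \deg(p_i)[p_0]$. You instead prove spanning via the identity $\deg(p_0)D = \sum_{i\ge 1} a_i e_i$, which has the small advantage of making the coordinate computation for $\div(h\sqrt{f})$ fully explicit.
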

\begin{proof}
We view $\Z\langle S\rangle^0/2\Z\langle S\rangle^0$ as a subspace of the $\F_2$-vector space $\F_2\langle S\rangle$ spanned by $S$. \details{We have a map $\Z\langle S\rangle^0 \hookrightarrow \Z\langle S\rangle \twoheadrightarrow \F_2\langle S \rangle$ which descends to $\Z\langle S\rangle^0/2\Z\langle S\rangle^0 \to \F_2\langle S\rangle$. This map is injective because $2\Z\langle S\rangle \cap \Z\langle S \rangle^0 = 2\Z\langle S \rangle^0$.} Since $\deg(p_0)$ is odd, the $\overline{e}_i$ are linearly independent inside $\F_2\langle S\rangle$.

Since $\Z\langle S\rangle^0$ is free abelian of rank $m$, the quotient $\Z\langle S\rangle^0/2\Z\langle S\rangle^0$ is an $m$-dimensional $\F_2$-vector space, so the $\overline{e}_i$ must form a basis.

In the proof of Lemma~\ref{lem:phi-kernel}, we observed that $h\sqrt{f}$ has odd valuation at each point of $S$. Thus, $\div(h\sqrt{f})$ has a nonzero $\overline{e}_i$ component for each $i$, so must map to the all-ones vector under $\Z\langle S\rangle^0 \to \Z\langle S\rangle^0/2\Z\langle S\rangle^0 \cong \F_2^m$.
\end{proof}

\subsection{The matrix of the pairing for a fixed curve}

In this subsection, let $X \to P$ be a degree 2 branched cover with assumptions as in Lemma~\ref{lem:basis} (in particular, assume $P$ has genus 0 and $X \to P$ is branched at a point $p_0$ of odd degree). We describe a matrix whose kernel is related to the left kernel of $\ip{\cdot}{\cdot}_X$. The entries of this matrix will be Artin symbols describing the ramification of points in hyperelliptic curves. Writing the matrix in this form will allow us to prove that it equidistributes among matrices with certain symmetry conditions as $X$ varies over all hyperelliptic curves of fixed genus. We freely use notation from the statements of Lemmas~\ref{lem:phi-surjection}, \ref{lem:phi-kernel}, and \ref{lem:basis}.

The first step is to get a handle on what exactly the pairing $\ip{\cdot}{\cdot}_X$ is in terms of the basis described in Lemma~\ref{lem:basis}. 

Using the isomorphism $\Pic^0(X)(k)[2] \cong \Hom(\pi_1^\twist(X), \Z/2\Z)$ from Lemma~\ref{lem:kummer}(c), we view $\ip{\cdot}{\cdot}_X$ as a pairing \[
\ip{\cdot}{\cdot}_X\colon \Pic^0(X)(k)[2] \times \Pic^0(X)(k)[2] \to \F_2
\]
and we get an induced pairing \[
\ip{\cdot}{\cdot}_X'\colon \Z\langle S\rangle^0/2\Z\langle S\rangle^0 \times \Z\langle S \rangle^0/2\Z\langle S \rangle^0 \to \F_2
\]
via the map $\phi_2\colon \Z\langle S\rangle^0/2\Z\langle S\rangle^0 \to \Pic^0(X)(k)[2]$ discussed in Subsection~\ref{sect:basis} (especially Lemma~\ref{lem:phi-kernel}). The left kernel of $\ip{\cdot}{\cdot}_X'$ has dimension one more than the left kernel of $\ip{\cdot}{\cdot}_X$. We will also implicitly think of $\ip{\cdot}{\cdot}_X'$ as a map $\Z\langle S\rangle^0 \times \Z\langle S \rangle^0 \to \F_2$ and write $\ip{D}{D'}_X'$ for $D, D' \in \Z\langle S\rangle^0$.

Recall the basis $\{\overline{e}_i \mid i = 1, \dots, m\}$ for $\Z\langle S\rangle^0/2\Z\langle S\rangle^0$ defined in Lemma~\ref{lem:basis}. We want to compute $\ip{e_i}{e_j}_X' = \ip{\phi_2(\overline{e}_i)}{\phi_2(\overline{e}_j)}_X$ for $1 \leq i, j \leq m$. To do this, let $\chi_j \in \Hom(\pi_1^\twist(X), \Z/2\Z)$ correspond to $\phi_2(\overline{e}_j)$ via the isomorphism in Lemma~\ref{lem:kummer}(c).

We need to pick a lift of $\chi_j$ to $\Hom(\pi_1^\etale(X), \F_2)$ to use to compute the pairing using \eqref{eq:pairing-formula}. Recall that elements of $\Hom(\pi_1^\etale(X), \F_2)$ correspond to \'etale degree 2 covers of $X$. For $p$ a closed point in $X$, evaluation of a character $\pi_1^\etale(X) \to \F_2$ at $\Frob_p$ detects the action of $\Frob_p$ on the fiber over $p$ of the associated cover. If $\Frob_p$ maps to $1 \in \F_2$, then the fiber above $p$ has a nontrivial Galois action and therefore consists of a single closed point with residue field a quadratic extension of $k(p)$; in other words, the associated cover is inert at $p$. Similarly, if $\Frob_p$ maps to $0$, then this cover is split. 

We will choose the lift of $\chi_j$ corresponding to the cover $X_j \to X$ which is split at $p_0$ (see Remark~\ref{rmk:determining-covers}), i.e., the lift $\widetilde{\chi_j}$ satisfying $\widetilde{\chi_j}(\Frob_{p_0}) = 0$. Then we have \begin{align*}
\ip{e_i}{e_j}_X' &= \ip{\phi(e_i)}{\phi(e_j)}_X \\
&= \deg(p_0)\widetilde{\chi_j}(\Frob_{p_i}) - \deg(p_i)\widetilde{\chi_j}(\Frob_{p_0})  \\
&=\widetilde{\chi_j}(\Frob_{p_i}).
\end{align*}
In other words, $\ip{e_i}{e_j}_X'$ is $0$ if $X_j \to X$ is split at $p_i$ and $1$ if $X_j \to X$ is inert at $p_i$, i.e., \[
\ip{e_i}{e_j}_X' = c(X_j \to X, p_i)_+
\]
where the $_+$ subscript denotes that the right hand side is being viewed as an element of $\F_2$.

\subsection{Parametrization of hyperelliptic curves and their R\'edei matrices}\label{sect:param-curves}

In this subsection we describe a parametrization of all hyperelliptic curves defined over $k$ that lends itself well to computing the pairing $\ip{\cdot}{\cdot}_X'$ associated to each curve. Later, we see that as the chosen curve ranges over certain cross-sections of this parametrization, the associated R\'edei matrix nearly equidistributes. Here again, $k$ is a finite field of odd characteristic.

We consider tuples of the form $B = (p_0, p_1, \dots, p_n, t)$ where $p_0, p_1, \dots, p_n \in \PP^1_k$ are closed points satisfying:
\begin{itemize}
    \item $\deg(p_1) \leq \dots \leq \deg(p_n)$;
    \item $\deg(p_0)$ is odd;
    \item if $\deg(p_i)$ is odd for some $1 \leq i \leq n$, then $\deg(p_0) \leq \deg(p_i)$;
    \item $\sum_{i=0}^n \deg(p_i)$ is even;
\end{itemize}
and $t$ is a choice of uniformizer at $p_0$ defined up to $(\widehat{\sheafO}_{\PP^1, p_0}^\times)^2$ (i.e., a uniformizer class at $p_0$). To such a tuple $B$ we associate the unique hyperelliptic curve $\pi_B\colon X_B \to \PP^1_k$ branched at $p_0, \dots, p_n$ such that $c_t(\pi_B, p_0) = 1$ if $\deg(p_0)$ is odd (see Remark~\ref{rmk:determining-covers} for why such a curve exists and is unique up to $k$-isomorphism).

Each hyperelliptic curve $X \to \PP^1_k$ with an odd branch point is obtained in this way, although most curves are associated to many tuples $B$ because these tuples are ordered. Note that the number of tuples associated to a curve depends only on the degrees (with multiplicity) of its ramification points. \details{Suppose $X$ is branched at points of degrees $d_0, d_1 \leq \dots \leq d_n$ with $v_2(d_0) \leq v_2(d_i)$ for each $1 \leq i \leq n$ and such that if $v_2(d_0) = v_2(d_i)$ then $d_0 \leq d_i$. Say $X$ has $n_i$ branch points of degree $d_i$. Then the number of tuples associated to $X$ is $\prod_i n_i!$, since a choice of $p_0$ and $C$ uniquely determines $t$ up to $(\widehat{\sheafO}_{\PP^1, p_0}^\times)^2$ subject to the constraint $c_t(X \to \PP^1_k, p_0) = 1$.}

We will use this parametrization to define a uniformly random hyperelliptic curve with fixed total branch degree $d$ as follows. First, pick a set of branch points of total degree $d$ at random according to an appropriate distribution (see Subsection~\ref{sect:branch-degrees}). If every point in this set has even degree, output one of the two hyperelliptic curves with this branch locus uniformly at random. Otherwise, order the points as $(p_0, \dots, p_n)$ satisfying the above conditions, pick a uniformizer class $t$ at $p_0$ at random, and output $X_B$ for $B = (p_0, \dots, p_n, t)$.

Almost all of our analysis concerns the second case where one of the random branch points has odd degree. We will see in Corollary~\ref{cor:assumptions} that this happens almost all the time as $d$ grows. In fact, we will show that the number of odd-degree branch points concentrates around half of the total number of branch points.

If $p \in \PP^1_k$ is a branch point of $\pi\colon X \to \PP^1_k$, we write $\pi^{-1}(p)$ for the unique closed point of $X$ above $p$, i.e., the \textit{set-theoretic} preimage of $p$ under $\pi$.

Given points $p_0, p \in \PP^1_k$ with $p_0$ of odd degree, define \[
e_{p_0, p} \coloneqq \deg(p_0)[p] - \deg(p)[p_0]
\]
and, if $\pi\colon X \to \PP^1_k$ is a hyperelliptic curve branched at $p_0$ and $p$, we write \[
\pi^{-1}e_{p_0, p} \coloneqq\deg(p_0)[\pi^{-1}(p)] - \deg(p)[\pi^{-1}(p_0)] = \frac{1}{2}\pi^*e_{p_0, p}.
\]
\begin{lemma}\label{lem:pairing-by-hyperelliptics}
Fix $p_0 \in \PP^1_k$ of odd degree and a uniformizer class $t$ at $p_0$. For each $p \in \PP^1_k - \{p_0\}$, there is a hyperelliptic curve $H_p \to \PP^1_k$ such that for each $p' \in \PP^1_k - \{p_0, p\}$ and for each hyperelliptic curve $\pi\colon X \to \PP^1_k$ branched at $p_0, p, p'$ (and possibly other points) we have \[
c(H_p \to \PP^1, p')_+ + \deg(p)\deg(p')c_t(X \to \PP^1, p_0)_+ = \ip{\pi^{-1}e_{p_0, p'}}{\pi^{-1}e_{p_0, p}}_X'
\]
(where the $_+$ subscripts indicate that the left hand side is viewed additively as an element of $\F_2$).

Moreover, $H_p \to \PP^1_k$ is \'etale away from $\{p_0, p\}$.
\end{lemma}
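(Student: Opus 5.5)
We construct $H_p$ directly from a rational function on $\PP^1_k$ with divisor $e_{p_0,p}$. Then we pull that function back to $X$ and read off the pairing from the recipe in the previous subsection: the value of $\ip{\cdot}{\cdot}_X'$ is the splitting behaviour at $\pi^{-1}(p')$ of the \'etale cover attached to $\pi^{-1}e_{p_0,p}$, normalized to be split at $\pi^{-1}(p_0)$.

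\textbf{Step 1: construction of $H_p$.} Since $\Pic^0(\PP^1_k)(k)=0$, there is $g_p\in k(\PP^1)^\times$ with $\div g_p = e_{p_0,p}=\deg(p_0)[p]-\deg(p)[p_0]$, unique up to $k^\times$. Because $\deg(p_0)$ is odd, rescaling $g_p$ by a nonsquare flips $c_t(X_{g_p}\to\PP^1,p_0)$, by \eqref{eq:twisting-second-order}. So we may normalize $g_p$ so that $c_t(X_{g_p}\to \PP^1,p_0)=1$; this makes sense whether or not $\deg(p)$ is even. Set $H_p\coloneqq X_{g_p}$. By Lemma~\ref{lem:kummer}(a), $H_p$ is branched only where $g_p$ has odd valuation, which is contained in $\{p_0,p\}$, so $H_p$ is \'etale elsewhere. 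Note that $H_p$ depends only on $p_0,t,p$, not on $X$ or $p'$.

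\textbf{Step 2: identifying the cover on $X$.} We have $\div(\pi^*g_p)=\pi^*e_{p_0,p}=2\,\pi^{-1}e_{p_0,p}$. By the construction in Lemma~\ref{lem:kummer}(c), the $\overline{k}$-isomorphism class of \'etale covers attached to $\phi(\pi^{-1}e_{p_0,p})$ is therefore that of $X_{\pi^*g_p}$. Its two $k$-forms are $X_{a\pi^*g_p}$ for $a\in k^\times/(k^\times)^2$. We must choose $a$ so that the cover is split at $\pi^{-1}(p_0)$.

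To compute the relevant second-order class, write $f=t^{m}u$ locally at $p_0$, with $m$ odd and $u$ a unit, and $z^2=f$ on $X$. Then $s\coloneqq z/t^{(m-1)/2}$ is a uniformizer at $\pi^{-1}(p_0)$ with $s^2=tu$. Writing $g_p=t^{-\deg p}v$ with $v(p_0)\equiv c_t(H_p,p_0)=1$, we get
\[
\pi^*g_p=s^{-2\deg p}u^{\deg p}v .
\]
Hence $c(X_{\pi^*g_p}\to X,\pi^{-1}(p_0))=c_t(X\to\PP^1,p_0)^{\deg p}$. Here we use that $k(\pi^{-1}(p_0))=k(p_0)$, since the point is ramified. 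Because $\deg(p_0)$ is odd, twisting by $a$ multiplies this class by $a$. So the split lift is $a=c_t(X\to\PP^1,p_0)^{\deg p}$.

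\textbf{Step 3: evaluation at $p'$.} Since $p'$ is a ramified point different from $p_0$ and $p$, the function $\pi^*g_p$ is a unit at $\pi^{-1}(p')$, and $k(\pi^{-1}(p'))=k(p')$. Its second-order class there is therefore just the class of $g_p(p')$, which is $c(H_p\to\PP^1,p')$. Twisting by $a$ multiplies this by $a^{\deg p'}$. Writing everything additively gives
\[
\ip{\pi^{-1}e_{p_0,p'}}{\pi^{-1}e_{p_0,p}}_X' = c(H_p\to\PP^1,p')_+ +\deg(p)\deg(p')\,c_t(X\to\PP^1,p_0)_+ .
\]
This uses the formula $\ip{e_i}{e_j}_X'=c(X_j\to X,p_i)_+$ from the previous subsection. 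That formula is stated for the basis elements $e_i$, and $\pi^{-1}e_{p_0,p}$ is exactly the basis element $e_i$ for $p_i=\pi^{-1}(p)$.

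The main obstacle is the local computation at $\pi^{-1}(p_0)$ in Step 2. There one must track carefully how the uniformizer class $t$ on $\PP^1$ induces a uniformizer on $X$ (via $s^2=tu$), so that the second-order class of $\pi^*g_p$ comes out as $c_t(X,p_0)^{\deg p}$ and is independent of the choice of representative of $t$.
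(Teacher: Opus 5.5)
Your proposal is correct and follows essentially the same route as the paper's proof. You normalize the function with divisor $e_{p_0,p}$ so that $c_t(H_p \to \PP^1, p_0) = 1$, pull it back to $X$, and fix the twist by computing its second-order class at $\pi^{-1}(p_0)$ with a uniformizer built from $\sqrt{f}$ and a power of $t$ (your $s$ with $s^2 = tu$ is the paper's $t^m\sqrt{f_0}$), which gives $c_t(X \to \PP^1, p_0)^{\deg(p)}$, and then you read off the class at $\pi^{-1}(p')$.
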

The slogan for this lemma is that the pairing $\ip{\cdot}{\cdot}_X'$ evaluated on the two generators of $\Z\langle S\rangle^0/2\Z\langle S\rangle^0$ corresponding to branch points $p, p'$ is independent of the other branch points of $X$. This will let us read the pairing $\ip{\cdot}{\cdot}_{X_B}'$ off from the tuple $B = (p_0, p_1, \dots, p_m, t)$ even as we vary $p_1, \dots, p_m$. In particular, since we chose $c_t(\pi_B, p_0) = 1$, we have \[
\ip{\pi^{-1}e_{p_0, p'}}{\pi^{-1}e_{p_0, p}}_{X_B}' = c(H_p \to \PP^1_k, p')_+.
\]
\begin{proof}
Let $f_p \in k(\PP^1_k)^\times$ such that $\div f_p = e_{p_0, p}$ and such that if $H_p \to \PP^1_k$ is the hyperelliptic curve defined by $f_p$, then $c_t(H_p \to \PP^1, p_0) = 1$. The branch locus of $H_p$ is contained in the support of $e_{p_0, p}$. Using $k(\PP^1_k) \subset k(X)$, we can view $f_p$ as a rational function on $X$.

Now $f_p$ has even valuation at $p'$, so we have $c(H_p \to \PP^1, p') = c(X_{f_p} \to X, \pi^{-1}(p'))$\details{We are just checking whether a particular function is a square in the same residue field on both sides}. We will now compare $c(X_{f_p} \to X, \pi^{-1}(p'))$ to $\ip{\pi^{-1}e_{p_0, p'}}{\pi^{-1}e_{p_0, p}}_X'$. Recall that \[
\ip{\pi^{-1}e_{p_0, p'}}{\pi^{-1}e_{p_0, p}}_X' = c(X_{g_p} \to X, \pi^{-1}(p'))_+
\]
where $X_{g_p}$ is the \'etale 2-cover of $X$ corresponding to $\pi^{-1}e_{p_0, p}$ under Lemma~\ref{lem:kummer}(c) satisfying $c(X_{g_p} \to X, \pi^{-1}(p_0)) = 1$.

If we view $f_p$ as a rational function on $X$, we have $\div f_p \equiv \pi^*e_{p_0, p} = 2\pi^{-1}e_{p_0, p}\pmod{4}$. Thus, under the correspondence of Lemma~\ref{lem:kummer}(c), $X_{f_p} \to X$ and $X_{g_p} \to X$ are isomorphic over $\overline{k}$. Since $X_{g_p}$ is determined within this $\overline{k}$-isomorphism class by its second-order class at $\pi^{-1}(p_0)$, we can take $g_p = f_p\cdot c(X_{f_p} \to X, \pi^{-1}(p_0))$ without changing $X_{g_p}$ so that \[
c(X_{g_p} \to X, \pi^{-1}(p')) = c(H_p \to \PP^1, p')c(X_{f_p} \to X, \pi^{-1}(p_0))^{\deg(p')}
\]
by \eqref{eq:twisting-second-order}. The last step is to compute $c(X_{f_p} \to X, \pi^{-1}(p_0))$.

By Lemma~\ref{lem:kummer}(a), the branched cover $X \to \PP^1_k$ corresponds to a class in $k(\PP^1_k)^\times/(k(\PP^1_k)^\times)^2$, and let $f_0 \in k(\PP^1_k)^\times$ be a lift of this class.

Since $X \to \PP^1_k$ is ramified at $p_0$, we know that $f_0$ has odd valuation at $p_0$, and by construction $f_p$ has valuation congruent to $\deg(p)$ modulo 2 at $p_0$. Thus $f_pf_0^{\deg(p)}$ has even valuation at $p_0$. Write $f_pf_0^{\deg(p)} = t^nu$ for some even integer $n$ and $u \in \widehat{\sheafO}_{\PP^1_k, p_0}^\times$. Now $t$ has valuation 2 at $\pi^{-1}(p_0)$\details{ because $t$ has valuation 1 at $p_0$ and the cover has $e = 2$ above $p_0$}, so since $\sqrt{f_0}$ has odd valuation at $\pi^{-1}(p_0)$ there is an integer $m$ such that $t^m\sqrt{f_0}$ has valuation 1 at $\pi^{-1}(p_0)$, i.e., is a uniformizer. 

We compute the second-order class $c(X_{f_p} \to X, \pi^{-1}(p_0))$ by writing $f_p$ as the product of a power of a uniformizer and a unit in $\widehat{\sheafO}_{X, \pi^{-1}(p_0)}$, up to squares of units. We have $f_p = f_0^{-\deg(p)}t^nu = (t^m\sqrt{f_0})^{-2\deg(p)}t^{n+2m\deg(p)}u$. Since $n$ is even, the right hand side differs from $u$ by a square in $\widehat{\sheafO}_{X, \pi^{-1}(p_0)}$.

Thus, $c(X_{f_p} \to X, \pi^{-1}(p_0)) =\colon \overline{u}$ is the image of $u$ in $k^\times$.

On the other hand, \[
\overline{u} = c_t(H_p \to \PP^1, p_0)c_t(X \to \PP^1, p_0)^{\deg(p)} = c_t(X \to \PP^1, p_0)^{\deg(p)},
\]
which means \[
c(X_{g_p} \to X, \pi^{-1}(p')) = c(H_p \to \PP^1, p')\overline{u}^{\deg(p')} = c(H_p \to \PP^1, p')c_t(X \to \PP^1_k, p_0)^{\deg(p)\deg(p')},
\]
as we wanted.
\end{proof}

\subsection{Symmetry properties of the R\'edei matrix}\label{sect:symmetry}

We use notation from the previous subsection. In this subsection, we use Lemma~\ref{lem:pairing-by-hyperelliptics} to explicitly show that the pairing $\ip{\cdot}{\cdot}_X'$ satisfies a particular symmetry property, which is the same as the symmetry property of the number field R\'edei matrix coming from quadratic reciprocity. Let $k = \F_q$ be a finite field of odd characteristic. To start, we will give a reformulation of quadratic reciprocity over function fields which fits more smoothly with our language.

\begin{lemma}\label{lem:quadratic-reciprocity}
Fix $p_0 \in \PP^1_k$ of odd degree. Let $p, p' \in \PP^1_k - \{p_0\}$ be distinct. Let $t$ be a uniformizer at $p_0$. Let $H \to \PP^1_k$ and $H' \to \PP^1_k$ be hyperelliptic curves whose branch loci are the supports of the divisors $e_{p_0, p}$ mod 2 and $e_{p_0, p'}$ mod 2, respectively. Then  \[
c(H \to \PP^1, p')c(H' \to \PP^1, p) = c_t(H \to \PP^1, p_0)^{\deg(p')}c_t(H' \to \PP^1, p_0)^{\deg(p)}(-1)^{\frac{q - 1}{2}\deg(p)\deg(p')}
\]
(in particular, the right hand side is independent of the choice of $t$).
\end{lemma}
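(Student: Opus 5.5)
The plan is to deduce the identity from Weil reciprocity on $\PP^1_k$, written in tame-symbol form, and then apply the quadratic character of $k^\times$.

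\textbf{Normalization.} Choose $f, g \in k(\PP^1_k)^\times$ with $\div f = e_{p_0,p}$ and $\div g = e_{p_0,p'}$. These exist because $\Pic^0(\PP^1_k)=0$. By Lemma~\ref{lem:kummer}(b), $H$ is $X_{af}$ and $H'$ is $X_{a'g}$ for some constants $a,a'\in k^\times$.

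The first step is to check that the statement is insensitive to replacing $f$ by $af$. By \eqref{eq:twisting-second-order}:
\begin{itemize}
\item the left side changes by $a^{\deg(p')}$;
\item the right side changes by $a^{\deg(p_0)\deg(p')}$.
\end{itemize}
These agree modulo squares because $\deg(p_0)$ is odd. The same argument handles $g$. So we may take $H=X_f$ and $H'=X_g$.

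We also need the effect of changing the uniformizer class $t$. At $p_0$, $f$ has valuation $-\deg(p)$ and $g$ has valuation $-\deg(p')$. Replacing $t$ by $vt$ therefore multiplies the right side by $\bar v^{-\deg(p)\deg(p')}\bar v^{-\deg(p')\deg(p)}$, which is a square. This gives the parenthetical claim.

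\textbf{Weil reciprocity.} For every closed point $v$ of $\PP^1_k$, let $\alpha = v(f)$, $\beta = v(g)$, and define the tame symbol
\[
(f,g)_v = (-1)^{\alpha\beta}\, f^{\beta} g^{-\alpha}\big|_v \in k(v)^\times.
\]
Weil reciprocity states that $\prod_v N_{k(v)/k}(f,g)_v = 1$. We then apply the quadratic character $\chi$ of $k^\times$. We will use two facts about $\chi$:
\begin{itemize}
\item $\chi\circ N_{k(v)/k}$ is the quadratic character of $k(v)^\times$, since the norm induces an isomorphism $k(v)^\times/(k(v)^\times)^2\cong k^\times/(k^\times)^2$ for finite fields;
\item $\chi(N(-1)) = \chi(-1)^{\deg v} = (-1)^{\frac{q-1}{2}\deg v}$.
\end{itemize}
Only the points $p_0$, $p$ and $p'$ contribute.

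\begin{itemize}
\item \emph{At $p'$.} Here $\alpha=0$ and $\beta=\deg(p_0)$, so the symbol is $f(p')^{\deg(p_0)}$. Its square class is $c(H\to\PP^1,p')$, because $\deg(p_0)$ is odd.
\item \emph{At $p$.} Symmetrically, we get $g(p)^{-\deg(p_0)}$, whose square class is $c(H'\to\PP^1,p)$.
\item \emph{At $p_0$.} Here $\alpha=-\deg(p)$ and $\beta=-\deg(p')$. Write $f=t^{-\deg(p)}u$ and $g=t^{-\deg(p')}w$. The powers of $t$ cancel, and the symbol becomes
\[
(-1)^{\deg(p)\deg(p')}\,u^{-\deg(p')}w^{\deg(p)}\big|_{p_0}.
\]
Its square class is $c_t(H,p_0)^{\deg(p')}c_t(H',p_0)^{\deg(p)}$ times $(-1)^{\deg(p)\deg(p')}$. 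After taking the norm and applying $\chi$, the sign contributes $(-1)^{\frac{q-1}{2}\deg(p)\deg(p')\deg(p_0)}$. Since $\deg(p_0)$ is odd, this equals $(-1)^{\frac{q-1}{2}\deg(p)\deg(p')}$.
\end{itemize}
Multiplying the three contributions and using the product formula gives the claimed identity.

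\textbf{Main obstacle.} The delicate part is the bookkeeping of signs and parities in the tame symbol at $p_0$. In particular, one must check that the norm from $k(p_0)$ converts $\chi(-1)$ into $(-1)^{\frac{q-1}{2}}$ only because $\deg(p_0)$ is odd. One must also confirm the exact sign convention in the form of Weil reciprocity being cited. If that form proves awkward, the fallback is to work directly with $\F_q[x]$ and quadratic reciprocity for monic irreducibles, $\left(\tfrac{P}{Q}\right)\left(\tfrac{Q}{P}\right)=(-1)^{\frac{q-1}{2}\deg P\deg Q}$, after moving $p_0$ to $\infty$ when $\deg(p_0)=1$. However, that route handles a general odd-degree $p_0$ less cleanly, so the tame-symbol route is the primary plan.
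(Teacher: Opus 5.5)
Your proof is correct, and it takes a different route from the paper's.

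\textbf{The paper's route.} The paper fixes an auxiliary degree-one point $\infty\notin\{p_0,p,p'\}$; this is where it uses $q+1\ge 4$. It writes $f=ah^{\deg(p_0)}h_0^{-\deg(p)}$ with $h_0,h,h'\in k[x]$ monic irreducible and uses $h_0$ as the uniformizer. It then expresses $c(H\to\PP^1,p')$, $c(H'\to\PP^1,p)$ and $c_{h_0}(H\to\PP^1,p_0)$ as Legendre symbols. Polynomial quadratic reciprocity is applied three times, to the pairs $(h,h_0)$, $(h',h_0)$ and $(h,h')$, so that the cross terms cancel. Independence of $t$ is checked by a case split on the parities of $\deg(p)$ and $\deg(p')$.

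\textbf{Your route.} You use a single product formula, Weil reciprocity with tame symbols, whose only nontrivial local factors sit at $p_0,p,p'$. This has several advantages:
\begin{itemize}
\item It needs no auxiliary point at infinity and handles $p_0$ of arbitrary odd degree directly.
\item It shows exactly where the sign comes from: the $(-1)^{v(f)v(g)}$ factor at $p_0$. Oddness of $\deg(p_0)$ is what makes $N_{k(p_0)/k}(-1)=-1$, and also what lets $f(p')^{\deg(p_0)}$ and $g(p)^{-\deg(p_0)}$ represent the correct square classes.
\item Your treatment of the change of uniformizer is uniform: the right side changes by $\bar v^{\pm 2\deg(p)\deg(p')}$, a square. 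This is cleaner than the paper's case analysis.
\end{itemize}

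\textbf{The cost.} You need Weil reciprocity over a non-algebraically-closed field in norm form, rather than the textbook reciprocity law for $\F_q[x]$. The sign convention you wrote is the standard one. The norm form follows from the version over $\bar k$ by grouping the $\deg(v)$ Galois-conjugate geometric points over each closed point $v$; these have the same valuations because $k$ is perfect. As a check on the convention you were unsure about, take $\deg(p_0)=1$, $p_0=\infty$, $t=1/x$. Then $u(\infty)=w(\infty)=1$, and your identity becomes exactly $\left(\frac{h}{h'}\right)\left(\frac{h'}{h}\right)=(-1)^{\frac{q-1}{2}\deg(h)\deg(h')}$.

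\textbf{A small simplification.} Your normalization step is harmless but unnecessary. A function on $\PP^1$ is determined by its divisor up to scalars, and the scalar can be absorbed. So you can choose $f$ with $\div f=e_{p_0,p}$ and $X_f=H$ from the start, which is what the paper does.
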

\begin{proof}
Note that since $q \geq 3$, there are at least $q + 1 \geq 4$ degree-1 points in $\PP^1_k$. Let $\infty \in \PP^1_k$ be a degree-1 point other than $p_0, p, p'$. We identify $\PP^1_k - \{\infty\}$ with $\A^1 = \Spec k[x]$.

Let $h_0, h, h' \in k[x]$ be the monic irreducible polynomials cutting out $p_0, p, p'$, respectively. 

Recall that $e_{p_0, p} = \deg(p_0)[p] - \deg(p)[p_0]$ and $e_{p_0, p'} = \deg(p_0)[p'] - \deg(p')[p_0]$. Let $f = ah^{\deg(p_0)}h_0^{-\deg(p)}$ and $f' = a'(h')^{\deg(p_0)}h_0^{-\deg(p')}$ be rational functions cutting out $e_{p_0, p}$ and $e_{p_0, p'}$, respectively. Here we choose coefficients $a, a' \in k^\times$ such that $H \to \PP^1_k$ and $H' \to \PP^1_k$ be the hyperelliptic curves associated to $f$ and $f'$, respectively, by the correspondence of Lemma~\ref{lem:kummer} with base curve $\PP^1_k$. We will also view $h_0$ as a uniformizer at $p_0$.

We have that $c(H \to \PP^1, p')$ is the image of $f$ in the residue field $k(p')$ modulo squares, which is precisely the quadratic residue symbol \[
\legendre{f}{h'} = \legendre{a}{h'}\legendre{h}{h'}^{\deg(p_0)}\legendre{h_0}{h'}^{-\deg(p)}.
\]
We can make a few observations here. First, $a$ is a square in the residue field $k(p')$ precisely when either $a$ is already a square in $k$ or $k(p')$ is an even-degree extension of $k$. So, $\legendre{a}{h'} \equiv a^{\deg(p')} \pmod{(k^\times)^2}$. Moreover, since $\deg(p_0)$ is odd, we get \[
c(H \to \PP^1, p') = \legendre{f}{h'} = a^{\deg(p')}\legendre{h}{h'}\legendre{h_0}{h'}^{\deg(p)}.
\]
Similarly, $c(H' \to \PP^1, p) = (a')^{\deg(p)}\legendre{h'}{h}\legendre{h_0}{h}^{\deg(p')}$. We also observe that $c_{h_0}(H \to \PP^1, p_0)$ is the image of $ah^{\deg(p_0)}$ in the residue field of $p_0$ modulo squares, which is just the quadratic residue symbol \[
\legendre{ah^{\deg(p_0)}}{h_0} = \legendre{a}{h_0}\legendre{h}{h_0} = a^{\deg(p_0)}\legendre{h}{h_0} = a\legendre{h}{h_0}
\]
because $\deg(p_0)$ is odd. Quadratic reciprocity for polynomials \cite[Theorem 3.3]{rosen} shows \[
\legendre{h}{h_0}\legendre{h_0}{h} = (-1)^{\frac{q - 1}{2}\deg(p)\deg(p_0)}
\]
so that \[
a = c_{h_0}(H \to \PP^1, p_0)\legendre{h}{h_0}^{-1} = c_{h_0}(H \to \PP^1, p_0)\legendre{h_0}{h}(-1)^{\frac{q - 1}{2}\deg(p)\deg(p_0)}
\]
because $\deg(p_0)$ is odd. Thus, \[
c(H \to \PP^1, p') = c_{h_0}(H \to \PP^1, p_0)^{\deg(p')}\legendre{h}{h'}\legendre{h_0}{h'}^{\deg(p)}\legendre{h_0}{h}^{\deg(p')}(-1)^{\frac{q - 1}{2}\deg(p)\deg(p')\deg(p_0)}.
\]
The same computation shows \[
c(H' \to \PP^1, p) = c_{h_0}(H' \to \PP^1, p_0)^{\deg(p)}\legendre{h'}{h}\legendre{h_0}{h}^{\deg(p')}\legendre{h_0}{h'}^{\deg(p)}(-1)^{\frac{q - 1}{2}\deg(p)\deg(p')\deg(p_0)}.
\]
Now quadratic reciprocity gives \[
\legendre{h'}{h}\legendre{h}{h'} = (-1)^{\frac{q - 1}{2}\deg(p)\deg(p')}
\]
so that \[
c(H \to \PP^1, p')c(H' \to \PP^1, p) = c_{h_0}(H \to \PP^1, p_0)^{\deg(p')}c_{h_0}(H' \to \PP^1, p_0)^{\deg(p)}(-1)^{\frac{q - 1}{2}\deg(p)\deg(p')}
\]
Now we examine what happens when replacing $h_0$ with another uniformizer $t$ at $p_0$. If $p$ has even degree, then $H \to \PP^1$ is unramified at $p_0$, so $c_t(H \to \PP^1, p_0)$ is independent of the choice of uniformizer $t$. At the same time, $c_t(H' \to \PP^1, p_0)^{\deg(p)}$ is trivial, so also independent of the choice of uniformizer. Similarly, if $p'$ has even degree, then the right hand side is also independent of the choice of uniformizer.

Finally, if both $p$ and $p'$ have odd degree, then replacing $h_0$ by a different uniformizer class will change $c_{h_0}(H \to \PP^1, p_0)^{\deg(p')}$ and $c_{h_0}(H' \to \PP^1, p_0)^{\deg(p)}$ both by the nontrivial element of $k^\times/(k^\times)^2$, so the class of the right hand side will not change.
\end{proof}

Applying this lemma gives a symmetry property for the pairing $\ip{\cdot}{\cdot}_X'$:

\begin{corollary}\label{cor:pairing-symmetry}
Fix $p_0 \in \PP^1_k$ of odd degree. Let $\pi\colon X \to \PP^1_k$ be a hyperelliptic curve branched at $p_0, p, p'$ (and possibly other points) such that $\deg(p), \deg(p') \geq \deg(p_0)$. Then \[
\ip{\pi^{-1}e_{p_0, p'}}{\pi^{-1}e_{p_0, p}}_X' + \ip{\pi^{-1}e_{p_0, p}}{\pi^{-1}e_{p_0, p'}}_X' = \begin{cases}
    0 &\text{if }q \equiv 1\pmod{4} \\
    \deg(p)\deg(p') &\text{if } q\equiv 3\pmod{4}
\end{cases}
\]
\end{corollary}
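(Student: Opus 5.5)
We combine Lemma~\ref{lem:pairing-by-hyperelliptics} with Lemma~\ref{lem:quadratic-reciprocity}. Fix a uniformizer class $t$ at $p_0$. For the branch points $p$ and $p'$, let $H_p \to \PP^1_k$ and $H_{p'} \to \PP^1_k$ be the auxiliary hyperelliptic curves given by Lemma~\ref{lem:pairing-by-hyperelliptics}. Recall from the construction in its proof that they are defined by rational functions $f_p, f_{p'}$ with $\div f_p = e_{p_0,p}$ and $\div f_{p'} = e_{p_0,p'}$, normalized so that
\[
c_t(H_p \to \PP^1, p_0) = c_t(H_{p'} \to \PP^1, p_0) = 1.
\]
Applying that lemma twice, with the roles of $p$ and $p'$ swapped, gives
\[
\ip{\pi^{-1}e_{p_0, p'}}{\pi^{-1}e_{p_0, p}}_X' = c(H_p \to \PP^1, p')_+ + \deg(p)\deg(p')\,c_t(X \to \PP^1, p_0)_+,
\]
and symmetrically for the other order. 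Adding the two identities, the terms $\deg(p)\deg(p')\,c_t(X\to\PP^1,p_0)_+$ appear twice and cancel in $\F_2$. So the left side of the corollary equals
\[
\bigl(c(H_p \to \PP^1, p')\,c(H_{p'} \to \PP^1, p)\bigr)_+,
\]
which depends only on $p_0, p, p'$ and the auxiliary curves, not on $X$.

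Next we apply Lemma~\ref{lem:quadratic-reciprocity} with $H = H_p$ and $H' = H_{p'}$. This is legitimate because, by construction, the branch loci of these curves are the supports of $e_{p_0,p}$ and $e_{p_0,p'}$ modulo $2$. The lemma gives
\[
c(H_p, p')\,c(H_{p'}, p) = c_t(H_p,p_0)^{\deg p'}\, c_t(H_{p'},p_0)^{\deg p}\,(-1)^{\frac{q-1}{2}\deg(p)\deg(p')}.
\]
The two normalized second-order classes on the right equal $1$. Passing to $\F_2$, the answer is therefore $\frac{q-1}{2}\deg(p)\deg(p') \bmod 2$. This is $0$ when $q\equiv 1 \pmod 4$ and $\deg(p)\deg(p')$ when $q \equiv 3\pmod 4$, as claimed.

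The main point needing care is matching normalizations. The curve $H_p$ must be the same object in both applications of Lemma~\ref{lem:pairing-by-hyperelliptics}, with the same $t$, and it must satisfy the hypotheses of Lemma~\ref{lem:quadratic-reciprocity}. Note that $H_p$ may actually be unramified at $p_0$ when $\deg p$ is even, or at $p$ when $\deg(p_0)$ is even, though the latter cannot happen since $\deg(p_0)$ is odd. When $\deg p$ is even, $c_t(H_p,p_0)$ is independent of $t$, and the normalization $c_t(H_p\to\PP^1,p_0) = 1$ still makes sense.

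One should also check that the hypothesis $\deg(p),\deg(p')\ge\deg(p_0)$ is not needed for the argument. It seems to be present only to match the ordering conventions of the tuples $B$, so I would remark that it plays no role here. Lastly, we need $p\neq p'$, both distinct from $p_0$, for Lemma~\ref{lem:quadratic-reciprocity} to apply. Degenerate cases such as $p = p'$ are excluded because they are distinct branch points.
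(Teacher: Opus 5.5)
Your proposal is correct and is essentially the paper's proof. You apply Lemma~\ref{lem:pairing-by-hyperelliptics} to get $H_p, H_{p'}$ normalized with $c_t(\cdot,p_0)=1$, add the two identities so that the $\deg(p)\deg(p')\,c_t(X\to\PP^1,p_0)_+$ terms cancel in $\F_2$, and conclude with Lemma~\ref{lem:quadratic-reciprocity}; the only cosmetic difference is that the paper fixes the specific uniformizer $h_0$ from the reciprocity lemma, whereas you use an arbitrary class $t$, which is harmless since that lemma holds for any $t$.
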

\begin{proof}
Let $h_0$ be the uniformizer at $p_0$ coming from Lemma~\ref{lem:quadratic-reciprocity}.
To compute $\ip{\pi^{-1}e_{p_0, p'}}{\pi^{-1}e_{p_0, p}}_X'$ and $\ip{\pi^{-1}e_{p_0, p}}{\pi^{-1}e_{p_0, p'}}_X'$, we use Lemma~\ref{lem:pairing-by-hyperelliptics}. This lemma produces hyperelliptic curves $H_p$ and $H_{p'}$ such that $c_{h_0}(H_p \to \PP^1_k, p_0) = c_{h_0}(H_{p'} \to \PP^1_k, p_0) = 1 \in k^\times/(k^\times)^2$.  We get \[
c(H_p \to \PP^1_k, p')_+ + \deg(p)\deg(p')c_{h_0}(X \to \PP^1_k, p_0)_+ = \ip{\pi^{-1}e_{p_0, p'}}{\pi^{-1}e_{p_0, p}}'_X
\]
and \[
c(H_{p'} \to \PP^1_k, p)_+ + \deg(p)\deg(p')c_{h_0}(X \to \PP^1_k, p_0)_+ = \ip{\pi^{-1}e_{p_0, p}}{\pi^{-1}e_{p_0, p'}}'_X.
\]
Adding these equations together and using Lemma~\ref{lem:quadratic-reciprocity} gives \[
\ip{\pi^{-1}e_{p_0, p'}}{\pi^{-1}e_{p_0, p}}'_X + \ip{\pi^{-1}e_{p_0, p}}{\pi^{-1}e_{p_0, p'}}'_X = \frac{q - 1}{2}\deg(p)\deg(p')
\]
which is the desired result.
\end{proof}

Let $S$ be the set of branch points of $X \to \PP^1_k$. We recall that the pairing $\ip{\cdot}{\cdot}'_X$ is lifted from $\Z\langle S\rangle^0/2\Z\langle S\rangle^0$ along a two-to-one map with kernel $\sum_{p' \in S - \{p_0\}} \pi^{-1}e_{p_0, p'}$ ( see Lemma~\ref{lem:phi-kernel}). Thus, we have \[
\sum_{p' \in S - \{p_0\}} \ip{\pi^{-1}e_{p_0, p'}}{\pi^{-1}e_{p_0, p}}'_X = \ip{\sum_{p' \in S - \{p_0\}} \pi^{-1}e_{p_0, p'}}{\pi^{-1}e_{p_0, p}}'_X = 0
\]
and \[
\sum_{p \in S - \{p_0\}} \ip{\pi^{-1}e_{p_0, p'}}{\pi^{-1}e_{p_0, p}}'_X = \ip{ \pi^{-1}e_{p_0, p'}}{\sum_{p \in S - \{p_0\}}\pi^{-1}e_{p_0, p}}'_X = 0.
\]

Thus, ordering the basis $\{\pi^{-1}e_{p_0, p} \mid p \in S - \{p_0\}\}$ for the preimage of $\Pic^0(X)(k)[2]$ in $\Z\langle S\rangle^0/2\Z\langle S\rangle^0$ by even degrees, then odd degrees, there are two cases for the pairing depending on the residue of $q$ mod $4$:
\begin{itemize}
    \item If $q \equiv 1\pmod{4}$, the matrix of the pairing is symmetric with row (and column) sums 0.
    \item If $q \equiv 3\pmod{4}$, the matrix of the pairing can be written in block form: \[
    \begin{pmatrix}
    A_1 & B \\
    B^\intercal & A_2
    \end{pmatrix}
    \] 
    where $A_1$ is symmetric, $A_2 - A_2^\intercal = J-I$ (and $J$ denotes the matrix with $1$ in every entry and $I$ the identity matrix), and the rows and columns once again sum to 0.
\end{itemize}

In the next section, we will systematically study such matrices and determine their rank distributions.

\section{Theory of Random $C$-symmetric Matrices over Finite Fields}\label{sect:random-matrices}

In this section, we study random matrices satisfying the same symmetry constraints as the R\'edei matrix, which are described in Subsection~\ref{sect:symmetry}. We determine the limiting corank distribution for most such matrices and give effective bounds on the rate of convergence. A similar result about corank distributions of matrices satisfying certain symmetry ``rules'' was obtained by Koymans and Pagano in \cite{Koymans2020EffectiveCO}. Compared to their result, ours treats a less general situation with the benefit of exposing some more linear algebraic structure.

In this section, let $\F_\ell$ be a finite field of order $\ell$, any prime power. Also, in this section, let $C$ be an alternating bilinear form on $\F_\ell^n$, i.e., a skew-symmetric $n\times n$ matrix with zeroes along the diagonal.

\begin{definition}
An $n\times n$ matrix $M$ over $\F_\ell$ is called \textit{$C$-symmetric} if $M - M^\intercal = C$.

If $M - M^\intercal$ has rank $c$ for some even $c \geq 0$, we say $M$ is \textit{$c$-symmetric}.
\end{definition}

Easily, being $O$-symmetric (where $O$ is the all-$0$ matrix) is the same as being symmetric.

The matrix of the pairing is over $\F_2$, and with a suitable ordering of the basis, it is:
\begin{itemize}
    \item If $q \equiv 1\pmod{4}$, $O$-symmetric with row and column sums 0.
    \item If $q \equiv 3\pmod{4}$, $C$-symmetric where \[
    C=\begin{pmatrix}
    O & O \\
    O & J-I
    \end{pmatrix}
    \] 
     while the rows and columns still sum to 0.
\end{itemize}

Later we will show that the matrix of the pairing we obtain in certain families is close to being uniformly distributed over $C$-symmetric matrices whose rows and columns sum to $0$ (for appropriate $C$). These matrices preserve their rank when their last row and column are deleted and become a uniformly distributed $C$-without-last-row-and-column-symmetric matrix; this motivates us to study the rank distribution of a uniformly random $C$-symmetric matrix. We summarize this with the following lemma:
\begin{lemma}\label{lem:remove-column}
    Let $\ell$ be a prime power and let $C$ be an alternating matrix on $\F_\ell^n$ whose rows and columns sum to 0. Let $M_n$ be uniformly distributed among $C$-symmetric $n\times n$ matrices whose rows and columns sum to 0. Let $T_n$ and $C'$ be the matrix obtained by deleting the last row and column of $M_n$ and $C$ respectively. Then $T_n$ is uniformly distributed among $C'$-symmetric $(n-1)\times (n-1)$ matrices, and \[\dim \ker M_n = \dim \ker T_n +1.\]
\end{lemma}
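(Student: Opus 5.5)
The plan is to exhibit an explicit bijection between the two sets of matrices, given by deleting the last row and column, and then to compare kernels directly. Write $\mathbf{1}\in\F_\ell^n$ for the all-ones vector.

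First we show the deletion map $M\mapsto T$ is a bijection from $C$-symmetric $n\times n$ matrices with zero row and column sums onto $C'$-symmetric $(n-1)\times(n-1)$ matrices.

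\begin{itemize}
\item \emph{The image lands in the right set.} Deleting the last row and column of $M-M^\intercal=C$ gives $T-T^\intercal=C'$.
\item \emph{Injectivity.} Given $T$, the conditions $M\mathbf{1}=0$ and $\mathbf{1}^\intercal M=0$ force the rest of $M$:
\begin{itemize}
\item the last column has entries $M_{i,n}=-\sum_{j<n}T_{ij}$ for $i<n$;
\item the last row has entries $M_{n,j}=-\sum_{i<n}T_{ij}$ for $j<n$;
\item the corner entry is $M_{n,n}=\sum_{i,j<n}T_{ij}$, obtained from either the last row or the last column, and both give the same value.
\end{itemize}
\item \emph{Surjectivity.} We must check that this unique completion $M$ is $C$-symmetric. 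For $i<n$,
\[
M_{i,n}-M_{n,i}=-\sum_{j<n}(T_{ij}-T_{ji})=-\sum_{j<n}C_{ij}=C_{i,n},
\]
where the last equality uses that the $i$-th row of $C$ sums to $0$ and $C$ is alternating. By antisymmetry the same holds for the entries $M_{n,i}-M_{i,n}=C_{n,i}$. The diagonal entry $M_{n,n}-M_{n,n}=0=C_{n,n}$ holds trivially.
\end{itemize}
Since the map is a bijection, the uniform distribution on the first set pushes forward to the uniform distribution on the second. This proves the first claim.

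For the kernel statement, the hypothesis $M\mathbf{1}=0$ puts $\mathbf{1}$ in $\ker M_n$. Consider the map $\ker T_n\to\F_\ell^n$ sending $v\mapsto (v,0)$.

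\begin{itemize}
\item \emph{Its image lies in $\ker M_n$.} The first $n-1$ coordinates of $M(v,0)$ are $Tv=0$. The last coordinate is $\sum_{j<n}M_{n,j}v_j$, which equals $-\mathbf{1}^\intercal T v=0$ because the last row of $M$ is minus the sum of the rows of $T$.
\item \emph{Together with $\mathbf{1}$ these vectors span $\ker M_n$.} Given $w\in\ker M_n$, the vector $w-w_n\mathbf{1}$ lies in $\ker M_n$ and has last coordinate $0$. Its first $n-1$ coordinates therefore lie in $\ker T_n$.
\item \emph{The sum is direct.} The vector $\mathbf{1}$ is not of the form $(v,0)$, since its last coordinate is nonzero.
\end{itemize}
Hence $\dim\ker M_n=\dim\ker T_n+1$.

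There is no real obstacle here. The only point needing care is the consistency check for the last row and column, and it is exactly the hypothesis that $C$ has zero row sums that makes it work.
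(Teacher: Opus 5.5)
Your proof is correct and complete. The paper gives no proof of this lemma; it only says, just before the statement, that such matrices keep their rank and become uniformly $C'$-symmetric once the last row and column are deleted. Your argument supplies exactly that omitted verification:
\begin{itemize}
\item The zero row and column sums determine the last row and column from $T_n$, and the zero row sums of $C$ ensure the completion is $C$-symmetric, so deletion is a bijection.
\item The kernel splits as $\ker M_n=\langle \mathbf{1}\rangle\oplus\{(v,0) : v\in\ker T_n\}$.
\end{itemize}
A minor precision: the equality $-\sum_{j<n}C_{ij}=C_{i,n}$ uses only the row-sum condition on $C$. The alternating property is what handles the $(n,i)$ and $(n,n)$ entries.
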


The remainder of this section then studies the rank distribution of a uniformly distributed $C$-symmetric matrix for $C=O$ (symmetric) and more general $C$.

\subsection{Symmetric matrices}\label{sect:symmetric-random-matrices}

The nullity of a random $n\times n$ symmetric matrix over $\F_2$ was computed in \cite[Theorem 2]{MacWilliams01021969}. The same computation works over any finite field.

We define \[
\mu_{S, \ell}(r) \coloneqq \frac{|\wedge^2 \F_\ell^r|}{|\GL_r(\F_\ell)|}\prod_{k=0}^\infty (1 - \ell^{-2k - 1}).
\]

\begin{theorem}[See {\cite[Theorem 2]{MacWilliams01021969}}]
Let $M_n$ be uniformly distributed among symmetric $n\times n$ matrices in $\F_\ell$. Then \[
\PP[\dim\ker M_n = r] = \ell^{\binom{n - r + 1}{2} - \binom{n + 1}{2}}\binom{n}{r}_\ell\prod_{k=0}^{\lfloor\frac{n - r - 1}{2}\rfloor}(1 - \ell^{-2k - 1}).
\]
where $\binom{n}{r}_\ell$ is the number of $r$-dimensional subspaces of $\F_\ell^n$.
\end{theorem}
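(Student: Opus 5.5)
We prove the formula by counting symmetric matrices of a given corank. A symmetric $n\times n$ matrix $M$ with kernel $K$ of dimension $r$ is the same as a pair consisting of an $r$-dimensional subspace $K\subseteq\F_\ell^n$ and a \emph{nondegenerate} symmetric bilinear form on the quotient $\F_\ell^n/K$. Indeed, $M$ descends to a symmetric form on the quotient, which is nondegenerate precisely because $K$ is the whole radical. Conversely, any nondegenerate symmetric form on the quotient pulls back to a symmetric form whose radical is exactly $K$. Therefore
\[
\#\{M \text{ symmetric}, \dim\ker M = r\} = \binom{n}{r}_\ell \cdot N_{n-r},
\]
where $N_m$ denotes the number of invertible symmetric $m\times m$ matrices over $\F_\ell$. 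The total number of symmetric $n\times n$ matrices is $\ell^{\binom{n+1}{2}}$. The claimed formula is therefore equivalent to the identity
\[
N_m = \ell^{\binom{m+1}{2}}\prod_{k=0}^{\lfloor (m-1)/2\rfloor}(1-\ell^{-2k-1}).
\]

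To compute $N_m$, we derive a recursion by applying the same decomposition one dimension lower, via the first row and column. Write a symmetric $m\times m$ matrix in block form $\begin{pmatrix} a & v^\intercal\\ v & A\end{pmatrix}$ with $A$ symmetric of size $m-1$. Suppose $\dim\ker A = s$. Then the corank of the full matrix can be analyzed by completing squares (Schur complement) on the nondegenerate part of $A$. Split $v$ into its component in the image of $A$ and a component that pairs nontrivially with $\ker A$. One finds that the full matrix has corank $s-1$, $s$, or $s+1$, and that only $s\in\{0,1\}$ can yield an invertible matrix:
\begin{itemize}
\item If $s=0$, the full matrix is invertible exactly when $a\neq v^\intercal A^{-1}v$. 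This holds for $\ell-1$ of the $\ell$ choices of $a$, for every $v$.
\item If $s=1$, the full matrix is invertible exactly when $v$ pairs nontrivially with the kernel line, with $a$ arbitrary. This gives $(\ell^{m-1}-\ell^{m-2})\ell$ choices of $(v,a)$.
\end{itemize}
The number of symmetric $(m-1)\times(m-1)$ matrices of corank $1$ is $\binom{m-1}{1}_\ell N_{m-2}$ by the first paragraph. This yields
\[
N_m=(\ell-1)\ell^{m-1}N_{m-1}+\ell^{m-1}(\ell-1)\,\frac{\ell^{m-1}-1}{\ell-1}\,N_{m-2}.
\]

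It then remains to verify by induction on $m$ that the closed form satisfies this recursion, with $N_0=1$ and $N_1=\ell-1$. Dividing by $\ell^{\binom{m+1}{2}}$ and writing $P_m:=\prod_{k=0}^{\lfloor (m-1)/2\rfloor}(1-\ell^{-2k-1})$, the recursion becomes
\[
P_m=(1-\ell^{-1})P_{m-1}+\ell^{-1}(1-\ell^{-(m-1)})P_{m-2}.
\]
We check this separately for $m$ odd, where $P_m=P_{m-1}(1-\ell^{-m})$ and $P_{m-1}=P_{m-2}$, and for $m$ even, where $P_m=P_{m-1}$ and $P_{m-1}=P_{m-2}(1-\ell^{-(m-1)})$. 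In each case both sides reduce to the same elementary expression in $\ell^{-1}$ and $\ell^{-m}$.

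I expect the main obstacle to be the careful case analysis in the recursion. In particular, one must confirm that no other value of $s$ contributes invertible matrices, and that the counts in the $s=0$ and $s=1$ cases are exactly as stated, without any dependence on the quadratic-form type of $A$. In odd characteristic there is a discriminant class to keep track of, but it does not affect these counts. In characteristic $2$ there are diagonal subtleties: the Schur-complement step uses only $v^\intercal A^{-1}v$ and never divides by $2$, so it should go through uniformly for all $\ell$.
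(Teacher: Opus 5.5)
Your proof is correct. The paper gives no argument of its own: it cites MacWilliams' count (which it describes as being over $\F_2$) and asserts that ``the same computation works over any finite field,'' so your proposal in effect supplies that verification.

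The computation behind the citation borders a symmetric matrix by one row and column and tracks all ranks at once, giving a recursion in both the size and the rank. You instead use the radical first. Identifying corank-$r$ symmetric matrices with pairs (an $r$-dimensional subspace, a nondegenerate form on the quotient) splits off the factor $\binom{n}{r}_\ell$ immediately. That leaves only the one-variable recursion $N_m=(\ell-1)\ell^{m-1}N_{m-1}+\ell^{m-1}(\ell^{m-1}-1)N_{m-2}$ for invertible matrices, which your parity check solves. Your route is shorter and makes it transparent why the answer does not depend on the characteristic: it uses no diagonalization and no division by $2$, which is exactly the point the paper leaves to the reader. The rank-by-rank recursion, in exchange, produces the whole table of rank counts directly without the quotient step.

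The one claim you state without proof, the $s=1$ criterion, is quick to justify. Suppose $\ker A=\langle w\rangle$; since $A$ is symmetric, $\operatorname{im}A=w^\perp$.
\begin{itemize}
\item If $v^\intercal w=0$, then the vector $(0,w)$ lies in the kernel of the bordered matrix.
\item If $v^\intercal w\neq 0$, then any kernel vector $(c,y)$ satisfies $cv=-Ay\in w^\perp$, which forces $c=0$. Then $y\in\langle w\rangle$, and $v^\intercal y=0$ forces $y=0$.
\end{itemize}
The case $s\geq 2$ is excluded because bordering raises the rank by at most $2$.
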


After some computation (see, e.g., \cite[Theorem 4.1]{fulmangoldstein2015ranks} for a much stronger result) one finds:

\begin{corollary}\label{cor:effective-symmetric-dist}
Let $M_n$ be uniformly distributed among symmetric $n\times n$ matrices over $\F_\ell$. Then when $0 \leq r < n$, we have \[
\left|\PP[\dim \ker M_n = r] - \mu_{S, \ell}(r)\right| = O_{\ell, r}(\ell^{-n}).
\]
\end{corollary}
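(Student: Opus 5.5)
We will compare the exact formula from the theorem of MacWilliams term by term with $\mu_{S,\ell}(r)$. Write the exact probability as a product of three factors and handle each separately for fixed $r$ with $n\to\infty$.

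The power of $\ell$ is handled exactly. We have $\binom{n-r+1}{2}-\binom{n+1}{2} = -rn + \binom{r}{2}$, since $(n-r+1)(n-r)-(n+1)n = -2rn + r^2 - r$. Using the notation section, $\binom{n}{r}_\ell = \ell^{rn}\prod_{i=0}^{r-1}(1-\ell^{i-n})/\#\GL_r(\F_\ell)$. The factors $\ell^{-rn}$ and $\ell^{rn}$ then cancel, leaving
\[
\PP[\dim\ker M_n = r] = \frac{\ell^{\binom{r}{2}}}{\#\GL_r(\F_\ell)}\prod_{i=0}^{r-1}(1-\ell^{i-n})\prod_{k=0}^{\lfloor\frac{n-r-1}{2}\rfloor}(1-\ell^{-2k-1}).
\]
Since $|\wedge^2\F_\ell^r| = \ell^{\binom r2}$, the prefactor is exactly the prefactor of $\mu_{S,\ell}(r)$.

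Next we bound the two remaining products. The first satisfies $\prod_{i=0}^{r-1}(1-\ell^{i-n}) = 1 + O_{\ell,r}(\ell^{-n})$: expand it and note that $\sum_{i<r}\ell^{i-n}\le \ell^{r-n}$, which is $O_{\ell,r}(\ell^{-n})$. The second is the truncated product. Let $N=\lfloor (n-r-1)/2\rfloor$. The ratio of the infinite product to the truncated one is $\prod_{k>N}(1-\ell^{-2k-1})$, which lies in $[1-\sum_{k>N}\ell^{-2k-1},1]$. The tail sum is $O_\ell(\ell^{-2N-3})$, which is $O_{\ell,r}(\ell^{-n+r})$, hence $O_{\ell,r}(\ell^{-n})$. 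All products involved are bounded by $1$ in absolute value.

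Finally, multiply the two error factors $(1+O(\ell^{-n}))$ to get a relative error of $O_{\ell,r}(\ell^{-n})$, and multiply by the bounded quantity $\mu_{S,\ell}(r)\le \ell^{\binom r2}/\#\GL_r$ to get the absolute bound. The only step needing care is the tail exponent when $n-r$ is small. This is absorbed into the $r$-dependence of the constant; the hypothesis $r<n$ just ensures the formula applies.
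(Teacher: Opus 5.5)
Your proof is correct and takes essentially the paper's route. The paper gives no detailed argument; it only says the bound follows from MacWilliams' formula "after some computation" and cites Fulman--Goldstein for a sharper version. Your cancellation of $\ell^{\pm rn}$, together with the estimates $1-\ell^{r-n}\le\prod_{i<r}(1-\ell^{i-n})\le 1$ and $\sum_{k>N}\ell^{-2k-1}\le \ell^{-(n-r+1)}/(1-\ell^{-2})$, is exactly that computation. One small quibble: the formula also holds at $r=n$ (the product is then empty), so $r<n$ is not what makes it applicable, though this does not affect your argument.
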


\subsection{General $C$-symmetric matrices}\label{sect:c-symmetric-random-matrices}

Let $M$ be a $C$-symmetric matrix. If $B \in \GL_n(\F_\ell)$, then $B^\intercal M B$ is $B^\intercal C B$-symmetric. Moreover, $\ker B^\intercal M B = B^{-1}\ker M$. If $M$ is a uniformly random $C$-symmetric matrix then $B^\intercal M B$ is a uniformly random $B^\intercal CB$-symmetric matrix.

The group $\GL_n(\F_\ell)$ acts on the set of alternating bilinear forms of any fixed rank on $\F_\ell^n$ by change of basis, i.e., $B\cdot C \coloneqq B^\intercal C B$, transitively. Thus, if $B$ is a uniformly random automorphism of $\F_\ell^n$ and $M$ is a uniformly random $C$-symmetric matrix for some fixed $C$, then $B^\intercal M B$ is a uniformly random $c$-symmetric matrix for $c = \rank C$. Since $B^\intercal M B$ has the same rank as $M$, the rank distribution of a uniformly random $c$-symmetric matrix is the same as the rank distribution of a uniformly random $C$-symmetric matrix whenever $c = \rank C$.

The goal of this section is to determine this rank distribution. We define \[
\mu_{CL, \ell}(r) \coloneqq \frac{1}{|\GL_r(\F_\ell)|}\prod_{i=r+1}^\infty (1 - \ell^{-i}),
\]
the limiting corank distribution of large uniformly random square matrices over $\F_\ell$ (see, e.g., \cite[Equation (2)]{fulmangoldstein2015ranks}).

We first turn our attention to the probability that a random subspace $V$ of dimension $r$ lies in $\ker M$, which then determines the rank distribution through Möbius inversion:

\begin{lemma}\label{beforetotisotropic}
Let $V \subseteq \F_\ell^n$ with $\dim V = r$. Let $C$ be a fixed alternating form on $\F_\ell^n$. Let $M$ be a uniformly random $C$-symmetric matrix. Then \[
\PP[V \subseteq \ker M] = \begin{cases}
    \ell^{-nr + \frac{r(r - 1)}{2}} &\text{if }V\text{ is totally isotropic for }C; \\
    0 &\text{otherwise}.
\end{cases}
\]
\end{lemma}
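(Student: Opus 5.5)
We would parametrize the set of $C$-symmetric matrices as an affine space and reduce the computation to counting solutions of an affine linear system. Fix one $C$-symmetric matrix $M_0$; for instance, take the strictly upper triangular part of $C$. Since $C$ is alternating with zero diagonal, $M_0 - M_0^\intercal = C$. Then every $C$-symmetric matrix has the form $M = M_0 + S$ with $S$ symmetric, and a uniformly random $C$-symmetric $M$ corresponds to a uniformly random symmetric $S$.

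The condition $V \subseteq \ker M$ says $Sv = -M_0 v$ for all $v \in V$. This is an affine-linear condition on $S$. The solution set is therefore either empty or a coset of the space
\[
\{S \text{ symmetric} : SV = 0\},
\]
so the probability is either $0$ or $\ell^{-\operatorname{codim}}$.

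First we compute this codimension. Change basis by some $B \in \GL_n(\F_\ell)$ so that $V$ is spanned by $e_1,\dots,e_r$; this preserves uniformity of symmetric matrices via $S \mapsto B^\intercal S B$. The condition $SV=0$ then says the first $r$ columns of $S$ vanish, and by symmetry also the first $r$ rows. The free entries that remain are those of an $(n-r)\times(n-r)$ symmetric block. Hence the codimension is
\[
\binom{n+1}{2} - \binom{n-r+1}{2} = nr - \frac{r(r-1)}{2},
\]
which matches the claimed probability.

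Next we decide when the affine system is consistent, which is the one real step. Set $W = \F_\ell^n$ and consider the linear map $\Psi \colon \mathrm{Sym}_n \to \Hom(V, W)$ given by $S \mapsto S|_V$. We need to know when $-M_0|_V$ lies in the image of $\Psi$.

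The image of $\Psi$ consists of exactly those maps $T \colon V \to W$ with $u^\intercal T v = v^\intercal T u$ for all $u,v \in V$. Necessity is clear. For sufficiency, work in the adapted basis: a symmetric $r\times r$ top block together with an arbitrary lower $(n-r)\times r$ block can be extended to a symmetric matrix. Alternatively, a dimension count gives it: the image has dimension $nr - \binom{r}{2}$, which equals the dimension of the space cut out by these $\binom{r}{2}$ independent conditions.

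Now apply this to $T = -M_0|_V$. For $u,v\in V$,
\[
u^\intercal M_0 v - v^\intercal M_0 u = u^\intercal (M_0 - M_0^\intercal) v = u^\intercal C v.
\]
So $-M_0|_V$ is in the image of $\Psi$ if and only if $C$ vanishes on $V\times V$, that is, if and only if $V$ is totally isotropic for $C$.

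Combining the two steps gives the dichotomy in Lemma~\ref{beforetotisotropic}. When $V$ is totally isotropic, $\PP[V\subseteq\ker M] = \ell^{-nr+r(r-1)/2}$; otherwise the system has no solutions and the probability is $0$. I expect no serious obstacle. The only care needed is the image characterization of $\Psi$, which the adapted-basis argument settles.
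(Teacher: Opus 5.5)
Your proof is correct. It shares the paper's key move, a basis adapted to $V$, but organizes the argument differently.

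The paper conjugates the whole ensemble: $B^\intercal M B$ is a uniformly random $B^\intercal C B$-symmetric matrix. It then reads two things directly off the entries of that matrix. First, the first $r$ columns involve $nr - \binom{r}{2}$ free parameters. Second, a nonzero upper-left $r\times r$ corner of $B^\intercal C B$ prevents those columns from vanishing.

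You instead translate by the particular solution $M_0$, which reduces everything to uniformly random symmetric matrices. The problem then splits in two. One part is a $C$-independent homogeneous count, $\dim\{S : SV = 0\}$. The other is the consistency of an affine system, which you settle intrinsically through the image of the restriction map $\mathrm{Sym}_n \to \Hom(V, \F_\ell^n)$. In your approach $C$ enters only through the identity $u^\intercal M_0 v - v^\intercal M_0 u = u^\intercal C v$.

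Your version makes two things transparent: the answer must be either $0$ or $\ell^{-\operatorname{codim}}$, and the nonzero value does not depend on $C$. The paper's version is shorter and more hands-on. The same adapted-basis linear algebra underlies both.
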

Here we say $V$ is \textit{totally isotropic} for $C$ if the restriction of the alternating form $C$ to $V$ is zero, i.e., $v^\intercal C w = 0$ for all $v, w \in V$.
\begin{proof}
Let $e_1, \dots, e_n$ be the standard basis for $\F_\ell^n$. Let $B \in \GL_n(\F_\ell)$ send $\vspan\{e_1, \dots, e_r\}$ to $V$. Then $B^\intercal M B$ is a uniformly random $B^\intercal C B$-symmetric matrix, and \[
\PP[V \subseteq \ker M] = \PP[\vspan\{e_1, \dots, e_r\} \subseteq \ker B^\intercal M B].
\]
This is the probability that the first $r$ columns of $B^\intercal M B$ are zero. If the upper-left $r\times r$ corner of $B^\intercal C B$ is zero, then this probability is $\ell^{-nr + \frac{r(r-1)}{2}}$. Otherwise, the first $r$ columns of $B^\intercal M B$ are never zero.

The condition that the upper-left $r\times r$ corner of $B^\intercal C B$ is zero is equivalent to the condition that the bilinear form $B^\intercal C B$ vanishes on $\vspan\{e_1, \dots, e_r\}$, or equivalently that the bilinear form $C$ vanishes on $V$. 
\end{proof}
Therefore, we focus on the probability that a random $r$-dimensional subspace is totally isotropic for a fixed rank-$c$ alternating form $C$. The main idea is that any alternating form on $\F_\ell^n$ descends to a nondegenerate alternating form after quotienting by the radical of $C$. The limiting probability that an $r$-dimensional subspace is totally isotropic for a nondegenerate form in a high-dimensional space is known, so the problem reduces to understanding the image of a random $r$-dimensional subspace under the quotient map. It turns out that this image is almost always a uniformly random $r$-dimensional subspace of the quotient. To see this, we show that a random $r$-dimensional subspace almost always intersects the radical of $C$ trivially if $C$ has high enough rank:

\begin{lemma}\label{lem:intersection-prob}
Let $K$ be a fixed subspace of $\F_\ell^n$ of dimension $k$ and let $V$ be a uniformly random subspace of $\F_\ell^n$ of dimension $r$. Then for $d > 0$, we have \[
\PP[\dim(V \cap K) \geq d] \leq \frac{\ell^{k + r - n}}{\ell^d - 1}.
\]
\end{lemma}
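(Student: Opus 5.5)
The plan is a first-moment (union bound) argument over the nonzero vectors of $K$, which avoids any exact $q$-binomial computation. Write $Z \coloneqq \#\bigl((V \cap K) \setminus \{0\}\bigr)$. If $\dim(V \cap K) \geq d$, then $V \cap K$ contains at least $\ell^d - 1$ nonzero vectors, so $Z \geq \ell^d - 1$. By Markov's inequality,
\[
\PP[\dim(V\cap K)\geq d] \leq \PP[Z \geq \ell^d - 1] \leq \frac{\E[Z]}{\ell^d - 1}.
\]
It therefore suffices to show $\E[Z] \leq \ell^{k+r-n}$.

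By linearity of expectation, $\E[Z] = \sum_{0 \neq v \in K} \PP[v \in V]$. Since $\GL_n(\F_\ell)$ acts transitively on nonzero vectors and the law of $V$ is invariant under this action, $\PP[v \in V]$ does not depend on $v \neq 0$. We compute it by double counting pairs $(v, V)$ with $0 \neq v \in V$. Summing over $V$, each $V$ contributes $\ell^r - 1$ vectors. Summing over $v$, each $v$ lies in the same number of $r$-dimensional subspaces. Hence
\[
\PP[v \in V] = \frac{\ell^r - 1}{\ell^n - 1}.
\]
This gives
\[
\E[Z] = (\ell^k - 1)\frac{\ell^r - 1}{\ell^n - 1}.
\]

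The remaining step is the elementary inequality $(\ell^k - 1)(\ell^r - 1) \leq \ell^{k+r-n}(\ell^n - 1)$. Expanding, it reads
\[
\ell^{k+r} - \ell^k - \ell^r + 1 \leq \ell^{k+r} - \ell^{k+r-n}.
\]
This is equivalent to $\ell^{k+r-n} + 1 \leq \ell^k + \ell^r$. It holds because $k, r \leq n$ gives $\ell^{k+r-n} \leq \min(\ell^k, \ell^r)$, while $1 \leq \max(\ell^k, \ell^r)$.

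There is no real obstacle here. The only points needing care are the transitivity claim (uniformity of $V$ makes $\PP[v\in V]$ constant) and remembering that $\dim \geq d$ yields $\ell^d - 1$ nonzero vectors rather than $\ell^d$. The bound is also trivially true when $k + r - n \geq d$ or when $d > \min(k, r)$, so no case split is needed.
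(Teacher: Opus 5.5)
Your proof is correct and is essentially the paper's argument. The paper applies Markov's inequality to $\PP[v \in K \mid V]$ for a uniform nonzero $v \in V$, which is exactly your $Z/(\ell^r - 1)$, and gets its mean from $v$ being uniform on $\F_\ell^n \setminus \{0\}$, which is your double count. The only difference is that you keep the exact mean $(\ell^k-1)(\ell^r-1)/(\ell^n-1)$ and finish with one elementary inequality, whereas the paper absorbs the slack through $\#V - 1 \le \#V$ and $(\ell^k-1)/(\ell^n-1) \le \ell^{k-n}$.
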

\begin{proof}
We observe that, for fixed $V'$ of dimension $r$, and for $v$ chosen uniformly at random in $V' - \{0\}$, we have \[
\PP[v \in K] = \frac{\#(V' \cap K) - 1}{\#V' - 1} \geq \frac{\#(V' \cap K) - 1}{\#V'}.
\]
Now consider the random variable \[
\PP[v \in K \mid V]
\]
where $v$ is chosen uniformly at random in $V - \{0\}$. Note that since $V$ is uniformly random, the unconditional distribution of $v$ is uniformly random in $\F_\ell^n - \{0\}$. 

By the law of total expectation, we have \[
\E[\PP[v \in K \mid V]] = \PP[v \in K] = \frac{\#K - 1}{\ell^n - 1} \leq \ell^{k - n}.
\]

Now by Markov's inequality, we have \begin{align*}
    \PP[\dim(V \cap K) \geq d] &= \PP\left[\frac{\#(V \cap K) - 1}{\#V} \geq \frac{\ell^d - 1}{\#V}\right] \\
    &\leq \PP\left[\PP[v \in K \mid V] \geq \frac{\ell^d - 1}{\ell^r}\right] \\
    &\leq \frac{\E[\PP[v \in K \mid V]]\ell^r}{\ell^d - 1} \\
    &= \frac{\ell^{k + r - n}}{\ell^d - 1}
\end{align*}
as we wanted.
\end{proof}
The following result gives two-sided bounds for the probability that a random subspace of fixed dimension $r$ is totally isotropic for an alternating form of rank $c$ when $c$ is large compared to $r$.
\begin{proposition}\label{bound-isotropic-probability}
Let $C$ be an alternating form on $\F_\ell^n$ of rank $c$ and let $V$ be a random subspace of dimension $r \leq c/2$. Then \[
1 - 2\ell^{2r - 1 - n} \leq \frac{\PP[V\text{ is totally isotropic}]}{\ell^{-\binom{r}{2}}} \leq 1 + 2\ell^{\binom{r + 1}{2} - 1 - c}.
\]
\end{proposition}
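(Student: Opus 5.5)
We will compute the probability by building $V$ one basis vector at a time, and we will handle the radical $K$ of $C$ (of dimension $n - c$) separately using Lemma~\ref{lem:intersection-prob}. Choose $v_1, \dots, v_r$ sequentially, with each $v_j$ uniformly random in $\F_\ell^n - W_{j-1}$, where $W_{j-1} \coloneqq \vspan\{v_1, \dots, v_{j-1}\}$. Then $V = W_r$ is a uniformly random $r$-dimensional subspace. Moreover, $V$ is totally isotropic if and only if, for every $j$, the vector $v_j$ lies in
\[
W_{j-1}^{\perp} \coloneqq \{x : w^\intercal C x = 0 \text{ for all } w \in W_{j-1}\}.
\]

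We first compute the conditional probability at step $j$. The map $x \mapsto Cx$ restricted to $W_{j-1}$ has kernel $W_{j-1} \cap K$. Hence, writing $d_j \coloneqq \dim(W_{j-1} \cap K)$, we get
\[
\dim W_{j-1}^\perp = n - (j-1) + d_j .
\]
Now suppose $W_{j-1}$ is already totally isotropic, so that $W_{j-1} \subseteq W_{j-1}^\perp$. Then the conditional probability that $v_j \in W_{j-1}^\perp$ is
\[
\frac{\ell^{\,n-(j-1)+d_j} - \ell^{\,j-1}}{\ell^n - \ell^{\,j-1}} .
\]

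\textbf{Lower bound.} Since $d_j \geq 0$, each conditional factor is at least
\[
\ell^{-(j-1)}\,\frac{1-\ell^{2j-2-n}}{1-\ell^{j-1-n}} \;\geq\; \ell^{-(j-1)}\bigl(1-\ell^{2j-2-n}\bigr),
\]
regardless of the history. Multiplying over $j = 1, \dots, r$ gives
\[
\frac{\PP[V \text{ totally isotropic}]}{\ell^{-\binom r2}} \;\geq\; \prod_{j=1}^r \bigl(1-\ell^{2j-2-n}\bigr) \;\geq\; 1 - \sum_{j=1}^r \ell^{2j-2-n}.
\]
The geometric sum is at most $\ell^{2r-2-n}/(1-\ell^{-2}) \leq \tfrac43 \ell^{2r-2-n} \leq 2\ell^{2r-1-n}$, which is the stated lower bound.

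\textbf{Upper bound.} Split according to whether $V \cap K = 0$:
\[
\PP[\text{isotropic}] \;\leq\; \PP[\text{isotropic and } V\cap K = 0] + \PP[V \cap K \neq 0].
\]
On the event $V \cap K = 0$, every $d_j$ is $0$. So we bound the probability that all steps succeed while all $d_j = 0$ by the product of the conditional probabilities with $d_j = 0$. Each such factor is
\[
\ell^{-(j-1)}\,\frac{1-\ell^{2j-2-n}}{1-\ell^{j-1-n}} \;\leq\; \ell^{-(j-1)},
\]
so the first term is at most $\ell^{-\binom r2}$. The one point needing care is formalizing this sequential conditioning on the intersection of events; I expect this to be the main (though mild) obstacle. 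I would handle it by induction on $j$, bounding $\PP[E_j \cap \{d_{j+1}=0\}] \leq \ell^{-(j-1)}\,\PP[E_{j-1} \cap \{d_j = 0\}]$, where $E_j$ is the event that $W_j$ is isotropic.

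For the second term, Lemma~\ref{lem:intersection-prob} with $k = n-c$ and $d = 1$ gives
\[
\PP[V \cap K \neq 0] \;\leq\; \frac{\ell^{r-c}}{\ell-1} \;\leq\; 2\ell^{r-1-c},
\]
using $\ell/(\ell-1) \leq 2$. Dividing by $\ell^{-\binom r2}$ turns this into $2\ell^{\binom r2 + r - 1 - c} = 2\ell^{\binom{r+1}{2}-1-c}$, which is exactly the stated upper bound. The hypothesis $r \leq c/2$ only ensures that totally isotropic subspaces avoiding $K$ exist, so the main term is non-vacuous; the inequalities themselves do not use it.
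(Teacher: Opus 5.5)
Your proof is correct. Its skeleton matches the paper's: the lower bound is a sequential count using $\dim W^\perp\ge n-i$, and the upper bound splits on whether $V\cap K=0$, with Lemma~\ref{lem:intersection-prob} controlling $\PP[V\cap K\neq 0]$. The differences are in how the two bounds are finished.

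\textbf{Lower bound.} You finish with $\prod_j(1-x_j)\ge 1-\sum_j x_j$ instead of the logarithmic estimates of Lemma~\ref{lem:product-error}. This is more elementary and gives the sharper error $\tfrac43\ell^{2r-2-n}$.

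\textbf{Main term of the upper bound.} The paper passes to the nondegenerate quotient $\F_\ell^n/K$. It asserts that, conditionally on $V\cap K=0$, the image of $V$ there is a uniformly random $r$-dimensional subspace, and gets the exact value $\ell^{-\binom r2}\prod_{i=0}^{r-1}\frac{1-\ell^{2i-c}}{1-\ell^{i-c}}$. You instead stay in $\F_\ell^n$ and run the same sequential count on the event $V\cap K=0$. There $\dim W_{j-1}^\perp=n-(j-1)$ exactly, so each factor is at most $\ell^{-(j-1)}$.

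\textbf{What each route buys.} The paper's route gives an exact conditional probability. However, it relies on an unstated symmetry argument (transitivity of the stabilizer of $K$) for uniformity in the quotient. To reach the stated constant it also needs the observation that this product is at most $1$; Lemma~\ref{lem:product-error} alone would leave an extra $2\ell^{2r-1-c}$. Your route is self-contained and lands directly on $1+2\ell^{\binom{r+1}{2}-1-c}$.

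\textbf{Caveat on your closing remark.} Your lower-bound derivation does tacitly use $2r-2\le n$. You need $1-\ell^{2j-2-n}\ge 0$ both when you drop the denominator and when you multiply the factor-wise inequalities. The hypothesis $r\le c/2$ guarantees this. Outside that range the claimed lower bound is negative, so the inequality itself does hold regardless, as you say.
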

\begin{proof}
The cases $r = 0, 1$ are immediate.

We establish an easier lower bound first. The number of totally isotropic spaces is bounded below by  \[
\frac{\prod_{i=0}^{r-1} (\ell^{n - i} - \ell^i)}{|\GL_r(\F_\ell)|},
\]
where ${n-i}$ bounds the dimension of the space orthogonal to the first $i$ vectors from below. Divided by the total number of subspaces of dimension $r$ we get the lower bound
\[
\prod_{i=0}^{r-1}\frac{\ell^{n-i} - \ell^i}{\ell^n - \ell^i}  = \prod_{i=0}^{r-1} \frac{\ell^{-i}(1 - \ell^{2i - n})}{1 - \ell^{i-n}} = \ell^{-r(r-1)/2} \prod_{i=0}^{r-1}\frac{1 - \ell^{2i - n}}{1 - \ell^{i - n}}.
\]

As for the upper bound, let $K$ be the radical of $C$ and note that
\begin{align*}
    \PP\left[V\text{ is totally isotropic}\right] &= \sum_{d = 0}^r \PP[V\text{ is totally isotropic} \mid \dim(V \cap K) = d] \cdot \PP[\dim(V \cap K) = d] \\
    &\leq \PP[V\text{ is totally isotropic} \mid V \cap K = 0] + \PP[\dim(V \cap K) \geq 1]
\end{align*}
where the first term is equal to the probability a random $r$-dimensional space is isotropic for a non-degenerate alternating form on the $c$-dimensional quotient space $\F_{\ell}^n/K$:
\[
\frac{\prod_{i=0}^{r-1}(\ell^{c - i} - \ell^i)}{\prod_{i=0}^{r-1}(\ell^c - \ell^i)} = \prod_{i=0}^{r-1}\frac{\ell^{-i}(1 - \ell^{2i - c})}{1 - \ell^{i - c}} = \ell^{-\binom{r}{2}}\prod_{i=0}^{r-1}\frac{1 - \ell^{2i - c}}{1 - \ell^{i - c}}
\]
and the second term is bounded by Lemma \ref{lem:intersection-prob}. Thus, we have \[
\ell^{-\binom{r}{2}}\prod_{i=0}^{r-1}\frac{1 - \ell^{2i - n}}{1 - \ell^{i - n}} \leq \PP[V\text{ is totally isotropic}] \leq \ell^{-\binom{r}{2}}\prod_{i=0}^{r-1}\frac{1 - \ell^{2i - c}}{1 - \ell^{i - c}} + \frac{\ell^{r - c}}{\ell - 1}.
\]
The inequality in the claim follows from combining the above with the following lemma.
\end{proof}
\begin{lemma}\label{lem:product-error}
When $n \geq 2r$ we have \[
\left|\prod_{i=0}^{r-1} \frac{1 - \ell^{2i - n}}{1 - \ell^{i-n}} - 1\right| \leq 2\ell^{2r - 1 - n}.
\]
\end{lemma}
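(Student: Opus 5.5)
We need to bound $\prod_{i=0}^{r-1} \frac{1-\ell^{2i-n}}{1-\ell^{i-n}}$ with $n\ge 2r$. Write the product as $\prod_i (1+x_i)$ where
\[
x_i = \frac{\ell^{i-n}-\ell^{2i-n}}{1-\ell^{i-n}} = -\ell^{i-n}\frac{\ell^{i}-1}{1-\ell^{i-n}}.
\]
Each $x_i$ is $\le 0$, and $x_0=0$. Hence the product lies in $(0,1]$, and it suffices to show $1-\prod_i(1+x_i)\le 2\ell^{2r-1-n}$.

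We use the standard inequality $1-\prod(1+x_i)\le \sum |x_i|$, valid for $x_i\in[-1,0]$; this follows by induction. So the task is to bound $\sum_{i=1}^{r-1}|x_i|$.

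For $1\le i\le r-1$ we have $i-n\le r-1-n\le -r-1\le -1$, so $1-\ell^{i-n}\ge 1-\ell^{-1}\ge 1/2$ (as $\ell\ge 2$). Therefore
\[
|x_i|\le 2\ell^{i-n}(\ell^i-1)< 2\ell^{2i-n}.
\]
Summing gives
\[
\sum_{i=1}^{r-1}|x_i|\le 2\ell^{-n}\sum_{i=1}^{r-1}\ell^{2i}.
\]
The geometric sum is $\sum_{i=1}^{r-1}\ell^{2i} = \ell^{2r-2}\cdot\frac{1-\ell^{-2(r-1)}}{1-\ell^{-2}} \le \ell^{2r-2}\cdot\frac{\ell^2}{\ell^2-1}$. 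The result is $2\ell^{2r-n}\cdot\frac{1}{\ell^2-1}$, which we need to be at most $2\ell^{2r-1-n}$. That requires $\ell^2-1\ge \ell$, which holds for $\ell\ge2$.

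The only delicate point is not losing a factor of $2$ twice: once from the denominator $1-\ell^{i-n}$ and once from the geometric sum. If the crude bounds above turned out to be too lossy, we could tighten them using $\ell^i-1$ in place of $\ell^i$, or use $1-\ell^{i-n}\ge 1-\ell^{-r-1}$. With the bounds as written the constants already close: the denominator contributes a factor $2$, and the geometric series contributes $\ell/(\ell^2-1)\le 2/3$ relative to $\ell^{2r-1-n}$. The total is therefore at most $\frac{4}{3}\ell^{2r-1-n}\le 2\ell^{2r-1-n}$.
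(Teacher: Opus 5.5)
Your proof is correct, but it follows a different route from the paper's.

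The paper works with logarithms. It first cancels the terms $\log(1-\ell^{j-n})$ with $j$ even and $j\le r-1$, which occur in both the numerator and denominator sums. It then bounds each leftover odd-$j$ denominator term by the $(j+1)$ term, at the cost of one extra term $|\log(1-\ell^{r-n})|$. Using $-x\ge\log(1-x)\ge-\tfrac32x$ on $[0,\tfrac12]$, it reaches $\bigl|\log\prod\bigr|\le 2\ell^{2r-1-n}$. Finally it converts back via $x\le-\log(1-x)$, implicitly using that the product is at most $1$.

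You instead write each factor as $1-a_i$ with $0\le a_i=\ell^{i-n}(\ell^i-1)/(1-\ell^{i-n})<2\ell^{2i-n}$, and apply the Weierstrass inequality $\prod(1-a_i)\ge 1-\sum a_i$ directly. This buys you three things:
\begin{itemize}
\item It avoids both the logarithm estimates and the cancellation bookkeeping.
\item It handles every $r$ uniformly. The paper's intermediate bound $\tfrac32\bigl(\ell^{-n}\tfrac{\ell^{2r}-1}{\ell^2-1}+\ell^{r-n}\bigr)$ actually exceeds $2\ell^{2r-1-n}$ when $r=1$, $\ell=2$, a case the paper sets aside as trivial.
\item It gives the sharper constant $\tfrac43$.
\end{itemize}
The log formulation could in principle control the ratio from both sides, but that is not exploited, since the paper also finishes by using that the product is at most $1$.

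One small tidy-up: state explicitly why each factor is positive. For instance, $2i-n\le -2$, or your own bound $a_i<2\ell^{-2}\le\tfrac12$. This is exactly the hypothesis $x_i\in[-1,0]$ that your product inequality needs. Your closing remarks about possibly tightening the bounds can be dropped, since the constants already close.
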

\begin{proof}
The case $r = 1$ is trivial. We have \begin{align*}
\left|\log\prod_{i=0}^{r-1} \frac{1 - \ell^{2i - n}}{1 - \ell^{i-n}} \right| &= \left|\sum_{i=0}^{r-1}\log(1 - \ell^{2i - n}) - \sum_{i=0}^{r-1}\log(1 - \ell^{i - n})\right| \\
&\leq \sum_{i=0}^{r-1}|\log(1 - \ell^{2i - n})| + |\log(1 - \ell^{r - n})|.
\end{align*}
The last inequality comes from cancellation. We get a sum over all odd $i$ no more than $r - 1$ and all even $i$ greater than $r - 1$, and we bound the odd-$i$ terms by even-$i$ terms. We end up with one extra term corresponding to the smallest even number that is at least $r - 1$, and we bound this above by $r$.

Now we know that when $x \in [0, 1/2]$, we get $-x \geq \log(1 - x) \geq -x - x^2 \geq -3x/2$. So, when $n \geq 2r$ we have \[
-3\ell^{2i - n}/2 \leq \log(1 - \ell^{2i - n}) \leq -\ell^{2i - n}
\]
for all $0 \leq i \leq r - 1$, and therefore \[
-\ell^{-n}\frac{3}{2}\cdot\frac{\ell^{2r} - 1}{\ell^2 - 1}\leq \sum_{i=0}^{r-1} \log(1 - \ell^{2i - n}) \leq -\ell^{-n}\frac{\ell^{2r} - 1}{\ell^2 - 1}.
\] 
Thus, we have \begin{align*}
\left|\log\prod_{i=0}^{r-1} \frac{1 - \ell^{2i - n}}{1 - \ell^{i-n}} \right| &\leq \frac{3}{2}\left(\ell^{-n} \frac{\ell^{2r} - 1}{\ell^2 - 1} + \ell^{r - n}\right) \\
&\leq 2\ell^{2r - 1 - n}.
\end{align*}
If $x$ is the term we want to bound in this lemma, we have shown $-x \geq \log(1 - x) \geq -2\ell^{2r - 1 - n}$, which implies $x \leq 2\ell^{2r - 1 - n}$.
\end{proof}

We will also need a one-sided upper bound estimate, which will be used to control the tail of a sum:

\begin{proposition}\label{prop:bound-isotropic-probability-tail}
Let $C$ be an alternating form on $\F_\ell^n$ of rank $c$ and let $V$ be a random subspace of dimension $r$. Then if $c < 2r$, we have \[
\PP[V\text{ is totally isotropic}] \leq \frac{\ell^{-c/2}}{1 - \ell^{-1}}.
\]
\end{proposition}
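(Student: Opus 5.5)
Since $V$ is uniformly random of dimension $r$, the probability equals the number of totally isotropic $r$-dimensional subspaces for $C$ divided by $\binom{n}{r}_\ell$. It is easier to count ordered bases: the probability equals the probability that $r$ uniformly random linearly independent vectors $v_1,\dots,v_r$ span a totally isotropic subspace. Equivalently, $v_1,\dots,v_r$ are chosen one at a time, each uniform among vectors outside the span of the previous ones. We then need $v_{i+1}$ to lie in the $C$-orthogonal complement $W_i \coloneqq \{w : v_j^\intercal C w = 0 \text{ for } j \leq i\}$ for each $i$. The plan is to bound each conditional probability
\[
\PP[v_{i+1} \in W_i \mid v_1,\dots,v_i]
\]
and multiply.

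Condition on $v_1,\dots,v_i$ spanning a totally isotropic space $U_i$. The map $w \mapsto (v_j^\intercal C w)_{j\le i}$ has rank $\dim(U_i/(U_i\cap K))$, where $K$ is the radical of $C$. So $\dim W_i = n - s_i$ with $s_i = i - \dim(U_i\cap K)$. Since $U_i \subseteq W_i$, the conditional probability is
\[
\frac{\ell^{n-s_i} - \ell^i}{\ell^n - \ell^i} \leq \ell^{-s_i}.
\]
The key observation is that the images of $U_i$ in $\F_\ell^n/K$ form a totally isotropic subspace for the nondegenerate form of rank $c$. Hence $s_i \leq c/2$ always. Also, $s_i$ increases by at most one at each step, and it increases exactly when $v_{i+1} \notin U_i + K$.

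The main obstacle is that the crude bound $\ell^{-s_i}$ does not telescope to something small by itself, since $s_i$ may stall. I would therefore track the stopping event differently. Let $m$ be the first index at which $s_m = \lfloor c/2\rfloor$, which must occur if the space stays isotropic and $s$ keeps growing. Alternatively, for steps where $s$ does not increase, we need $v_{i+1} \in (U_i+K)\cap W_i$, a set of size at most $\ell^{\,i+(n-c)}$. The cleanest route is to simply bound the final probability by the single-step event at step $i = \lceil c/2\rceil$ onward. Once $s_i = c/2$ (maximal), $W_i = U_i + K$ exactly, because the image of $U_i$ is Lagrangian in the quotient. Every subsequent vector must then lie in a set of size $\ell^{\dim(U_i+K)}$.

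Concretely, I would argue by the final projected dimension. The total isotropy of $V$ forces the image $\bar V$ of $V$ in $\F_\ell^n/K$ to be isotropic, so $\dim \bar V \le c/2$. Since $r > c/2$, we get $\dim(V\cap K) \geq r - c/2 > 0$. Then $\PP[V \text{ totally isotropic}] \le \PP[\dim(V\cap K)\ge d]$ with $d = r - \lfloor c/2\rfloor$, and Lemma~\ref{lem:intersection-prob} with $k = n-c$ gives the bound
\[
\frac{\ell^{r-c}}{\ell^d - 1} = \frac{\ell^{\lfloor c/2\rfloor - c}\,\ell^{d}}{\ell^d-1} \leq \frac{\ell^{-c/2}}{1-\ell^{-d}} \leq \frac{\ell^{-c/2}}{1-\ell^{-1}},
\]
using that $c$ is even, so $\lfloor c/2\rfloor = c/2$. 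This is exactly the claim, and the sequential-vector analysis above is unnecessary. The only point to verify carefully is that the image of an isotropic space in the nondegenerate $c$-dimensional quotient has dimension at most $c/2$, which is the standard bound on isotropic subspaces of a symplectic space.
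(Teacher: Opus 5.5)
Your final argument is correct and is the same as the paper's proof. Total isotropy forces the image of $V$ in $\F_\ell^n/K$ to be isotropic for the induced nondegenerate form, so $\dim(V\cap K)\ge r-c/2\ge 1$, and Lemma~\ref{lem:intersection-prob} with $k=n-c$ gives $\ell^{r-c}/(\ell^{r-c/2}-1)\le \ell^{-c/2}/(1-\ell^{-1})$; the sequential-vector discussion before it can simply be dropped.
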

\begin{proof}
As in the proof of Proposition~\ref{bound-isotropic-probability}, let $K$ be the radical of $C$. The idea is that $\F_\ell^n/K$ is equipped with a nondegenerate symplectic form induced by $C$, such that if $V$ is totally isotropic for $C$, then $V/(V \cap K)$ is totally isotropic for the induced symplectic form on $\F_\ell^n/K$. In particular, if $V$ is totally isotropic for $C$, then \[
\dim V/(V \cap K) \leq \frac{1}{2}\dim \F_\ell^n/K = \frac{c}{2}.
\]
Thus if $V$ is totally isotropic, we have $\dim (V \cap K) \geq r - c/2$.

By Lemma~\ref{lem:intersection-prob}, we have \[
\PP[V\text{ is totally isotropic}] \leq \PP[\dim(V \cap K) \geq r - c/2] \leq \frac{\ell^{r - c}}{\ell^{r - c/2} - 1} \leq \frac{\ell^{-c/2}}{1 - \ell^{-1}}
\]
\details{because $\ell^{r - c/2} - 1 = \ell^{r - c/2}(1 - \ell^{c/2 - r}) \geq \ell^{r - c/2}(1 - \ell^{-1})$}provided $r \geq c/2 + 1$.
\end{proof}

\begin{remark}
    The bounds we obtained in Propositions~\ref{bound-isotropic-probability} and \ref{prop:bound-isotropic-probability-tail} are of the probability that a uniformly random subspace is isotropic for a fixed alternating form. We could write it, for a uniformly random element $B\in \GL_{n}(\F_\ell)$ and fixed alternating form $C$ and subspace $V$, as
    \[ \PP[BV \text{ is isotropic for }C]=\PP[V \text{ is isotropic for }B^{\intercal}CB]\]
    and hence Propositions~\ref{bound-isotropic-probability} and \ref{prop:bound-isotropic-probability-tail} are also bounds on the probability that a fixed subspace $V$ is isotropic for a random rank-$c$ alternating form $C$. Combined with Lemma~\ref{beforetotisotropic}, we obtain:
\end{remark}
\begin{corollary}\label{cor:moment-bound}
    Let $C$ be a uniformly random alternating form on $\F_\ell^n$ of rank $c$ and let $V$ be a fixed subspace of dimension $r$. When $c \geq 2r$, we have \[
1 - 2\ell^{2r - 1 - n} \leq \frac{\PP[V\text{ is totally isotropic for }C]}{\ell^{-\binom{r}{2}}} \leq 1 + 2\ell^{\binom{r + 1}{2} - 1 - c} \]
and when $c < 2r$, we have \[
\PP[V\text{ is totally isotropic for }C] \leq \frac{\ell^{-c/2}}{1 - \ell^{-1}}.
\]
If $M$ is a random $c$-symmetric $n\times n$ matrix and $c \geq 2r$, we have \begin{equation}\label{eq:isotropy-most}
1 - 2\ell^{2r - 1 - n} \leq \frac{\PP[\ker M \supseteq V]}{\ell^{-nr}} \leq 1 + 2\ell^{\binom{r + 1}{2} - 1 - c}
\end{equation}
and if $c < 2r$, we have \begin{equation}\label{eq:isotropy-tail}
\PP[\ker M \supseteq V] \leq \frac{\ell^{-nr - c/2 + \binom{r}{2}}}{1 - \ell^{-1}}.
\end{equation}
\end{corollary}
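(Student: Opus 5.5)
The plan is to get the first two statements from the Remark, and then condition on $C$ to get the matrix statements from Lemma~\ref{beforetotisotropic}.

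\textbf{Bounds for $C$.} Let $C$ be uniformly random among rank-$c$ alternating forms and $V$ fixed of dimension $r$. Because $\GL_n(\F_\ell)$ acts transitively on rank-$c$ alternating forms, we may realize $C$ as $B^\intercal C_0 B$ with $C_0$ fixed and $B$ uniform in $\GL_n(\F_\ell)$. Then
\[
\PP[V \text{ totally isotropic for } C] = \PP[BV \text{ totally isotropic for } C_0].
\]
Since $\GL_n(\F_\ell)$ also acts transitively on $r$-dimensional subspaces, $BV$ is a uniformly random $r$-dimensional subspace. So Proposition~\ref{bound-isotropic-probability} gives the two-sided bound when $c \geq 2r$, which is the case $r \leq c/2$. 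Proposition~\ref{prop:bound-isotropic-probability-tail} gives the tail bound when $c < 2r$.

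\textbf{Bounds for $M$.} Here $M$ is a uniformly random $c$-symmetric matrix. By the discussion at the start of Subsection~\ref{sect:c-symmetric-random-matrices}, we can generate $M$ in two steps: first take $C = M - M^\intercal$ uniform among rank-$c$ alternating forms, then take $M$ uniform among $C$-symmetric matrices. This works because $B^\intercal M B$ with uniform $B$ produces exactly this distribution, and each fiber over $C$ has the same size $\ell^{\binom{n+1}{2}}$.

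Conditioning on $C$, Lemma~\ref{beforetotisotropic} gives
\[
\PP[\ker M \supseteq V \mid C] = \ell^{-nr+\binom{r}{2}}\,\mathbf{1}[V \text{ totally isotropic for } C],
\]
using $\frac{r(r-1)}{2} = \binom{r}{2}$. Averaging over $C$ gives
\[
\PP[\ker M \supseteq V] = \ell^{-nr+\binom{r}{2}}\,\PP[V \text{ totally isotropic for } C].
\]
Now substitute the bounds from the first step:
\begin{itemize}
\item When $c \geq 2r$, the factor $\ell^{\binom{r}{2}}$ cancels the $\ell^{-\binom{r}{2}}$ normalization, leaving \eqref{eq:isotropy-most} normalized by $\ell^{-nr}$.
\item When $c < 2r$, the tail bound gives \eqref{eq:isotropy-tail} directly.
\end{itemize}

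The only step needing care is the claim that the fibers over $C$ all have the same size. This holds because the $C$-symmetric matrices form a coset of the symmetric matrices: they are $M_0 + S$ with $S$ symmetric, for $M_0$ any one of them, for instance the strictly upper triangular part of $C$. So the two-step sampling reproduces the uniform distribution on $c$-symmetric matrices.
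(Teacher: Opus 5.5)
Your proposal is correct and follows the paper's argument. The paper transfers Propositions~\ref{bound-isotropic-probability} and~\ref{prop:bound-isotropic-probability-tail} to a random rank-$c$ form via $B^\intercal C B$ and then combines with Lemma~\ref{beforetotisotropic}; your explicit fibering of uniform $c$-symmetric matrices over $C = M - M^\intercal$ (each fiber a coset of the symmetric matrices, hence of equal size) spells out the step the paper leaves implicit.
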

We now assume $M$ is a uniformly random $c$-symmetric matrix. To turn this into an error bound for the full rank distribution, we quote a version of M\"obius inversion for posets:

\begin{theorem}[See {\cite[Proposition 3.7.2]{stanleyvol1}}]
Let $(P, \leq)$ be a finite poset. Define \[
\mu(s, s) = 1 \text{ for }s \in P, \qquad\qquad \mu(s, u) = -\sum_{s \leq t < u} \mu(s, t), \qquad \text{ for }s < u\text { in }P
\]
If $f, g \colon P \to \R$, then \[
g(s) = \sum_{t \geq s} f(t)
\]
if and only if \[
f(s) = \sum_{t \geq s} g(t)\mu(s, t).
\]
\end{theorem}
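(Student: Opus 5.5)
This is the classical M\"obius inversion formula. I would prove it in the language of the incidence algebra of $P$, so that both directions reduce to the fact that $\mu$ is a two-sided inverse of the zeta function.

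\emph{Setup.} Let $I(P)$ be the set of functions $\alpha\colon P\times P\to\R$ with $\alpha(s,u)=0$ unless $s\le u$. Multiplication is convolution, $(\alpha\beta)(s,u)=\sum_{s\le t\le u}\alpha(s,t)\beta(t,u)$. The unit is $\delta(s,u)=[s=u]$. Choose a linear extension of the order on $P$, i.e.\ enumerate $P$ compatibly with $\le$. Then $I(P)$ is identified with a subalgebra of upper triangular $|P|\times|P|$ real matrices, so associativity is automatic. Define $\zeta(s,u)=1$ for $s\le u$ and $0$ otherwise.

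\emph{Key identity.} The recursive definition of $\mu$ in the statement says exactly that $\sum_{s\le t\le u}\mu(s,t)=\delta(s,u)$ for all $s\le u$. That is, $\mu\zeta=\delta$. Since $\zeta$ is upper triangular with ones on the diagonal, it is an invertible matrix. A left inverse of an invertible square matrix is also its right inverse, so $\zeta\mu=\delta$ as well. Explicitly, this means $\sum_{s\le t\le u}\mu(t,u)=\delta(s,u)$.

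This passage from a one-sided to a two-sided inverse is the only non-formal step, and hence the main thing to check carefully. A direct combinatorial proof of $\zeta\mu=\delta$ by induction on the length of $[s,u]$ is awkward. The matrix argument avoids it entirely, because finiteness of $P$ makes $I(P)$ a finite-dimensional algebra.

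\emph{The two directions.} View $f$ and $g$ as column vectors indexed by $P$. The hypothesis $g(s)=\sum_{t\ge s}f(t)$ then reads $g=\zeta f$. Multiplying on the left by $\mu$ and using $\mu\zeta=\delta$ gives $f=\mu g$. Written out, this is
\[
f(s)=\sum_{t\ge s}\mu(s,t)g(t),
\]
which is the claimed formula. Conversely, if $f=\mu g$, then multiplying by $\zeta$ and using $\zeta\mu=\delta$ gives $g=\zeta f$, which is the original relation. Equivalently, the map $f\mapsto \zeta f$ is a bijection with inverse $g\mapsto\mu g$. Reordering the finite double sums in either direction needs nothing beyond finiteness of $P$.
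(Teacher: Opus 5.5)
Your proposal is correct and is essentially the standard incidence-algebra proof from Stanley's Proposition 3.7.2, which the paper cites without reproducing: with $I(P)$ acting on column vectors on the left, the statement is just $g=\zeta f \iff f=\mu g$. Your linear-extension matrix argument for upgrading $\mu\zeta=\delta$ (which is exactly the given recursion) to $\zeta\mu=\delta$ is a valid substitute for Stanley's general result that elements of $I(P)$ with nonzero diagonal are two-sided invertible.
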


We take $P$ to be the set of subspaces of $\F_\ell^n$, ordered by inclusion. The function $f$ is given by $f(V) = \PP[\ker M = V]$, and $g$ is given by $g(V) = \PP[\ker M \supseteq V]$.

M\"obius inversion implies
\[
\PP[\ker M = V] = \sum_{W \supseteq V} \mu(V, W)\PP[\ker M \supseteq W]
\]
and therefore \[
\PP[\dim \ker M = r] = \binom{n}{r}_\ell\sum_{W \supseteq V} \mu(V, W)\PP[\ker M \supseteq W]
\]

where
\[\binom{n}{r}_\ell=\prod_{i=0}^{r-1}\frac{\ell^n-\ell^{i}}{\ell^r-\ell^{i}}=\ell^{rn}\prod_{i=0}^{r-1}({1-\ell^{i-n}})/|\GL_r(\F_\ell)|, \]
the number of $r$-dimensional subspaces of $\F_\ell^n$. We are interested in the case where $r$ is fixed and $c$ grows, so throughout the rest of this section, assume $c > 2r$. 

By \cite[Example 3.10.2]{stanleyvol1}, when $V \subseteq W$ we have \[
\mu(V, W) = (-1)^{\dim W - \dim V}\ell^{\binom{\dim W - \dim V}{2}}.
\]
The number of subspaces of dimension $r + d$ containing a fixed subspace $V$ is the same as the number of subspaces of $\F_\ell^n/V$ of dimension $d$, so that \[
\PP[\dim \ker M = r] = \binom{n}{r}_\ell\sum_{d=0}^{n-r}\binom{n - r}{d}_\ell (-1)^d \ell^{\binom{d}{2}}\PP[\ker M \supseteq \F_\ell^{r + d}].
\]
Instead of directly calculating this probability, we compare it to the probability that an i.i.d. uniform $n\times n$ matrix has an $r$-dimensional kernel, which we denote by $p_r(n)$. Since the probability that an i.i.d. uniform $n\times n$ matrix vanishes on an $r$-dimensional subspace $V$ is $\ell^{-rn}$, we have
\[
p_r(n) = \binom{n}{r}_\ell\sum_{d=0}^{n-r}\binom{n - r}{d}_\ell (-1)^d \ell^{\binom{d}{2}}\ell^{-(r+d)n}.
\]
Then \begin{equation}\label{eq:sum-expression}
\PP[\dim \ker M = r] - p_r(n) =  \binom{n}{r}_\ell\sum_{d=0}^{n-r}\binom{n - r}{d}_\ell (-1)^d \ell^{\binom{d}{2}}\left(\PP[\ker M \supseteq \F_\ell^{r+d}] - \ell^{-(r + d)n}\right).
\end{equation}
To control the sum, we split it into two sums at $r + d = c/2$. When $r + d \leq c/2$, we will use the more precise estimate in Corollary~\ref{cor:moment-bound}, and when $r + d > c/2$, we will use the less precise upper bound.

We bound the first sum by taking logarithm of each term in the product:
\begin{align}\label{eq:log-inequalities-1}
    \log_\ell\binom{n}{r}_\ell  &\leq rn - \log_\ell|\GL_r(\F_\ell)|\\ 
    \label{eq:log-inequalities-2}\log_\ell\binom{n - r}{d}_\ell & \leq d(n - r) - \log_\ell|\GL_d(\F_\ell)|\\
    \log_\ell|\PP[\ker M \supseteq \F_\ell^{r+d}] - \ell^{-(r+d)n}| &\leq -(r+d)n + \max\left\{2r + 2d - n, \binom{r + d + 1}{2} - c\right\}
\end{align}
when $r + d \leq c/2$ by Corollary \ref{cor:moment-bound}.
When $r+d\geq 3$, 
\begin{align*}
     \log_\ell\left|\binom{n}{r}_\ell\binom{n - r}{d}_\ell \ell^{\binom{d}{2}}\left(\PP[\ker M \supseteq \F_\ell^{r+d}] - \ell^{-(r + d)n}\right)\right| 
     &\leq -c + d^2 - \log_\ell|\GL_d(\F_\ell)| + \frac{1}{2}(r^2 + r)
     - \log_\ell|\GL_r(\F_\ell)|. 
\end{align*}
By considering the proportion of invertible matrices among all $d\times d$ matrices, we bound $d^2 - \log_\ell|\GL_d(\F_\ell)|$ by an absolute constant dependent only on $\ell$:
\begin{equation}\label{eq:gld-d^2}
d^2 - \log_\ell|\GL_d(\F_\ell)| = -\sum_{i=0}^{d-1}\log_\ell(1 - \ell^{i-d}) = -\frac{1}{\log \ell}\sum_{i=0}^{d - 1}\log(1 - \ell^{i - d}),
\end{equation}
so $d^2 - \log_\ell|\GL_d(\F_\ell)| = O(1)$. Since $-\log(1 - x) \leq 3x/2$ when $x \in [0, 1/2]$, we have \begin{equation}\label{eq:sum-of-logs}
-\sum_{i=0}^{d-1}\log(1 - \ell^{i-d}) \leq \frac{3}{2}\sum_{i=0}^{d-1}\ell^{i-d} = \frac{3}{2}\ell^{-d} \frac{\ell^d - 1}{\ell - 1} = \frac{3}{2(\ell - 1)}(1 - \ell^{-d}) \leq \frac{3}{2(\ell - 1)}.
\end{equation}
Therefore, when $r + d \geq 3$, we have \begin{align*}
     \log_\ell\left|\binom{n}{r}_\ell\binom{n - r}{d}_\ell \ell^{\binom{d}{2}}\left(\PP[\ker M \supseteq \F_\ell^{r+d}] - \ell^{-(r + d)n}\right)\right| 
     &\leq -c + O_r(1).
\end{align*}
When $r + d < 3$, we get an even better bound \begin{align*}
 \log_\ell\left|\binom{n}{r}_\ell\binom{n - r}{d}_\ell \ell^{\binom{d}{2}}\left(\PP[\ker M \supseteq \F_\ell^{r+d}] - \ell^{-(r + d)n}\right)\right| \leq -n + O_{\ell, r}(1) \leq -c + O_{\ell, r}(1).
\end{align*}
So we have, for all $d$ such that $r + d \leq c/2$, that \[
\left|\binom{n}{r}_\ell\binom{n - r}{d}_\ell \ell^{\binom{d}{2}}\left(\PP[\ker M \supseteq \F_\ell^{r+d}] - \ell^{-(r + d)n}\right)\right| \leq \ell^{-c + O_{\ell, r}(1)} = O_{\ell, r}(1)\ell^{-c}
\]
giving us \begin{equation}\label{eq:main-terms}
\binom{n}{r}_\ell\sum_{d=0}^{c/2 - r}\binom{n - r}{d}_\ell (-1)^d \ell^{\binom{d}{2}}\left(\PP[\ker M \supseteq \F_\ell^{r+d}] - \ell^{-(r + d)n}\right) = O_{\ell, r}(c\ell^{-c}).
\end{equation}
Now we want to control the tail of the sum starting at $d = c/2 + 1 - r$. To do this, we will separate the $\PP[\ker M \supseteq \F_\ell^{r + d}]$ terms and the $\ell^{-(r + d)n}$ terms and show that each of the resulting sums is $O_{\ell, r}(\ell^{-c})$.

We start with the $\PP[\ker M \supseteq \F_\ell^{r + d}]$ terms. By Corollary~\ref{cor:moment-bound}, we have \[
\PP[\ker M \supseteq \F_\ell^{r + d}] \leq \frac{\ell^{-n(r+d) - c/2 + \binom{r + d}{2}}}{1 - \ell^{-1}}
\]
whenever $r + d > c/2$. Using \eqref{eq:log-inequalities-1} and \eqref{eq:log-inequalities-2} we have \begin{align*}
\log_\ell&\left|\binom{n}{r}_\ell\binom{n - r}{d}_\ell \ell^{\binom{d}{2}}\PP[\ker M \supseteq \F_\ell^{r+d}]\right| \\
&\leq rn - \log_\ell|\GL_r(\F_\ell)| + d(n - r) - \log_\ell|\GL_d(\F_\ell)| + \binom{d}{2} - n(r + d) - c/2 + \binom{r + d}{2} - \log_\ell(1 - \ell^{-1}) \\
&= - \log_\ell|\GL_r(\F_\ell)| - rd - \log_\ell|\GL_d(\F_\ell)| + \binom{d}{2} - c/2 + \binom{r + d}{2} - \log_\ell(1 - \ell^{-1}) \\
&= -r^2 - rd - d^2 + \binom{d}{2} - c/2 + \binom{r + d}{2} + O_\ell(1) \\
&= -\binom{r + 1}{2} - d - c/2 + O_\ell(1) \\
&= -d - c/2 + O_{\ell, r}(1)
\end{align*}
where the fourth line uses \eqref{eq:gld-d^2} and \eqref{eq:sum-of-logs} to control $\log_\ell|\GL_r(\F_\ell)|$ and $\log_\ell |\GL_d(\F_\ell)|$. Now when $r + d > c/2$ we have $d \geq c/2 + O_{\ell, r}(1)$, so the right hand side above is bounded above by $-c + O_{\ell, r}(1)$. It follows that \begin{equation}\label{eq:tail-csymm}
\binom{n}{r}_\ell\sum_{d=c/2 - r + 1}^{n - r}\binom{n - r}{d}_\ell (-1)^d \ell^{\binom{d}{2}}\PP[\ker M \supseteq \F_\ell^{r+d}] = O_{\ell, r}((n - c/2)\ell^{-c}).
\end{equation}
Finally, we will control the sum of the $\ell^{-(r + d)n}$ terms in the tail. Analogously to the above calculation, we have \begin{align*}
\log_\ell&\left|\binom{n}{r}_\ell\binom{n - r}{d}_\ell \ell^{\binom{d}{2}}\ell^{-(r + d)n}\right| \\
&\leq rn - \log_\ell|\GL_r(\F_\ell)| + d(n - r) - \log_\ell|\GL_d(\F_\ell)| + \binom{d}{2} - n(r + d) \\
&= - \log_\ell|\GL_r(\F_\ell)| - rd - \log_\ell|\GL_d(\F_\ell)| + \binom{d}{2} \\
&= -r^2 - (r + 1/2)d - d^2/2 + O_\ell(1)\ \\
&\leq -r^2 - (r + 1/2)(c/2 - r + 1) - (c/2 - r + 1)^2/2 + O_\ell(1) \\
&= -c^2/8 - 3c/4 + O_{\ell, r}(1).
\end{align*}
Thus we have \[
\binom{n}{r}_\ell\binom{n - r}{d}_\ell \ell^{\binom{d}{2}}\ell^{-(r + d)n} = O_{\ell, r}(\ell^{-c^2/8 - 3c/4}) = O_{\ell, r}(\ell^{-c}),
\]
whence \begin{equation}\label{eq:tail-iid}
\binom{n}{r}_\ell\sum_{d=c/2 - r + 1}^{n - r}\binom{n - r}{d}_\ell (-1)^d \ell^{\binom{d}{2}}\ell^{-(r + d)n} = O_{\ell, r}((n - c/2)\ell^{-c}).
\end{equation}

Combining \eqref{eq:sum-expression}, \eqref{eq:main-terms}, \eqref{eq:tail-csymm}, and \eqref{eq:tail-iid} yields the following result:

\begin{theorem}\label{thm:c-symmetric-ranks}
Fix $r \geq 0$. For large enough $n$ and for large enough even $c$, let $M$ be a uniformly random $C$-symmetric $n\times n$ matrix over $\F_\ell$, where $C$ is a fixed alternating form of rank $c$. Then we have \[
|\PP[\dim \ker M = r] - p_r(n)| = O_{\ell, r}(n\ell^{-c}).
\] 
\end{theorem}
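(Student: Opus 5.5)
The plan is to compare the corank distribution of $M$ with that of an i.i.d. uniform matrix through M\"obius inversion on the lattice of subspaces of $\F_\ell^n$, in the form \eqref{eq:sum-expression}. By the discussion preceding Lemma~\ref{beforetotisotropic}, the corank distribution of a uniformly random $C$-symmetric matrix depends only on $c=\rank C$. We may therefore replace $M$ by a uniformly random $c$-symmetric matrix. For such a matrix, $\PP[\ker M\supseteq W]$ depends only on $\dim W$, and Corollary~\ref{cor:moment-bound} applies. Inverting $g(V)=\PP[\ker M\supseteq V]$ with $\mu(V,W)=(-1)^{d}\ell^{\binom d2}$ and counting the subspaces above a fixed $r$-dimensional $V$ gives the difference $\PP[\dim\ker M=r]-p_r(n)$ as the alternating sum \eqref{eq:sum-expression} over $d=0,\dots,n-r$.

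I will split this sum at $r+d=c/2$.

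\textbf{Head of the sum ($r+d\le c/2$).} Here I use the two-sided estimate \eqref{eq:isotropy-most}, which gives
\[
\bigl|\PP[\ker M\supseteq\F_\ell^{r+d}]-\ell^{-(r+d)n}\bigr|\le 2\ell^{-(r+d)n}\max\bigl\{\ell^{2(r+d)-1-n},\ell^{\binom{r+d+1}{2}-1-c}\bigr\}.
\]
The prefactor $\binom nr_\ell\binom{n-r}d_\ell\ell^{\binom d2}$ is at most $\ell^{(r+d)n-rd-d^2+\binom d2+O_{\ell,r}(1)}$, using \eqref{eq:log-inequalities-1}, \eqref{eq:log-inequalities-2}, and the bound \eqref{eq:gld-d^2}--\eqref{eq:sum-of-logs} showing that $d^2-\log_\ell|\GL_d|=O_\ell(1)$. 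The exponents then combine. The quadratic term $\binom{r+d+1}2$ is more than cancelled by $-rd-d^2+\binom d2+\binom r2$ up to $O_r(1)$, and the $n$-term is handled using $n\ge c$. Each term is thus $O_{\ell,r}(\ell^{-c})$. Summing the at most $c/2$ terms gives $O_{\ell,r}(c\ell^{-c})$, as in \eqref{eq:main-terms}.

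\textbf{Tail of the sum ($r+d>c/2$).} Here I bound the two pieces separately rather than their difference. For the $M$-piece, the one-sided tail bound \eqref{eq:isotropy-tail} together with the same prefactor estimates makes the $d$-th term at most $\ell^{-d-c/2+O_{\ell,r}(1)}\le\ell^{-c+O_{\ell,r}(1)}$, which gives \eqref{eq:tail-csymm}. For the i.i.d. piece, the exponent is $-rd-d^2/2+O(1)$. This is quadratically negative in $c$ once $d\ge c/2-r+1$, so each term is $O(\ell^{-c})$, which gives \eqref{eq:tail-iid}. Each tail has at most $n$ terms, so both are $O_{\ell,r}(n\ell^{-c})$. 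Adding the head and the two tails and using $c\le n$ yields the stated $O_{\ell,r}(n\ell^{-c})$.

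I expect the main obstacle to be the head estimate. One has to check that the error $\ell^{\binom{r+d+1}2-c}$ coming from Proposition~\ref{bound-isotropic-probability} is genuinely absorbed by the quadratic decay $-rd-d^2+\binom d2$ of the M\"obius prefactor, uniformly in $d$ up to $c/2-r$. This requires keeping the $\binom d2$ from $\mu$ and the $\binom{r+d}2$ isotropy factor exactly, since a careless bound would leave a positive quadratic in $d$. I would first write the exponent as an explicit quadratic in $d$, confirm that its $d^2$ coefficient is nonpositive, and only then absorb lower-order terms into $O_{\ell,r}(1)$. The small cases $r+d<3$, where the $n$-term dominates, are handled separately by the bound $-n+O(1)\le -c+O(1)$.
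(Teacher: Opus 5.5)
Your proposal is correct and follows the paper's proof essentially step for step: you reduce to a uniformly random $c$-symmetric matrix, apply M\"obius inversion against the i.i.d.\ case to obtain \eqref{eq:sum-expression}, and split at $r+d=c/2$, using \eqref{eq:isotropy-most} in the head and \eqref{eq:isotropy-tail} together with a separate i.i.d.\ bound in the tail; the small cases $r+d<3$ are bounded by $-n+O(1)\le -c+O(1)$, exactly as the paper does. One small sharpening of your head computation: both the $d^2$ and the linear-in-$d$ terms cancel exactly (so the $d^2$ coefficient is $0$, not negative), leaving the exponent $\binom{r+1}{2}-c+O_\ell(1)$, which matches the paper.
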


A computation (e.g., \cite[Theorem 1.1]{fulmangoldstein2015ranks}) shows that $p_r(n)$ converges exponentially quickly in $n$ to $\mu_{CL, \ell}(r)$, leading to the following:

\begin{corollary}\label{cor:c-symmetric-ranks}
Fix $r \geq 0$. For large enough $n$ and for large enough even $c$, let $M$ be a uniformly random $C$-symmetric $n\times n$ matrix over $\F_\ell$, where $C$ is a fixed alternating form of rank $c$. Then we have \[
|\PP[\dim \ker M = r] - \mu_{CL, \ell}(r)| = O_{\ell, r}(n\ell^{-c}).
\] 
\end{corollary}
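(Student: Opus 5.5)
The plan is to combine Theorem~\ref{thm:c-symmetric-ranks} with the known convergence of $p_r(n)$ to $\mu_{CL,\ell}(r)$ via the triangle inequality:
\[
|\PP[\dim\ker M = r] - \mu_{CL,\ell}(r)| \leq |\PP[\dim\ker M = r] - p_r(n)| + |p_r(n) - \mu_{CL,\ell}(r)|.
\]
The first term is $O_{\ell,r}(n\ell^{-c})$ directly by Theorem~\ref{thm:c-symmetric-ranks}, under the same hypotheses ($n$ large, $c$ large and even, $C$ fixed of rank $c$).

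For the second term, I will quote \cite[Theorem 1.1]{fulmangoldstein2015ranks}, or derive the bound directly. The exact count gives
\[
p_r(n) = \ell^{-r^2}\binom{n}{r}_\ell\cdot\frac{|\GL_{n-r}(\F_\ell)|\,\ell^{\ldots}}{\ldots},
\]
which equals $\frac{1}{|\GL_r(\F_\ell)|}$ times a finite product of factors $(1-\ell^{-i})$ and $(1-\ell^{i-n})$. These factors differ from the infinite product defining $\mu_{CL,\ell}(r)$ by a multiplicative factor $1+O_{\ell,r}(\ell^{-(n-r)})$, by the same logarithm estimate used in Lemma~\ref{lem:product-error}. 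This yields $|p_r(n)-\mu_{CL,\ell}(r)| = O_{\ell,r}(\ell^{-n})$.

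Since the rank of an $n\times n$ alternating matrix satisfies $c \leq n$, we have $\ell^{-n} \leq \ell^{-c} \leq n\ell^{-c}$. The second term is therefore absorbed into $O_{\ell,r}(n\ell^{-c})$, which gives the corollary.

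There is no real obstacle here. The only care needed is that the implicit constants depend only on $\ell$ and $r$ (which holds for both pieces), and that the inequality $c\le n$ is what justifies absorbing the exponentially small $n$-error into the $c$-error.
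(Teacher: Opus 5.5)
Your proposal is correct and is essentially the paper's own argument: the paper also deduces the corollary from Theorem~\ref{thm:c-symmetric-ranks} together with the exponential convergence $p_r(n)\to\mu_{CL,\ell}(r)$ quoted from \cite[Theorem 1.1]{fulmangoldstein2015ranks}, with the $O_{\ell,r}(\ell^{-n})$ error absorbed because $c\le n$. Your placeholder formula for $p_r(n)$ is garbled, but the claim you actually use is right, since one computes $p_r(n)/\mu_{CL,\ell}(r)=\prod_{j>n-r}(1-\ell^{-j})\big/\prod_{j>n}(1-\ell^{-j})^2=1+O_{\ell}(\ell^{-(n-r)})$.
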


\section{Equidistribution of Artin Symbols}\label{sect:equidistribution}

Recall in Subsection~\ref{sect:param-curves}, we parametrized hyperelliptic curves by tuples $B = (p_0, p_1, \dots, p_n, t)$, where $p_0, \dots, p_n$ are points in $\PP^1$ and $t$ is a choice of uniformizer at $p_0$, up to squares.

In this section, we will determine the limiting corank distribution of the R\'edei matrix of a hyperelliptic curve drawn uniformly at random subject to some local conditions.

The types of local conditions we are allowed to fix are:
\begin{enumerate}[label=(\roman*)]
    \item the random hyperelliptic curve is constrained to be branched at a particular fixed finite set of points $S$;
    \item the random hyperelliptic curve is constrained to be unbranched at a particular fixed finite set of points $S' \cup S''$ (disjoint from $S$);
    \item the second-order class of the random hyperelliptic curve at a branch point of minimal odd degree is fixed depending on the branch point;
    \item the random hyperelliptic curve is split at $S'$ and inert at $S''$.
\end{enumerate}
For technical reasons, the last type of condition is the hardest to address, and we will have to handle it separately by finding the probability that the R\'edei matrix has a particular corank \textit{and} condition (iv) is satisfied if the curve is drawn uniformly at random from those hyperelliptic curves satisfying the first three conditions. 
 
First, we will show that as $B$ ranges over certain families, the matrix of the pairing $\ip{\cdot}{\cdot}_{X_B}'$ has the same rank distribution as a uniformly random $C$-symmetric matrix over $\F_2$ with row and column sums zero. These families will be determined by a tuple of integers prescribing the degrees of the ramification points of $X_B$. We will obtain a result when these degrees satisfy some numerical assumptions. Then in Subsection~\ref{sect:branch-degrees} we will show that if we pick $X_B$ uniformly at random among all hyperelliptic curves of fixed total branch degree satisfying local conditions of types (i), (ii), (iii), the degrees of the ramification points usually satisfy these assumptions.

We will briefly outline how to prove equidistribution in the subfamilies described above without imposing local conditions. Then, we will explain how this argument must be modified to account for local conditions of types (i), (ii), (iii).

Recall that the $ij$th entry of the R\'edei matrix of $X_B$ is given by \[
\ip{\pi_B^{-1}e_{p_0, p_i}}{\pi^{-1}e_{p_0, p_j}}_{X_B}' = c_t(H_{p_j} \to \PP^1_k, p_i)
\]
by Lemma~\ref{lem:pairing-by-hyperelliptics}. So, the $i$th row of the matrix is the information of the splitting type of $p_i$ in each of the curves $H_{p_j}$ for $j < i$, or equivalently, the splitting type of $p_i$ in the $2^{i-1}$-cover $H_{p_1} \times_{\PP^1} \dots \times_{\PP^1} H_{p_{i-1}} \to \PP^1$.

The Chebotarev density theorem for function fields says that this splitting type should equidistribute in $\F_2^{i-1}$ as $p_i$ varies. Thus, the $i$th row of the R\'edei matrix of $X_B$ should be close to uniformly distributed, conditional on $p_0, p_1, \dots, p_{i-1}$. We work this out in Section~\ref{sect:chebotarev}. Replacing the cover $H_{p_1} \times_{\PP^1} \dots \times_{\PP^1} H_{p_{i-1}} \to \PP^1$ by a carefully chosen modification allows us to exclude certain points from the set of possible values for $p_i$ to account for local conditions of type (ii).

The Riemann hypothesis for function fields affords us an effective version of the Chebotarev density theorem with an error term that shrinks exponentially with the degree of $p_i$. However, this is not quite good enough, since the error term also depends on the complexity of the cover $H_{p_1} \times_{\PP^1} \dots \times_{\PP^1} H_{p_{i-1}} \to \PP^1$. It turns out that the error term we get looks like $a^ib^{-\deg(p_i)}$ for some constants $a, b > 1$.

The number of points in a random subset of $\PP^1_{\F_q}$ of fixed total degree tends to be logarithmic in the degree. Thus, if we order the $p_i$ by increasing degree, we should expect that $\deg(p_i)$ grows exponentially in $i$. As a result, the error term from the Chebotarev density theorem should be small for large enough $i$. This tells us that, conditional on a small upper left corner of the R\'edei matrix, the rest of the matrix is close to uniformly distributed.

To handle the problem coming from the upper left corner, we use a trick due to Alex Smith \cite[Section 6]{smith_2infty-selmer_2017}. The idea is to abandon the hope that the R\'edei matrix itself equidistributes. However, we only need the rank of the R\'edei matrix to behave like the rank of a uniformly random $C$-symmetric matrix. We accomplish this by randomly permuting the rows and columns of the R\'edei matrix in a way that preserves the $C$-symmetry condition (and the rank!). After this additional random permutation, the R\'edei matrix is indeed close to uniformly distributed among all $C$-symmetric matrices.

This trick is remarkably insensitive to the upper left corner of the R\'edei matrix. As such, it allows us to fix a small number of ramification points, handling local conditions of type (i). The whole process is also performed conditional on $p_0$ and $t$, which allows us to account for local conditions of type (iii).

\subsection{Equidistribution from high-degree points}\label{sect:chebotarev}

In this subsection, we show that the distribution of the $i$th row of the (lower triangular part) of the R\'edei matrix is close to uniform conditional on the previous rows. The main input to this is an effective version of the Chebotarev density theorem for function fields:

\begin{theorem}[Consequence of \cite{friedjarden}, Proposition 7.4.8; see \cite{park2024primeselmerrankscyclic}, Theorem 3.1]
Let $Y \to \PP^1_{\F_q}$ be a branched Galois cover of curves with Galois group $G$. Let $g$ denote the genus of $Y$. Suppose the constant field of $Y$ is $\F_q$. Let $C \subset G$ be a conjugacy class. Then \begin{align*}
    &\left|\#\left\{p \in \PP^1 \ \middle|\ p\text{ unramified in }Y, \deg(p) = n, \Frob_p \in C\right\} - \frac{|C|}{|G|}\frac{q^n}{n}\right| \\
    &\qquad \leq \frac{2|C|}{n|G|}\left((|G| + g)q^{\frac{n}{2}} + 2|G|q^{\frac{n}{4}} + (|G| + g)\right).
\end{align*}
\end{theorem}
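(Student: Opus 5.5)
The plan is to obtain the statement as a direct specialization of the effective Chebotarev density theorem of Fried--Jarden \cite[Proposition 7.4.8]{friedjarden}, in the form recorded by Park \cite[Theorem 3.1]{park2024primeselmerrankscyclic}.

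\textbf{Translation to function fields.} Let $E \coloneqq \F_q(\PP^1) = \F_q(x)$ and let $F \coloneqq \F_q(Y)$. The cover $Y \to \PP^1_{\F_q}$ makes $F/E$ a Galois extension with group $G$ and $[F:E] = m = |G|$. The assumption that $Y$ has constant field $\F_q$ is exactly the hypothesis that $F/E$ has no constant field extension. Closed points $p \in \PP^1$ of degree $n$ correspond to places of $E$ of degree $n$. For unramified $p$, our (geometric) $\Frob_p$ differs from the arithmetic Frobenius used by Fried--Jarden by inversion. Since $C \mapsto C^{-1}$ preserves $|C|$, this does not affect the count.

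\textbf{Specialization.} Fried--Jarden bound the number of degree-$n$ places of $E$ that are unramified in $F$ with Frobenius class $C$. Their main term is $\frac{|C|}{n|G|}q^n$, and their error term is
\[
\frac{2|C|}{n|G|}\Bigl((m + g_F)q^{n/2} + m(2g_E + 1)q^{n/4} + g_F + d m\Bigr),
\]
where $d$ is the degree of the pole divisor of the chosen separating element $x$. We take $g_E = 0$ (since $E$ is rational), $d = 1$ (taking $x$ to be the coordinate), $g_F = g$ and $m = |G|$. Then $m(2g_E+1)q^{n/4} = |G|q^{n/4} \le 2|G|q^{n/4}$ and $g_F + dm = g + |G|$, which gives exactly the displayed inequality.

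\textbf{Underlying argument.} If one wants a self-contained sketch rather than a citation, the proof behind Fried--Jarden goes in three steps.
\begin{enumerate}
\item For each $\tau \in C$, count the points of $Y(\overline{\F_q})$ fixed by $\tau^{-1}\circ\mathrm{Frob}^n$. These are the $\F_{q^n}$-points of a twist of $Y$, which has the same genus $g$. The Weil bound (the Riemann hypothesis for curves) gives $q^n + 1 + O(g\,q^{n/2})$ such points.
\item Each unramified degree-$n$ place with Frobenius in $C$ contributes a fixed number of these points, namely $n|C_G(\tau)|$. Ramified fibers contribute at most $|G|$ per branch point.
\item Points lying over places of degree a proper divisor of $n$ are removed by bounding their number trivially. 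There are at most $\sum_{e \mid n,\, e<n} q^e \ll q^{n/2}$ of them, which is where the $q^{n/4}$ and constant terms come from, after tracking $|G|$ factors.
\end{enumerate}
Summing over $\tau \in C$ and dividing by $n|C_G(\tau)|\cdot|C| = n|G|$ then yields the main term $\frac{|C|}{|G|}\frac{q^n}{n}$.

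\textbf{Main obstacle.} The only delicate point is matching normalizations and constants: geometric versus arithmetic Frobenius, and the exact shape of the lower-order terms. In particular, the contributions of ramified places and of places of degree properly dividing $n$ have to be absorbed into the terms $2|G|q^{n/4}$ and $|G| + g$. I would handle this by simply quoting Fried--Jarden's constants rather than re-deriving them.
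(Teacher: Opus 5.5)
Your proposal is correct and matches the paper, which also gives no independent proof: it obtains the bound by specializing Fried--Jarden's effective Chebotarev theorem (as recorded by Park) to the rational base $E=\F_q(x)$ with $g_E=0$ and $d=1$. In that specialization the $q^{n/4}$ coefficient is at most $2|G|$, and the switch between geometric and arithmetic Frobenius is harmless since $|C^{-1}|=|C|$. One small inaccuracy in your optional sketch: trivially counting places whose degree properly divides $n$ contributes to the $|G|q^{n/2}$ term, not the $q^{n/4}$ term. Since you quote Fried--Jarden's constants rather than rederive them, this does not affect the argument.
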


If $C \subset G$ is a union of conjugacy classes, we write \[
\pi_Y(C, n) \coloneqq \left\{p \in \PP^1 \ \middle|\ p\text{ unramified in }Y, \deg(p) = n, \Frob_p \in C\right\}.
\]

Using the above approximation for $\pi_Y(C, n)$ when $C$ is a conjugacy class, we can get approximations for $\pi_Y(C, n)/\pi_Y(C', n)$ when $C, C'$ are unions of conjugacy classes: 

\begin{corollary}[See \cite{park2024primeselmerrankscyclic}, Corollary 3.2]\label{cor:chebotarev}
Let $Y \to \PP^1_{\F_q}$ be a branched Galois cover of curves with Galois group $G$. Let $g$ denote the genus of $Y$. Suppose the constant field of $Y$ is $\F_q$. Let $C, C' \subset G$ be nonempty unions of conjugacy classes.

When $n \geq 2\log_q(8(|G| + g))$ we have \[
\left|\frac{\#\pi_Y(C, n)}{\#\pi_Y(C', n)} - \frac{|C|}{|C'|}\right| \leq 16\frac{|C|}{|C'|}(|G| + g)q^{-\frac{n}{2}}.
\]
Thus, for $0 < \delta < 1$, when $n \geq \frac{2}{1 - \delta}\log_q(8(|G| + g))$ we have \[
\left|\frac{\#\pi_Y(C, n)}{\#\pi_Y(C', n)} - \frac{|C|}{|C'|}\right|\leq 2\frac{|C|}{|C'|}q^{-\frac{\delta}{2}n}.
\]
\end{corollary}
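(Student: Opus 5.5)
The plan is to derive both inequalities of Corollary~\ref{cor:chebotarev} directly from the effective Chebotarev bound stated just above. I would write each count as its main term times $(1+\text{relative error})$ and then compare the two relative errors.

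\textbf{Step 1: extend to unions of classes.} For a union $C=\bigsqcup_j C_j$ of conjugacy classes, $\#\pi_Y(C,n)=\sum_j\#\pi_Y(C_j,n)$. The main terms $\frac{|C_j|}{|G|}\frac{q^n}{n}$ add, and the error bounds are linear in $|C_j|$, so they also add. Hence the Chebotarev bound holds verbatim with $C$ in place of a single class. I would then write
\[
\#\pi_Y(C,n)=\frac{|C|}{|G|}\frac{q^n}{n}(1+a),\qquad \#\pi_Y(C',n)=\frac{|C'|}{|G|}\frac{q^n}{n}(1+b),
\]
where
\[
|a|,|b|\le \varepsilon\coloneqq 2\bigl((|G|+g)q^{-n/2}+2|G|q^{-3n/4}+(|G|+g)q^{-n}\bigr).
\]

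\textbf{Step 2: bound $\varepsilon$ in terms of $x\coloneqq(|G|+g)q^{-n/2}$.} The hypothesis $n\ge 2\log_q(8(|G|+g))$ says exactly $q^{n/2}\ge 8(|G|+g)$, so $x\le 1/8$. Since $|G|+g\ge 1$, it also gives $q^{-n/4}\le 8^{-1/2}$ and $q^{-n/2}\le 1/8$. Using $|G|\le|G|+g$,
\[
\varepsilon\le 2x\bigl(1+2q^{-n/4}+q^{-n/2}\bigr)\le 2x\bigl(1+2\cdot 8^{-1/2}+\tfrac18\bigr)< 3.7x .
\]
Consequently $\varepsilon<0.47$, so $1+b>0$. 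In particular $\#\pi_Y(C',n)>0$ and the ratio is well defined.

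\textbf{Step 3: bound the ratio.} We have
\[
\frac{\#\pi_Y(C,n)}{\#\pi_Y(C',n)}-\frac{|C|}{|C'|}=\frac{|C|}{|C'|}\cdot\frac{a-b}{1+b},
\]
and
\[
\Bigl|\frac{a-b}{1+b}\Bigr|\le\frac{2\varepsilon}{1-\varepsilon}\le\frac{7.4x}{0.53}<16x .
\]
This is the first claimed inequality.

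\textbf{Step 4: the $\delta$-version.} Suppose $n\ge\frac{2}{1-\delta}\log_q(8(|G|+g))$. This threshold is at least $2\log_q(8(|G|+g))$, so the first part applies. The hypothesis also gives $q^{(1-\delta)n/2}\ge 8(|G|+g)$, hence
\[
16(|G|+g)q^{-n/2}=2\cdot 8(|G|+g)q^{-(1-\delta)n/2}\,q^{-\delta n/2}\le 2q^{-\delta n/2},
\]
which gives the second inequality.

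\textbf{Main obstacle.} The only delicate point is the numerical constant in Steps 2--3. A crude bound $\varepsilon\le 8x$ only yields $2\varepsilon/(1-\varepsilon)$ with $\varepsilon$ possibly as large as $1$, which fails. So I would track the lower-order terms $q^{-3n/4}$ and $q^{-n}$ separately, as above, to stay below the constant $16$.
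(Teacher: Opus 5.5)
Your argument is correct: extending the effective Chebotarev bound additively to unions of classes, normalizing to relative errors $|a|,|b|\le\varepsilon<3.7(|G|+g)q^{-n/2}\le 0.47$, and bounding $(a-b)/(1+b)$ gives a constant below $14$, and the $\delta$-version follows exactly as you say. The paper does not prove this statement itself; it quotes it from Park's Corollary 3.2. Your derivation from the preceding theorem is the natural one, and tracking the lower-order terms as you do is enough to stay under the stated constant $16$.
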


Let $p_0 \in \PP^1_{\F_q}$ be a point of degree $d_0$ and let $d_1, \dots, d_n$ be positive integers satisfying the following conditions:
\begin{itemize}
    \item $d_1 \leq \dots \leq d_n$;
    \item $d_0$ is odd and is minimal among all odd $d_i$;
    \item $\sum_{i=0}^n d_i$ is even.
\end{itemize}
Let $t$ be a uniformizer class at $p_0$. Let $n_1$ be an integer with $1 \leq n_1 \leq n$. We will allow up to $n_1$ branch points to be fixed. For some choices of $1 \leq i \leq n_1$, let $T_i = \{p_i\}$ be singletons, where $p_i$ has degree $d_i$ (to account for local conditions of type (i) as described in the beginning of Section~\ref{sect:equidistribution}). For all other $1 \leq i \leq n$, let \[
T_i \coloneqq \{p \in \PP^1_k \mid \deg(p) = d_i\}
\]
We also want to condition on our random curve being unramified at a certain fixed set of points $u_1, \dots, u_\ell$ to account for local conditions of type (ii). We will also add one more technical assumption we will use throughout this section: \begin{itemize}
    \item at least one of $d_1, \dots, d_{n_1}$ is odd.
\end{itemize}
This assumption can be removed, but ultimately it is satisfied all the time (Corollary~\ref{cor:assumptions}, ($A_3$)), so it is harmless.

Let $T^*$ be the set of tuples $(p_0, p_1, \dots, p_n, t)$ such that the $p_i$ ($0 \leq i \leq n$) are all distinct and such that $p_i \notin \{u_1, \dots, u_\ell\}$ for each $i$. Recall that for each $B = (p_0, p_1, \dots, p_n, t) \in T^*$, we have a hyperelliptic curve $\pi_B\colon X_B \to \PP^1_{\F_q}$ branched at $p_0, p_1, \dots, p_n$ and such that $c_t(\pi_B, p_0) = 1$ (observe that we are imposing a local condition of type (iii) here). Also, recall from Lemma~\ref{lem:pairing-by-hyperelliptics} that for $1 \leq i < j \leq n$, the pairing $\ip{\pi_B^{-1}e_{p_0,p_i}}{\pi_B^{-1}e_{p_0, p_j}}'_{X_B} \in \F_2$ is independent of $p_r$ for $r \neq 0, i, j$, and we denote this by $\ip{p_i}{p_j}$.

Combined with the symmetry conditions described in Subsection~\ref{sect:symmetry}, this means that the upper-left $i\times i$ corner of the matrix of the pairing $\ip{\cdot}{\cdot}_{X_B}'$ only depends on $p_1, \dots, p_i$. 

We choose $B$ uniformly at random from $T^*$ and want to compute the distribution of $\ip{\cdot}{\cdot}'_{X_B}$. We do this by picking one point $p_i$ at a time. The following proposition says that, once $p_1, \dots, p_{i-1}$ are fixed, then the part of the upper-left $i\times i$ corner of the pairing matrix depending on $p_i$ will be equidistributed (as long as $d_i$ is large enough). 

\begin{proposition}\label{prop:row-equidistribution}
Fix $u_1, \dots, u_\ell \in \PP^1_{\F_q}$ of total degree $\sum_\beta \deg(u_\beta) =\colon d_{\text{exc}}$ (where we allow $\ell = 0$).

Let $i \geq n_1 + 1$ and let $(p_1, \dots, p_{i-1}) \in \prod_{j=1}^{i-1} T_j$ with $p_0, p_1, \dots, p_{i-1}$ distinct and with $p_j \notin \{u_1, \dots, u_\ell\}$ for each $0 \leq j \leq i - 1$. Let \[
T_i^* = T_i - \{p_0, p_1, \dots, p_{i-1}, u_1, \dots, u_\ell\}.
\]
Fix $M_1, \dots, M_{i-1} \in \F_2$. Suppose there is $0 < \delta < 1$ such that \[
d_i \geq \frac{2}{1 - \delta}\log_q(2^{i +1}(i+1)d_i + 2^{i+1}d_\text{exc} + 8),
\] 
If $p_i$ is chosen uniformly at random from $T_i^*$, then \[
\left|2^{i - 1}\PP[\ip{p_i}{p_j} = M_j \text{ for }1 \leq j \leq i - 1] -  1\right| \leq 2q^{-\frac{\delta}{2}d_i}.
\]
\end{proposition}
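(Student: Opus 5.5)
The plan is to realize the vector $(\ip{p_i}{p_j})_{1\le j\le i-1}$ as the Frobenius of $p_i$ in a single multiquadratic cover of $\PP^1_{\F_q}$. We then apply Corollary~\ref{cor:chebotarev} with $n=d_i$. The excluded points $u_1,\dots,u_\ell$ (and $p_0,\dots,p_{i-1}$) are handled by forcing them to be ramified in the cover, so that "unramified of degree $d_i$" means exactly "in $T_i^*$".

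\textbf{Step 1: the cover $Y$.} By Lemma~\ref{lem:pairing-by-hyperelliptics}, with $c_t(\pi_B,p_0)=1$, we have $\ip{p_i}{p_j} = c(H_{p_j}\to\PP^1,p_i)_+$. Here $H_{p_j}$ is defined by a function $f_{p_j}$ with $\div f_{p_j}=e_{p_0,p_j}$, and $H_{p_j}$ depends only on $p_0,p_j,t$. Since $\deg(p_0)$ is odd, $f_{p_j}$ has odd valuation at $p_j$, so $H_{p_j}$ is branched at $p_j$. If $\ell\geq 1$, add one more quadratic cover $H_u$. It is defined by a function whose divisor is $\sum_\beta[u_\beta]$ minus a suitable multiple of $[p_0]$, so that it is branched exactly at the $u_\beta$ and possibly $p_0$.

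Let $Y$ be the normalization of the fiber product of $H_{p_1},\dots,H_{p_{i-1}}$ (and $H_u$). It corresponds to $K(\sqrt{f_{p_1}},\dots,\sqrt{f_{p_{i-1}}},\sqrt{f_u})$.

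\textbf{Step 2: Galois group and constant field.} Any nontrivial product of the chosen functions has odd valuation at some $p_j$ with $j\ge1$, or at some $u_\beta$. These points are distinct and different from $p_0$. So no such product lies in $k^\times(K^\times)^2$. By Lemma~\ref{lem:kummer}(b) this gives:
\begin{itemize}
\item $Y$ is geometrically connected, hence has constant field $\F_q$;
\item $G=\Gal(Y/\PP^1)\cong\F_2^{i-1}$, or $\F_2^{i}$ with the extra factor from $H_u$.
\end{itemize}
For an unramified point $p$, $\Frob_p\in G$ has coordinates $c(H_{p_j}\to\PP^1,p)_+$ by the split/inert trichotomy. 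Its first $i-1$ coordinates are therefore exactly $(\ip{p}{p_j})_j$.

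\textbf{Step 3: ramification locus and genus.} The ramification locus of $Y$ is $\{p_0,\dots,p_{i-1},u_1,\dots,u_\ell\}$. So among points of degree $d_i$, the unramified ones are exactly $T_i^*$ (here $T_i$ is all points of degree $d_i$, since $i\ge n_1+1$).

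By Riemann--Hurwitz for a $2$-elementary cover, the genus satisfies
\[
2g-2\le |G|\Bigl(-2+\tfrac12\bigl(\textstyle\sum_{j<i}d_j+d_0+d_{\text{exc}}\bigr)\Bigr).
\]
Using $d_j\le d_i$ and $d_0\le d_i$, this gives $|G|+g\le 2^{i}(i+1)d_i+2^{i}d_{\text{exc}}+2^{i}$, up to harmless constants. Then $8(|G|+g)$ is at most the quantity inside the logarithm in the hypothesis, so the hypothesis on $d_i$ is exactly the condition needed in Corollary~\ref{cor:chebotarev}.

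\textbf{Step 4: applying Chebotarev.} Take $C'=G$ and let $C$ be the set of elements whose first $i-1$ coordinates equal $(M_j)_j$; when $H_u$ is present, $C$ is a coset of size $|G|/2^{i-1}$. Then $\#\pi_Y(C',d_i)=|T_i^*|$ and
\[
\PP[\ip{p_i}{p_j}=M_j\ \forall j] = \frac{\#\pi_Y(C,d_i)}{\#\pi_Y(C',d_i)}.
\]
Corollary~\ref{cor:chebotarev} bounds the deviation of this ratio from $2^{-(i-1)}$ by $2\cdot2^{-(i-1)}q^{-\delta d_i/2}$. Multiplying by $2^{i-1}$ gives the claim.

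The main obstacle is Step 2 together with the bookkeeping in Step 3. One must make sure the auxiliary cover $H_u$ can be chosen so that:
\begin{itemize}
\item the full cover stays geometrically connected with the expected Galois group;
\item no point of degree $d_i$ other than the intended ones becomes ramified;
\item the genus bound lands inside the logarithm in the hypothesis.
\end{itemize}
If the parity bookkeeping for $\div f_u$ gets in the way, I would first try instead to branch $H_u$ at the $u_\beta$ together with $p_0$ only.
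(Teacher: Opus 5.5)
Your route is the paper's: form the compositum $Y$ of the $H_{p_j}$ ($1\le j\le i-1$) with an auxiliary quadratic cover branched at the $u_\beta$, identify $T_i^*$ with the degree-$d_i$ points unramified in $Y$, bound the genus by Riemann--Hurwitz, and apply Corollary~\ref{cor:chebotarev} to the ratio $\#\pi_Y(C,d_i)/\#\pi_Y(G,d_i)$. Your constant-field argument (no nontrivial product of the defining functions lies in $k^\times(K^\times)^2$, then Lemma~\ref{lem:kummer}(b)) is, if anything, more careful than the paper's.

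\textbf{The gap.} You assert without justification that $p_0$ lies in the ramification locus of $Y$ and that $d_0\le d_i$. Neither is automatic.
\begin{itemize}
\item $d_0$ is only minimal among the \emph{odd} degrees, so an even $d_i$ may be smaller than $d_0$.
\item $p_0$ is ramified in $H_{p_j}$ only when $d_j$ is odd. Suppose $d_1,\dots,d_{i-1}$ were all even, $H_u$ (if present) were unbranched at $p_0$, and $d_i=d_0$. Then $p_0$ would be counted in $\pi_Y(G,d_i)$ but not in $T_i^*$.
\end{itemize}
Both are fixed by this subsection's standing assumption that some $d_j$ with $1\le j\le n_1$ is odd. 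Since $i\ge n_1+1$, that $H_{p_j}$ is one of your factors. Hence $p_0$ is ramified in $Y$, and $d_0\le d_j\le d_i$. The paper's proof uses this assumption at exactly these two points.

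\textbf{Two smaller repairs.}
\begin{itemize}
\item The divisor $\sum_\beta[u_\beta]-m[p_0]$ has degree zero only when $d_0\mid d_{\text{exc}}$. Use $d_0\sum_\beta[u_\beta]-d_{\text{exc}}[p_0]$ instead: it has odd valuation $d_0$ at each $u_\beta$, and is branched at $p_0$ iff $d_{\text{exc}}$ is odd.
\item Your intermediate bound $|G|+g\le 2^i(i+1)d_i+\cdots$ is a factor of $4$ too weak to fit under the logarithm after multiplying by $8$. Your exact Riemann--Hurwitz identity does suffice: it gives $|G|+g=1+\tfrac{|G|}{4}\sum_{p\text{ ram}}\deg(p)\le 1+2^{i-2}(i\,d_i+d_{\text{exc}})$, so $8(|G|+g)$ fits.
\end{itemize}
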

The factors of $2^{i + 1}$ in the condition for $d_i$ may be improved to $2^i$ if $\ell = 0$.
\begin{proof}
By Lemma~\ref{lem:pairing-by-hyperelliptics}, for each $1 \leq j \leq i - 1$ there is a hyperelliptic curve $H_j$ with branch locus contained in $\{p_0, p_j\}$ such that $\ip{p_i}{p_j} = c_t(H_j, p_i)$. Also, if $\ell > 0$, pick a hyperelliptic curve $\widetilde{H}$ branched at $u_1, \dots, u_\ell$, and possibly also at $p_0$ (if $d_{\text{exc}}$ is odd). Else, let $\widetilde{H} = \PP^1$.

The normalization $H$ of the curve \[
H_1 \times_{\PP^1} \times \dots \times_{\PP^1} H_{i-1} \times_{\PP^1} \widetilde{H} \to \PP^1
\]
is a Galois cover of $\PP^1$ with Galois group $G \coloneqq (\Z/2\Z)^{i-1} \times \Z/2\Z$ (if $\ell > 0$) or $(\Z/2\Z)^{i-1}$ (if $\ell = 0$), and the tuple $(\ip{p_i}{p_j})_{j=1}^{i-1}$ is exactly the image of the Frobenius element associated to $p_i$ under the projection $G \twoheadrightarrow (\Z/2\Z)^{i-1}$. Since $H_j$ is ramified at $p_j$, the cover $H \to \PP^1$ is ramified at $p_1, \dots, p_{i-1}$, and since at least one $1 \leq j \leq i - 1$ (in fact, $1 \leq j \leq n_1$) has $d_j$ odd, we must have that $H \to \PP^1$ is also ramified at $p_0$. Moreover, since $\widetilde{H} \to \PP^1$ is ramified at $u_1, \dots, u_\ell$, the cover $H \to \PP^1$ is also ramified at $u_1, \dots, u_\ell$.

Note that the constant field of $H$ is $\F_q$. To check this, we need to verify that no nontrivial extension of $\F_q$ is in the compositum of the function fields of the $H_i$ and $\widetilde{H}$. This compositum is the multiquadratic function field obtained by adjoining functions $\sqrt{f_i}$ and $\sqrt{\widetilde{f}}$, so it can only contain 2-power-degree extensions of $\F_q$. In other words, we need to avoid having $\F_{q^2}$ in the function field. This occurs only if the square root of a constant is in the function field, i.e., if some product of the $f_i$ and $\widetilde{f}$ is constant. But that cannot occur because they have independent sets of branch points.

Thus, the set $T_i^*$ consists precisely of degree $d_i$ points which are unramified in $H \to \PP^1$, i.e., $T_i^* = \pi_H(G, d_i)$. To apply Corollary~\ref{cor:chebotarev}, we need to understand the genus of $H$.

Let $R$ be the ramification divisor of $H \to \PP^1$. Each $H_j \to \PP^1$ is ramified of degree 2 at $p_j$, which contributes $2^{i-1}d_j \leq 2^{i-1}d_i$ to the degree of $R$. There may be some ramification at $p_0$ as well, which would contribute at most $2^id_0$ to the degree of $R$. However, note that since at least one of the $d_j$ is odd for $1 \leq j \leq i - 1$, we must have $d_0 \leq d_j \leq d_i$ for that value of $j$ by the conditions we placed on the degrees. Finally, each additional point $u_\beta$ contributes $2^{i-1}\deg(u_\beta)$ to the degree of $R$, for a total of $2^{i-1}d_{\text{exc}}$. Thus, we have \[
\deg(R) \leq 2^{i-1}(i-1)d_i + 2^id_i + 2^{i-1}d_{\text{exc}} = 2^{i-1}(i + 1)d_i + 2^{i-1}d_{\text{exc}}.
\]
By Riemann-Hurwitz, the genus of $H$ is bounded by: \[
g(H) \leq 2^{i-2}(i+1)d_i + 2^{i - 2}d_{\text{exc}} - |G| + 1.
\]
Now let $G'$ be the preimage of the vector $M$ under the map $G \twoheadrightarrow (\Z/2\Z)^{i-1}$. We have $|G|/|G'| = 2^{i-1}$, and the probability we seek is $\frac{\#\pi_H(G', d_i)}{\#\pi_H(G, d_i)}$. Corollary~\ref{cor:chebotarev} says that whenever \[
d_i \geq \frac{2}{1 - \delta}\log_q(2^{i +1}(i+1)d_i + 2^{i+1}d_\text{exc} + 8),
\]
we have \begin{align*}
\left|\frac{\#\pi_H(G', d_i)}{\#\pi_H(G, d_i)} - \frac{1}{2^{i-1}}\right| \leq 2\frac{1}{2^{i-1}}q^{-\frac{\delta}{2}d_i}.
\end{align*}
\end{proof}

We want to combine these estimates over each $n_1 + 1 \leq i \leq n$ for some small $n_1$. However, we will do something slightly different for the highest-degree ramification point to account for local conditions of type (iv).

\begin{proposition}\label{prop:last-row-equidistribution}
Fix $u_1, \dots, u_\ell \in \PP^1_{\F_q}$ of total degree $\sum_\beta \deg(u_\beta) = d_{\text{exc}}$ (where we allow $\ell = 0$) and fix $a_1, \dots, a_\ell \in \F_q^\times/(\F_q^\times)^2$.

Let $p_0, \dots, p_{n-1}$ be fixed distinct points in $\PP^1_{\F_q}$ with $\deg(p_j) = d_j$ for $j = 0, \dots, n - 1$ and such that $p_j \notin \{u_1, \dots, u_\ell\}$ for $j = 0, \dots, n - 1$. Let \[
T_n^* = T_n - \{p_0, \dots, p_{n-1}, u_1, \dots, u_\ell\}.
\]
Let $p_n$ be drawn uniformly at random from $T_n^*$ and $B = (p_0, p_1, \dots, p_n, t)$ as before. 
Fix $M_1, \dots, M_{n-1} \in \F_2$. Then, assuming that \[
d_n \geq \frac{2}{1 - \delta}\log_q(2^{\ell + n}(n + 1)d_n + 2^{\ell + n}d_{\text{exc}} + 8)
\] we have 
\begin{align*}
|2^{\ell + n - 1}\PP[\ip{p_n}{p_j} = M_j \text{ for }1 \leq j \leq n - 1\text{ and }c(X_B \to \PP^1, u_\beta) = a_\beta \text{ for }1 \leq \beta \leq \ell] - 1| \leq 2q^{-\frac{\delta}{2}d_n}.
\end{align*}
\end{proposition}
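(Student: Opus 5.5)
We follow the proof of Proposition~\ref{prop:row-equidistribution}, enlarging the Galois cover so that Frobenius at $p_n$ also records the splitting of $X_B$ at each $u_\beta$. By Lemma~\ref{lem:pairing-by-hyperelliptics}, for $1 \leq j \leq n-1$ there is a hyperelliptic curve $H_j \to \PP^1$ with branch locus in $\{p_0,p_j\}$ and $\ip{p_n}{p_j} = c(H_j \to \PP^1, p_n)_+$; these are the $n-1$ coordinates we need. For the $u_\beta$ conditions we use reciprocity. Let $Y \to \PP^1$ be the cover determined by the fixed points $p_0,\dots,p_{n-1}$ with second-order class $1$ at $p_0$ relative to $t$, so that $X_B$ corresponds to (up to constants) the product of the function for $Y$ and a function with divisor $\deg(p_0)[p_n]-d_n[p_0]$ (or just $[p_n]$-type, adjusted by parity). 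Then $c(X_B \to \PP^1, u_\beta)$ equals $c(Y', u_\beta)$ times the Legendre symbol of $h_n$ (the polynomial cutting out $p_n$) modulo $u_\beta$, for an explicit fixed $Y'$. By Lemma~\ref{lem:quadratic-reciprocity}, this symbol can be rewritten as $c(G_\beta \to \PP^1, p_n)$ times a factor depending only on $d_n$, $\deg(u_\beta)$, $q$ and the fixed data. Here $G_\beta$ is a hyperelliptic curve branched within $\{p_0, u_\beta\}$. Thus the event $c(X_B\to\PP^1,u_\beta)=a_\beta$ becomes $c(G_\beta\to\PP^1,p_n)=a'_\beta$ for some fixed $a'_\beta$ determined by the data.

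Let $H$ be the normalization of $H_1\times_{\PP^1}\cdots\times_{\PP^1}H_{n-1}\times_{\PP^1}G_1\times\cdots\times G_\ell$. It is Galois with group $G=(\Z/2\Z)^{n-1+\ell}$, and the event in question is that $\Frob_{p_n}$ lies in a single coset $G'$, i.e.\ a single element, so $|G|/|G'|=2^{n-1+\ell}$. The constant field is $\F_q$ by the argument of Proposition~\ref{prop:row-equidistribution}: the $H_j$ and $G_\beta$ are branched at independent points $p_j$, $u_\beta$, so no product of the defining functions is constant. The points unramified in $H$ of degree $d_n$ are exactly $T_n^*$, since the ramification lies in $\{p_0,\dots,p_{n-1},u_1,\dots,u_\ell\}$ and each of these is genuinely ramified (using that some $d_j$, $j\le n_1$, is odd, to force ramification at $p_0$).

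Next we bound the genus by Riemann--Hurwitz as before. Each $p_j$ contributes at most $2^{n+\ell-2}d_j\le 2^{n+\ell-2}d_n$, $p_0$ contributes at most $2^{n+\ell-1}d_0\le 2^{n+\ell-1}d_n$, and the $u_\beta$ contribute at most $2^{n+\ell-2}d_{\text{exc}}$ in total. This gives $|G|+g(H)\lesssim 2^{n+\ell-3}(n+1)d_n+2^{n+\ell-3}d_{\text{exc}}$, so the hypothesis on $d_n$ is exactly what Corollary~\ref{cor:chebotarev} requires with $C=G'$ and $C'=G$. That corollary yields $|\#\pi_H(G',d_n)/\#\pi_H(G,d_n)-2^{-(n+\ell-1)}|\le 2\cdot 2^{-(n+\ell-1)}q^{-\delta d_n/2}$, which is the claim after multiplying by $2^{\ell+n-1}$.

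The main obstacle is the reciprocity step: showing that, for fixed $p_0,\dots,p_{n-1}$ and $t$, the class $c(X_B\to\PP^1,u_\beta)$ depends on $p_n$ only through the splitting of $p_n$ in a fixed quadratic cover $G_\beta$, up to a factor that is constant across $T_n$. The normalization $c_t(X_B,p_0)=1$ changes the scalar in front of the defining function depending on $p_n$; this scalar is a Legendre symbol of $h_n$ at $p_0$, which we absorb by letting $G_\beta$ also be branched at $p_0$. The parity bookkeeping in the exponents $\deg(u_\beta)d_n$ must be checked carefully, since $d_n$ is fixed in $T_n$ and so the resulting sign is constant. If the dependence through $p_0$ cannot be absorbed cleanly, the fallback is to add one extra quadratic factor branched at $p_0$ to the cover $H$; this enlarges $G$, which is likely the origin of the factor $2^{\ell+n}$ rather than $2^{\ell+n-1}$ in the hypothesis.
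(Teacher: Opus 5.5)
Your proposal is correct and follows essentially the same route as the paper. The paper also writes $c(X_B\to\PP^1,u_\beta)=c(X_{n-1}\to\PP^1,u_\beta)\,c(H_n\to\PP^1,u_\beta)$ with $H_n$ normalized so that $c_t(H_n\to\PP^1,p_0)=1$, applies Lemma~\ref{lem:quadratic-reciprocity} to $H_n$ and a curve $\widetilde{H}_\beta$ branched at the support of $e_{p_0,u_\beta}$ mod $2$ (your $G_\beta$) to get $c(X_B\to\PP^1,u_\beta)=A_\beta\,c(\widetilde{H}_\beta\to\PP^1,p_n)$ with $A_\beta$ deterministic, and then applies Chebotarev to the compositum with group $(\Z/2\Z)^{n-1+\ell}$ using the same Riemann--Hurwitz bound. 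Your hedged fallback is unnecessary: the normalization $c_t(H_n\to\PP^1,p_0)=1$ already kills the only $p_n$-dependent factor on the right side of Lemma~\ref{lem:quadratic-reciprocity}. Likewise, the $2^{\ell+n}$ in the hypothesis is just $8\cdot 2^{\ell+n-3}$ from your own genus bound, not the trace of an extra quadratic factor.
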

\begin{remark}
The slogan for this result is that the classes $c(X_B \to \PP^1, u_\beta)$ are independent of the values of the pairing $\ip{p_i}{p_j}$ that make up the entries of the R\'edei matrix, so that the distribution of $2\Pic^0(X_B)(\F_q)[4]$ should be insensitive to (unramified) local conditions. We will make this more precise in Corollary~\ref{cor:independence}.
\end{remark}
\begin{proof}
For $1 \leq j \leq n$ let $f_j$ be a rational function on $\PP^1_{\F_q}$ such that $\div f_j = e_{p_0, p_j}$ and such that the hyperelliptic curve $H_j \to \PP^1$ determined by $f_j$ via the correspondence of Lemma~\ref{lem:kummer} satisfies $c_t(H_j \to \PP^1, p_0) = 1$. The rational function $f_j$ is deterministic for $1 \leq j \leq n - 1$ and random for $j = n$. 

The hyperelliptic curve $X_f$ defined by the rational function $\prod_{i=1}^n f_j$ via the correspondence in Lemma~\ref{lem:kummer} has the same branch points as $X_B$ and satisfies $c_t(X_f \to \PP^1, p_0) = c_t(X_B \to \PP^1, p_0) = 1$, so in fact $X_f = X_B$.

Let $X_{n-1}$ be the (deterministic) hyperelliptic curve defined by the rational function $\prod_{j=1}^{n-1} f_j$. Both $X_{n-1}$ and $H_n$ are unramified at $u_\beta$ for each $1 \leq \beta \leq \ell$.

Also, for each $1 \leq \beta \leq \ell$ let $\widetilde{H}_\beta$ be a hyperelliptic curve whose branch locus is the support of the divisor $e_{p_0, u_\beta}$. 

We have  \[
c(X_B \to \PP^1, u_\beta) = c(X_{n-1} \to \PP^1, u_\beta)c(H_n \to \PP^1, u_\beta).
\]
By Lemma~\ref{lem:quadratic-reciprocity}, we have \begin{align*}
c(H_n \to \PP^1, u_\beta)c(\widetilde{H}_\beta \to \PP^1, p_n) &= c_t(H_n \to \PP^1, p_0)^{\deg(u_\beta)}c_t(\widetilde{H}_\beta \to \PP^1, p_0)^{d_n}(-1)^{\frac{q - 1}{2}\deg(u_\beta)d_n} \\
&= c_t(\widetilde{H}_\beta \to \PP^1, p_0)^{d_n}(-1)^{\frac{q - 1}{2}\deg(u_\beta)d_n}.
\end{align*}
The right hand side is a deterministic constant. Set \[
A_\beta \coloneqq c(X_{n-1} \to \PP^1, u_\beta)c_t(\widetilde{H}_\beta \to \PP^1, p_0)^{d_n}(-1)^{\frac{q - 1}{2}\deg(u_\beta)d_n}
\]
so that\[
c(X_B \to \PP^1, u_\beta) = A_\beta c(\widetilde{H}_\beta \to \PP^1, p_n).
\]
Let $\widetilde{H}$ be the (normalization of) the curve $\widetilde{H}_1 \times_{\PP^1} \dots \times_{\PP^1} \widetilde{H}_\ell$. This curve is a Galois cover of $\PP^1$ with Galois group $\widetilde{G} \coloneqq (\Z/2\Z)^\ell$. We determine the distribution of the tuple of classes $c(X_B \to \PP^1, u_\beta)$ using the same Chebotarev density argument as in Proposition~\ref{prop:row-equidistribution}. As such, we will omit some details that are identical to those in the proof of Proposition~\ref{prop:row-equidistribution}.

To get the joint distribution, we consider the normalization $H$ of the cover \[
H_1 \times_{\PP^1} \times \dots \times_{\PP^1} H_{n-1} \times_{\PP^1} \widetilde{H} \to \PP^1
\]
which is Galois with Galois group $G \coloneqq (\Z/2\Z)^{n-1}\times\widetilde{G}$. The $(\Z/2\Z)^{n-1}$ part of $\Frob_{p_n}$ determines the pairing entries $\ip{p_n}{p_j}$, and following the argument above, the $\widetilde{G}$ part determines the $c(X_B \to \PP^1, u_\beta)$.

Let $R$ be the ramification divisor of $H$. Ramification at each $p_i$, $1 \leq i \leq n - 1$, contributes $2^{\ell + (n - 1) - 1}d_i \leq 2^{\ell + n - 2}d_n$ to the degree of $R$ and $p_0$ contributes at most $2^{\ell + (n-1)}d_0 \leq 2(2^{\ell + n - 2})d_n$ for a total of at most $2^{\ell + n - 2}(n + 1)d_n$. Also, ramification at each $u_\beta$ contributes $2^{\ell + n - 2}\deg(u_\beta)$, for a total of $2^{\ell + n - 2}d_{\mathrm{exc}}$. Thus $R$ has \[
\deg(R) \leq 2^{\ell + n - 2}((n + 1)d_n + d_{\text{exc}})
\]
so that the genus of $H$ is bounded by \[
g(H) \leq 2^{\ell + n - 3}((n + 1)d_n + d_{\text{exc}}) - |G| + 1.
\]
Then Corollary~\ref{cor:chebotarev} says that whenever \[
d_n \geq \frac{2}{1 - \delta}\log_q(2^{\ell + n}((n + 1)d_n + d_{\text{exc}}) + 8)
\]
we have \begin{align*}
|2^{\ell + n - 1}\PP[\ip{p_n}{p_j} = M_j \text{ for }1 \leq j \leq n - 1\text{ and }c(X_B \to \PP^1, u_\beta) = a_\beta &\text{ for }1 \leq \beta \leq \ell] - 1| \leq 2q^{-\frac{\delta}{2}d_n}
\end{align*}
as we wanted.
\end{proof}

\begin{corollary}\label{cor:independence}
Fix $u_1, \dots, u_\ell \in \PP^1_{\F_q}$ of total degree $\sum_\beta \deg(u_\beta) = d_{\text{exc}}$ (where we allow $\ell = 0$) and fix $a_1, \dots, a_\ell \in \F_q^\times/(\F_q^\times)^2$.

Suppose there is $0 < \delta < 1$ such that  \[
d_n \geq \frac{2}{1 - \delta}\log_q(2^{\ell + n}(n + 1)d_n + 2^{\ell + n}d_{\text{exc}} + 8).
\]

Let $B = (p_0, p_1, \dots, p_n, t)$ be chosen uniformly at random from $T^*$ and let $Y$ be any random variable independent of $B$. Let $E_1$ be any event depending only on $Y$ and the values of $\ip{p_i}{p_j}$ for each $1 \leq j < i \leq n$, and let $E_2$ be the event that $c(X_B \to \PP^1, u_\beta) = a_\beta$ for each $1 \leq \beta \leq \ell$. Then \[
\left|\PP[E_1 \cap E_2] - 2^{-\ell}\PP[E_1]\right| \leq 2^{\binom{n}{2} -\ell - n + 3}q^{-\frac{\delta}{2}d_n}.
\]
\end{corollary}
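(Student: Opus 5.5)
We plan to condition on everything except the last branch point $p_n$ and then use Proposition~\ref{prop:last-row-equidistribution}. Write $B' = (p_0, p_1, \dots, p_{n-1}, t)$. Under the uniform distribution on $T^*$, the conditional law of $p_n$ given $B'$ is uniform on $T_n^* = T_n - \{p_0, \dots, p_{n-1}, u_1, \dots, u_\ell\}$. Let $\mathcal{L}$ be the lower-triangular data $(\ip{p_i}{p_j})_{1 \leq j < i \leq n}$. Its part with $i \leq n-1$, call it $\mathcal{L}'$, is determined by $B'$ (Lemma~\ref{lem:pairing-by-hyperelliptics}). The last row $R = (\ip{p_n}{p_j})_{j<n} \in \F_2^{n-1}$ depends on $p_n$. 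Since $Y$ is independent of $B$, conditioning on $(Y, B')$ leaves $p_n$ uniform on $T_n^*$.

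First reduce $E_1$ to atoms. Write $E_1 = \bigsqcup_{L} (\{\mathcal{L} = L\} \cap F_L)$, where $L$ ranges over the $2^{\binom{n}{2}}$ possible values of $\mathcal{L}$ and $F_L$ is an event depending only on $Y$. Each $L$ splits as $(L', M)$ with $M \in \F_2^{n-1}$ the last row. Then
\[
\PP[E_1 \cap E_2] = \sum_{L} \E\bigl[\mathbf{1}_{F_L}\mathbf{1}_{\mathcal{L}' = L'}\,\PP[R = M,\ E_2 \mid Y, B']\bigr].
\]
Proposition~\ref{prop:last-row-equidistribution}, applied for fixed $B'$ under the stated hypothesis on $d_n$, gives $\PP[R = M, E_2 \mid Y, B'] = 2^{-(\ell + n - 1)}(1 + \epsilon)$ with $|\epsilon| \leq 2q^{-\frac{\delta}{2}d_n}$. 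Summing that proposition over the $2^\ell$ values of $(a_\beta)$, or equivalently applying it with $\ell$ replaced by $0$, gives $\PP[R = M \mid Y, B'] = 2^{-(n-1)}(1 + \epsilon')$ with the same bound on $|\epsilon'|$. Subtracting, each atom contributes an error of at most
\[
\E\bigl[\mathbf{1}_{F_L}\mathbf{1}_{\mathcal{L}' = L'}\bigr]\cdot 2^{-(\ell + n - 1)} \cdot 4q^{-\frac{\delta}{2}d_n} \leq 2^{-\ell - n + 3}q^{-\frac{\delta}{2}d_n}.
\]
Summing crudely over all $2^{\binom{n}{2}}$ values of $L$ gives the bound $2^{\binom{n}{2} - \ell - n + 3}q^{-\frac{\delta}{2}d_n}$, which matches the claimed constant. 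A sharper sum is available, since $\sum_L \PP[F_L, \mathcal{L}' = L'] \leq 2^{n-1}$, but it is not needed.

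The main obstacle is uniformity of the conditional estimate. Proposition~\ref{prop:last-row-equidistribution} needs its error bound to be uniform in the fixed points $p_0, \dots, p_{n-1}$, and it is, since the hypothesis only involves $d_n$, $n$, $\ell$, $d_{\text{exc}}$. A second point to check is whether the unconditional comparison $\PP[R = M \mid B']$ needs the $\ell = 0$ version of the hypothesis. That version is implied by the stated one, because its left side is larger.
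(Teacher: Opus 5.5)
Your argument is correct and is essentially the paper's proof. Both condition on $Y$ and $p_1,\dots,p_{n-1}$, compare the joint last-row estimate of Proposition~\ref{prop:last-row-equidistribution} with a marginal last-row estimate, and sum crudely over the $2^{\binom{n}{2}}$ atoms. The one difference is how the marginal is obtained: the paper takes it from Proposition~\ref{prop:row-equidistribution} with $i=n$, while you sum Proposition~\ref{prop:last-row-equidistribution} over the $2^\ell$ choices of $(a_\beta)$. Summing is the right reading of your ``$\ell$ replaced by $0$'', because the literal substitution would no longer exclude the $u_\beta$ from $T_n^*$.
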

\begin{proof}
The case $\ell = 0$ is immediate. We consider tuples $M = (M_{ij})_{1 \leq j < i \leq n}$ with $M_{ij} \in \F_2$. Let $E_1'(M)$ be the event that $\ip{p_i}{p_j} = M_{ij}$ for each $1 \leq j < i \leq n$. 

Condition on $p_1, \dots, p_{n-1}$ and $Y$. Then $p_n$ is uniformly random among points of degree $d_n$ excluding $p_0, p_1, \dots, p_{n-1}$ and $u_1, \dots, u_\ell$. Define \[
g(M, p_1, \dots, p_{n-1}) = \begin{cases}
    1 &\text{if } \ip{p_i}{p_j} = M_{ij} \text{ for each } 1 \leq j < i \leq n - 1; \\
    0 &\text{otherwise.}
\end{cases}
\]
From Proposition~\ref{prop:last-row-equidistribution} we have \[
|\PP[E_1'(M) \cap E_2 \mid p_1, \dots, p_{n-1}] - 2^{-\ell - n + 1}g(M, p_1, \dots, p_{n-1})| \leq 2^{-\ell - n + 2}q^{-\frac{\delta}{2}d_n}.
\]
From Proposition~\ref{prop:row-equidistribution} with $i = n$ we have \[
|\PP[E_1'(M) \mid p_1, \dots, p_{n-1}] - 2^{-n + 1}g(M, p_1, \dots, p_{n-1})| \leq 2^{-n + 2}q^{-\frac{\delta}{2}d_n},
\]
By the law of total probability we have \[
|\PP[E_1'(M) \cap E_2] - 2^{-\ell}\PP[E_1'(M)]| \leq 2^{-\ell - n + 3}q^{-\frac{\delta}{2}d_n}
\]
and if $E_1'$ is any event depending only on the values of $\ip{p_i}{p_j}$ for each $1 \leq j < i \leq n$, then \[
|\PP[E_1' \cap E_2] - 2^{-\ell}\PP[E_1']| \leq 2^{\binom{n}{2} -\ell - n + 3}q^{-\frac{\delta}{2}d_n}.
\]
We obtain the claimed result by conditioning on $Y$ and running the above argument.
\end{proof}

To get the conditional distribution of the rest of the matrix given the upper-left corner, we use the following lemma. The proof is straightforward, but can be found in, e.g. \cite[Lemma 4.12]{gorokhovsky2024timeinhomogeneousrandomwalksfinite}.

\begin{lemma}\label{lem:err-combining}
    Let $x_1, \dots, x_n \geq -1$ be real numbers such that $\sum_{m=1}^n \max\{0, x_m\} \leq \log 2$. Then \[
    \left|\prod_{m=1}^n (1 + x_m) - 1\right| \leq 2\sum_{m=1}^n |x_m|.
    \]
\end{lemma}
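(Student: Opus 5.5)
We argue by passing to logarithms and using elementary estimates for $\log(1+x)$ on the two sides of zero, with the positive and negative parts of the $x_m$ handled separately. Put $S^+ = \sum_m \max\{0,x_m\}$ and $S^- = \sum_m \max\{0,-x_m\}$, so that $\sum_m |x_m| = S^+ + S^-$. If some $x_m = -1$, the product is $0$; then $|0 - 1| = 1 \le 2 \cdot 1 \le 2\sum|x_m|$, and this case is done. Otherwise every factor $1 + x_m$ is positive, and we bound $\prod(1+x_m)$ above and below.

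\textbf{Upper bound.} For each $m$ we have $1 + x_m \le e^{x_m} \le e^{\max\{0,x_m\}}$, so
\[
\prod_m (1+x_m) \le e^{S^+}.
\]
Since $S^+ \le \log 2$ by hypothesis, convexity of $e^s$ gives $e^s - 1 \le s/\log 2 \le 2s$ on $[0,\log 2]$. Hence
\[
\prod_m (1+x_m) - 1 \le e^{S^+} - 1 \le 2S^+ \le 2\sum_m |x_m|.
\]

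\textbf{Lower bound.} Each factor with $x_m \ge 0$ is at least $1$, so those factors can be dropped:
\[
\prod_m (1+x_m) \ge \prod_{x_m<0}(1+x_m) = \prod_{x_m<0}\bigl(1 - |x_m|\bigr).
\]
The Weierstrass product inequality $\prod(1-a_m) \ge 1 - \sum a_m$ holds for $a_m \in [0,1]$; it follows by an easy induction, multiplying the inductive bound by $1 - a_{m+1} \ge 0$. Applying it here gives
\[
\prod_m (1+x_m) \ge 1 - S^-,
\]
and therefore
\[
1 - \prod_m (1+x_m) \le S^- \le 2\sum_m |x_m|.
\]

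Combining the two bounds yields $\bigl|\prod_m (1+x_m) - 1\bigr| \le 2\sum_m |x_m|$, as claimed.

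The only point needing care is the upper bound: it is exactly where the hypothesis $S^+ \le \log 2$ is used, in order to linearize $e^{S^+} - 1$ with constant $2$. The lower bound needs no smallness assumption at all, thanks to the Weierstrass inequality.
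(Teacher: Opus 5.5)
Your proof is correct. The upper bound $\prod_m(1+x_m)\le e^{S^+}$ with the chord estimate $e^s-1\le s/\log 2$ on $[0,\log 2]$, together with the Weierstrass lower bound, actually gives the slightly sharper two-sided estimate
\[
-S^- \;\le\; \prod_{m=1}^n (1+x_m) - 1 \;\le\; \frac{S^+}{\log 2},
\]
and the claim follows from it.

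Two small remarks. You never actually take logarithms; you only use $1+x\le e^x$. Also, the case $x_m=-1$ need not be split off: both of your bounds hold for nonnegative factors, and the Weierstrass inequality allows $a_m=1$.

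The paper gives no proof of its own here. It calls the lemma straightforward and defers to an outside reference, so there is no in-paper argument to match. A shorter route, which the constants $\log 2$ and $2$ seem tailored to, is the telescoping identity
\[
\prod_{m=1}^n (1+x_m) - 1 = \sum_{k=1}^n x_k \prod_{m=1}^{k-1}(1+x_m).
\]
Since $0\le 1+x_m\le e^{\max\{0,x_m\}}$, each partial product lies in $\bigl[0, e^{\sum_{m<k}\max\{0,x_m\}}\bigr]\subseteq[0,2]$. The triangle inequality then gives the bound with constant $2$ at once.

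The telescoping argument treats both signs of $x_m$ uniformly and shows exactly why the hypothesis is $\log 2$: it makes every partial product at most $2$. Your argument separates positive and negative parts. In exchange it gives the better constant $1/\log 2$, and constant $1$ on the negative side, where no smallness hypothesis is needed.
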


We apply this with $x_i$ being the expression in the absolute value on the left hand side of the inequality in Proposition~\ref{prop:row-equidistribution} for $n_1 + 1 \leq i \leq n$.

\begin{corollary}\label{cor:conditional-equidist}
Fix $u_1, \dots, u_\ell \in \PP^1_{\F_q}$ of total degree $\sum_\beta \deg(u_\beta) =\colon d_{\text{exc}}$ (where we allow $\ell = 0$).

Let $2 \leq n_1 \leq n$ and fix $(p_1, \dots, p_{n_1}) \in \prod_{j=1}^{n_1} T_j$ with $p_0, p_1, \dots, p_{n_1}$ distinct and such that $p_j \notin \{u_1, \dots, u_\ell\}$ for $0 \leq j \leq n_1$. Let $(p_{n_1 + 1}, \dots, p_n) \in \prod_{j=n_1 + 1}^n T_j$ be chosen uniformly at random among all tuples such that $p_0, p_1, \dots, p_n$ are all distinct and such that $p_j \notin \{u_1, \dots, u_\ell\}$ for $0 \leq j \leq n$. Let $B = (p_0, p_1, \dots, p_n, t)$ as before. For $n_1 + 1 \leq i \leq n$ and $1 \leq j \leq i - 1$, fix $M_{ij} \in \F_2$. Assume that: \begin{itemize}
    \item At least one of $d_1, \dots, d_{n_1}$ is odd;
    \item For $n_1 + 1 \leq i \leq n$ there is a $0 < \delta_i < 1$ such that we have \[
    d_i \geq \frac{2}{1 - \delta_i}\log_q(2^{i + 1}(i+1)d_i + 2^{i+1}d_\text{exc} + 8).
    \]
    \item We have \[
    2\sum_{i=n_1 + 1}^n q^{-\frac{\delta_i}{2}d_i} \leq \log 2.
    \]
\end{itemize}
Then \[
\left|2^{\binom{n}{2}-\binom{n_1}{2}}\PP\left[\ip{p_i}{p_j} = M_{ij} \text{ for all } n_1 + 1 \leq i \leq n, 1 \leq j < i\right] - 1\right| \leq 4\sum_{i=n_1 + 1}^n q^{-\frac{\delta_i}{2}d_i}.
\]
\end{corollary}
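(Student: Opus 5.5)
We will prove the statement by revealing the points $p_{n_1+1},\dots,p_n$ one at a time and applying Proposition~\ref{prop:row-equidistribution} at each step, then multiplying the resulting conditional estimates with Lemma~\ref{lem:err-combining}. The key structural fact is that the entries $\ip{p_i}{p_j}$ with $j<i$ depend only on $p_0,p_1,\dots,p_i$ (and $t$), by Lemma~\ref{lem:pairing-by-hyperelliptics}. So the event in question factors as an intersection over $i=n_1+1,\dots,n$ of the events
\[
E_i=\{\ip{p_i}{p_j}=M_{ij} \text{ for } 1\le j<i\},
\]
where $E_i$ is measurable with respect to $p_1,\dots,p_i$. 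Note also that the number of constrained entries is $\sum_{i=n_1+1}^n (i-1)=\binom{n}{2}-\binom{n_1}{2}$, which matches the normalizing power of $2$.

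Next, we use that the uniform distribution on admissible tuples $(p_{n_1+1},\dots,p_n)$ decomposes sequentially. Since distinctness and avoidance of the $u_\beta$ are the only constraints, each admissible tuple of the first $i-1$ points extends to a last point in $T_i^*=T_i-\{p_0,\dots,p_{i-1},u_1,\dots,u_\ell\}$. The catch is that $|T_i^*|$ does depend slightly on the earlier points: it equals $|T_i|$ minus the number of earlier points and $u_\beta$ of degree $d_i$. So the uniform measure on tuples is not exactly the product of uniform conditionals. To avoid this, I would instead build the tuple sequentially, choosing each $p_i$ uniformly from $T_i^*$ given the previous choices. If the statement is to be read for the exactly uniform tuple, the discrepancy between the two measures is a ratio of counts differing by at most $n+\ell$ out of roughly $q^{d_i}/d_i$. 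That discrepancy can be absorbed into the error term or handled by a reweighting argument. I expect this to be the main technical nuisance, though not a deep one.

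With the sequential description, the chain rule gives
\[
\PP\Bigl[\bigcap_i E_i\Bigr]=\E\Bigl[\prod_{i=n_1+1}^n \PP[E_i\mid p_1,\dots,p_{i-1}]\Bigr],
\]
with each conditional probability evaluated on the event that all earlier $E_{i'}$ hold. We apply Proposition~\ref{prop:row-equidistribution}, which is legitimate because at least one of $d_1,\dots,d_{n_1}$ is odd and the degree hypotheses hold with $\delta_i$. It gives, for every choice of $p_1,\dots,p_{i-1}$,
\[
2^{i-1}\PP[E_i\mid p_1,\dots,p_{i-1}]=1+x_i, \qquad |x_i|\le 2q^{-\delta_i d_i/2}.
\]
Multiplying by $2^{\binom n2-\binom{n_1}2}=\prod_i 2^{i-1}$, the normalized probability becomes $\E[\prod_i(1+x_i)]$. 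Here the $x_i$ are random, but each is uniformly bounded by the deterministic quantity $2q^{-\delta_i d_i/2}$. The third hypothesis says that $\sum_i 2q^{-\delta_i d_i/2}\le\log 2$, so Lemma~\ref{lem:err-combining} applies pointwise and yields
\[
\Bigl|\prod_i(1+x_i)-1\Bigr|\le 2\sum_i|x_i|\le 4\sum_{i=n_1+1}^n q^{-\delta_i d_i/2}.
\]
Taking expectations preserves this bound, which gives the claimed inequality.
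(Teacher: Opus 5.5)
Your plan matches the paper's: reveal $p_{n_1+1},\dots,p_n$ one at a time, use the history-uniform bound of Proposition~\ref{prop:row-equidistribution}, and combine the steps with Lemma~\ref{lem:err-combining}. However, the step where you combine the conditional estimates is wrong as written. The identity $\PP[\bigcap_i E_i]=\E[\prod_i \PP[E_i\mid p_1,\dots,p_{i-1}]]$ does not hold in general. With two factors, the right side equals $\PP[E_{n_1+1}]\,\PP[E_{n_1+2}]$ (the first factor is a constant because $p_1,\dots,p_{n_1}$ are fixed), not $\PP[E_{n_1+1}\cap E_{n_1+2}]$. The qualifier ``evaluated on the event that all earlier $E_{i'}$ hold'' does not turn it into a valid formula. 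The repair uses only what you already have, namely that $|x_i|\le \epsilon_i:=2q^{-\delta_i d_i/2}$ for every admissible history. The paper sets $B_{i-1}=E_{n_1+1}\cap\dots\cap E_{i-1}$. Since $B_{i-1}$ is determined by $p_1,\dots,p_{i-1}$, the law of total probability shows that the deterministic number $2^{i-1}\PP[E_i\mid B_{i-1}]-1$ obeys the same bound. The exact telescoping $\PP[B_n]=\prod_{i=n_1+1}^n\PP[E_i\mid B_{i-1}]$ together with Lemma~\ref{lem:err-combining} then gives the claim. Equivalently, you can peel off factors with the tower property, $\E[\prod_{i\le m}2^{i-1}\mathbf{1}_{E_i}]=\E[\prod_{i<m}2^{i-1}\mathbf{1}_{E_i}\,(1+x_m)]$. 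By nonnegativity this gives $\prod_i(1-\epsilon_i)\le 2^{\binom{n}{2}-\binom{n_1}{2}}\PP[\bigcap_i E_i]\le\prod_i(1+\epsilon_i)$, and you finish by applying Lemma~\ref{lem:err-combining} to $\pm\epsilon_i$.

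Separately, the ``technical nuisance'' you identify does not exist. Every earlier point has its prescribed degree $d_j$, the points are distinct, and they avoid the $u_\beta$. So $\#T_i^*=\#T_i-\#\{0\le j<i: d_j=d_i\}-\#\{\beta: \deg(u_\beta)=d_i\}$ does not depend on which points were chosen. Likewise, the number of admissible completions of an admissible partial tuple is a fixed product of such counts. Hence the uniform measure on admissible tuples is exactly your sequential one, with each $p_i$ uniform on $T_i^*$ given the past. That is precisely the setting Proposition~\ref{prop:row-equidistribution} requires, so no reweighting is needed. This matters: your fallback of absorbing a discrepancy into the error term would not have produced the stated inequality with the exact bound $4\sum_{i} q^{-\delta_i d_i/2}$.
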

\details{
\begin{proof}
For $n_1 + 1 \leq i \leq n$ let $A_i$ be the event that $\ip{p_i}{p_j} = M_{ij}$ for $1 \leq j \leq i - 1$. Let $B_i = \bigcap_{j=n_1}^i A_j$ (we say $B_{n_1-1}$ is the full sample space). Proposition~\ref{prop:row-equidistribution} gives an estimate for $\PP[A_i \mid p_1, \dots, p_{i-1}]$ independent of $p_1, \dots, p_{i-1}$ (provided they satisfy the assumptions). In particular, the same estimate holds for $\PP[A_i \mid B_{i-1}]$ by the law of total probability.
For $i = n_1 + 1, \dots, n$ let \[
x_i = 2^{i-1}\PP[A_i \mid B_{i-1}] - 1 \qquad\text{ so that }\qquad  |x_i| \leq 2q^{-\frac{\delta_i}{2}d_i}.
\] 
The claim follows by \[
\PP[A_n] = \PP[A_n | B_{n-1}]\PP[A_{n-1} | B_{n-2}] \dots \PP[A_{n_1} \mid B_{n_1 - 1}]\PP[B_{n_1}]
\]
and Lemma~\ref{lem:err-combining}.
\end{proof}
}

\subsection{Averaging trick}\label{sect:averaging}

When proving equidistribution, we run into an issue when we are ranging over low-degree points in relatively high-degree extensions. Although we expect that degrees of points in a random selection get big fast, this isn't quite enough to deal with the large error terms in the upper left corner. We are going to use a trick from \cite[Section 6.2]{smith_2infty-selmer_2017} that ``mixes in the bad corner'' by randomly permuting the rows and columns.

In this section, we will use some notation and definitions which will not reappear in later sections for the sake of expressing the main results as generally as possible. The following notation will be in use for the rest of this section and nowhere else.

Let $F$ be a finite set with $\#F = \ell$ (later, we will have $F = \F_2$). Let $1 \leq n_1 \leq n$ be integers. For $1 \leq i < j \leq n$ let $f_{ij}\colon F \to F$ be a bijection. For $1 \leq j < i \leq n$ let $f_{ij} = f_{ji}^{-1}$. Write $f$ for the collection of $f_{ij}$ over all $1 \leq i \neq j \leq n$.

We say a function $M\colon \{(i, j) \in [n]^2 \mid i \neq j\} \to F$ is \textit{$f$-symmetric} if it satisfies $M(j, i) = f_{ij}(M(i, j))$ for all $1 \leq i \neq j \leq n$. We write $M_{n_1}$ for the restriction of $M$ to $\{(i, j) \in [n_1]^2 \mid i \neq j\}$.

\begin{remark}\label{rmk:csymm-is-fsymm}
If $\widetilde{M}$ is a $C$-symmetric matrix, then $M(i, j) = M_{ij}$ is an $f$-symmetric function valued in $F = \F_\ell$, with $f_{ij}(x) = x + C_{ji}$ for $x \in \F_\ell$. In our case of interest, we will work with $C$-symmetric matrices with row and column sums zero, so they are determined by their off-diagonal entries. In other words, a $C$-symmetric matrix with row and column sums zero is the same data as an $f$-symmetric function for this choice of $f$. The function $M_{n_1}$ is the data of the upper-left $n_1 \times n_1$ corner of $\widetilde{M}$.
\end{remark}

We will not actually use the data of the functions $f_{ij}$ in this section; we only need them to express the key idea that an $f$-symmetric function $M$ is uniquely determined by fixing one of $M(i, j)$ and $M(j, i)$ for each $1 \leq i < j \leq n$.

Let $G \subseteq S_n$ be a subgroup such that for $\sigma \in G$ we have $f_{ij} = f_{\sigma(i), \sigma(j)}$ for all $i \neq j$.

Consider the action of $G$ on the collection of subsets of $[n]$ of size $n_1$. Denote by $G[n_1]$ the orbit of the set $[n_1]$ under this action. Let
\[S(G,d,n_1) \coloneqq \frac{\#\{S\in G[n_1] \mid \# (S \cap [n_1])=d \}}{\# G[n_1]}\]
be the fraction of sets in this orbit that intersect $[n_1]$ at exactly $d$ elements. The rate at which $S(G, d, n_1)$ decays with increasing $d$ is a measure of how well the $G$-action mixes the first $n_1$ indices of $[n]$ into the rest of the positions.

For $\sigma \in G$, and $M$ an $f$-symmetric function, let $M^\sigma$ be the function given by $M^\sigma(i, j) = M(\sigma^{-1}(i), \sigma^{-1}(j))$. Note that for $1 \leq i < j \leq n$, we have \[
M^\sigma(j, i) = M(\sigma^{-1}(j), \sigma^{-1}(i)) = f_{\sigma^{-1}(i), \sigma^{-1}(j)}(M(\sigma^{-1}(i), \sigma^{-1}(j))) = f_{ij}(M^\sigma(i, j))
\]
so that $M^\sigma$ is $f$-symmetric. We write $M^\sigma_{n_1}$ for $(M^\sigma)_{n_1}$.

Note that if $M$ represents a $C$-symmetric matrix (with row and column sums zero), then $M^\sigma$ represents the $C$-symmetric matrix obtained by permuting the rows and columns of $M$ according to $\sigma$; in particular, the matrix represented by $M^\sigma$ has the same rank as the matrix represented by $M$.

The following lemma is the key technical result of this subsection. In Corollary~\ref{cor:f-symmetric-equidistribution}, we will use it to prove that a random $f$-symmetric function which is equidistributed away from $[n_1]^2$ is equidistributed after randomly permuting the domain according to the action defined above.

\begin{lemma}\label{lem:averaging}
With setup as above, let $\sigma$ be chosen uniformly at random from $G$. Let $P$ be a fixed $f$-symmetric function. Then \[
\sum_M\left|\PP[P_{n_1} = M^\sigma_{n_1}] - \ell^{-\binom{n_1}{2}}\right| \leq \ell^{\binom{n}{2}-\binom{n_1}{2}}\sqrt{\sum_{d\geq 0} S(G,d,n_1)\ell^{n_1d} - 1}
\]
where the sum ranges over all $f$-symmetric functions $M$.
\end{lemma}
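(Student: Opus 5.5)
Since $P_{n_1} = M^\sigma_{n_1}$ depends on $M$ only through the restriction of $M$ to pairs inside $\sigma^{-1}([n_1])$, the plan is to reduce to a sum over restrictions and then apply Cauchy--Schwarz. An $f$-symmetric function is determined by one value per unordered pair, so there are $\ell^{\binom{n}{2}}$ of them. For a fixed $n_1$-subset $S$, the restrictions to $S$ number $\ell^{\binom{n_1}{2}}$, and each restriction extends in $\ell^{\binom{n}{2}-\binom{n_1}{2}}$ ways.

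\textbf{Step 1 (reduction to a variance).} For each $M$ set $X_M \coloneqq \mathbf{1}[P_{n_1} = M^\sigma_{n_1}]$, a random variable in $\sigma$. Cauchy--Schwarz over the $\ell^{\binom{n}{2}}$ functions $M$ gives
\[
\sum_M \left|\E X_M - \ell^{-\binom{n_1}{2}}\right| \leq \ell^{\binom{n}{2}/2}\Bigl(\sum_M (\E X_M - \ell^{-\binom{n_1}{2}})^2\Bigr)^{1/2}.
\]
Averaging over $M$ uniformly, $\E_M \E_\sigma X_M = \ell^{-\binom{n_1}{2}}$. This holds because for fixed $\sigma$ the map $M \mapsto M^\sigma_{n_1}$ pushes the uniform measure on $f$-symmetric $M$ to the uniform measure on $f$-symmetric functions on $[n_1]$; here we use $f_{ij} = f_{\sigma(i)\sigma(j)}$. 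So the quantity to control is
\[
\sum_M (\E_\sigma X_M)^2 - \ell^{\binom{n}{2}-2\binom{n_1}{2}}.
\]

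\textbf{Step 2 (second moment via two independent permutations).} Write $(\E_\sigma X_M)^2 = \E_{\sigma,\tau} X_M^{(\sigma)} X_M^{(\tau)}$ with $\sigma,\tau$ independent and uniform in $G$. Fix $\sigma,\tau$, and set $S = \sigma^{-1}([n_1])$ and $T = \tau^{-1}([n_1])$. The condition $X^{(\sigma)}_M = 1$ prescribes $M$ on the $\binom{n_1}{2}$ unordered pairs inside $S$, and $X^{(\tau)}_M = 1$ prescribes $M$ on the pairs inside $T$. With $d = \#(S \cap T)$, the two conditions together fix at most $2\binom{n_1}{2} - \binom{d}{2}$ free coordinates, and may be inconsistent. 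Hence
\[
\#\{M : X^{(\sigma)}_M X^{(\tau)}_M = 1\} \leq \ell^{\binom{n}{2} - 2\binom{n_1}{2} + \binom{d}{2}}.
\]
The distribution of $S\cap T$ is obtained by translating by $\sigma$: $\#(S\cap T) = \#(\sigma\tau^{-1}([n_1]) \cap [n_1])$, and $\sigma\tau^{-1}$ is uniform on $G$. The orbit map $G \to G[n_1]$ has fibers of equal size, so $\PP[\#(S\cap T) = d] = S(G,d,n_1)$. Therefore
\[
\sum_M (\E X_M)^2 \leq \ell^{\binom{n}{2}-2\binom{n_1}{2}} \sum_d S(G,d,n_1)\,\ell^{\binom{d}{2}}.
\]

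\textbf{Step 3 (assembly).} Inserting this into Step 1 gives the bound
\[
\ell^{\binom{n}{2}/2}\,\ell^{\binom{n}{2}/2-\binom{n_1}{2}}\Bigl(\sum_d S(G,d,n_1)\ell^{\binom{d}{2}} - 1\Bigr)^{1/2},
\]
which is at most the claimed bound because $\binom{d}{2} \leq n_1 d$ for $d \leq n_1$.

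The main point needing care is the counting in Step 2: for $d \geq 2$ the pairs in $S\cap T$ may carry conflicting prescriptions, so the count must be stated as an upper bound rather than an equality. Since the bound is used only from above, this is harmless.
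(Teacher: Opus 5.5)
Your proof is correct and follows the paper's argument. The paper also rewrites the sum as $\ell^{\binom{n}{2}}\E|W-\E W|$, where $W$ is the conditional probability for a uniform $M'$, and applies Jensen (your Cauchy--Schwarz). It then bounds $\E[W^2]$ using two independent permutations, whose overlap $\#(\sigma_2\sigma_1^{-1}([n_1])\cap[n_1])$ has distribution $S(G,d,n_1)$. The only difference is in counting the shared coordinates. You count them exactly as the $\binom{d}{2}$ pairs inside $S\cap T$, so the two conditions fix exactly (not ``at most'') $2\binom{n_1}{2}-\binom{d}{2}$ coordinates, whereas the paper uses the cruder bound $n_1 d$. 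As a result you obtain the slightly stronger bound with $\ell^{\binom{d}{2}}$ in place of $\ell^{n_1 d}$.
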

\begin{remark}
The heuristic slogan for this lemma is that for \textit{most} $C$-symmetric matrices $M$, after randomly permuting the rows and columns in a way that respects the $C$-symmetry condition, the upper-left $n_1\times n_1$ corner is close to uniformly random. We can make this more precise.

Let $W(P, M) \coloneqq \PP[P_{n_1} = M^\sigma_{n_1} \mid M]$. Let $M'$ be chosen uniformly at random among all $f$-symmetric functions (independently of $\sigma$) and let $W(P) \coloneqq W(P, M')$.

In the proof of Lemma~\ref{lem:averaging}, we show that \[
\E\left[\left|W(P) - \ell^{-\binom{n_1}{2}}\right|\right] \leq \ell^{-\binom{n_1}{2}}\sqrt{\sum_{d\geq 0} S(G,d,n_1)\ell^{n_1d} - 1}.
\]
By Markov's inequality, for $\varepsilon > 0$, we have \[
\PP\left[\left|W(P) - \ell^{-\binom{n_1}{2}}\right|  \geq \varepsilon\right] \leq \varepsilon^{-1}\ell^{-\binom{n_1}{2}}\sqrt{\sum_{d\geq 0} S(G,d,n_1)\ell^{n_1d} - 1}.
\]
Now by the union bound, \[
\PP\left[\max_P \left|W(P) - \ell^{-\binom{n_1}{2}}\right| \geq \varepsilon\right] \leq \varepsilon^{-1}\sqrt{\sum_{d\geq 0} S(G,d,n_1)\ell^{n_1d} - 1}.
\]
Later, we will give bounds for $S(G, d, n_1)$ in our case of interest which will show that the square root expression on the right hand side converges to 0 as long as $n_1$ grows slowly enough relative to $n$.

So, we have shown that for any $\varepsilon > 0$, for most (aside from a fraction going to $0$) $f$-symmetric functions $M$, the restriction $M^\sigma_{n_1}$ has $L^\infty$-distance to the uniform distribution less than $\varepsilon$. 

We will not use this conclusion. In Corollary~\ref{cor:f-symmetric-equidistribution}, we use the statement of Lemma~\ref{lem:averaging} directly to prove an equidistribution statement about the whole function $P$, rather than its restriction $P_{n_1}$.
\end{remark}

\begin{proof}
Let \[
W(M) \coloneqq \PP[P_{n_1} = M^\sigma_{n_1} \mid M]
\]
be the conditional probability. Let $M'$ be chosen uniformly at random among all $f$-symmetric functions and let $W \coloneqq W(M')$. By the law of total probability, \[
\E[W] = \PP[P_{n_1} = (M')^\sigma_{n_1}] = \ell^{-\binom{n_1}{2}}
\]
because, for each value of $\sigma$, the restriction $(M')^\sigma_{n_1}$ is uniformly random among the $\ell^{\binom{n_1}{2}}$ functions $\{(i, j) \in [n_1]^2 \mid i \neq j\} \to F$ satisfying the symmetry conditions imposed by $f$.

Thus, the sum on the left hand side of the lemma is \[
\sum_M\left|\PP[P_{n_1} = M^\sigma_{n_1}] - \ell^{-\binom{n_1}{2}}\right| = \sum_M \left|W(M) - \E[W]\right| = \ell^{\binom{n}{2}}\E\left[\left|W - \E[W]\right|\right]
\]
where again the sum ranges over all $f$-symmetric functions $M$.

By Jensen's inequality, \[
\E[|W - \E[W]|] \leq \sqrt{\Var(W)}.
\]
Thus, we only need to bound the variance of $W$. We are going to do this by computing the average of $W^2$.

We have \[
W(M)^2 = \PP[P_{n_1} = M^{\sigma_1}_{n_1} = M^{\sigma_2}_{n_1} \mid M]
\]
where $\sigma_1, \sigma_2$ are independent uniformly random elements of $G$. Then, by the law of total probability, \[
\E[W^2] = \PP[P_{n_1} = (M')^{\sigma_1}_{n_1} = (M')^{\sigma_2}_{n_1}].
\]
We will bound the probability on the right hand side conditional on $\sigma_1$ and $\sigma_2$.

Fix values $\sigma_1, \sigma_2 \in G$. 

We view the equations $P_{n_1} = (M')^{\sigma_1}_{n_1} = (M')^{\sigma_2}_{n_1}$ as a system of $2\binom{n_1}{2}$ equations, two for each pair of values of $P_{n_1}$. Each of these equations is of the form $P(i', j') = M'(\sigma_1^{-1}(i'), \sigma_1^{-1}(j'))$ or $P(i', j') =  M'(\sigma_2^{-1}(i'), \sigma_2^{-1}(j'))$ for some $1 \leq i' < j' \leq n_1$. However, there may be some redundancy in these equations coming from pairs of indices $1 \leq i' < j' \leq n_1$ and $1 \leq i'' < j'' \leq n_1$ with $(\sigma_1^{-1}(i'), \sigma_1^{-1}(j')) = (\sigma_2^{-1}(i''), \sigma_2^{-1}(j''))$. Let $I$ be the set of pairs $(i, j)$ with $1 \leq i < j \leq n$ such that $M'(i, j)$ appears in the system of equations $P_{n_1} = (M')^{\sigma_1}_{n_1} = (M')^{\sigma_2}_{n_1}$, i.e., $1 \leq \sigma_1(i), \sigma_1(j) \leq n_1$ or $1 \leq \sigma_2(i), \sigma_2(j) \leq n_1$.

The event that $P_{n_1} = (M')^{\sigma_1}_{n_1} = (M')^{\sigma_2}_{n_1}$ has probability (conditional on $\sigma_1, \sigma_2$) either 0 or $\ell^{-\#I}$, depending on whether the conditions $P_{n_1} = (M')^{\sigma_1}_{n_1}$ and $P_{n_1} = (M')^{\sigma_2}_{n_1}$ contradict each other. So, to bound $\E[W^2]$ from above, we want a lower bound on $\#I$.

To get a lower bound on $\#I$, we want an upper bound on the number of entries of $M'$ that appear more than once among the $2\binom{n_1}{2}$ conditions imposed by $P(i', j') = M'(\sigma_1^{-1}(i'), \sigma_1^{-1}(j')) = M'(\sigma_2^{-1}(i'), \sigma_2^{-1}(j'))$ for $1 \leq i' < j' \leq n_1$. These are indexed by pairs $1 \leq i \neq j \leq n$ such that $1 \leq \sigma_1(i) < \sigma_1(j) \leq n_1$ \textit{and} $1 \leq \sigma_2(i) < \sigma_2(j) \leq n_1$.

Let $d(\sigma_1, \sigma_2)$ be the number of indices $1 \leq i \leq n$ such that $\sigma_1(i), \sigma_2(i) \leq n_1$. Then the number of entries of $M'$ that appear more than once among the conditions imposed by $P_{n_1} = (M')^{\sigma_1}_{n_1} = (M')^{\sigma_2}_{n_1}$ is bounded above by $n_1d(\sigma_1, \sigma_2)$\details{there are at most $n_1$ choices for $\sigma_1(j)$, so at most $n_1$ choices for $j$}, and so we have $\#I \geq 2\binom{n_1}{2} - n_1d(\sigma_1, \sigma_2)$.

Thus, \[
\PP[P_{n_1} = (M')^{\sigma_1}_{n_1} = (M')^{\sigma_2}_{n_1} \mid \sigma_1, \sigma_2] \leq \ell^{-2\binom{n_1}{2} + n_1d(\sigma_1, \sigma_2)}
\]
(as functions of $\sigma_1, \sigma_2$) which means \[
\PP[P_{n_1} = (M')^{\sigma_1}_{n_1} = (M')^{\sigma_2}_{n_1}] \leq \sum_{d \geq 0}  \PP[d(\sigma_1, \sigma_2) = d]\ell^{-2\binom{n_1}{2} + n_1d}.
\]
So, we want to bound $\PP[d(\sigma_1, \sigma_2) = d]$ for each $d$. Indices $i$ such that $1 \leq \sigma_1(i), \sigma_2(i) \leq n_1$ correspond to indices $i' = \sigma_1(i) \leq n_1$ such that $\sigma_2\sigma_1^{-1}(i') \leq n_1$. Thus, the number of such indices is \[
\#(\sigma_2\sigma_1^{-1}([n_1]) \cap [n_1]).
\] 
Note that this only depends on $\sigma_2\sigma_1^{-1}$, which is uniformly random in $G$. The probability that $\#(\sigma_2\sigma_1^{-1}([n_1]) \cap [n_1]) = d$ is precisely $S(G, d, n_1)$. So,\[
\E[W^2] \leq \ell^{-2\binom{n_1}{2}}\sum_{d \geq 0} S(G, d, n_1) \ell^{n_1d}
\]
and therefore
\begin{align*}
    \Var(W) &= \E[W^2] - \E[W]^2 \\
    &\leq \ell^{-2\binom{n_1}{2}}\sum_{d\geq 0} S(G,d,n_1)\ell^{n_1d} - \ell^{-2\binom{n_1}{2}} \\
    &= \ell^{-2\binom{n_1}{2}}\left(\sum_{d\geq 0} S(G,d,n_1)\ell^{n_1d} - 1\right)
\end{align*}
so that \[
\E[|W - \E[W]|] \leq \ell^{-\binom{n_1}{2}}\sqrt{\sum_{d\geq 0} S(G,d,n_1)\ell^{n_1d} - 1}
\]
which completes the proof.
\end{proof}

\begin{corollary}\label{cor:f-symmetric-equidistribution}
Let $P$ be a random $f$-symmetric function. Suppose that for any $f$-symmetric function $M$ satisfying $\PP[P_{n_1} = M_{n_1}] \neq 0$ we have \[
\left|\PP[P = M \mid P_{n_1} = M_{n_1}] - \ell^{\binom{n_1}{2} - \binom{n}{2}}\right| \leq \delta\ell^{\binom{n_1}{2} - \binom{n}{2}}
\]
for some $\delta > 0$. Then if $M'$ is a uniformly random $f$-symmetric function and $\sigma$ is a uniformly random element of $G$, we have \[
d_{TV}(P^\sigma, M') \leq  \frac{\delta}{2} + \frac{1}{2}\sqrt{\sum_{d\geq 0} S(G,d,n_1)\ell^{n_1d} - 1}
\]
where $d_{TV}$ is total variation distance.
\end{corollary}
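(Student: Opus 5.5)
Our plan is to compute the distribution of $P^\sigma$ directly and compare it with the uniform distribution $N^{-1}$, where $N = \ell^{\binom{n}{2}}$ is the number of $f$-symmetric functions. Because $\sigma$ is independent of $P$, for each $f$-symmetric $M$ we have
\[
\PP[P^\sigma = M] = \E_\sigma\bigl[\PP[P = M^{\sigma^{-1}}]\bigr],
\]
using that $M\mapsto M^\sigma$ is a bijection on $f$-symmetric functions (this was checked above). We then split $\PP[P = M^{\sigma^{-1}}]$ as $\PP[P = M^{\sigma^{-1}} \mid P_{n_1} = M^{\sigma^{-1}}_{n_1}]\cdot \PP[P_{n_1} = M^{\sigma^{-1}}_{n_1}]$. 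Whenever the second factor is nonzero, the hypothesis lets us write the first factor as $\ell^{\binom{n_1}{2}-\binom{n}{2}}(1+\epsilon)$ with $|\epsilon|\le\delta$. When the second factor is zero, the whole term is zero, and that case is consistent with the same formula.

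Using this, we decompose
\[
\PP[P^\sigma = M] - \ell^{-\binom{n}{2}} = \ell^{\binom{n_1}{2}-\binom{n}{2}}\E_\sigma\bigl[\epsilon_{M,\sigma}\,\PP[P_{n_1} = M^{\sigma^{-1}}_{n_1}]\bigr] + \ell^{\binom{n_1}{2}-\binom{n}{2}}\Bigl(\E_\sigma\PP[P_{n_1}=M^{\sigma^{-1}}_{n_1}] - \ell^{-\binom{n_1}{2}}\Bigr).
\]
Summing the absolute value of the first term over $M$ gives at most $\delta\,\ell^{\binom{n_1}{2}-\binom n2}\,\E_\sigma\sum_M \PP[P_{n_1}=M^{\sigma^{-1}}_{n_1}]$. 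For fixed $\sigma$, this sum over $M$ equals $\sum_{M}\PP[P_{n_1}=M_{n_1}]$. Each restriction $M_{n_1}$ is hit by exactly $\ell^{\binom n2-\binom{n_1}2}$ functions $M$, so the sum is $\ell^{\binom n2-\binom{n_1}2}$. The contribution is therefore at most $\delta$, and after halving in the total variation distance we get $\delta/2$.

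For the second term we apply Lemma~\ref{lem:averaging}. We condition on $P$, apply the lemma with $P$ fixed, and then average over $P$ using the triangle inequality. The quantity $\PP_\sigma[P_{n_1}=M^{\sigma^{-1}}_{n_1}]$ differs from the lemma's $\PP[P_{n_1}=M^\sigma_{n_1}]$ only by replacing $\sigma$ with $\sigma^{-1}$. This is harmless, since $\sigma^{-1}$ is also uniform on $G$. It does require care about which of $P$ or $M$ is being permuted, but the identity $\{P_{n_1}=M^{\sigma^{-1}}_{n_1}\}$ matches the lemma's statement after renaming. Thus
\[
\sum_M \ell^{\binom{n_1}{2}-\binom n2}\Bigl|\E\,\PP[P_{n_1}=M^{\sigma^{-1}}_{n_1}\mid P]-\ell^{-\binom{n_1}2}\Bigr| \le \sqrt{\textstyle\sum_d S(G,d,n_1)\ell^{n_1 d}-1},
\]
and halving gives the second summand of the claimed bound.

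We expect the main obstacle to be bookkeeping rather than a genuine difficulty. One point is keeping the direction of $\sigma$ versus $\sigma^{-1}$ consistent with the convention in Lemma~\ref{lem:averaging}. The other is checking that the hypothesis applies to $M^{\sigma^{-1}}$: the error $\epsilon$ depends on $M$ and $\sigma$, but the bound $|\epsilon|\le\delta$ is uniform over all of them.
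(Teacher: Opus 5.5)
Your proposal is correct and follows essentially the same route as the paper. You factor $\PP[P = M^{\sigma^{-1}}]$ through the corner event $\{P_{n_1} = M^{\sigma^{-1}}_{n_1}\}$. You bound the conditional-probability error term by $\delta$, using that each corner is the restriction of exactly $\ell^{\binom{n}{2}-\binom{n_1}{2}}$ functions, and you bound the remaining term by Lemma~\ref{lem:averaging}. Fixing $\sigma$ first, and averaging over the random $P$ with the triangle inequality before invoking the lemma (which is stated for fixed $P$), makes precise a step the paper leaves implicit.
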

In our application, $P$ will be the R\'edei matrix, and the assumption on the conditional probabilities $\PP[P = M \mid P_{n_1} = M_{n_1}]$ will follow from Corollary~\ref{cor:conditional-equidist}.
\begin{proof}
We will bound \[
d_{TV}(P^\sigma, M') = \frac{1}{2}\sum_M \left|\PP[P^\sigma = M] - \ell^{-\binom{n}{2}}\right|
\]
where the sum is over all $f$-symmetric functions $M$.

We have \[
\PP[P^\sigma = M] = \PP[P = M^{\sigma^{-1}}] = \PP[P = M^{\sigma^{-1}} \mid P_{n_1} = M^{\sigma^{-1}}_{n_1}]\PP[P_{n_1} = M^{\sigma^{-1}}_{n_1}].
\]
Thus, \begin{align*}
\sum_M \left|\PP[P^\sigma = M] - \ell^{-\binom{n}{2}}\right| &\leq \sum_M\PP[P_{n_1} = M^{\sigma^{-1}}_{n_1}]\left|\PP[P = M^{\sigma^{-1}} \mid P_{n_1} = M^{\sigma^{-1}}_{n_1}] - \ell^{\binom{n_1}{2} - \binom{n}{2}}\right| \\
&\qquad\qquad+ \sum_M \ell^{\binom{n_1}{2} - \binom{n}{2}}\left|\PP[P_{n_1} = M^{\sigma^{-1}}_{n_1}] - \ell^{-\binom{n_1}{2}}\right|.
\end{align*}
To bound the first term, we use the assumption in the corollary statement and the observation that $\sum_M \PP[P_{n_1} = M^{\sigma^{-1}}_{n_1}] = \ell^{\binom{n}{2} - \binom{n_1}{2}}$.\details{
We have \[
\sum_M \PP[P_{n_1} = M^{\sigma^{-1}}_{n_1}] = \frac{1}{|G|}\sum_M \sum_{\sigma_0 \in G}\PP[P_{n_1} = M^{\sigma_0^{-1}}_{n_1}] = \frac{1}{|G|}\sum_{\sigma_0 \in G}\sum_M \PP[P_{n_1} = M^{\sigma_0^{-1}}_{n_1}].
\]
We can split the sum over $M$: \[
\sum_M \PP[P_{n_1} = M^{\sigma_0^{-1}}_{n_1}] = \sum_{N_{n_1}}\sum_{M^{\sigma_0^{-1}}_{n_1} = N_{n_1}}\PP[P_{n_1} = M^{\sigma_0^{-1}}_{n_1}] = \ell^{\binom{n}{2} - \binom{n_1}{2}}\sum_{N_{n_1}}  \PP[P_{n_1} = N_{n_1}] = \ell^{\binom{n}{2}-\binom{n_1}{2}}
\]
where $N_{n_1}$ ranges over all possible $f$-symmetric functions $n_1$ with inputs in $[n_1]$.
}
Finally, the second term is bounded by Lemma~\ref{lem:averaging}.
\end{proof}

\begin{lemma}\label{lem:C-symm-mixing}
Use notation from Lemma~\ref{lem:averaging}. Let $[n] = O \sqcup E$ be a partition of $[n]$ into subsets of size $\#O =\colon n'$ and $\#E =\colon n''$. Let $n_1' \coloneqq \#(O \cap [n_1])$ and $n_1'' \coloneqq \#(E \cap [n_1])$. Let $G = S_{n'} \times S_{n''} \subseteq S_n$ act on $O$ and $E$ independently. Then \[
S(G, d, n_1) \leq \frac{1}{d!}\left(\frac{(n_1')^2}{n'} + \frac{(n_1'')^2}{n''}\right)^d.
\]
where we adopt the convention $0^2/0 = 0$.
\end{lemma}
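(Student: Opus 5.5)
The plan is to identify the orbit $G[n_1]$ explicitly and bound $S(G,d,n_1)$ by a factorial moment. Since $G = S_{n'}\times S_{n''}$ permutes $O$ and $E$ independently, the orbit of $[n_1]$ is exactly the set of $S = S'\sqcup S''$ with $S'\subseteq O$ of size $n_1'$ and $S''\subseteq E$ of size $n_1''$. It has cardinality $\binom{n'}{n_1'}\binom{n''}{n_1''}$. Hence $S(G,d,n_1) = \PP[X = d]$, where $S$ is drawn uniformly from this orbit and $X \coloneqq \#(S\cap[n_1])$. Writing $O_1 \coloneqq O\cap[n_1]$ and $E_1 \coloneqq E\cap[n_1]$, we have $X = \#(S'\cap O_1) + \#(S''\cap E_1)$, which is a sum of two independent hypergeometric variables.

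I would then use the pointwise inequality $\mathbf{1}[X=d] \leq \binom{X}{d}$. Taking expectations gives $\PP[X=d]\leq \E\binom{X}{d}$, and expanding the binomial as a sum of indicators gives
\[
\E\binom{X}{d} = \sum_{T\subseteq [n_1],\ \#T = d} \PP[T\subseteq S].
\]
Split each $T$ as $T = T'\sqcup T''$ with $T'\subseteq O_1$, $\#T' = a$, and $T''\subseteq E_1$, $\#T''=b$, where $a+b=d$. By independence,
\[
\PP[T\subseteq S] = \prod_{i=0}^{a-1}\frac{n_1'-i}{n'-i}\cdot\prod_{i=0}^{b-1}\frac{n_1''-i}{n''-i}.
\]
Since $n_1'\leq n'$, each factor satisfies $\frac{n_1'-i}{n'-i}\leq \frac{n_1'}{n'}$, and similarly on the $E$ side. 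So $\PP[T\subseteq S]\leq (n_1'/n')^a (n_1''/n'')^b$.

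Next, count the sets $T$ with given $(a,b)$: there are $\binom{n_1'}{a}\binom{n_1''}{b}\leq \frac{(n_1')^a}{a!}\frac{(n_1'')^b}{b!}$ of them. Setting $x \coloneqq (n_1')^2/n'$ and $y \coloneqq (n_1'')^2/n''$ and summing over $a+b=d$, the binomial theorem gives
\[
S(G,d,n_1)\leq \sum_{a+b=d}\frac{x^a}{a!}\frac{y^b}{b!} = \frac{(x+y)^d}{d!},
\]
which is the claimed bound.

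The only point requiring care is the degenerate case where $n'$ or $n''$ is $0$, or more generally $n_1'=0$ or $n_1''=0$. In that case only $a=0$ (respectively $b=0$) contributes, and the convention $0^2/0=0$ makes the formula consistent. I expect no real obstacle beyond this bookkeeping; the key step is the reduction to the factorial moment $\E\binom{X}{d}$.
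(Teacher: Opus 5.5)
Your proof is correct, and it is a somewhat different route from the paper's, though the two pass through the same intermediate bounds.

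\textbf{The paper's route.} The paper writes $S(G,d,n_1)$ explicitly as the convolution $\sum_{h'+h''=d}\PP[X'=h']\,\PP[X''=h'']$ of two hypergeometric point probabilities. It then bounds each factor $\binom{n_1'}{h'}\binom{n'-n_1'}{n_1'-h'}/\binom{n'}{n_1'}$ by direct factorial manipulation. First it discards the factor $(n'-n_1')!^2/\bigl((n'-h')!\,(n'-2n_1'+h')!\bigr)$. This is at most $1$ by unimodality of binomial coefficients, a step the paper leaves unstated. It then bounds the remainder by $\frac{1}{h'!}(n_1'^2/n')^{h'}$, using the same estimate $\frac{n_1'-i}{n'-i}\le\frac{n_1'}{n'}$ that you use.

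\textbf{Your route.} You apply the single inequality $\mathbf{1}[X=d]\le\binom{X}{d}$ to the sum $X$. You then compute the factorial moment by counting $d$-subsets $T\subseteq[n_1]$ and splitting each one across $O$ and $E$.

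\textbf{How they compare.} The quantity the paper is left with after discarding that factor is exactly $\E\binom{X'}{h'}=\binom{n_1'}{h'}\prod_{i<h'}\frac{n_1'-i}{n'-i}$. So both arguments reduce to bounding factorial moments and then applying the binomial theorem. The difference is that you apply the factorial-moment inequality once to the sum, while the paper applies it implicitly to each summand. Your version makes the reason for the key inequality transparent and avoids both the factorial bookkeeping and the unstated unimodality step.

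A small bonus of your approach is that $\mathbf{1}[X\ge d]\le\binom{X}{d}$ also holds. It therefore gives the tail bound $\PP[X\ge d]\le (x+y)^d/d!$ at no extra cost. The application in Theorem~\ref{thm:subfamily-equidist} only needs the point probabilities, however.
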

\begin{remark}
The same argument gives a similar bound for any products of symmetric or alternating groups on at least $n_1 + 2$ letters, using multiple transitivity of such groups.
\end{remark}
\begin{proof}
We give the proof in the most important case where $n', n'' > 0$. The case where $n'' = 0$ is strictly easier. $S(G, d,n_1)$, the proportion of sets $\pi([n_1])$ in the orbit $G[n_1]$ such that $\#(\pi([n_1]) \cap [n_1]) = d$, is equal to
\[
S(G, d,n_1)=\frac{\sum_{h'+h''=d}\binom{n_1'}{h'}\binom{n'-n_1'}{n_1'-h'}\binom{n_1''}{h''}\binom{n''-n_1''}{n_1''-h''}}{\binom{n'}{n_1'}\binom{n''}{n_1''}}.
\]
We note that 
\begin{align*}
    \frac{\binom{n_1'}{h'}\binom{n'-n_1'}{n_1'-h'}}{\binom{n'}{n_1'}}&=\frac{n_1'!^2(n'-h')!}{n'!h'!(n_1'-h')!^2}\cdot\frac{(n'-n_1')!^2}{(n'-h')!((n'-n_1')-(n_1'-h'))!}\\
    &\leq \frac{n_1'!^2(n'-h')!}{n'!h'!(n_1'-h')!^2}\\
    &\leq\frac{1}{h'!}\cdot \left(\frac{n_1'^2}{n'}\right)^{h'}.
\end{align*}
Treating the rest of the terms similarly, we receive
\begin{align*}
    S(G, d,n_1)&\leq\sum_{h'+h''=d}\frac{1}{h'!}\cdot \left(\frac{n_1'^2}{n'}\right)^{h'}\frac{1}{h''!}\cdot \left(\frac{n_1''^2}{n''}\right)^{h''}\\
    &=\frac{1}{d!}\left(\frac{n_1'^2}{n'}+\frac{n_1''^2}{n''}\right)^{d}.
\end{align*}
\end{proof}

\subsection{Corank distribution of the R\'edei matrix in subfamilies}\label{sect:subfamilies}

In this subsection, we combine the results of the previous two sections and give some conditions on the degrees $d_0, d_1, \dots, d_n$ under which the matrix of the pairing $\ip{\cdot}{\cdot}'_{X_B}$ has the same corank distribution as uniformly random $C$-symmetric matrix when $B$ is chosen uniformly at random from a family $T^*$ as defined in Subsection~\ref{sect:chebotarev}.

\begin{theorem}\label{thm:subfamily-equidist}
Let $q$ be an odd prime power. Fix distinct points $u_1, \dots, u_\ell \in \PP^1_{\F_q}$ with $\sum_\beta \deg(u_\beta) = d_{\text{exc}}$. (where we allow $\ell = 0$) and fix $a_1, \dots, a_\ell \in \F_q^\times/(\F_q^\times)^2$.

Let $p_0 \in \PP^1_{\F_q} - \{u_1, \dots, u_\ell\}$ be a point of odd degree $d_0$ and $d_1, \dots, d_n$ be positive integers such that:
\begin{itemize}
    \item $d_1 \leq \dots \leq d_n$;
    \item if $d_i$ is odd for some $1 \leq i \leq n$, then $d_0 \leq d_i$;
    \item $\sum_{i=0}^n d_i$ is even;
\end{itemize}
and let $t$ be a uniformizer class at $p_0$. 

Also, suppose the following conditions are satisfied:
\begin{enumerate}[label=$(A_{\arabic*})$]
    \setcounter{enumi}{1}
    \item the number of $d_i$ ($0 \leq i \leq n$) which are odd and the number of $d_i$ ($0 \leq i \leq n$) which are even are both at least $\varepsilon n$ for some $0 < \varepsilon < 1$;
    \item there is at least one $1 \leq i < \sqrt{\log n}$ with $d_i$ odd;
    \item for $i \geq \sqrt{\log n}$, we have $d_i \geq 4\binom{i}{2}$.
\end{enumerate}

Choose a subset of indices $1 \leq i < \sqrt{\log n}$ and set $T_i \coloneqq \{r_i\}$ for these indices, where the $r_i$ are distinct points in $\PP^1_{\F_q} - \{p_0, u_1, \dots, u_\ell\}$ of degree $d_i$. For all other $1 \leq i \leq n$, let \[
T_i \coloneqq \{p \in \PP^1_{\F_q} - \{u_1, \dots, u_\ell\} \mid \deg(p) = d_i\}
\]
and let $T^* \subseteq \{p_0\} \times \prod_{i=1}^n T_i \times \{t\}$ be the set of tuples $(p_0, p_1, \dots, p_n, t)$ such that $p_0, p_1, \dots, p_n$ are all distinct. (Note that for certain $i \leq \sqrt{\log n}$ we are forcing $p_i = r_i$.)

Let $B = (p_0, p_1, \dots, p_n, t)$ be chosen uniformly at random from $T^*$. Let $\mu \coloneqq \mu_{CL, 2}$ if $q\equiv 3\pmod{4}$ and $\mu\coloneqq \mu_{S, 2}$ if $q\equiv 1\pmod{4}$. Let $r \geq 0$ be an integer. Let $E$ be the event that $c(X_B \to \PP^1, u_\beta) = a_\beta$ for $1 \leq \beta \leq \ell$. Then \begin{align*}
\left|\PP[\{\dim_{\F_2}2\Pic^0(X_B)(\F_q)[4] = r\} \cap E] - 2^{-\ell}\mu(r)\right| = O_{q, r, \ell, \varepsilon, d_{\text{exc}}}\left(n^{-1/4}\right).
\end{align*}
\end{theorem}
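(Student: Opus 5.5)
The plan is to combine Corollary~\ref{cor:conditional-equidist} (rows beyond a small corner equidistribute), Corollary~\ref{cor:f-symmetric-equidistribution} with Lemma~\ref{lem:C-symm-mixing} (random permutation mixes in the corner), and the random-matrix results of Section~\ref{sect:random-matrices}, with Corollary~\ref{cor:independence} handling the event $E$. Set $n_1 \coloneqq \lceil\sqrt{\log n}\rceil$ and order the basis $\{\pi_B^{-1}e_{p_0,p_i}\}_{i=1}^n$ so that, by Subsection~\ref{sect:symmetry}, the R\'edei matrix $P$ is $C$-symmetric with zero row and column sums. Here $C=0$ if $q\equiv 1\pmod 4$, and $C$ is $J-I$ on the odd-degree block if $q\equiv 3\pmod 4$. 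By Remark~\ref{rmk:csymm-is-fsymm}, $P$ is an $f$-symmetric function, and $G=S_{O}\times S_{E}$ permuting odd-degree and even-degree indices separately preserves $f$. Since $\dim_{\F_2}2\Pic^0(X_B)(\F_q)[4]=\dim\ker P-1$ (Lemma~\ref{lem:phi-kernel}), and $\dim\ker P^\sigma=\dim\ker P$, it suffices to compare the law of $P^\sigma$, for $\sigma$ uniform in $G$ independent of $B$, with a uniform $C$-symmetric matrix $M'$ with zero row and column sums.

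\textbf{Step 1 (conditional equidistribution).} Condition on $p_1,\dots,p_{n_1}$; the fixed points $r_i$ all lie in this corner. Assumption $(A_3)$ gives an odd $d_j$ with $j\le n_1$. For $i>n_1$, $(A_4)$ gives $d_i\ge 4\binom i2 \gg i\log 2/\log q$, so the hypothesis $d_i\ge\frac{2}{1-\delta}\log_q(2^{i+1}(i+1)d_i+2^{i+1}d_{\text{exc}}+8)$ holds with, say, $\delta=1/2$ once $n$ is large. The constant depends on $d_{\text{exc}}$ and $q$, and the finitely many small $i$ can be absorbed since $i\ge\sqrt{\log n}$. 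Then $\sum_{i>n_1}q^{-d_i/4}\le\sum_{i>n_1}q^{-\binom i2}=O(q^{-\log n/3})$, which is far smaller than $n^{-1/4}$. Corollary~\ref{cor:conditional-equidist} therefore supplies the hypothesis of Corollary~\ref{cor:f-symmetric-equidistribution} with this value of $\delta$.

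\textbf{Step 2 (mixing).} By Lemma~\ref{lem:C-symm-mixing} and $(A_2)$ we have $n',n''\ge\varepsilon n$, so $S(G,d,n_1)\le \frac{1}{d!}(2n_1^2/(\varepsilon n))^d$. Hence
\[
\sum_d S(G,d,n_1)2^{n_1d}-1\le \exp\!\left(2^{n_1+1}n_1^2/(\varepsilon n)\right)-1=O_\varepsilon\!\left(2^{2\sqrt{\log n}}/n\right)=O_\varepsilon(n^{-1/2}).
\]
Its square root is $O(n^{-1/4})$, and this is the source of the stated rate. Corollary~\ref{cor:f-symmetric-equidistribution} then gives $d_{TV}(P^\sigma,M')=O(n^{-1/4})$.

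\textbf{Step 3 (rank distribution and the event $E$).} By Lemma~\ref{lem:remove-column}, $\dim\ker M'-1$ is distributed as the corank of a uniform $C'$-symmetric $(n-1)\times(n-1)$ matrix. If $q\equiv1\pmod 4$, Corollary~\ref{cor:effective-symmetric-dist} gives $\mu_{S,2}(r)+O(2^{-n})$. If $q\equiv 3\pmod 4$, then $C'$ is $J-I$ on roughly $\varepsilon n$ odd indices, and $J-I$ over $\F_2$ on $m$ indices has rank at least $m-1$. So $c\gtrsim\varepsilon n$, and Corollary~\ref{cor:c-symmetric-ranks} gives $\mu_{CL,2}(r)+O(n2^{-\varepsilon n})$. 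For $E$, apply Corollary~\ref{cor:independence} with $Y=\sigma$ and $E_1=\{\dim\ker P^\sigma=r+1\}$; this event depends only on $\sigma$ and the entries $\ip{p_i}{p_j}$, because the symmetry of $P$ determines the remaining entries. The result is $\PP[E_1\cap E]=2^{-\ell}\PP[E_1]+O(2^{\binom n2}q^{-d_n/4})$.

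The main obstacle is this last error term, since $2^{\binom n2}$ looks huge. My plan is to use $(A_4)$, $d_n\ge4\binom n2$, which gives $2^{\binom n2}q^{-d_n/4}\le(2/q)^{\binom n2}$, exponentially small because $q\ge3$. If the exponent bookkeeping in Corollary~\ref{cor:independence} fails to close this way, the fallback is to apply Proposition~\ref{prop:last-row-equidistribution} directly. One would condition on $p_1,\dots,p_{n-1}$ and on $\sigma$, sum over the last row only, and get an error of $2^{n}q^{-d_n/4}$ in place of $2^{\binom n2}q^{-d_n/4}$.
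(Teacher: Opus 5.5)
Your proposal is correct and follows essentially the same route as the paper. It uses Corollary~\ref{cor:independence} with $Y=\sigma$ to split off $E$, with the $2^{\binom{n}{2}}q^{-d_n/4}$ error closed by $(A_4)$ exactly as you planned. It feeds Corollary~\ref{cor:conditional-equidist} into Corollary~\ref{cor:f-symmetric-equidistribution}, with $G=S_{n'}\times S_{n''}$ and Lemma~\ref{lem:C-symm-mixing} producing the dominant $n^{-1/4}$ term. It finishes with Lemma~\ref{lem:remove-column} and Corollaries~\ref{cor:effective-symmetric-dist} and \ref{cor:c-symmetric-ranks}.

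The remaining differences are cosmetic. You take $n_1=\lceil\sqrt{\log n}\rceil$ where the paper takes $\lfloor\sqrt{\log n}\rfloor$, and you should use $n'\ge\varepsilon n-1$, since $d_0$ is not a matrix index. Your fallback of summing only over the last row would genuinely improve the paper's $2^{\binom{n}{2}}$ factor, though it is not needed.
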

\begin{remark}
Since the result holds uniformly in $p_0$ and $t$, it also holds if $p_0$ and $t$ are chosen randomly.
\end{remark}
\begin{remark}\label{rmk:A1}
Let $d = \sum_{i=0}^n d_i$. If we also introduce a condition \begin{enumerate}[label=$(A_{\arabic*})$]
    \item $n \geq (1 - \gamma)\log d$
\end{enumerate}
for some $\gamma < 1$, then we can conclude that \[
\left|\PP[\{\dim_{\F_2}2\Pic^0(X_B)(\F_q)[4] = r\} \cap E] - 2^{-\ell}\mu(r)\right| = O_{q, r, \ell, \gamma, \varepsilon, d_{\text{exc}}}\left((\log d)^{-1/4}\right).
\] 
This will be useful when passing from Theorem~\ref{thm:subfamily-equidist} to the main results, where the number of branch points is random but their total degree is fixed.
\end{remark}
\begin{proof}
First, we will check that the assumptions we have made on the growth of the degrees imply the assumptions needed to apply Propositions~\ref{prop:row-equidistribution}, \ref{prop:last-row-equidistribution}, and their corollaries. If $d_i \geq 4\binom{i}{2}$, we observe that for $i \geq 3$ we have $d_i \geq 4i$, so $q^{\frac{d_i}{4}}/d_i \geq q^i/4i$, and for sufficiently large $i$ (depending on $\ell, d_{\text{exc}}$, and $q \geq 3$) this is larger than $2^{\ell + i + 1}(i + 1) + 2^{\ell + i + 1}d_{\text{exc}}/d_i + 8/d_i$. In particular, for large enough $n$ (depending on the same variables) the degrees $d_i$ satisfy the assumptions in Corollaries~\ref{cor:independence} and \ref{cor:conditional-equidist} with $\delta_i = 1/2$ as long as $i \geq \sqrt{\log n}$.

Let $M_B$ be the matrix representing the pairing $\ip{\cdot}{\cdot}'_{X_B}$. By Lemma~\ref{lem:remove-column} and the construction of the pairing $\ip{\cdot}{\cdot}'_{X_B}$, we have \[
\dim_{\F_2}\ker M_B = \dim_{\F_2}2\Pic^0(X_B)(\F_q)[4] + 1.
\]

The matrix $M_B$ is $C$-symmetric, where $C$ is described at the start of Section~\ref{sect:random-matrices}: \begin{itemize}
    \item If $q \equiv 1\pmod{4}$, then $C$ is the all-zeroes matrix.
    \item If $q \equiv 3\pmod{4}$, write $n' \geq 1$ for the number of odd degrees among $d_1, \dots, d_n$. Since $d_0$ is odd and $\sum_{i=0}^n d_i$ is even, we must have that $n'$ is odd. The rank of $C$ is therefore $n' - 1$.
\end{itemize}
Also, let $n'' = n - n'$ be the number of even degrees among $d_1, \dots, d_n$.

We want to show that for large $n$, the nullity distribution of $M_B$ is close to that of a uniformly random $C$-symmetric matrix over $\F_2$ with row and column sums zero, which was computed in Section~\ref{sect:random-matrices}.

If $M$ is an $n\times n$ matrix and $\sigma \in S_n$, denote by $M^\sigma$ the matrix whose $ij$th entry is $M_{\sigma^{-1}(i), \sigma^{-1}(j)}$. If $M$ is $C$-symmetric, then $M^\sigma$ is $C^\sigma$-symmetric. Moreover, $M$ and $M^\sigma$ have the same nullity. Let $G = S_{n'} \times S_{n''} \subseteq S_n$ permute basis elements corresponding to odd and even degrees independently. If $\sigma \in G$, then $C^\sigma = C$.

Let $E$ be the event that $c(X_B \to \PP^1, u_\beta) = a_\beta$ for $1 \leq \beta \leq \ell$.

Let $\sigma$ be a uniformly random element of $G$. Let $M'$ be drawn uniformly at random from $C$-symmetric matrices with row and column sums zero. 

Since $\dim\ker M_B^\sigma = \dim\ker M_B$, it suffices to estimate $\PP[\{\dim\ker M_B^\sigma = r + 1\} \cap E]$. To start, by Corollary~\ref{cor:independence}, we have \[
\left|\PP[\{\dim\ker M_B^\sigma = r + 1\} \cap E] - 2^{-\ell}\PP[\dim\ker M_B^\sigma = r + 1]\right| \leq 2^{\binom{n}{2} - \ell - n + 3}q^{-\frac{d_n}{4}}.
\]
So, it remains to estimate $2^{-\ell}\PP[\dim\ker M_B^\sigma = r + 1]$.

We will use the equidistribution results of Subsections~\ref{sect:chebotarev} and \ref{sect:averaging}. We will view $C$-symmetric matrices as $f$-symmetric functions for a particular choice of $f$ as described in Remark~\ref{rmk:csymm-is-fsymm}. Set \[
n_1 \coloneqq  \lfloor \sqrt{\log n}\rfloor.
\] 
By Corollary~\ref{cor:conditional-equidist}, the random matrix $M_B$ satisfies the assumption of Corollary~\ref{cor:f-symmetric-equidistribution} with $\delta = 4\sum_{i=n_1 + 1}^n q^{-\frac{d_i}{4}}$. Thus, by Corollary~\ref{cor:f-symmetric-equidistribution} we get that \[
d_{TV}(M_B^\sigma, M') \leq 2\sum_{i=n_1 + 1}^n q^{-\frac{d_i}{4}} + \frac{1}{2}\sqrt{\sum_{d\geq 0}S(G, d, n_1)2^{n_1d} - 1}.
\]
In particular, \begin{equation}\label{eq:equidist-err}
|2^{-\ell}\PP[\dim\ker M_B^\sigma = r + 1] - 2^{-\ell}\PP[\dim\ker M' = r + 1]| \leq 2^{- \ell + 1}\sum_{i=n_1 + 1}^n q^{-\frac{d_i}{4}} + 2^{-\ell - 1}\sqrt{\sum_{d\geq 0}S(G, d, n_1)2^{n_1d} - 1}.
\end{equation}
Let $n_1'$ be the number of odd degrees among $d_1, \dots, d_{n_1}$ and $n_1''$ be the number of even degrees among $d_1, \dots, d_{n_1}$. By Lemma~\ref{lem:C-symm-mixing}, we have \[
S(G, d, n_1) \leq \frac{1}{d!}L^d \qquad\text{ where }\qquad L \coloneqq \frac{(n_1')^2}{n'} + \frac{(n_1'')^2}{n''}
\]so the second term on the right hand side is bounded by \[
2^{-\ell - 1}\sqrt{\sum_{d\geq 0}S(G, d, n_1)2^{n_1d} - 1} \leq 2^{-\ell - 1}\sqrt{\sum_{d\geq 0}\frac{1}{d!}L^d2^{n_1d} - 1} = 2^{-\ell - 1}\sqrt{\exp\left(2^{n_1}L\right) - 1}.
\]
Finally, by Corollary~\ref{cor:effective-symmetric-dist} and Corollary~\ref{cor:c-symmetric-ranks} (and using Lemma~\ref{lem:remove-column}), it follows that \[
\left|2^{-\ell}\PP[\dim\ker M' = r + 1] - 2^{-\ell}\mu(r)\right| = O_r(n\cdot 2^{-n'-\ell}).
\]
(Note that we have a better bound from Corollary~\ref{cor:effective-symmetric-dist} in the case $q \equiv 1\pmod{4}$, but it is not needed here.) 

To summarize, we have four error terms:\begin{itemize}
    \item \textbf{Coming from using Corollary~\ref{cor:independence} to treat local conditions independently:} we have\[
    \left|\PP[\{\dim\ker M_B^\sigma = r + 1\} \cap E] - 2^{-\ell}\PP[\dim\ker M_B^\sigma = r + 1]\right| \leq 2^{\binom{n}{2} - \ell - n + 3}q^{-\frac{d_n}{4}} = \boxed{O_\ell\left(\left(\frac{2}{q}\right)^{\binom{n}{2}}2^{-n}\right)}.
    \]

    \item \textbf{Coming from the averaging trick:} this is the second term on the right hand side of Equation~\ref{eq:equidist-err}. We bound $n_1'$ and $n_1''$ above by $n_1 \leq \sqrt{\log n}$, and we bound $n'$ and $n''$ below by $\varepsilon n - 1$ to get $L \leq \frac{2\log n}{\varepsilon n - 1}$, so the error bound we get is \[
    2^{-\ell - 1}\sqrt{\exp\left(\frac{2^{1 + \sqrt{\log n}}\log n}{\varepsilon n - 1}\right) - 1}
    \]
    For large enough $n$, we have \[
    \frac{1}{2} \log (\varepsilon n - 1) \geq \log\log n + \frac{\sqrt{\log n}}{\log 2} + \log 2 
    \]
    so that the term in the exponent is at most $(\varepsilon n - 1)^{-\frac{1}{2}}$. Then the full error term is $\boxed{O_{\ell, \varepsilon}(n^{-\frac{1}{4}})}$.
    
    \item \textbf{Coming from Chebotarev:} this is the first term on the right hand side of Equation~\ref{eq:equidist-err}. We have an error of
        \[
        2^{-\ell + 1}\cdot \sum_{i=n_1 + 1}^n q^{-\frac{d_i}{4}} \leq 2^{-\ell + 1}\sum_{i=n_1 + 1}^\infty q^{-\binom{i}{2}} \leq \frac{2^{-\ell + 1}}{1 - q^{-1}}\left(\frac{1}{q}\right)^{\binom{n_1}{2}} = \boxed{O_{q, \ell}\left(n^{-\frac{1}{2}\log\left(q\right)}q^{\frac{3}{2}\sqrt{\log n}}\right)} = O_{q, \ell}\left(n^{-\frac{1}{4}\log (q)}\right).
    \]
    We note that since $q \geq 3$, we have $\frac{1}{4}\log(q) > \frac{1}{4}$.
    \details{In the last line, we used the bound $q^{-\binom{i}{2}} \leq q^{-\binom{n_1}{2} - (i - n_1)}$ to compare to a geometric series.}

    \item \textbf{Coming from rank distributions of $C$-symmetric matrices:} \[
    \left|2^{-\ell}\PP[\dim\ker M' = r + 1] - 2^{-\ell}\mu(r)\right| = O_r(n\cdot 2^{-n'}) = \boxed{O_{r, \ell, \varepsilon}(n\cdot 2^{-\varepsilon n})}
    \]
    by Corollary~\ref{cor:c-symmetric-ranks}.
\end{itemize}
The largest error term comes from the averaging trick.
\end{proof}

\subsection{Branch degrees}\label{sect:branch-degrees}

In this subsection, we move from the result in the previous subsection about corank distribution in subfamilies of curves with prescribed degrees of individual ramification points (Theorem~\ref{thm:subfamily-equidist}) to showing the same corank distribution arises if we take our curves uniformly among those with fixed total degree of ramification points.  

The idea is that a uniformly random curve with ramification points of total even degree $d$ (including $r_1, \dots, r_k$ and excluding $u_1, \dots, u_\ell$) can be sampled in two steps:
\begin{enumerate}
    \item Sample a random set of distinct points in $\PP^1$ with total degree $d - \sum_{\alpha=1}^k \deg(r_\alpha)$, excluding $r_1, \dots, r_k$ and $u_1, \dots, u_\ell$. Remember the degrees of these points with multiplicity, but not the points themselves.
    \item Given a set of degrees with multiplicity, sample a random hyperelliptic curve branched at $r_1, \dots, r_k$ whose other ramification points have these degrees and multiplicities.
\end{enumerate}
This works because any two curves with the same degrees of ramification points are equally likely to be sampled. In Theorem~\ref{thm:subfamily-equidist}, we showed that, conditional on some properties of the multiset of degrees obtained in the first step, the Picard group of a curve sampled in the second step will have the desired 4-rank distribution and (morally) that this 4-rank distribution is independent of local conditions imposed at $u_1, \dots, u_\ell$.

In this section, we use the general theory of random logarithmic combinatorial structures developed by Arratia, Barbour, and Tavar\'e \cite{arratiabarbourtavare} to bound the probability that the degrees sampled from the first step fail to have the desired properties. 

A \textit{selection} is a random process defined as follows. We start with a universe of objects, each having a positive integer \textit{weight}, such that there are exactly $m_i$ objects of weight $i$. In the case of interest, our objects will be points in $\PP^1_{\F_q}$ excluding $r_1, \dots, r_k, u_1, \dots, u_\ell$, weighted by degree. The selection is the data of this universe together with a subset of the universe chosen uniformly at random among all subsets of total weight $d - \sum_{\alpha = 1}^k \deg(r_\alpha)$. 

A selection is called \textit{logarithmic} if it satisfies $m_i \sim \frac{\theta y^i}{i}$ as $i \to \infty$ for some $y > 1$ and $\theta > 0$. If $m_i$ is the number of points of degree $i$ in $\PP^1_{\F_q}$ excluding $r_1, \dots, r_k$ and $u_1, \dots, u_\ell$, then for $i$ large enough (e.g., $i > 1$ and $i > \deg(r_\alpha), \deg(u_\beta)$ for all $\alpha, \beta$), $m_i$ is the number of irreducible monic polynomials of degree $i$ over $\F_q$. For such $i$, we have the estimate \begin{equation}\label{eq:mi-estimate}
\frac{q^i}{i} - \frac{q^{\frac{i}{2}}}{i} - q^{\frac{i}{3}} \leq m_i \leq \frac{q^i}{i} + \frac{q^{\frac{i}{2}}}{i} + q^{\frac{i}{3}}
\end{equation}
(see, e.g., \cite[Theorem 2.2]{rosen}), so our selection of interest is logarithmic with $\theta = 1$ and $y = q$.

For positive integers $i$, let $Z_i$ be independent binomial random variables with $m_i$ trials and success probability $p_i = y^{-i}/(1 + y^{-i})$. Set \[
\theta_i = i\E[Z_i] = im_ip_i.
\]
Given a selection picking subsets of total weight $d$, we denote by $C^{(d)}_i$ the number of selected objects of weight $i$. We denote by $C^{(d)}$ the tuple of these data and by $C^{(d)}[m_1, m_2]$ the tuple $(C^{(d)}_{m_1}, \dots, C^{(d)}_{m_2})$. Similarly, we denote by $Z[m_1, m_2]$ the tuple $(Z_{m_1}, \dots, Z_{m_2})$. Logarithmic selections satisfy the property that counts of small-weight objects are asymptotically independent:

\begin{theorem}[See {\cite[Theorem 3.3]{arratiabarbourtavare}}]\label{thm:small-components}
For a logarithmic selection satisfying \[
\left|\frac{\theta_i}{\theta} - 1\right| = O(i^{-g_1}) \qquad\text{ and }\qquad |\theta_i - \theta_{i+1}| = O(i^{-g_2}) 
\]
for some $g_1 > 0$, $g_2 > 1$, we have \[
d_{TV}(C^{(d)}[1, b], Z[1, b]) = O\left(\frac{b}{d}\right)
\]
for any positive integer $b$.
\end{theorem}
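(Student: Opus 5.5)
The plan is the standard ``conditioning relation plus local limit theorem'' argument for logarithmic combinatorial structures.

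\textbf{Step 1: the conditioning relation.} First I would establish that $C^{(d)}$ has the law of $Z[1,d]$ conditioned on the event $T_{0d} \coloneqq \sum_{i=1}^d iZ_i = d$. Here $Z_i$ counts the objects of weight $i$ that are included when each object is included independently with probability $p_i = y^{-i}/(1+y^{-i})$. The point is that an individual subset $S$ of total weight $w$ then has probability
\[
\prod_i (1+y^{-i})^{-m_i}\, y^{-w}.
\]
This depends only on $w$, so conditioning on $w = d$ gives the uniform distribution on subsets of weight $d$.

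Write $T_{bd} \coloneqq \sum_{i=b+1}^d iZ_i$, which is independent of $Z[1,b]$. Using the standard formula for total variation distance under conditioning, this gives
\[
d_{TV}(C^{(d)}[1,b], Z[1,b]) = \sum_{r \geq 0} \PP[T_{0b}=r]\left(1 - \frac{\PP[T_{bd}=d-r]}{\PP[T_{0d}=d]}\right)_+ ,
\]
where $T_{0b} = \sum_{i \le b} iZ_i$.

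\textbf{Step 2: a local limit theorem.} The core is a uniform local limit theorem. I would show that for $0 \le b < d$ and $0 \le r \le d$,
\[
\PP[T_{bd} = d-r] = \frac{1}{d}\left(p_\theta\!\left(\tfrac{d-r}{d}\right) + O\!\left(\tfrac{b+1}{d}\right)\right),
\]
where $p_\theta$ is the density of the limiting Dickman-type law. On $(0,1]$ one has $p_\theta(x) = \frac{e^{-\gamma\theta}}{\Gamma(\theta)} x^{\theta-1}$, which is positive and Lipschitz away from $0$. Specialising to $b=0$, $r=0$ gives $\PP[T_{0d}=d] \asymp 1/d$.

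To prove the local limit theorem I would use the size-biasing recursion for sums of independent binomials,
\[
k\,\PP[T=k] = \sum_i i\,\E\bigl[Z_i \mathbf{1}\{T - i + \ldots\}\bigr],
\]
which reduces to
\[
k\,\PP[T=k] \approx \sum_{i} \theta_i\, \PP[T = k-i]
\]
up to corrections from second-order terms $O(m_i p_i^2)$. These corrections are summable because $p_i \approx y^{-i}$. The hypothesis $|\theta_i/\theta - 1| = O(i^{-g_1})$ lets me replace $\theta_i$ by $\theta$ at a summable cost. The hypothesis $|\theta_i - \theta_{i+1}| = O(i^{-g_2})$ with $g_2 > 1$ supplies the smoothness in $k$ that is needed to compare $\PP[T=k]$ with $\PP[T=k+1]$, and hence to get the Lipschitz error $O(r/d)$ rather than just $o(1)$. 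Comparing with the continuous equation $xp_\theta(x) = \theta\int_{x-1}^x p_\theta(u)\,du$ then yields the estimate.

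\textbf{Step 3: summing.} With the local limit theorem, for $r \le d/2$ the ratio $\PP[T_{bd}=d-r]/\PP[T_{0d}=d]$ is $1 - O((r+b)/d)$. For $r > d/2$ I would bound the summand trivially by $\PP[T_{0b}=r]$ and use Markov's inequality. Since
\[
\E[T_{0b}] = \sum_{i\le b}\theta_i = O(b),
\]
both pieces are $O(b/d)$, which gives the claim.

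\textbf{Main obstacle.} I expect the uniform local limit theorem of Step 2 to be the main difficulty. It needs an error that is linear in $b/d$, uniform in $r$, and valid also near $r \approx d$, where $p_\theta$ may degenerate when $\theta < 1$ (in our case $\theta = 1$). My first attempt would be the Stein/recursion approach as in Arratia--Barbour--Tavar\'e. If that becomes too delicate, the fallback is a generating-function (Tauberian) analysis of $\prod_i \E[x^{iZ_i}]$ near $x = 1$.
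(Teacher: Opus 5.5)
The paper does not prove this statement; it quotes it from Arratia--Barbour--Tavar\'e. Your Steps 1 and 3 reproduce the standard framework correctly: the conditioning relation (with the product restricted to $i\le d$), the identity $d_{TV}=\sum_r\PP[T_{0b}=r]\bigl(1-\PP[T_{bd}=d-r]/\PP[T_{0d}=d]\bigr)_+$, the split at $r=d/2$, and Markov's inequality with $\E T_{0b}=O(b)$.

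\textbf{The gap is in Step 2.} The local limit theorem you plan to prove is false in the generality of the statement, so it cannot drive the argument. There are two problems.

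\emph{Uniformity up to $r=d$ fails.} $T_{bd}$ takes no values in $[1,b]$. So for $\theta=1$ and $1\le b=o(d)$ one has $\PP[T_{bd}=1]=0$, while your right-hand side is $\asymp 1/d$. You never need $r>d/2$, since Step 3 handles that range by Markov. So your ``main obstacle'' near $r\approx d$ is moot.

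\emph{The absolute rate fails, even at $b=r=0$.} The claim $d\,\PP[T_{0d}=d]=p_\theta(1)+O(1/d)$ is false in general. Take $\theta\neq1$ and $0<g_1<1$, and choose the $m_i$ so that $\theta_i=\theta(1+c\,i^{-g_1})$ up to exponentially small rounding. Both hypotheses hold, with $g_2=1+g_1$.
\begin{itemize}
\item $\prod_i(1+y^{-i}x^i)^{m_i}$ equals $(1-x)^{-\theta}\exp\bigl(\theta c\,\mathrm{Li}_{1+g_1}(x)\bigr)$ times a function analytic on $|x|<y^{1/2}$.
\item Singularity analysis, together with the normalising factor $\prod_{i\le d}(1+y^{-i})^{-m_i}$, gives $d\,\PP[T_{0d}=d]=p_\theta(1)\bigl(1+\kappa d^{-g_1}+o(d^{-g_1})\bigr)$.
\item Here $\kappa=\theta c\bigl(\Gamma(-g_1)\Gamma(\theta)/\Gamma(\theta-g_1)+1/g_1\bigr)$, which is nonzero for every $\theta\neq1$.
\end{itemize}
Comparing numerator and denominator separately with $p_\theta$ therefore gives at best $O(b/d+d^{-g_1})$, not $O(b/d)$.

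\textbf{How to repair it.} Only the ratio matters, and the slowly decaying corrections are common to numerator and denominator. Assume $b\le d/4$; otherwise the claim is trivial. Use the convolution identity $\PP[T_{0d}=d]=\sum_s\PP[T_{0b}=s]\PP[T_{bd}=d-s]$ to write, for $r\le d/2$,
\[
\PP[T_{0d}=d]-\PP[T_{bd}=d-r]\le\sum_{s\le d/2}\PP[T_{0b}=s]\bigl(\PP[T_{bd}=d-s]-\PP[T_{bd}=d-r]\bigr)+\PP[T_{0b}>d/2,\ T_{0d}=d].
\]
You then need only three inputs.
\begin{enumerate}
\item[(a)] A rate-free lower bound $\PP[T_{0d}=d]\ge c/d$.
\item[(b)] $\max_{d/4\le m\le d}\PP[T_{bd}=m]=O(1/d)$ (also for $b=0$), and $|\PP[T_{bd}=m+1]-\PP[T_{bd}=m]|=O(d^{-2})$ for $d/2\le m<d$. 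Get these by differencing your size-biasing recursion in $m$. The term $\sum_i|\theta_{i+1}-\theta_i|\,\PP[T_{bd}=m-i]$ is $O(1/d)$ precisely because $g_2>1$.
\item[(c)] $\PP[T_{0b}>d/2\mid T_{0d}=d]=O(b/d)$, by Markov's inequality. For $i\le b$ one has $i\,\E C^{(d)}_i\le\frac{\theta_i}{1-p_i}\,\PP[T_{0d}=d-i]/\PP[T_{0d}=d]=O(1)$ by (a) and (b).
\end{enumerate}
Together these give $\PP[T_{bd}=d-r]/\PP[T_{0d}=d]\ge1-O((r+b)/d)$ for $r\le d/2$, and your Step 3 then finishes the proof.

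In the paper's own application ($\theta=1$, $\theta_i-1=O(q^{-i/2})$) the $d^{-g_1}$ obstruction disappears, and your absolute version is plausible there. It does not, however, cover the theorem as stated.
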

Here the implied constant can depend on all the available data, i.e., the full sequence of counts $m_i$, but not on $b$ or $d$.

We also have Poisson approximation for the total number of selected objects:

\begin{theorem}[See {\cite[Theorem 8.15]{arratiabarbourtavare}}]\label{thm:num-components}
For a logarithmic selection satisfying \[
\left|\frac{\theta_i}{\theta} - 1\right| = O(i^{-g_1}) \qquad\text{ and }\qquad |\theta_i - \theta_{i+1}| = O(i^{-g_2}) 
\]
for some $g_1 > 0$, $g_2 > 1$, let $N$ be Poisson distributed with mean $\theta\log d$. Then \[
d_{TV}\left(\sum_{i=1}^d C^{(d)}_i, N\right) = O\left(\frac{1}{\sqrt{\log d}}\right).
\]
\end{theorem}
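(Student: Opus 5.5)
Since this is quoted from \cite[Theorem 8.15]{arratiabarbourtavare}, in the paper I would simply invoke it. If I had to reprove it, I would start from the \emph{conditioning relation} for selections. The law of $C^{(d)}$ equals the law of $Z[1,d]$ conditioned on $T_{0d} \coloneqq \sum_{i=1}^d iZ_i = d$. This holds because $Z_i$ is binomial with $m_i$ trials and success probability $p_i = y^{-i}/(1+y^{-i})$, so each subset of total weight $d$ has conditional probability proportional to $\prod_i y^{-i\cdot(\text{number chosen of weight } i)} = y^{-d}$, the same for every such subset. The quantity to control is therefore the law of $K_d \coloneqq \sum_{i\le d} Z_i$ conditioned on $T_{0d}=d$.

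\textbf{Step 1 (unconditioned count).} For the unconditioned sum, $\E K_d = \sum_{i\le d}\theta_i/i = \theta\log d + O(1)$ by the hypothesis $|\theta_i/\theta - 1| = O(i^{-g_1})$. I would apply the Barbour--Hall refinement of Le Cam's bound. It gives total variation distance between $K_d$ and a Poisson variable of the same mean of at most
\[
\min\{1, (\E K_d)^{-1}\}\sum_{i\le d} m_ip_i^2 = O\!\left(\frac{1}{\log d}\right),
\]
since $\sum_i m_ip_i^2 \ll \sum_i y^{-i}/i < \infty$. Shifting the Poisson mean by $O(1)$ costs $O(1/\sqrt{\log d})$ in total variation. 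This step already produces the claimed rate.

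\textbf{Step 2 (removing the conditioning).} Fix a small cutoff, say $b = \lfloor d/\log d\rfloor$. Write $K_d = K_{\le b} + K_{>b}$, and similarly split $T_{0d}$. The large-weight part $K_{>b}$ has mean $\sum_{b<i\le d}\theta_i/i = O(\log\log d)$, but under the conditioning it is stochastically bounded. On the event $T_{0d}=d$ there are at most $\log d$ components of weight exceeding $b$, and in fact the conditional law of the large components is close to the Poisson--Dirichlet$(\theta)$ picture, which has $O(1)$ parts of size greater than $d/\log d$ only with a tail of order $\log\log d$. For the small-weight part, Theorem~\ref{thm:small-components} gives $d_{TV}(C^{(d)}[1,b],Z[1,b]) = O(b/d) = O(1/\log d)$. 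Hence the conditioned $K_{\le b}$ is within $O(1/\log d)$ of the independent sum $\sum_{i\le b}Z_i$, which is Poisson with mean $\theta\log d - \theta\log\log d + O(1)$ up to $O(1/\log d)$ by Step~1.

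\textbf{Step 3 (main obstacle).} The hard part is recombining the two pieces. Adding an independent-ish $O_p(\log\log d)$ correction to a Poisson variable of mean about $\theta\log d$ moves it in total variation by roughly $\E|K_{>b} - \theta\log\log d|/\sqrt{\theta\log d}$. This gives only $O(\log\log d/\sqrt{\log d})$ if done crudely, because the variance of $K_{>b}$ is of order $\log\log d$, which contributes about $\sqrt{\log\log d}/\sqrt{\log d}$. To reach exactly $O(1/\sqrt{\log d})$, I would instead follow ABT's size-biasing argument. The hypothesis $|\theta_i - \theta_{i+1}| = O(i^{-g_2})$ with $g_2>1$ gives a local limit theorem: $d\,\PP[T_{0d}=d \mid K_d = k]$ is uniformly close to a smooth function of $k/\log d$. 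Thus the conditioning tilts the Poisson law of $K_d$ only by a factor $1 + O((k-\theta\log d)/\log d)$, and integrating this tilt against a Poisson$(\theta\log d)$ weight gives the required $O(1/\sqrt{\log d})$. Verifying that local limit estimate uniformly in $k$ is where I expect the real work to lie.
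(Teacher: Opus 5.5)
Invoking \cite[Theorem 8.15]{arratiabarbourtavare} is exactly what the paper does: it quotes the result without proof and separately notes that $\theta_i = 1 + O_q(q^{-i/2})$ for the point-selection on $\PP^1_{\F_q}$. So your proposal is correct as it stands. Your optional reproof sketch should not be relied on, though. The Step~2 claim that $K_{>b}$ is stochastically bounded under the conditioning is false: its conditional mean is $\theta\log\log d+O(1)$, which is what the Poisson--Dirichlet picture you invoke actually gives, and your own Step~3 remark that its variance is of order $\log\log d$ concedes this. As a result, the $O(1/\sqrt{\log d})$ rate rests entirely on the uniform local limit estimate in Step~3, which you leave unverified.
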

Here again the implied constant can depend on all the available data.

Both above theorems are true in much greater generality.

In our case of interest (picking points in $\PP^1_{\F_q}$), we have \[
\theta_i = \frac{im_iq^{-i}}{1 + q^{-i}} = 1 + O_q\left(q^{-\frac{i}{2}}\right)
\]
so that the assumptions in Theorems~\ref{thm:small-components} and \ref{thm:num-components} are easily satisfied. 

Sample a uniformly random set of points in $\PP^1_{\F_q}$ of total degree $d$ containing $r_1, \dots, r_k$ and excluding $u_1, \dots, u_\ell$ (with $\sum_{\alpha = 1}^k \deg(r_\alpha) =\colon d_{\text{inc}}$ and $\sum_{\beta = 1}^\ell \deg(u_\beta) =\colon d_{\text{exc}}$). Let $\{d_0, d_1, \dots, d_n\}$ be their degrees, ordered so that: 
\begin{itemize}
    \item $d_1 \leq \dots \leq d_n$;
    \item $v_2(d_0) \leq v_2(d_i)$ for $1 \leq i \leq n$ and, if $v_2(d_i) = v_2(d_0)$, then $d_0 \leq d_i$.
\end{itemize} 
(More precisely, we consider the selection on $\PP^1_{\F_q} - \{r_1, \dots, r_k, u_1, \dots, u_\ell\}$, weighted by degree, with total degree $d - \sum_{\alpha=1}^k\deg(r_\alpha)$. Then add back in the points $r_1, \dots, r_k$.) For notational convenience, write $\vec{r}$ and $\vec{u}$ for $(r_1, \dots, r_k)$ and $(u_1, \dots, u_\ell)$, respectively.

Let $X \to \PP^1_{\F_q}$ be a curve chosen uniformly at random among all hyperelliptic curves whose ramification points have degrees $d_0, \dots, d_n$, include $r_1, \dots, r_k$, and exclude $u_1, \dots, u_\ell$. Using Theorem~\ref{thm:subfamily-equidist} and Remark~\ref{rmk:A1}, we have an estimate for the distribution of the $4$-rank of $\Pic^0(X)(\F_q)$ if there are some $\gamma < 1$ and $0 < \varepsilon < 1$ such that: \begin{enumerate}[label=$(A_\arabic*)$:]
    \item $n \geq (1 - \gamma)\log d$;
    \item the number of $d_i$ ($0 \leq i \leq n$) which are odd and the number of $d_i$ ($0 \leq i \leq n$) which are even are both at least $\varepsilon n$;
    \item there is at least one $1 \leq i < \sqrt{\log n}$ with $d_i$ odd---in particular, $d_0$ is odd;
    \item for $i \geq \sqrt{\log n}$, we have $d_i \geq 4\binom{i}{2}$.
\end{enumerate}

We will do this by directly applying Theorems~\ref{thm:small-components} and \ref{thm:num-components} in a few different ways. 

As a warm-up, we prove some easier results about the total number of branch points and about the distribution of the largest degree $d_n$:

\begin{corollary}\label{cor:num-points}
With notation as above, for $0 < \gamma < 1$ we have \[
\PP[|n - \log d| \geq \gamma\log d] = O_{\gamma, q, \vec{r}, \vec{u}}\left(\frac{1}{\sqrt{\log d}}\right).
\]
\end{corollary}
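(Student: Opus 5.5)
The plan is to reduce the statement to Theorem~\ref{thm:num-components} applied to the logarithmic selection on $\PP^1_{\F_q} - \{r_1, \dots, r_k, u_1, \dots, u_\ell\}$, weighted by degree, with total weight $d' \coloneqq d - d_{\text{inc}}$. The total number of branch points is $n + 1 = k + \sum_{i=1}^{d'} C^{(d')}_i$. The hypotheses of Theorem~\ref{thm:num-components} hold with $\theta = 1$: by \eqref{eq:mi-estimate}, $\theta_i = 1 + O_q(q^{-i/2})$ for $i$ larger than the degrees of the $r_\alpha, u_\beta$. The finitely many small $i$ only affect the implied constants, which may depend on $\vec{r}, \vec{u}, q$. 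Hence the total variation distance between $\sum_i C^{(d')}_i$ and a Poisson variable $N$ with mean $\log d'$ is $O_{q,\vec r,\vec u}((\log d')^{-1/2})$.

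Next I compare events. Let $N' \coloneqq N + k - 1$, so that $n$ and $N'$ are within $d_{TV}$ distance $O((\log d')^{-1/2})$. It then suffices to bound
\[
\PP\left[|N + k - 1 - \log d| \geq \gamma \log d\right].
\]
Since $d_{\text{inc}}$ is fixed, $\log d' = \log d + O_{\vec r}(1/d)$, and the shift $k - 1$ is $O(1)$. So for $d$ large (depending on $\gamma, \vec r$) this event is contained in $\{|N - \log d'| \geq (\gamma/2)\log d'\}$; small $d$ are absorbed into the constant.

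Finally, I bound the Poisson tail by Chebyshev: $\Var(N) = \log d'$, so
\[
\PP\left[|N - \log d'| \geq (\gamma/2)\log d'\right] \leq \frac{4}{\gamma^2 \log d'},
\]
which is $O_\gamma(1/\log d) = O_\gamma((\log d)^{-1/2})$. Adding this to the total variation error gives the claim.

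The only mild obstacle is bookkeeping. One must check that excluding or including finitely many points keeps the selection logarithmic with constants satisfying the $g_1, g_2$ hypotheses; this holds because $m_i$ agrees with the irreducible monic count for large $i$. One must also handle the off-by-$k$ and $d$ versus $d'$ shifts, which are absorbed for large $d$.
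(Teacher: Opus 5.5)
Your proof is correct and follows the same route as the paper: Poisson approximation of the number of selected points via Theorem~\ref{thm:num-components} on the selection of total weight $d - d_{\text{inc}}$, followed by Chebyshev's inequality for the Poisson tail. The only difference is in how the mean shift is handled. The paper compares the Poisson laws with means $\log(d - d_{\text{inc}})$ and $\log d - k + 1$ using Yannaros's total variation bound, whereas you absorb the $O(1)$ shift directly through an event containment at level $\gamma/2$, which is slightly more elementary and equally valid.
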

\begin{proof}
Let $N$ be Poisson distributed with mean $\log(d - d_{\text{inc}})$ (and therefore variance $\log(d - d_{\text{inc}})$). Theorem~\ref{thm:num-components} says that the number of points in our selection behaves like $N + k$, i.e., $n$ behaves like $N + k - 1$. Let $N'$ be Poisson distributed with mean $\log d - k + 1$. By \cite[Theorem 2.1]{yannaros_poisson_1991}, which gives a bound on total variation distance between any two Poisson distributions, we have \[
d_{TV}(N, N') \leq \frac{|\log d - \log(d - d_{\text{inc}}) - k + 1|}{\sqrt{\log d - k + 1} + \sqrt{\log(d - d_{\text{inc}})}} = O_{k, d_{\text{inc}}}\left(\frac{1}{\sqrt{\log d}}\right).
\]
By Chebyshev's inequality, for $V > 0$ we have \[
\PP[|N' - (\log d - k + 1)| \geq V] \leq \frac{\log d - k + 1}{V^2}
\]
so that, by Theorem~\ref{thm:num-components}, we have \begin{align*}
\PP[|n - \log d| \geq V] &= \PP[|N + k - 1 - \log d| \geq V] + O_{q, \vec{r}, \vec{u}}\left(\frac{1}{\sqrt{\log d}}\right) \\
&= \PP[|N' - (\log d - k + 1)| \geq V] + O_{q, \vec{r}, \vec{u}}\left(\frac{1}{\sqrt{\log d}}\right) \\
&\leq \frac{\log d - k + 1}{V^2} + O_{q, \vec{r}, \vec{u}}\left(\frac{1}{\sqrt{\log d}}\right).
\end{align*}
Then the result follows by setting $V = \gamma\log d$.
\end{proof}

\begin{corollary}\label{cor:assumptions}
Fix $0 < \gamma < 1$ and $0 < \varepsilon < \frac{1}{4(1 + \gamma)}$. With notation as above, we have \[
\PP[d_0, \dots, d_n\text{ fail to satisfy } (A_1)\text{--}(A_4)] = O_{\gamma, \varepsilon, q, \vec{r}, \vec{u}}\left(\frac{1}{\log\log\log d}\right).
\] 
\end{corollary}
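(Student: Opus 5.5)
We bound the failure probability of each of $(A_1)$--$(A_4)$ separately and conclude by a union bound. The weakest bound, $1/\log\log\log d$, should come from $(A_3)$. Throughout we write $b$ for cutoffs on degrees and pass between $C^{(d)}$ and the independent binomials $Z_i$ via Theorem~\ref{thm:small-components}, at cost $O(b/d)$. The fixed points $r_1,\dots,r_k$ change counts by at most $k$ and only in degrees at most $\max\deg(r_\alpha)$, so they affect nothing asymptotically.

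\textbf{Step 1: $(A_1)$.} This is immediate from Corollary~\ref{cor:num-points}: with probability $1-O(1/\sqrt{\log d})$ we have $(1-\gamma)\log d\le n\le(1+\gamma)\log d$. We work on this event from now on.

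\textbf{Step 2: $(A_2)$.} For this parity-balance condition, Theorem~\ref{thm:num-components} is only stated for total counts. We therefore split into small and large degrees. Take $b=d^{1/2}$, so that Theorem~\ref{thm:small-components} applies with error $O(d^{-1/2})$. The number of selected points of degree at most $b$ is then approximated by $\sum_{i\le b}Z_i$, whose odd part and even part each have mean about $\tfrac12\log b=\tfrac14\log d$ and variance of the same order. By Chebyshev, each part exceeds $(\tfrac14-\eta)\log d\ge\varepsilon(1+\gamma)\log d\ge\varepsilon n$ with probability $1-O(1/\log d)$. This is exactly where the hypothesis $\varepsilon<\frac{1}{4(1+\gamma)}$ is used.

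\textbf{Step 3: $(A_4)$.} Since $n\le(1+\gamma)\log d$, the condition only concerns indices $i\le 2\log d$. It suffices to show that the $i$-th smallest degree is at least $2i^2$ for all $\sqrt{\log n}\le i$. Equivalently, for each $m\ge \sqrt{\log n}$ the number of points of degree less than $2m^2$ is less than $m$. Under the $Z$-approximation this count has mean about $\log(2m^2)=O(\log m)$. A Poisson/Chernoff tail bound then gives probability $\exp(-cm)$ that it reaches $m$. Summing over $m\ge\sqrt{\log n}$ gives $O(\exp(-c\sqrt{\log\log d}))$, and the total-variation error is negligible because $b=O((\log d)^2)$.

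\textbf{Step 4: $(A_3)$, the main obstacle.} We need an odd-degree point among the first $\sqrt{\log n}\approx\sqrt{\log\log d}$ points. After discarding the finitely many degrees of the $r_\alpha$, let $D$ be a degree threshold; by Step~3 the smallest indices correspond to degrees up to roughly $D$. The probability that no odd-degree point of degree at most $D$ is selected is $\prod_{i\le D,\,i\text{ odd}}\PP[Z_i=0]\approx\exp(-\sum_{i\text{ odd},i\le D}1/i)\approx D^{-1/2}$. Meanwhile, the number of points of degree at most $D$ is about $\log D$ with fluctuations of order $\sqrt{\log D}$. Choosing $\log D\approx\tfrac12\sqrt{\log n}$ makes $\PP[\text{fewer than }\sqrt{\log n}\text{ points below }D\text{ fail}]$ small. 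The residual failure probability is then $D^{-1/2}=\exp(-c\sqrt{\log\log d})$, together with a Chebyshev term of size $O(1/\log D)=O(1/\sqrt{\log\log d})$.

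A cruder route accepts a Chebyshev bound at each stage, which produces the stated $1/\log\log\log d$. I would first try this crude version, since only the stated rate is needed. The delicate point is to make "the first $\sqrt{\log n}$ indices" (a random set of size depending on $n$) comparable with "degrees at most $D$" (a fixed set). I would handle this by conditioning on $n$ lying in the range from Step~1 and using monotonicity in $n$.

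Adding all failure probabilities, the dominant term is the one from Step~4.
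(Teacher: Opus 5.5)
Your proposal is correct in outline and uses the same framework as the paper. You replace $(A_1)$--$(A_4)$ by versions with deterministic thresholds, using $(1-\gamma)\log d\le n\le(1+\gamma)\log d$ from Corollary~\ref{cor:num-points}. You pass to the independent $Z_i$ via Theorem~\ref{thm:small-components}, apply second-moment or tail bounds, and finish with a union bound.

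There is one slip in Step~4. By the ordering convention, $d_0$ is the \emph{smallest} odd degree, and it is excluded from the range $1\le i<\sqrt{\log n}$ in $(A_3)$. So one odd-degree point of degree at most $D$ is not enough; you need at least two. The paper accordingly bounds $\PP[\sum_{i\le D,\ i\text{ odd}}Z_i\le 1]$. In your computation this replaces $D^{-1/2}$ by $O(D^{-1/2}\log D)$, or by $O(1/\log D)$ via Chebyshev, so nothing downstream changes.

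The real difference is how you link $(A_3)$ to small degrees.
\begin{itemize}
\item \textbf{The paper's link.} It goes through $(A_4)$: on $\{n\ge(1-\gamma)\log d\}\cap(A_4)$, every $d_i\le D\coloneqq 4\binom{\lfloor\sqrt{\log\log d+\log(1-\gamma)}\rfloor}{2}\approx 2\log\log d$ has $i<\sqrt{\log n}$. Chebyshev then costs $O(1/\log D)=O(1/\log\log\log d)$, which is the bottleneck in the stated rate.
\item \textbf{Your link.} You directly count that fewer than $\sqrt{\log((1-\gamma)\log d)}$ selected points have degree at most $D$, with $\log D\approx\frac12\sqrt{\log\log d}$. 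Using the deterministic lower bound on $n$ (intersecting with the Step~1 event, not conditioning) is the right way to make this rigorous. This decouples $(A_3)$ from $(A_4)$, allows a much larger threshold, and gives $O(1/\sqrt{\log\log d})$.
\item \textbf{Consequence for your remark.} Your claim that a Chebyshev-only version ``produces $1/\log\log\log d$'' describes the paper's smaller threshold, not yours.
\item \textbf{For $(A_4)$.} The paper applies Chebyshev for each $i$ and sums $\log i/i^2$ to get $O((\log\log d)^{-1/4})$; your Chernoff bound is exponentially small.
\item \textbf{For $(A_2)$.} The paper takes $b=\lfloor d^\delta\rfloor$ with $4\varepsilon(1+\gamma)<\delta<1$ but uses $1/(1+q^{-i})\ge\frac12$, getting only mean $\ge\frac14\log b$. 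You use the sharp mean $\frac12\log b$ with $b=d^{1/2}$. Both land exactly on the hypothesis $\varepsilon<\frac{1}{4(1+\gamma)}$.
\end{itemize}
The paper's route is more uniform, repeating one Chebyshev computation; yours gives a strictly better overall rate.
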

\begin{proof}
It suffices to bound the probability that $d_0, \dots, d_n$ satisfy $(A_1), (A_2), (A_3)$ but not $(A_4)$, as well as the probability that they satisfy $(A_1)$, $(A_2)$, $(A_4)$ but not $(A_3)$, and so on. The idea is to modify and rephrase the conditions $(A_i)$ to conditions $(A_i')$ in such a way that all the $(A_i')$ together imply all the $(A_i)$. Then the probability of $d_0, \dots, d_n$ failing $(A_1)$--$(A_4)$ is bounded by $\sum_{i=1}^4 \PP[d_0, \dots, d_n \text{ fail to satisfy }(A_i')]$.
\begin{enumerate}[label=$(A_{\arabic*}):$]
    \item Set $(A_1')$ to be the condition that $(1 - \gamma)\log d \leq n \leq (1 + \gamma)\log d$. Then $(A_1')$ implies $(A_1)$.
    
    The fact that $\PP[d_0, \dots, d_n \text{ fail to satisfy }(A_1')] = O_{\gamma, q, \vec{r}, \vec{u}}\left(\frac{1}{\sqrt{\log d}}\right)$ follows from Corollary~\ref{cor:num-points}.
    
    \item We will instead bound the probability of the condition $(A_2')$: the number of $d_i$ which are odd and the number of $d_i$ which are even are both more than $\varepsilon(1 + \gamma)\log d$. If $(A_1')$ is satisfied, then $n \leq (1 + \gamma)\log d$, in which case $(A_2')$ implies $(A_2)$. However, $(A_2')$ is easier to work with because we do not need to compare to the random variable $n$.

    For fixed $b \ll d$, the expected number of odd $d_i \leq b$ after approximating by the $Z_i$ is, in the worst case that each of $r_1, \dots, r_k$ has even degree and each of $u_1, \dots, u_\ell$ has odd degree, \[
    \sum_{i=0}^{\lceil b/2\rceil - 1} \E[Z_{2i + 1}] = \sum_{i=0}^{\lceil b/2\rceil - 1} \frac{q^{-2i-1}m_{2i+1}}{1 + q^{-2i-1}} = \sum_{i=0}^{\lceil b/2\rceil - 1} \frac{q^{-2i-1}m_{2i+1}}{1 + q^{-2i-1}}.
    \]
    For $i$ large enough (certainly, if $i > 1$ is also larger than the largest degree of any excluded points), we have the estimate from \eqref{eq:mi-estimate}. In the worst case that each of $u_1, \dots, u_\ell$ has odd degree, some of the $m_i$ may be smaller than the number of irreducible monic polynomials of degree $i$, and the total defect contributed by this consideration is less than $\ell$ (the case where all of $u_1, \dots, u_\ell$ have degree 1). Thus, \[
    \sum_{i=0}^{\lceil b/2\rceil - 1} \E[Z_{2i + 1}] \geq \frac{1}{2}\left(\sum_{i=0}^{\lceil b/2\rceil - 1} \frac{1}{2i + 1} - \sum_{i=0}^{\lceil b/2\rceil - 1} \frac{q^{-i}}{2i + 1}- \sum_{i=0}^{\lceil b/2\rceil - 1} \frac{q^{-2i/3}}{2i + 1} - \ell\right)
    \]
    using \eqref{eq:mi-estimate}. The last three terms on the right hand side are uniformly bounded, and the first term is bounded below by \[
    \sum_{i=0}^{\lceil b/2\rceil - 1} \frac{1}{2i + 1} \geq \sum_{i=1}^{\lceil b/2\rceil}\frac{1}{2i} \geq \int_{1}^{b/2} \frac{du}{2u} = \frac{1}{2}\log\left(\frac{b}{2}\right)
    \]
    so that \begin{equation}\label{eq:ave-odd}
    \E\left[\sum_{i=0}^{\lceil b/2\rceil - 1} Z_{2i + 1}\right] \geq \frac{1}{4}\log b - O_{q, \ell}(1). 
    \end{equation}
    Similarly, the expected number of even $d_i \leq b$ after approximating by the $Z_i$ is \[
    \E\left[\sum_{i=1}^{\lfloor b/2\rfloor} Z_{2i}\right] \geq \frac{1}{4}\log b - O_{q, \ell}(1).
    \]
    Since the $Z_i$ are independent and \begin{equation}\label{eq:var-bound}
    \Var(Z_i) = m_ip_i(1 - p_i) \leq \frac{1}{i}\cdot\frac{1}{(1 + q^{-i})^2} + O_q(q^{-i/2}) \leq \frac{1}{i} + O_q(q^{-i/2})
    \end{equation}
    \details{Note that $m_i$ is bounded above by the number of irreducible squarefree polynomials of degree $i$ (possibly $+1$ if $i = 1$), so we can use \eqref{eq:mi-estimate} for an upper bound even if $m_i$ is smaller due to excluded points.}
    we have \begin{equation}\label{eq:var-odd}
    \Var\left(\sum_{i=0}^{\lceil b/2\rceil - 1} Z_{2i + 1}\right) \leq \sum_{i=0}^{\lceil b/2\rceil - 1} \frac{1}{2i + 1} + O_q(1) \leq \frac{1}{2}\log b + O_q(1)
    \end{equation}
    and \[
    \Var\left(\sum_{i=1}^{\lfloor b/2\rfloor} Z_{2i}\right) \leq \sum_{i=0}^{\lceil b/2\rceil - 1} \frac{1}{2i + 1} + O_q(1) \leq \frac{1}{2}\log b + O_q(1).
    \]
    By Chebyshev's inequality, we have \[
    \PP\left[\sum_{i=0}^{\lceil b/2\rceil - 1} Z_{2i + 1} \leq \varepsilon(1 + \gamma)\log d \right] \leq \frac{\log b + O_q(1)}{2\left(\varepsilon(1 + \gamma)\log d - \frac{1}{4}\log b + O_{q, \ell}(1)\right)^2}
    \]
    Choose $4\varepsilon(1 + \gamma) < \delta < 1$ (here we are using the constraint on $\varepsilon$) and set $b = \lfloor d^\delta\rfloor$ so that $\log b \leq \delta\log d$ and we have \[
    \PP\left[\sum_{i=0}^{\lceil b/2\rceil - 1} Z_{2i + 1} \leq \varepsilon(1 + \gamma)\log d \right] \leq \frac{\delta\log d + O_q(1)}{2\left(\log d + O_{q, \ell, \gamma, \varepsilon, \delta}(1)\right)^2}\cdot\frac{1}{\left(\varepsilon(1 + \gamma) - \frac{\delta}{4}\right)^2} = O_{q, \ell, \gamma, \varepsilon, \delta}\left(\frac{1}{\log d}\right).
    \]
    We get the same bound for the sum of the even terms. Finally, by Theorem~\ref{thm:small-components}, we find that the same estimates hold for the number of even and odd $d_i$ (excluding those coming from $r_1, \dots, r_k$) up to an error on the order of $\frac{b}{d} \leq d^{\delta - 1}$, which is absorbed by the error term. Hence, \[
    \PP[d_0, \dots, d_n \text{ fail to satisfy } (A_2')] = O_{\gamma, \varepsilon, q, \vec{r}, \vec{u}}\left(\frac{1}{\log d}\right).
    \]

    \item We will observe that if $(A_1')$ and $(A_4)$ are satisfied, then for each $1 \leq i \leq n$, \[
    \text{if }\qquad d_i \leq  4\binom{\lfloor \sqrt{\log \log d + \log(1 - \gamma)}\rfloor}{2} \leq 4\binom{\lfloor \sqrt{\log n}\rfloor}{2} \qquad\text{ then }\qquad i < \sqrt{\log n}.
    \]
    Set \[
    D \coloneqq 4\binom{\lfloor \sqrt{\log \log d + \log(1 - \gamma)}\rfloor}{2}.
    \]
    To find an odd degree $d_i$ with $1 \leq i < \sqrt{\log n}$, it suffices to find an odd degree $1 \leq d_i \leq D$. Thus, we set $(A_3')$ to be the condition that there is an odd $1 \leq d_i \leq D$ and note that $(A_1')$, $(A_3')$, and $(A_4')$ together imply $(A_3)$.

    By Theorem~\ref{thm:small-components}, the number of odd degrees bounded by $D$ is approximated by $\sum_{i=0}^{\lceil D/2 \rceil - 1} Z_{2i + 1}$ (plus however many of $r_1, \dots, r_k$ have odd degree). We have estimates for the mean and variance of this number from \eqref{eq:ave-odd} and \eqref{eq:var-odd}. Using these, Chebyshev's inequality gives \[
    \PP\left[\sum_{i=0}^{\lceil D/2 \rceil - 1} Z_{2i + 1} \leq 1\right] \leq \frac{\log(D) + O_q(1)}{2\left(\frac{1}{4}\log D + O_{q, \ell}(1) - 1\right)^2} = O_{q, \ell}\left(\frac{1}{\log D}\right) = O_{\gamma, q, \ell}\left(\frac{1}{\log\log\log d}\right).
    \]
    Thus, using Theorem~\ref{thm:small-components}, we have \[
    \PP[d_0, \dots, d_n \text{ fail to satisfy } (A_3')] = O_{\gamma, q, \vec{r}, \vec{u}}\left(\frac{1}{\log\log\log d}\right).
    \]
    \item Let $(A_4')$ be the condition that $d_i \geq 4\binom{i}{2}$ for $\sqrt{\log \log d + \log(1 - \gamma)} \leq i \leq (1 + \gamma)\log d$. Note that $(A_1')$ and $(A_4')$ together imply $(A_4)$.

    We observe that, since the $d_i$ are nondecreasing for $i \geq 1$, we can reformulate the condition $(A_4')$ as asking that for each $\sqrt{\log \log d + \log(1 - \gamma)} \leq i \leq (1 + \gamma)\log d$, the number of degrees $d_j$ strictly smaller than $4\binom{i}{2}$ is less than $i$. Recall that we have a number of deterministic branch degrees coming from $\deg(r_1), \dots, \deg(r_k)$, so $(A_4')$ is satisfied if for each $i$ in that range, the number of random selected points of degree smaller than $4\binom{i}{2}$ is less than $i - k$.

    We have \[
    \E\left[\sum_{j=1}^{4\binom{i}{2}}Z_j\right] = \sum_{j=1}^{4\binom{i}{2}}\E[Z_j] = \sum_{j=1}^{4\binom{i}{2}} \frac{q^{-j}m_j}{1 + q^{-j}}.
    \] 
    Using the estimate from \eqref{eq:mi-estimate} for large enough $j$ (depending on $r_1, \dots, r_k$ and $u_1, \dots, u_\ell$), we have \[
    \E\left[\sum_{j=1}^{4\binom{i}{2}}Z_j\right] = \sum_{j=1}^{4\binom{i}{2}} \frac{q^{-j}m_j}{1 + q^{-j}} \leq \sum_{j=1}^{4\binom{i}{2}} \frac{1}{j} + O_{q, \ell}(1) = 2\log i + O_{q, \ell}(1)
    \]
    and, using \eqref{eq:var-bound}, we have \[
    \Var\left(\sum_{j=1}^{4\binom{i}{2}}Z_j\right) = \sum_{j=1}^{4\binom{i}{2}}\Var(Z_j) \leq \sum_{j=1}^{4\binom{i}{2}} \frac{1}{j} + O_q(1) = 2\log i + O_q(1).
    \]
    Then by Chebyshev's inequality, we have \[
    \PP\left[\sum_{j=1}^{4\binom{i}{2}}Z_j \geq i - k\right] \leq \frac{2\log i + O_q(1)}{\left(i - k - 2\log i + O_{q, \ell}(1)\right)^2}.
    \]
    We will only need to use this inequality when $i \geq \sqrt{\log \log d + \log(1 - \gamma)}$. In that case, for $d$ large enough (depending on $q$, $k$, $\ell$, and $\gamma$), the numerator is bounded above by $3\log i$ and the denominator is bounded below by $(i/2)^2$ so that \[
    \PP\left[\sum_{j=1}^{4\binom{i}{2}}Z_j \geq i - k\right] = O_{\gamma, q, k, \ell}\left(\frac{\log i}{i^2}\right).
    \]
    Then, by the union bound, \begin{align*}
    \PP&\left[\sum_{j=1}^{4\binom{i}{2}}Z_j \geq i - k \text{ for some }\sqrt{\log \log d + \log(1 - \gamma)} \leq i \leq (1 + \gamma)\log d\right] \\
    &\qquad\qquad\qquad\qquad\qquad\qquad\qquad\qquad= O_{\gamma, q, \ell}\left(\sum_{\sqrt{\log \log d + \log(1 - \gamma)} \leq i \leq (1 + \gamma)\log d} \frac{\log i}{i^2}\right) \\
    &\qquad\qquad\qquad\qquad\qquad\qquad\qquad\qquad= O_{\gamma, q, k, \ell}\left(\sum_{\sqrt{\log \log d + \log(1 - \gamma)} \leq i \leq (1 + \gamma)\log d} \frac{1}{i^{3/2}}\right).
    \end{align*}
    We have \begin{align*}
        \sum_{\sqrt{\log \log d + \log(1 - \gamma)} \leq i \leq (1 + \gamma)\log d} \frac{1}{i^{3/2}} &\leq \int_{\sqrt{\log \log d + \log(1 - \gamma)}}^{(1 + \gamma)\log d + 1} \frac{du}{u^{3/2}} \\
        &\leq  \int_{\sqrt{\log \log d + \log(1 - \gamma)}}^{\infty} \frac{du}{u^{3/2}} \\
        &= \frac{2}{(\log \log d + \log(1 - \gamma))^{1/4}}
    \end{align*}
    which means \[
    \PP\left[\sum_{j=1}^{4\binom{i}{2}}Z_j \geq i - k \text{ for some }\sqrt{\log \log d + \log(1 - \gamma)} \leq i \leq (1 + \gamma)\log d\right] = O_{\gamma, q, k, \ell}\left(\frac{1}{(\log\log d)^{1/4}}\right)
    \]
    and, using Theorem~\ref{thm:small-components} with $b = 4\binom{\lfloor(1 + \gamma)\log d\rfloor}{2}$, we get \[
    \PP[d_0, \dots, d_n \text{ fail to satisfy } (A_4')] = O_{\gamma, \varepsilon, q, \vec{r}, \vec{u}}\left(\frac{1}{(\log\log d)^{1/4}}\right).
    \] 
\end{enumerate}
\end{proof}

\subsection{Main results}\label{sect:main-results}

We now have all the ingredients we need to state and prove our main results. We start by setting up a procedure for drawing hyperelliptic curves from the distributions of interest.

Fix an odd prime power $q$. Let $S, S', S'' \subset \PP^1_{\F_q}$ be disjoint finite sets of points. Write $\deg(S) \coloneqq \sum_{p \in S} \deg(p)$, and similarly for $S', S''$. Let $g \geq 1$ be an integer. We are going to construct a random hyperelliptic curve $X$ of total branch degree $d \coloneqq 2g + 2$, ramified at $S$ and unramified at $S' \cup S''$. Assume $\sqrt{\log\log d - \log 2} > \max_{p \in S} \deg(p)$, and also assume that there exists a set $S_0 \subset \PP^1_{\F_q} - (S \cup S' \cup S'')$ such that $\deg(S_0) + \deg(S) = d$.

Draw points of total degree $d - \deg(S)$ from the selection consisting of points in $\PP^1_{\F_q} - (S \cup S' \cup S'')$ weighted by degree as described in Subsection~\ref{sect:branch-degrees}. Let $\vec{d}$ be the random multiset of degrees of points obtained this way together with the degrees of the points of $S$. Note that the elements $d_0, d_1, \dots, d_n$ of $\vec{d}$ may be uniquely ordered as follows (if any degree $d_i$ is odd, this agrees with the ordering in Subsection~\ref{sect:param-curves}):
\begin{itemize}
    \item $d_1 \leq \dots \leq d_n$;
    \item $v_2(d_0) \leq v_2(d_i)$ for $1 \leq i \leq n$;
    \item if $v_2(d_0) = v_2(d_i)$ for some $1 \leq i \leq n$, then $d_0 \leq d_i$.
\end{itemize}
We now split into two cases depending on whether $d_0$ is odd.

If $d_0$ is even (so all points drawn in the above step have even degree), let $X$ be a hyperelliptic curve drawn uniformly at random among the two hyperelliptic curves branched at all points selected in the above step and all points in $S$.

If $d_0$ is odd, we will resample the branch points after having chosen the degree of each branch point, as follows:

For each $0 \leq i \leq n$ we define a subset $T_i$ of degree $d_i$ points in $\PP^1_{\F_q}$ as follows: first, choose an order for the points in $S$. Then for each point $p \in S$ in this order, let $i$ be minimal such that $d_i = \deg(p)$ and $T_i$ has not been defined yet. We set $T_i = \{p\}$. Once this step has been done for each point $p \in S$, for each remaining $0 \leq i \leq n$ with $T_i$ undefined we set $T_i = \{p \in \PP^1_{\F_q} - (S' \cup S'') \mid \deg(p) = d_i\}$ to be the set consisting of all points of $\PP^1_{\F_q}$ of degree $d_i$.

For each point $p \in \PP^1_{\F_q}$, let $t(p)$ be a random uniformizer class at $p$ chosen from any distribution. Let $T$ be the set of ordered tuples $(p_0, \dots, p_n, t(p_0))$ such that $p_i \in T_i$ for $0 \leq i \leq n$. Let $T^* \subset T$ be the set of tuples $(p_0, \dots, p_n, t) \in T$ such that $p_0, \dots, p_n$ are distinct. Note that since the $t(p)$ are random, $T$ and $T^*$ are random sets.

Let $B$ be chosen uniformly at random from $T^*$, and let $X \coloneqq X_B$ be the hyperelliptic curve associated to $B$ via the parametrization described in Section~\ref{sect:param-curves}.

Then note that even though $B$ is a random \textit{ordered} tuple of points (with uniformizer class), the set of branch points of $X_B$ is drawn uniformly at random among all subsets of $\PP^1_{\F_q}$ of total degree $d$ which include $S$ and exclude $S' \cup S''$. 

\begin{example}\label{ex:XB-dist}
\begin{itemize}
\item If $t(p)$ is uniformly random for $p \in \PP^1_{\F_q}$, then $X$ is uniformly random among all hyperelliptic curves of genus $g$ which are ramified at $S$ and unramified at $S' \cup S''$.
\item If $T_0 = \{p_0\}$ is a singleton with $d_0$ odd and $t\coloneqq t(p_0)$ is fixed, then $X_B$ is uniformly random among all hyperelliptic curves of genus $g$ which are ramified at $S$ and unramified at $S' \cup S''$, and which have $c_t(X_B \to \PP^1, p_0) = 1$. 
\end{itemize}
\end{example}

\begin{proposition}\label{prop:local-conditions-prob}
We have \[
|\PP[X \text{ is split at }S'\text{ and inert at }S''] - 2^{-\#(S' \cup S'')}| = O_{q, S, S', S''}\left(\frac{1}{\log\log \log g}\right).
\]
\end{proposition}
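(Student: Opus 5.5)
We condition on the random multiset of degrees $\vec{d}$ and split into good and bad events. Let $A$ be the event that $\vec{d}$ satisfies $(A_1)$--$(A_4)$ for fixed admissible $\gamma,\varepsilon$ (say $\gamma = 1/2$, $\varepsilon = 1/8$). By Corollary~\ref{cor:assumptions}, with $\vec{r}$ the points of $S$ and $\vec{u}$ the points of $S'\cup S''$, we have $\PP[A^c] = O(1/\log\log\log d)$. Since $d = 2g+2$, this is $O(1/\log\log\log g)$, and it already dominates the claimed error. It therefore suffices to show that, uniformly over $\vec{d} \in A$, and also over the choice of $p_0 \in T_0$ and of $t = t(p_0)$, the conditional probability that $X$ is split at $S'$ and inert at $S''$ is $2^{-\#(S'\cup S'')} + o(1)$, with an explicit rate. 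On $A$ the degree $d_0$ is odd, so $X = X_B$ with $B$ uniform in $T^*$.

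Fix $\vec{d} \in A$, $p_0$, and $t$. Set $\ell = \#(S' \cup S'')$ and $\{u_1,\dots,u_\ell\} = S' \cup S''$. Put $a_\beta = 1$ for $u_\beta \in S'$, and let $a_\beta$ be the nonsquare class for $u_\beta \in S''$. By the trichotomy in Subsection~\ref{sect:second-order}, the event in question is exactly $E_2 = \{c(X_B\to\PP^1,u_\beta) = a_\beta \text{ for all } \beta\}$. We then apply Corollary~\ref{cor:independence} with $E_1$ the sure event, obtaining
\[
\left|\PP[E_2] - 2^{-\ell}\right| \leq 2^{\binom{n}{2}-\ell-n+3}q^{-\frac{\delta}{2}d_n}.
\]
To use it we must check its degree hypothesis on $d_n$. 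Condition $(A_4)$ gives $d_n \geq 4\binom{n}{2}$, while $(A_1)$ gives $n \geq (1-\gamma)\log d \to \infty$. As in the first paragraph of the proof of Theorem~\ref{thm:subfamily-equidist}, with $\delta = 1/2$ the condition $d_n \geq 4\log_q(2^{\ell+n}(n+1)d_n + 2^{\ell+n}d_{\mathrm{exc}} + 8)$ then holds for $d$ large in terms of $q$, $\ell$, $d_{\mathrm{exc}}$. The error becomes $O_\ell((2/q)^{\binom{n}{2}}2^{-n})$, and since $n \gg \log d$ this is far smaller than $1/\log\log\log g$.

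Two bookkeeping issues have to be checked against the setup of Subsection~\ref{sect:chebotarev}. First, some $T_i$ are singletons coming from $S$. Corollary~\ref{cor:independence} and Proposition~\ref{prop:last-row-equidistribution} only randomize $p_n$ and condition on everything else, so it is enough that $T_n$ is not a forced singleton. This holds on $A$ for large $d$: the assumption $\sqrt{\log\log d - \log 2} > \max_{p\in S}\deg(p)$, together with $(A_4)$, puts every point of $S$ in an index $i < \sqrt{\log n}$. Second, the case where $p_0$ itself lies in $S$ is harmless, because the argument is uniform in $p_0$. Averaging over $p_0$, $t$, and $\vec{d}\in A$ and adding $\PP[A^c]$ then gives the proposition.

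The main obstacle I expect is exactly this matching of hypotheses. We need to confirm that on $A$ the forced points of $S$ occupy only small indices, so that the last point $p_n$ is genuinely uniform over all points of degree $d_n$ outside $S'\cup S''$ and the earlier points. We also need the Chebotarev condition on $d_n$ to hold uniformly. Everything else reduces to Corollaries~\ref{cor:assumptions} and \ref{cor:independence}.
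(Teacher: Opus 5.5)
Your proposal is correct and follows essentially the paper's argument: discard the bad degree configurations via Corollary~\ref{cor:assumptions}, observe that $T_n$ is not a forced singleton, and apply the last-row Chebotarev estimate to the event on the second-order classes at $S'\cup S''$. The only cosmetic difference is that the paper applies the estimate of Proposition~\ref{prop:last-row-equidistribution}, marginalized over the pairing values and conditional on $p_0,\dots,p_{n-1}$, so it only needs $d_n$ to be large. Your use of Corollary~\ref{cor:independence} with trivial $E_1$ instead carries an extra $2^{\binom{n}{2}}$ factor, which, as you note, $(A_4)$ absorbs because $q^{-d_n/4}\le q^{-\binom{n}{2}}$ and $q\ge 3$.
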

\begin{proof}
Let $\ell = \#(S' \cup S'')$ and $d_{\mathrm{exc}} = \deg(S') + \deg(S'')$.

By Corollary~\ref{cor:assumptions}, we accrue an $O_{q,S, S', S''}\left(\frac{1}{\log\log \log g}\right)$ error term by conditioning on the assumptions $(A_1)$--$(A_4)$ being satisfied. In particular we are conditioning on $\deg(p_0)$ being odd, so that $X = X_B$ with $B = (p_0, \dots, p_n, t(p_0))$.

Condition on $\vec{d}$, $p_0, \dots, p_{n-1}$, and $t(p_0)$ satisfying $(A_1)$--$(A_4)$. In particular, for $g$ large enough, $(A_4)$ ensures that $d_n > \max_{p \in S}\deg(p)$, and so $T_n$ is not a singleton. The conditional distribution of $p_n$ is uniform over all points of degree $d_n$ excluding $p_0, \dots, p_{n-1}$ and the points in $S', S''$.  

Let $E$ be the event that $X_B$ is split at $S'$ and inert at $S''$. This is equivalent to a condition on $c(X_B \to \PP^1, p)$ for each $p \in S' \cup S''$. We observe that, if $d_n \geq 4\binom{\frac{1}{2}\log d}{2}$, then for large enough $n$ (depending on $q$, $\ell$, and $\deg(S') + \deg(S'')$) we have \[
d_n \geq 4\log_q(2^{\ell + 1}(d_{exc} + 2d_n))
\]
so that if $d_n \geq 4\binom{\frac{1}{2}\log d}{2}$ then \[
\left|\PP\left[E\ \middle|\ \vec{d}, p_0, \dots, p_{n-1}, t(p_0)\right] - 2^{-\ell}\right| \leq 2q^{-\binom{\frac{1}{2}\log d}{2}}.
\]
Therefore, \[
\left|\PP\left[E\ \middle|\ d_n \geq 4\binom{\frac{1}{2}\log d}{2}\right] - 2^{-\ell}\right| \leq 2q^{-\binom{\frac{1}{2}\log d}{2}}.
\]
Now note that $d_n \geq 4\binom{\frac{1}{2}\log d}{2}$ is implied by conditions $(A_1)$ and $(A_4)$ with $\gamma = 1/2$. Therefore, by Corollary~\ref{cor:assumptions}, we have \[
\PP\left[d_n < 4\binom{\frac{1}{2}\log d}{2}\right] = O_{q, S, S', S''}\left(\frac{1}{
\log\log\log d}\right).
\]
The conclusion follows.
\end{proof}

\begin{theorem}\label{thm:mainthm}
Fix an odd prime power $q$. Let $S, S', S'' \subset \PP^1_{\F_q}$ be disjoint finite sets of closed points.

Let $X$ be a hyperelliptic curve of genus $g \geq 2\deg(S)$ drawn uniformly at random among all hyperelliptic curves of genus $g$ which are ramified at $S$, split at $S'$, and inert at $S''$. 

Let $\mu \coloneqq \mu_{CL,2}$ if $q \equiv 3\pmod{4}$ and $\mu \coloneqq \mu_{S, 2}$ if $q \equiv 1\pmod{4}$. Let $r \geq 0$ be an integer. Then \[
|\PP[\dim_{\F_2}2\Pic^0(X)(\F_q)[4] = r] - \mu(r)| = O_{q, r, S, S', S''}\left(\frac{1}{\log\log\log g}\right).
\]
Moreover, suppose $S$ contains a point of odd degree and fix $p_0 \in S$ of minimal odd degree and a uniformizer class $t$ at $p_0$. If $S' \cup S''$ contains all closed points of $\PP^1_{\F_q}$ of odd degree strictly less than $\deg(p_0)$, then the above result also holds if $X$ is drawn uniformly at random among all hyperelliptic curves of genus $g$ which are ramified at $S$, split at $S'$, inert at $S''$, and satisfy $c_t(X \to \PP^1, p_0) = 1$.
\end{theorem}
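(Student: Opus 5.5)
We plan to deduce Theorem~\ref{thm:mainthm} from Theorem~\ref{thm:subfamily-equidist}, Corollary~\ref{cor:assumptions} and Proposition~\ref{prop:local-conditions-prob}, using the sampling procedure set up at the start of this subsection. Put $d = 2g+2$, $\ell = \#(S'\cup S'')$ and $d_{\mathrm{exc}} = \deg(S')+\deg(S'')$. The hypothesis $g \ge 2\deg(S)$ guarantees that a set $S_0$ of total degree $d-\deg(S)$ avoiding $S\cup S'\cup S''$ exists. It also gives $\sqrt{\log\log d-\log 2}>\max_{p\in S}\deg(p)$ once $g$ is large, and we may take $g$ large since small $g$ is absorbed into the implied constant.

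Take $t(p)$ uniformly random at every $p$. By Example~\ref{ex:XB-dist}, the curve $X$ produced by the procedure is then uniform among genus-$g$ curves ramified at $S$ and unramified at $S'\cup S''$. Let $E$ be the event that $X$ is split at $S'$ and inert at $S''$. The curve in the theorem is $X$ conditioned on $E$, so
\[
\PP[\dim 2\Pic^0(X)(\F_q)[4]=r \mid E]=\frac{\PP[\{\dim 2\Pic^0(X)(\F_q)[4]=r\}\cap E]}{\PP[E]}.
\]
Proposition~\ref{prop:local-conditions-prob} gives $\PP[E]=2^{-\ell}+O(1/\log\log\log g)$. It therefore suffices to show that the numerator equals $2^{-\ell}\mu(r)+O(1/\log\log\log g)$; dividing then costs only a constant factor $2^{\ell}$.

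For the numerator, we first condition on the degree multiset $\vec d$. By Corollary~\ref{cor:assumptions} with fixed $\gamma=1/2$ and some small $\varepsilon<1/6$, the event that $\vec d$ violates $(A_1)$--$(A_4)$ has probability $O(1/\log\log\log d)$, and we discard it. On the complement, $d_0$ is odd, and we further condition on $p_0$ and $t=t(p_0)$. Since $(A_4)$ holds, $d_i\ge 4\binom{i}{2}$ for $i\ge\sqrt{\log n}$, while the points of $S$ have degree below $\sqrt{\log\log d}$. Hence those points occupy only indices $i<\sqrt{\log n}$, possibly including $i=0$, and are exactly the fixed singletons $T_i=\{r_i\}$ permitted in Theorem~\ref{thm:subfamily-equidist}.

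There is a subtlety when $p_0\in S$ is fixed, since then $p_0$ is not random. This causes no problem, because Theorem~\ref{thm:subfamily-equidist} holds uniformly in $p_0$ and $t$. Likewise, removing $S'\cup S''$ from every $T_i$ is exactly the $u_\beta$-exclusion in that theorem. The event $E$ is the event that $c(X_B\to\PP^1,u_\beta)=a_\beta$ for suitable $a_\beta$: $a_\beta=1$ for $u_\beta\in S'$ and $a_\beta$ a nonsquare for $u_\beta\in S''$. Applying Theorem~\ref{thm:subfamily-equidist} in the form of Remark~\ref{rmk:A1} gives conditional error $O((\log d)^{-1/4})$. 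Averaging over $\vec d$, $p_0$ and $t$ yields the numerator estimate, with the $1/\log\log\log g$ term dominating.

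For the second statement, fix $p_0$ and $t$ and use the second case of Example~\ref{ex:XB-dist}. The only new issue is that $p_0$ must be the index-$0$ point of the ordering, meaning it has minimal odd degree among the branch points. The hypothesis that $S'\cup S''$ contains every odd-degree point of degree less than $\deg(p_0)$ forces exactly this, since no random branch point can have smaller odd degree. With $T_0=\{p_0\}$, the argument above then runs verbatim. The main obstacle is the bookkeeping in the reduction: checking that the fixed points of $S$ and the exclusions fit the hypotheses of Theorem~\ref{thm:subfamily-equidist} uniformly, and that the conditioning on $E$ combines correctly with Proposition~\ref{prop:local-conditions-prob}. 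Since that proposition already absorbs the failure of $(A_1)$--$(A_4)$, the error terms add as claimed.
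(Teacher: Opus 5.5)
Your proposal is correct and follows the paper's proof essentially step for step. You reduce via Example~\ref{ex:XB-dist} to the sampled curve conditioned on $E$. You estimate $\PP[\{\dim_{\F_2}2\Pic^0(X)(\F_q)[4]=r\}\cap E]$ by combining Theorem~\ref{thm:subfamily-equidist} (in the form of Remark~\ref{rmk:A1}) with Corollary~\ref{cor:assumptions}. You then divide by $\PP[E]$ using Proposition~\ref{prop:local-conditions-prob}. Your extra checks make explicit what the paper leaves implicit: that the points of $S$ land at indices below $\sqrt{\log n}$, and how the hypothesis on $S'\cup S''$ makes $p_0$ the index-$0$ point in the second part.

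One small inaccuracy: $g\ge 2\deg(S)$ alone does not guarantee that $S_0$ exists, for example if $S'\cup S''$ contains all points of small degree. This is harmless, because for $g$ large in terms of $S,S',S''$ a single point of degree $d-\deg(S)$ outside $S\cup S'\cup S''$ works, and small $g$ is absorbed into the constant as you say.
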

Theorem~\ref{thm:intro-thm-monic} follows by picking a ring of integers $\F_q[x] \subset \F_q(x) = k(\PP^1_{\F_q})$ and associated point at infinity $\infty \in \PP^1_{\F_q}$. With notation as in Theorem~\ref{thm:intro-thm-monic}, split into two cases. If $d$ is even, we use the first part of Theorem~\ref{thm:mainthm} with $S, S'' = \varnothing$ and $S' = \{\infty\}$. If $d$ is odd, we use the second part of Theorem~\ref{thm:mainthm} with $p_0 = \infty \in S$ with $t = 1/x$.

\begin{proof}
By Example~\ref{ex:XB-dist}, the conclusion in both cases follows after proving the statement for the random hyperelliptic curves $X$ defined at the start of this subsection, conditional on $X$ being split at $S'$ and inert at $S''$.

Condition on $\vec{d}$, assuming $d_0$ is odd. Let $E'$ be the event that $\dim_{\F_2}2\Pic^0(X)(\F_q)[4] = r$ and that $X$ is split at $S'$ and inert at $S''$.

By Theorem~\ref{thm:subfamily-equidist} and Remark~\ref{rmk:A1}, if $\vec{d}$ satisfies $(A_1)$, $(A_2)$, $(A_3)$, $(A_4)$ with, e.g., $\varepsilon = 1/7$ and $\gamma = 1/2$, then \[
\left|\PP[E' \mid \vec{d}] - 2^{-\#(S' \cup S'')}\mu(r)\right| = O_{q, r, S', S''}\left((\log g)^{-1/4}\right).
\]
Therefore, by Corollary~\ref{cor:assumptions}, we have \[
\left|\PP[E'] - 2^{-\#(S' \cup S'')}\mu(r)\right| = O_{q, r, S, S', S''}\left(\frac{1}{\log\log\log g}\right).
\]
Now let $E$ be the event that $X$ is split at $S'$ and inert at $S''$. The conditional probability we wish to estimate is $\PP[E']/\PP[E]$. By Proposition~\ref{prop:local-conditions-prob} we have \[
\left|2^{\#(S' \cup S'')}\PP[E] - 1\right| = O_{q, S, S', S''}\left(\frac{1}{\log\log\log g}\right).
\]
When the left hand side is at most $1/2$, we also have \[
\left|\frac{1}{2^{\#(S' \cup S'')}\PP[E]} - 1\right| \leq 2\left|2^{\#(S' \cup S'')}\PP[E] - 1\right| = O_{q, r, S, S', S''}\left(\frac{1}{\log\log\log g}\right).
\]
Then by Lemma~\ref{lem:err-combining} we have \[
\left|\frac{\PP[E']}{\mu(r)\PP[E]} - 1\right| = O_{q, r, S, S', S''}\left(\frac{1}{\log\log\log g}\right).
\]
and the conclusion follows.
\end{proof}

\subsection*{Acknowledgements}

The first author was supported by an NSF Graduate Research Fellowship. The authors thank Carlo Pagano, Alex Smith, and Melanie Matchett Wood for valuable conversations. The authors also thank \'Etienne Fouvry, Hunter Handley, Melanie Matchett Wood, Peter Koymans, and especially Isaac Rajagopal for helpful comments on earlier drafts of this manuscript. This work was partially funded by NSF grant DMS-2140043.

\printbibliography
\end{document}

%% file: settings.tex
\usepackage[utf8]{inputenc}
\usepackage{amsmath}
\usepackage{amssymb}
\usepackage{amsthm}
\usepackage{bbm}
\usepackage{fancyhdr}
\usepackage{enumitem}
\usepackage{parskip}
\usepackage{mathrsfs}
\usepackage{mathtools}
\usepackage{xcolor}
\usepackage[lmargin=0.5in, rmargin=0.5in]{geometry}
\usepackage{hyperref}
\hypersetup{
  colorlinks=true,
  linkcolor=blue,
  citecolor=blue
}
\usepackage{tikz-cd}
\usepackage{chngcntr}
\usepackage{extpfeil}

\usepackage[
backend=biber,
style=alphabetic,
]{biblatex}

\makeatletter
\let\@old@begintheorem=\@begintheorem
\def\@begintheorem#1#2[#3]{%
  \gdef\@thm@name{#3}%
  \@old@begintheorem{#1}{#2}[#3]%
}
\def\namedthmlabel#1{\begingroup
   \edef\@currentlabel{\@thm@name}%
   \label{#1}\endgroup
}
\makeatother

\theoremstyle{definition}

\makeatletter
\renewcommand*\env@matrix[1][*\c@MaxMatrixCols c]{%
  \hskip -\arraycolsep
  \let\@ifnextchar\new@ifnextchar
  \array{#1}}
\makeatother

\theoremstyle{theorem}
\newtheorem{theorem}{Theorem}
\newtheorem*{theorem*}{Theorem}
\newtheorem{lemma}[theorem]{Lemma}
\newtheorem{proposition}[theorem]{Proposition}

\newtheorem{corollary}[theorem]{Corollary}
\newtheorem{example}[theorem]{Example}

\theoremstyle{definition}
\newtheorem{definition}[theorem]{Definition}
\newtheorem{remark}[theorem]{Remark}
\newtheorem*{definition*}{Definition}
\newtheorem*{lemma*}{Lemma}

\numberwithin{equation}{section}
\numberwithin{theorem}{section}

\newcommand{\R}{\mathbb R}
\newcommand{\Z}{\mathbb Z}
\newcommand{\Q}{\mathbb Q}

\newcommand{\A}{\mathbb A}
\newcommand{\F}{\mathbb F}

\newcommand{\PP}{\mathbb P}

\newcommand{\GL}{\operatorname{GL}}

\newcommand{\Hom}{\operatorname{Hom}}

\newcommand{\Aut}{\operatorname{Aut}}

\newcommand{\Spec}{\operatorname{Spec}} 

\renewcommand{\ker}{\operatorname{ker}}

\newcommand{\Frac}{\operatorname{Frac}} 

\newcommand{\ip}[2]{\left\langle#1,#2\right\rangle} 
\newcommand{\Gal}{\operatorname{Gal}} 

\makeatletter
\def\namedlabel#1#2{\begingroup
   \def\@currentlabel{#2}%
   \label{#1}\endgroup
}
\makeatother

\newcommand{\E}{\mathbb E}
\newcommand{\Var}{\operatorname{Var}}

\newcommand{\vspan}{\operatorname{span}}

\newcommand{\rank}{\operatorname{rank}}

\renewcommand{\div}{\operatorname{div}}
\newcommand{\Div}{\operatorname{Div}}
\newcommand{\Prin}{\operatorname{Prin}}
\newcommand{\Cl}{\operatorname{Cl}}

\newcommand{\Pic}{\operatorname{Pic}}

\newcommand{\Frob}{\operatorname{Frob}}
\newcommand{\sheafO}{\mathcal{O}}
\newcommand{\ab}{{\text{ab}}}

\newcommand{\etale}{{\operatorname{\acute{e}t}}}

\newcommand{\twist}{{\operatorname{twist}}}
\newcommand{\legendre}[2]{\left(\dfrac{#1}{#2}\right)}
\newcommand{\Br}{\operatorname{Br}}
\newcommand{\G}{\mathbb{G}}